\newtheorem{theorem}{Theorem}[subsection]
\newtheorem{thmintro}{Theorem}
\newtheorem{lemma}[theorem]{Lemma}
\newtheorem{definition}[theorem]{Definition}
\newtheorem{Remark}[theorem]{Remark}
\newtheorem{proposition}[theorem]{Proposition}
\newtheorem{corollary}[theorem]{Corollary}
\newtheorem{Example}[theorem]{Example}
\newtheorem{Number}[theorem]{\!\!}
\newtheorem{Claim}{Claim}
\newtheorem*{Claim-non}{Claim}
\newenvironment{example}{\begin{Example}\rm}{\end{Example}}
\newenvironment{remark}{\begin{Remark}\rm}{\end{Remark}}
\newenvironment{proof}{{\noindent\bf Proof.}}%
                  {\nopagebreak\hspace*{\fill}$\Box$\medskip\par}
\newcommand{\fl}[1]{\mathscr{#1}}
\DeclareMathOperator{\Aut}{Aut}
\DeclareMathOperator{\Isom}{Isom}
\DeclareMathOperator{\End}{End}
\newcommand{\tree}{T}
\DeclareMathOperator{\id}{id}
\DeclareMathOperator{\stab}{{stab}}
\DeclareMathOperator{\supp}{{supp}}
\DeclareMathOperator{\con}{{\sf con}}
\DeclareMathOperator{\parb}{{\sf par}}
\DeclareMathOperator{\lev}{{\sf lev}}
\DeclareMathOperator{\nub}{{\sf nub}}
\DeclareMathOperator{\lnub}{{\sf lnub}}
\DeclareMathOperator{\COS}{\mathsf{CO}}%set of compact open subgroups
\DeclareMathOperator{\red}{\Phi}%set of scaling contractive triples for flat group
\DeclareMathOperator{\rk}{{rk}}
\DeclareMathOperator{\modulo}{mod}
\newcommand{\open}[1]{\mathcal{#1}}
\newcommand{\Up}[2]{#1_{#2+}}
\newcommand{\Um}[2]{#1_{#2-}}
\newcommand{\Upm}[2]{#1_{#2\pm}}
\newcommand{\Uz}[2]{#1_{#20}}
\newcommand{\Upp}[2]{\widehat{#1}_{#2+}}
\newcommand{\Umm}[2]{\widehat{#1}_{#2-}}
\newcommand{\Utpm}[2]{\widehat{#1}_{#2\pm}}
\def\tree{T}
\def\fp{\mathfrak{p}}
\def\MUab{(U,\alpha,\beta,-)}
\def\PUab{(U,\alpha,\beta,+)}
\def\PMUab{(U,\alpha,\beta,\pm)}
\def\triv{\{\id\}}
\def\tdlc{t.d.l.c.}
\def\roo{\rho}
\definecolor{cardinal}{rgb}{0.77, 0.12, 0.23}
\definecolor{carnelian}{rgb}{0.7, 0.11, 0.11}
\definecolor{cornellred}{rgb}{0.7, 0.11, 0.11}
\definecolor{crimsonglory}{rgb}{0.75, 0.0, 0.2}
\definecolor{darkcandyapplered}{rgb}{0.64, 0.0, 0.0}
\definecolor{deepcarminepink}{rgb}{0.94, 0.19, 0.22}
\definecolor{electriccrimson}{rgb}{1.0, 0.0, 0.25}
\definecolor{fireenginered}{rgb}{0.81, 0.09, 0.13}
\definecolor{green(colorwheel)(x11green)}{rgb}{0.0, 1.0, 0.0}
\definecolor{green(html/cssgreen)}{rgb}{0.0, 0.5, 0.0}
\definecolor{bondiblue}{rgb}{0.0, 0.58, 0.71}
\title{Flat groups of automorphisms of totally disconnected, locally compact groups}
\author{George A. Willis\thanks{This research was supported by the Australian Research Council grant FL170100032}}
\begin{document}
\maketitle
\begin{abstract}
A group, $\fl{H}$, of automorphisms of a totally disconnected locally compact group, $G$, is flat if there is a compact open $U\leq G$ such that the index $[\alpha(U):U\cap \alpha(U)]$ is mininimized for every $\alpha\in\fl{H}$. The stabilizer of $U$ in $\fl{H}$ is a normal subgroup, $\fl{H}_u$; the quotient $\fl{H}/\fl{H}_u$ is a free abelian group; and the rank of $\fl{H}$ is the rank of this free abelian group. Each singly generated group $\langle\alpha\rangle$ is flat and has rank either $0$ or $1$. Higher rank groups may be seen in Lie groups over local fields and automorphism groups of buildings. 
	
Flat groups of automorphisms exhibit many of the features of these special examples, including  analogues of roots and a factoring of $U$ into analogues of root subgroups. New proofs of improved versions of these results are presented here. 
\end{abstract}
{\bf Classification:} Primary 22D05; % general properties and structure of lcp gps
Secondary
%20E22, % extensions etc
%20E36, % autos
%20F18, % nilpotent groups
%20J06\\[3mm] % cohomology of gps
%
%
%{\bf Key words:} rooted tree; automorphism group; scale; self-similar group; self-replicating group
%\\[11mm]
%
\tableofcontents
\section*{Introduction}
\label{sec:Intro}

These notes describe the structure of flat groups of automorphisms of a totally disconnected, locally compact (\tdlc)~group. Cartan subgroups of semi-simple $p$-adic Lie groups, or more precisely the groups of inner automorphisms they induce, are motivating examples flat groups and it is seen that flat groups share some of the features of Cartan subgroups.

A group, $\fl{H}$, of automorphisms of $G$ is flat if there is a compact open subgroup, $U$, of $G$ that is minimizing for every $\alpha\in\fl{H}$. For this notion to have any content, it is necessary that $G$ be non-discrete, for the trivial subgroup is compact, open and minimizing for every automorphism otherwise. For it to be meaningful for a  \tdlc~group, it is necessary that the group have compact open subgroups , and that was shown in general by van Dantzig,~\cite{vD_CO}, see~\cite[Theorem~7.7]{HandR}. Since $U$ is minimizing for $\alpha$ if and only if it is tidy for $\alpha$, see \cite{Structure94,FurtherTidy}, results about subgroups tidy for a single automorphism are used to study flat groups. Relevant results about tidy subgroups are collected from the literature in the first section.

The second section gives complete proofs that a flat group $\fl{H}$ is abelian modulo the stabilizer of a minimizing subgroup and that there exist surjective homomorphisms  $\fl{H}\to\mathbb{Z}$ that are analogous to roots of a Cartan subgroup. It is also shown that tidy subgroups factor into analogues of root subgroups. These things were shown in \cite{SimulTriang}, but stronger statements and new and clearer proofs are presented here.

\section{Single automorphisms and endomorphisms}
\label{sec:single_automorphism}
To each automorphism, $\alpha$, of a \tdlc~group, $G$, is associated a positive integer $s(\alpha)$, called its scale, and certain compact open subgroups of $G$ that are said to be tidy for $\alpha$. Tidy subgroups and their properties reveal the dynamics of the action of $\alpha$ on $G$. This section recalls the definitions and theorems about the scale and tidy subgroups needed to study flat groups of automorphisms. 

The following notation will be used throughout. The group of automorphisms of the \tdlc~group $G$ will be denoted by $\Aut(G)$ and the monoid of endomorphisms by $\End(G)$. Individual automorphisms and endomorphisms will be denoted $\alpha$, $\beta$,  \dots. The set of all compact open subgroups of $G$ will be denoted by $\COS(G)$ and individual subgroups by letters $U$, $V$,  \dots. 

\subsection{Minimizing subgroups and the scale}
\label{sec:Min-Scale}

This section gives the definitions and summarizes basic results about the scale and minimizing subgroups associated with a single endomorphism of a \tdlc~group. We begin with endomorphisms even though the later results on flatness only concern automorphisms. Little additional effort is needed to formulate the results (although not their proofs) in the more general context, and it would be desirable to extend the idea of flatness to endomorphisms. Results on endomorphisms are quoted from \cite{Endo15} and extend ideas initially developed for (inner) automorphisms in \cite{Structure94,FurtherTidy}. 

Here is the basic definition. It relies on observation that, if $U\in\COS(G)$, then $\alpha(U)\cap U$ is an open subgroup of the compact group $\alpha(U)$ and therefore has finite index.
%%%%%%%%
\begin{definition}
\label{defn:minimizing}
Suppose that $G$ is a \tdlc~group and that $\alpha\in\End(G)$. The \emph{scale of $\alpha$} is the positive integer
$$
s(\alpha) = \min\left\{ [\alpha(U) : \alpha(U)\cap U] \mid U\in\COS(G)\right\}.
$$
The subgroup $U$ is \emph{minimizing for $\alpha$} if $s(\alpha)$ is attained at $U$.
\end{definition}

This definition is not very useful by itself. Its value comes from a characterization of subgroups minimizing for an endomorphism in terms of dynamics of the action of the endomorphism.
%%%%%%%%
\begin{theorem}[\cite{FurtherTidy} Theorem~3.1 \& \cite{Endo15} Theorem~2]\ 
\label{thm:characterise_minimizing}

\noindent Let $G$ be a \tdlc~group and $\alpha\in\End(G)$. Suppose that $U\in\COS(G)$ and set
\begin{align*}
\Up{U}{\alpha} &= \left\{ u\in U \mid \exists\{u_n\}_{n\geq0}\subset U\mbox{ such that }\alpha(u_{n+1}) = u_n \forall n\geq0\mbox{ and } u_0=u\right\}\\
\Um{U}{\alpha} &= \left\{ u\in U \mid \alpha^n(u)\in U\ \forall n\geq0\right\}.
\end{align*}
Then $\Up{U}{\alpha}$ and $\Um{U}{\alpha}$ are closed subgroups of $U$, and $U$ is minimizing for $\alpha$ if and only if
\begin{description}
\item[TA:] $U = \Up{U}{\alpha}\Um{U}{\alpha}$ \mbox{ and }
\item[TB:] $\Umm{U}{\alpha} := \bigcup_{n\geq0} \alpha^{-n}(U_{\alpha-})$ is closed.\label{TB}
\end{description}
When $U$ satisfies these conditions the scale of $\alpha$ is equal to $[\alpha(\Up{U}{\alpha}) : \Up{U}{\alpha}]$.
\end{theorem}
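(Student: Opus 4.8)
The plan is to prove the two preliminary assertions first, then establish the equivalence by treating the two implications separately; the non-trivial direction is carried out by the tidying procedure. For the preliminaries: the subgroup $\Um{U}{\alpha}=\bigcap_{n\geq0}\alpha^{-n}(U)$ is closed because $\alpha$ is continuous and $U$ is closed, and one records the identities $\alpha(\Um{U}{\alpha})\subseteq\Um{U}{\alpha}$ and $\alpha^{-1}(\Um{U}{\alpha})\cap U=\Um{U}{\alpha}$. For $\Up{U}{\alpha}$ one realizes it as the decreasing intersection $\bigcap_{n\geq0}V_n$ where $V_0=U$ and $V_{n+1}=U\cap\alpha(V_n)$: each $V_n$ is a compact subgroup, so the intersection is a closed subgroup, and a finite-intersection-property argument in the compact product space $U^{\mathbb{N}}$ identifies $\bigcap_n V_n$ with the set of elements of $U$ carrying an infinite $\alpha$-backward orbit inside $U$, that is, with $\Up{U}{\alpha}$. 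The same description yields $\alpha(\Up{U}{\alpha})\cap U=\Up{U}{\alpha}$, whence $\Up{U}{\alpha}\leq\alpha(\Up{U}{\alpha})$ and $[\alpha(\Up{U}{\alpha}):\Up{U}{\alpha}]<\infty$.

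For sufficiency, suppose \textbf{TA} and \textbf{TB} hold and set $m=[\alpha(\Up{U}{\alpha}):\Up{U}{\alpha}]$. A direct computation from \textbf{TA} --- factor $\alpha(U)=\alpha(\Up{U}{\alpha})\alpha(\Um{U}{\alpha})$, use $\alpha(\Um{U}{\alpha})\leq U$ and modularity to see $\alpha(U)\cap U=\Up{U}{\alpha}\,\alpha(\Um{U}{\alpha})$ --- gives $[\alpha(U):\alpha(U)\cap U]=m$; iterating this while invoking \textbf{TB} (so that the ascending union $\Umm{U}{\alpha}$ is closed and the coset counts do not inflate) gives $[\alpha^{n}(U):\alpha^{n}(U)\cap U]=m^{n}$ for all $n\geq1$. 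For an arbitrary $W\in\COS(G)$, submultiplicativity gives $[\alpha^{n}(W):\alpha^{n}(W)\cap W]\leq[\alpha(W):\alpha(W)\cap W]^{n}$, and comparing $U$ with $W$ through $U\cap W$ (whose index in each is finite and fixed) yields a constant $C$ independent of $n$ with $m^{n}=[\alpha^{n}(U):\alpha^{n}(U)\cap U]\leq C\,[\alpha^{n}(W):\alpha^{n}(W)\cap W]$. Taking $n$-th roots and letting $n\to\infty$ gives $m\leq[\alpha(W):\alpha(W)\cap W]$; since $W$ was arbitrary, $m\leq s(\alpha)\leq[\alpha(U):\alpha(U)\cap U]=m$, so $U$ is minimizing and $s(\alpha)=[\alpha(\Up{U}{\alpha}):\Up{U}{\alpha}]$.

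For necessity, assume $U$ is minimizing and run the tidying procedure. In Step~1 one replaces $U$ by $U'=\bigcap_{k=0}^{n}\alpha^{k}(U)$ for $n$ large enough that the descending chain $\bigcap_{k=1}^{n}\alpha^{k}(U)$ has stabilized; then $U'$ satisfies \textbf{TA} and $[\alpha(U'):\alpha(U')\cap U']\leq[\alpha(U):\alpha(U)\cap U]$, with equality forcing $U$ itself to satisfy \textbf{TA}. In Step~2, if $U$ satisfies \textbf{TA} but $\Umm{U}{\alpha}$ is not closed, one replaces $U$ by $U''=U\cap K$ for a suitable compact subgroup $K$ built from the closures of $\Umm{U}{\alpha}$ and $\Upp{U}{\alpha}$; one then verifies that $U''$ still satisfies \textbf{TA}, now satisfies \textbf{TB}, and does not increase the index, with equality forcing $\Umm{U}{\alpha}$ to have been closed already. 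Because $U$ is minimizing, neither step can strictly decrease the index, so $U$ must already satisfy \textbf{TA} and \textbf{TB}.

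The main obstacle is Step~2 of the tidying procedure: exhibiting the correcting compact subgroup $K$ and showing that passing to $U\cap K$ restores \textbf{TB} while preserving \textbf{TA} and not increasing the index. In the stated endomorphism generality one must moreover carry this out without injectivity of $\alpha$ and without $\alpha(G)$ being open or closed, which is precisely the strengthening of \cite{FurtherTidy} effected in \cite{Endo15}; by contrast, the compactness argument for $\Up{U}{\alpha}$ and the submultiplicativity and commensurability estimates of the sufficiency direction are routine.
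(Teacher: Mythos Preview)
The paper does not give its own proof of this theorem; it is quoted from \cite{FurtherTidy} and \cite{Endo15} as background, so there is no in-paper argument to compare against. Your outline follows the standard strategy of those references: the sufficiency direction via the multiplicativity $[\alpha^{n}(U):\alpha^{n}(U)\cap U]=m^{n}$ combined with a commensurability/$n$-th-root comparison is essentially M\"oller's spectral-radius characterisation, and the necessity direction via the tidying procedure is exactly the approach of \cite{FurtherTidy,Endo15}.

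Two points in your sketch are imprecise and would need fixing in a full proof. In Step~1 you say one chooses $n$ so that ``the descending chain $\bigcap_{k=1}^{n}\alpha^{k}(U)$ has stabilized''; it does not stabilize as a chain of subgroups. What stabilizes is the sequence of indices $[\,\bigcap_{k=0}^{j}\alpha^{k}(U):\bigcap_{k=0}^{j+1}\alpha^{k}(U)\,]$, which is non-increasing and bounded; once it is constant one shows $U':=\bigcap_{k=0}^{n}\alpha^{k}(U)$ is tidy above. In Step~2, the correction is not of the form $U\cap K$ for a compact $K$: in \cite{FurtherTidy} one adjoins a specific compact $\alpha$-stable group $L$ (containing what is now called $\nub(\alpha)$) to a tidy-above subgroup, via either $(\bigcap_{x\in L}xU'x^{-1})L$ or $\{u\in U':Lu\subseteq U'L\}\,L$, and checks the resulting group is tidy with no larger index. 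The deductions ``equality forces $U$ to satisfy \textbf{TA}'' and ``equality forces \textbf{TB}'' are correct in spirit but require the explicit index inequalities proved along the way in those constructions; they are not automatic from the bare fact that the index did not drop. Finally, your remark that the endomorphism case needs extra care is apt: for $\alpha\in\End(G)$ one cannot apply $\alpha^{-1}$ to indices, and the tower argument for $[\alpha^{n}(\Up{U}{\alpha}):\Up{U}{\alpha}]=m^{n}$ as well as Step~2 must be reworked as in \cite{Endo15}.
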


\begin{definition}
\label{defn:tidy}
The compact open subgroup $U$ is said to be \emph{tidy above} for $\alpha$ if it satisfies {\bf TA} and to be \emph{tidy below} if it satisfies {\bf TB}. The subgroup is \emph{tidy for $\alpha$} if it is both tidy above and below for $\alpha$. 
%The set of compact open subgroups of $G$ tidy for $\alpha$ will be denoted $\tid(\alpha)$.
\end{definition}

\begin{remark}
\label{rem:characterise_minimizing}
Tidy subgroups were defined for automorphisms in~\cite{Structure94} and tidiness was shown to be equivalent to the minimizing condition in~\cite{FurtherTidy}. These papers defined $\Up{U}{\alpha} = \bigcap_{n\geq0}\alpha^n(U)$ and $\Um{U}{\alpha} = \bigcap_{n\geq0}\alpha^{-n}(U)$, which is equivalent to the definition given here when $\alpha$ is an automorphism. The extension to endomorphisms was made in~\cite{Endo15}. 
\end{remark}

%{\color{fireenginered} Proof of theorem? Tidying procedure? First Lemma?

%Examples: Exercises, trees, matrix groups ( Gl\"ockner papers, \cite{Glock_scale1, Glock_scale2}), product of finite groups. Will focus on automorphisms from now on. Refer to Gl\"ockner, Bywaters, Tornier.}

The following claims are established in Exercises~\ref{ex:comparable_tidy},~\ref{exer:scale_prop1} and~\ref{exer:scale_prop2}.
%%%%%%%
\begin{proposition}
	\label{prop:tidy_conditions}
Let $\alpha\in\Aut(G)$. 
\begin{enumerate}
	\item 	\label{prop:tidy_conditions1}
	If $U\in\COS(G)$ satisfies that either $\alpha(U)\leq U$ or $\alpha(U)\geq U$, then $U$ is tidy for $\alpha$.
\item 	\label{prop:tidy_conditions2}
If $U$ is tidy for $\alpha$, then $U$ is tidy for $\alpha^n$ for every $n\in\mathbb{Z}$, and for $\alpha^{-1}$ in particular.
\end{enumerate}
\endproof
\end{proposition}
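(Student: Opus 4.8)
The plan is to verify both parts directly against the characterisation of minimizing (equivalently, tidy) subgroups in Theorem~\ref{thm:characterise_minimizing}, using throughout that for an automorphism $\Up U\alpha=\bigcap_{k\ge0}\alpha^k(U)$ and $\Um U\alpha=\bigcap_{k\ge0}\alpha^{-k}(U)$ (Remark~\ref{rem:characterise_minimizing}). For part~(\ref{prop:tidy_conditions1}), suppose first that $\alpha(U)\le U$. Then $\bigl(\alpha^k(U)\bigr)_{k\ge0}$ is a descending and $\bigl(\alpha^{-k}(U)\bigr)_{k\ge0}$ an ascending chain of compact open subgroups, so $\Um U\alpha=U$; hence {\bf TA} (the equality $U=\Up U\alpha\,\Um U\alpha$) is immediate, and $\Umm U\alpha=\bigcup_{k\ge0}\alpha^{-k}(\Um U\alpha)=\bigcup_{k\ge0}\alpha^{-k}(U)$ is an ascending union of open subgroups, hence an open --- and therefore closed --- subgroup, so {\bf TB} holds. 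If instead $\alpha(U)\ge U$ the two chains swap monotonicity: now $\Up U\alpha=U$ makes {\bf TA} trivial, and since $\bigl(\alpha^{-k}(U)\bigr)_{k\ge0}$ is descending one has $\alpha^{-k}(\Um U\alpha)=\Um U\alpha$ for every $k$, whence $\Umm U\alpha=\Um U\alpha$ is closed. (Alternatively, $\alpha(U)\ge U$ gives $\alpha^{-1}(U)\le U$, so $U$ is tidy for $\alpha^{-1}$ by the case just treated, and hence for $\alpha$ by part~(\ref{prop:tidy_conditions2}).)

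For part~(\ref{prop:tidy_conditions2}) it suffices to treat $n=-1$ and $n\ge1$, since $\alpha^0=\id$ is harmless ($U_{\id\pm}=U$) and a negative power of $\alpha$ is a positive power of $\alpha^{-1}$. From the formulas above $\Up U{\alpha^{-1}}=\Um U\alpha$ and $\Um U{\alpha^{-1}}=\Up U\alpha$, so {\bf TA} for $\alpha^{-1}$, namely $U=\Um U\alpha\,\Up U\alpha$, follows from {\bf TA} for $\alpha$ on inverting the product of the subgroups $U$, $\Up U\alpha$, $\Um U\alpha$; and {\bf TB} for $\alpha^{-1}$ asks precisely that $\bigcup_{k\ge0}\alpha^k(\Up U\alpha)$ be closed. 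For $n\ge1$, once one knows that $U$ is tidy above for $\alpha^n$ with $U_{\alpha^n\pm}=U_{\alpha\pm}$, condition {\bf TA} for $\alpha^n$ is just {\bf TA} for $\alpha$, while {\bf TB} for $\alpha^n$ follows from {\bf TB} for $\alpha$ because $\Umm U{\alpha^n}=\bigcup_{m\ge0}\alpha^{-nm}(\Um U\alpha)$ coincides with $\Umm U\alpha=\bigcup_{j\ge0}\alpha^{-j}(\Um U\alpha)$ --- the latter union being ascending and the exponents $nm$ cofinal among $j\ge0$ --- which is closed by hypothesis.

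It therefore remains to show, for $U$ tidy for $\alpha$, that $U$ is tidy above for $\alpha^n$ with $U_{\alpha^n\pm}=U_{\alpha\pm}$ $(n\ge1)$ and that $\bigcup_{k\ge0}\alpha^k(\Up U\alpha)$ is closed; this is the crux and the step I expect to be the main obstacle. Since a subgroup of $G$ is closed as soon as its intersection with the compact open subgroup $U$ is closed, and $\bigcup_{k\ge0}\bigl(\alpha^k(\Up U\alpha)\cap U\bigr)=\bigl(\bigcup_{k\ge0}\alpha^k(\Up U\alpha)\bigr)\cap U$, both assertions reduce to the ``convexity'' identities $\alpha^{k}(\Up U\alpha)\cap U=\Up U\alpha$ and $\alpha^{-k}(\Um U\alpha)\cap U=\Um U\alpha$ for $k\ge0$. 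I would prove these by induction on $k$ from the refined form of the tidy-above condition --- that for $U$ tidy above one has, for all $a,b\ge0$,
$$\alpha^{a}(U)\cap\alpha^{-b}(U)=\bigl(\alpha^{a}(U)\cap\alpha^{-b}(U)\cap\Up U\alpha\bigr)\bigl(\alpha^{a}(U)\cap\alpha^{-b}(U)\cap\Um U\alpha\bigr)$$
--- applying it to split an element of $\alpha^k(\Up U\alpha)\cap U$ into a factor already lying in $\Up U\alpha$ and a factor trapped in $\alpha^k(\Up U\alpha)\cap\Um U\alpha$, and showing that the induction forces this last intersection into the $\alpha$-invariant subgroup $\Up U\alpha\cap\Um U\alpha=\bigcap_{m\in\mathbb Z}\alpha^m(U)\subseteq\Up U\alpha$. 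Making this induction precise is the genuinely technical part; everything else --- inverting products of subgroups, monotonicity and cofinality of chains of compact open subgroups, and the ``closed near the identity'' criterion for subgroups --- is routine bookkeeping.
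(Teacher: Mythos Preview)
Your argument for part~(\ref{prop:tidy_conditions1}) is correct and is exactly the direct verification the paper intends (Exercise~\ref{ex:comparable_tidy}).

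For part~(\ref{prop:tidy_conditions2}) your route diverges from the paper's. The paper handles $n=-1$ not by verifying {\bf TB} for $\alpha^{-1}$ directly, but via the \emph{minimizing} characterisation: the Haar-measure identity in Exercise~\ref{exer:scale_prop1} shows that $[\alpha(U):\alpha(U)\cap U]$ is minimised at $U$ if and only if $[\alpha^{-1}(U):\alpha^{-1}(U)\cap U]$ is, so tidiness for $\alpha$ and $\alpha^{-1}$ coincide without ever touching $\Upp{U}{\alpha}$. For $n\ge1$ the paper's hint (Exercise~\ref{exer:endo_tidy_below}) is to use Proposition~\ref{prop:tidy_criteria}: if $u\in\Um{U}{\alpha^n}$ then $\{\alpha^{nk}(u)\}_{k\ge0}\subset U$ has an accumulation point, hence so does the full sequence $\{\alpha^j(u)\}_{j\ge0}$, and Proposition~\ref{prop:tidy_criteria} gives $u\in\Um{U}{\alpha}$ immediately. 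This is considerably shorter than the inductive convexity argument you outline.

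Your direct approach is viable but there is an imprecision in the reduction. The identities $\alpha^k(\Up{U}{\alpha})\cap U=\Up{U}{\alpha}$ and $\alpha^{-k}(\Um{U}{\alpha})\cap U=\Um{U}{\alpha}$ do give closedness of $\bigcup_{k\ge0}\alpha^k(\Up{U}{\alpha})$, but they do \emph{not} by themselves yield $\Upm{U}{\alpha^n}=\Upm{U}{\alpha}$: knowing $u\in\bigcap_{m\ge0}\alpha^{nm}(U)$ does not place $u$ in any $\alpha^k(\Up{U}{\alpha})$, so your stated identities cannot be applied. What is actually needed is the orbit convexity $\alpha^a(U)\cap\alpha^b(U)=\bigcap_{k=a}^b\alpha^k(U)$ of Proposition~\ref{prop:tidy_orbit}, which does follow from the ``refined tidy-above'' formula you quote and the induction you sketch. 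So the ingredients are there, but the target of the induction should be the full orbit convexity rather than the weaker $U_\pm$-based identities; once you have that, both $\Upm{U}{\alpha^n}=\Upm{U}{\alpha}$ and the closedness of $\Upp{U}{\alpha}$ drop out.
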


Part~\ref{prop:tidy_conditions1}.~of Proposition~\ref{prop:tidy_conditions} does not hold for endomorphisms, see Exercise~\ref{ex:comparable_not_tidy}. Part~\ref{prop:tidy_conditions2}.~does hold for non-negative $n$ and the proof of tidiness below is seen in Exercise~\ref{exer:endo_tidy_below}. It is not true, even for automorphisms, that any $U$ tidy for $\alpha^n$ is tidy for $\alpha$, see Exercise~\ref{exer:power_tidy_not}.

The following criteria for membership of $\Um{U}{\alpha}$ and $\Uz{U}{\alpha} := \bigcap_{k\in\mathbb{N}} \alpha^k(\Um{U}{\alpha})$ when $U$ is tidy for $\alpha$ are used frequently. The converse to these criteria is the observation that, if $u\in \Um{U}{\alpha}$, then $\left\{\alpha^k(u)\right\}_{k\in\mathbb{N}}\subset \Um{U}{\alpha}$ and thus in particular is contained in a compact set. 
%%%%%%%
\begin{proposition}[\cite{Structure94} Lemma~9 \& \cite{Endo15} Proposition~11]
\label{prop:tidy_criteria}
Let $\alpha$ be an endomorphism of $G$. Suppose that $U$ is tidy for $\alpha$ and that $u\in U$. 
\begin{enumerate}
\item If either $\left(\alpha^k(u)\right)_{k\in\mathbb{N}}$ or $\left(u\alpha(u)\dots\alpha^k(u)\right)_{k\in\mathbb{N}}$ has an accumulation point, then $u$ belongs to $\Um{U}{\alpha}$. 
\item If either $\left\{\alpha^k(u)\right\}_{k\in\mathbb{Z}}$ or $\left\{u\alpha(u)\dots\alpha^k(u)\right\}_{k\in\mathbb{Z}}$ is contained in a compact set, then $u\in \Up{U}{\alpha}\cap\Um{U}{\alpha} =  \Uz{U}{\alpha}$. 
\end{enumerate}
\end{proposition}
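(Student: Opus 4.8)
\ Only the two stated implications need proof, the reverse inclusions being the converse observations recorded just before the statement; moreover part~2 reduces to part~1. Indeed, if $\{\alpha^k(u)\}_{k\in\mathbb Z}$ lies in a compact set then so does its forward half, so part~1 gives $u\in\Um{U}{\alpha}$; and by part~\ref{prop:tidy_conditions2} of Proposition~\ref{prop:tidy_conditions}, $U$ is also tidy for $\alpha^{-1}$, with $\Um{U}{\alpha^{-1}}=\Up{U}{\alpha}$ and with the backward $\alpha$-orbit of $u$ equal to the forward $\alpha^{-1}$-orbit, so part~1 applied to $\alpha^{-1}$ gives $u\in\Up{U}{\alpha}$; intersecting, $u\in\Up{U}{\alpha}\cap\Um{U}{\alpha}=\Uz{U}{\alpha}$. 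The partial-product case is handled the same way, the ``negative half'' of the bi-infinite family of partial products being the partial-product family of part~1 for $\alpha^{-1}$.

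For part~1 I would work throughout under the weaker hypothesis that the sequence in question merely has an accumulation point; relative compactness of the whole orbit, all that part~2 uses, is a special case. Step one uses tidiness above to write $u=u_+u_-$ with $u_+\in\Up{U}{\alpha}$ and $u_-\in\Um{U}{\alpha}$. Since $u_-\in\Um{U}{\alpha}$, the forward orbit $\left(\alpha^k(u_-)\right)_{k\ge 0}$ stays inside the compact group $\Um{U}{\alpha}$; choosing a subsequence $(k_i)$ along which $\alpha^{k_i}(u)$ converges and, by compactness, also $\alpha^{k_i}(u_-)$ converges, the identity $\alpha^{k_i}(u_+)=\alpha^{k_i}(u)\,\alpha^{k_i}(u_-)^{-1}$ shows $\left(\alpha^{k_i}(u_+)\right)_i$ converges. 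As $u_-$ already lies in $\Um{U}{\alpha}$, it then suffices to prove the reduced claim: if $v\in\Up{U}{\alpha}$ and $\left(\alpha^k(v)\right)_{k\ge 0}$ has an accumulation point, then $v\in\Uz{U}{\alpha}$. For the partial-product version the same reduction applies after replacing orbits by the partial products $p_k=u\alpha(u)\cdots\alpha^k(u)$, using the telescoping identity $\alpha^k(u)=p_{k-1}^{-1}p_k$ to pass between the two descriptions and noting that the partial products formed from $u_-$ stay in $\Um{U}{\alpha}$.

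The reduced claim is the heart of the matter, and proving it is where tidiness below, condition~{\bf TB}, is used; this is the step I expect to be the main obstacle. Two facts about tidy $U$ carry it, both consequences of {\bf TB}: that $\Umm{U}{\alpha}=\bigcup_{n\ge 0}\alpha^{-n}(\Um{U}{\alpha})$, being a closed increasing union of compact open subgroups, is an open subgroup with $\alpha^{-k}(\Umm{U}{\alpha})\subseteq\Umm{U}{\alpha}$ for every $k\ge 0$ and $U\cap\Umm{U}{\alpha}=\Um{U}{\alpha}$; and that $\alpha$ is expanding on $\Up{U}{\alpha}$, in the sense that either $s(\alpha)=1$, whence $\alpha(\Up{U}{\alpha})=\Up{U}{\alpha}$ and so $\Up{U}{\alpha}=\Uz{U}{\alpha}$ and the reduced claim is trivial, or $[\alpha^n(\Up{U}{\alpha}):\Up{U}{\alpha}]=s(\alpha)^n\to\infty$. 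Assuming $s(\alpha)\ge 2$, I would argue by contradiction: a relatively compact forward orbit of $v$ meets only finitely many cosets of the open subgroup $\Umm{U}{\alpha}$, so a pigeonhole argument yields exponents $k<l$ with $\alpha^k(v)^{-1}\alpha^l(v)=\alpha^k\!\left(v^{-1}\alpha^{l-k}(v)\right)\in\Umm{U}{\alpha}$, hence $v^{-1}\alpha^{l-k}(v)\in\Umm{U}{\alpha}$ by the inclusion $\alpha^{-k}(\Umm{U}{\alpha})\subseteq\Umm{U}{\alpha}$; feeding such relations, together with the expanding action of $\alpha$ on $\Up{U}{\alpha}$, back into the coset structure forces $v$ into $\Umm{U}{\alpha}$, whence $v\in U\cap\Umm{U}{\alpha}=\Um{U}{\alpha}$ and therefore $v\in\Up{U}{\alpha}\cap\Um{U}{\alpha}=\Uz{U}{\alpha}$. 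The delicate points are establishing the two displayed facts, which themselves combine {\bf TA} and {\bf TB}, and making the last implication airtight, since without the expansiveness of $\alpha$ on $\Up{U}{\alpha}$ the pigeonhole relations alone do not pin $v$ down. The general endomorphism case should then require only extra bookkeeping: part~2 must be read with coherent backward sequences in place of an inverse, and the same replacement is made wherever the argument appealed to $\alpha^{-1}$; no new idea seems to be needed there.
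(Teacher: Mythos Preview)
The paper does not itself prove this proposition; it is quoted from \cite{Structure94} and \cite{Endo15}. What the paper does provide is Exercise~\ref{exer:orbit}, which isolates the key step: if $U$ is tidy for $\alpha$ and $x\in\alpha(\Up{U}{\alpha})\setminus\Up{U}{\alpha}$, then $\alpha^n(x)\in\alpha^{n+1}(\Up{U}{\alpha})\setminus\alpha^n(\Up{U}{\alpha})$ for every $n$, so the forward orbit of $x$ has no accumulation point. This immediately gives your ``reduced claim'': if $v\in\Up{U}{\alpha}$ and $v\notin\Um{U}{\alpha}$, take the least $N\ge1$ with $\alpha^N(v)\notin U$, check that $x:=\alpha^{N-1}(v)\in\Up{U}{\alpha}$ while $\alpha(x)\notin\Up{U}{\alpha}$, and apply the exercise.

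Your overall architecture --- reduce part~2 to part~1 via tidiness for $\alpha^{-1}$, decompose $u=u_+u_-$ and reduce to $v=u_+\in\Up{U}{\alpha}$ --- is exactly right and matches the original approach. The gap is in your treatment of the reduced claim. The pigeonhole step gives you $v^{-1}\alpha^m(v)\in\Umm{U}{\alpha}$ for some $m>0$, but ``feeding such relations, together with the expanding action of $\alpha$ on $\Up{U}{\alpha}$, back into the coset structure forces $v$ into $\Umm{U}{\alpha}$'' is not an argument. From $v^{-1}\alpha^m(v)\in\Umm{U}{\alpha}$ and $v\in\Up{U}{\alpha}$ alone it is not clear how to conclude $v\in\Umm{U}{\alpha}$; the element $v^{-1}\alpha^m(v)$ lies in $\Upp{U}{\alpha}\cap\Umm{U}{\alpha}$, and that intersection need not be contained in $U$. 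The clean route is the one in Exercise~\ref{exer:orbit}: work directly with the filtration $\alpha^n(\Up{U}{\alpha})$ and show that once the orbit leaves $\Up{U}{\alpha}$ it moves strictly outward at every step, which uses that $[\alpha(\Up{U}{\alpha}):\Up{U}{\alpha}]=s(\alpha)$ together with $\Upp{U}{\alpha}\cap U=\Up{U}{\alpha}$ (this last identity is where tidiness below enters). Your instinct about which facts are needed is correct; what is missing is the concrete mechanism that replaces the vague ``feeding back''.
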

%%%%%%%
%{\color{fireenginered} Proof. Exercise. Ask to prove lemma(s?) first?}

Here is a description of the $\alpha$-orbit $\{\alpha^k(x)\}_{k\in\mathbb{Z}}$ of $x$ in the case when $\alpha$ is an automorphism. Endomorphisms need not be injective and the notion of \emph{$\alpha$-trajectory} is used instead in \cite{Endo15}, where the relationship between trajectories for $\alpha$ and subgroups tidy for $\alpha$ is described in detail.
%%%%%%%%%%
\begin{proposition}
\label{prop:tidy_orbit}
Suppose that $\alpha\in\Aut(G)$ and that $U$ is tidy for $\alpha$. Then, for all $m\leq n\in\mathbb{Z}$, 
\begin{equation}
\label{eq:tidy_orbit}
\alpha^m(U)\cap\alpha^n(U)  = \bigcap_{k=m}^n \alpha^k(U).
\end{equation}
Hence, given $x\in G$, the intersection of the $\alpha$-orbit of $x$ with $U$ is either:
%%%%%%
\begin{multicols}{2}
	\begin{enumerate}[label={(\alph*)}]
\item $\{\alpha^k(x)\}_{k=m}^\infty$ with $m\in\mathbb{Z}$;   
\label{tidy-orbit4}
\item  $\{\alpha^k(x)\}_{k=-\infty}^n$ with $m\in\mathbb{Z}$;
\label{tidy-orbit5}
\item $\{\alpha^k(x)\}_{k=m}^n$ with $m\leq n$;
\item $\{\alpha^k(x)\}_{k\in\mathbb{Z}}$; or
\label{tidy-orbit6}
\item $\emptyset$.
\item[] 
\end{enumerate}
\end{multicols}
In case \ref{tidy-orbit4}, $x\in \Umm{U}{\alpha}$, in \ref{tidy-orbit5} $x\in \Upp{U}{\alpha}$ and in case \ref{tidy-orbit6} $x\in\Uz{U}{\alpha}$.
 \end{proposition}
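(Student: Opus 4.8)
The plan is to base everything on one lemma about return times, which I will call the \emph{interval lemma}: \emph{if $u\in U$ and $\alpha^N(u)\in U$ for some $N\geq 1$, then $\alpha^j(u)\in U$ for every $0\leq j\leq N$.} Equivalently, for each $x\in G$ the set $S_x:=\{k\in\mathbb Z:\alpha^k(x)\in U\}$ is convex in $\mathbb Z$ (apply the lemma to $u=\alpha^a(x)$, $N=b-a$, whenever $a<b$ lie in $S_x$). Granting this, \eqref{eq:tidy_orbit} follows by applying $\alpha^{-m}$ to reduce to $m=0$ and then, for $x\in U\cap\alpha^n(U)$, applying the interval lemma to $u=\alpha^{-n}(x)$ (which lies in $U$ and has $\alpha^n(u)=x\in U$) to get $\alpha^i(u)\in U$ for $0\leq i\leq n$, i.e.\ $x\in\bigcap_{k=0}^{n}\alpha^{k}(U)$; the reverse inclusion is trivial.

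The orbit dichotomy is then just the classification of convex subsets of $\mathbb Z$: $\emptyset$, a finite interval, a left or right half-line, or all of $\mathbb Z$, giving cases (e), (c), (b), (a), (d). For the refinements: in case (a), $S_x=[m,\infty)$, so with $n:=\max\{m,0\}\geq 0$ the forward $\alpha$-orbit of $\alpha^n(x)$ stays in $U$, hence $\alpha^n(x)\in\Um{U}{\alpha}$ by the description in Theorem~\ref{thm:characterise_minimizing}, so $x\in\alpha^{-n}(\Um{U}{\alpha})\subseteq\Umm{U}{\alpha}$. Case (b) is the same argument applied to $\alpha^{-1}$, which is again tidy by Proposition~\ref{prop:tidy_conditions}, giving $x\in\Upp{U}{\alpha}$. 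In case (d) the whole orbit lies in the compact set $U$, so $x\in\Uz{U}{\alpha}$ by the second part of Proposition~\ref{prop:tidy_criteria}.

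The real content is the interval lemma. Using tidiness above, write $u=pq$ with $p\in\Up{U}{\alpha}$ and $q\in\Um{U}{\alpha}$. Since $q\in\Um{U}{\alpha}$ gives $\alpha^j(q)\in U$ for all $j\geq 0$, it suffices to prove $\alpha^j(p)\in U$ for $0\leq j\leq N$; and from $\alpha^N(u)\in U$ and $\alpha^N(q)^{-1}\in U$ we already have $\alpha^N(p)=\alpha^N(u)\alpha^N(q)^{-1}\in U$. The inductive step is the sub-claim: \emph{for $1\leq j\leq N$, if $\alpha^j(p)\in U$ then $\alpha^{j-1}(p)\in U$.} To see this, note the backward $\alpha$-orbit of $\alpha^j(p)$, namely $\{\alpha^{j-k}(p):k\geq 0\}$, consists of the finitely many points $\alpha^{j}(p),\dots,\alpha(p)$ together with $\{\alpha^{-i}(p):i\geq 0\}$, and the latter lies in $U$ because $p\in\Up{U}{\alpha}$; hence this backward orbit lies in a compact set, and the first part of Proposition~\ref{prop:tidy_criteria} applied to $\alpha^{-1}$ (tidy by Proposition~\ref{prop:tidy_conditions}) yields $\alpha^j(p)\in\Um{U}{\alpha^{-1}}=\Up{U}{\alpha}$. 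Since $\Up{U}{\alpha}\subseteq\alpha(U)$ directly from its definition, $\alpha^{j-1}(p)\in U$. Iterating the sub-claim $N$ times starting from $\alpha^N(p)\in U$ gives $\alpha^j(p)\in U$ for all $0\leq j\leq N$, and hence $\alpha^j(u)=\alpha^j(p)\alpha^j(q)\in U$ there.

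The main obstacle is exactly this lemma, and the point worth flagging is that tidiness \emph{above} alone does not suffice: one must feed a full half-line of orbit points to Proposition~\ref{prop:tidy_criteria}, and the mechanism that makes this possible is that the backward orbit of $\alpha^j(p)$ stays bounded precisely because $p$ lies in the ``expanding'' subgroup $\Up{U}{\alpha}$, so that descending from $\alpha^j(p)$ to $\alpha^{j-1}(p)$ adds only one new point. Everything else is bookkeeping with the definitions of $\Up{U}{\alpha}$, $\Um{U}{\alpha}$, $\Uz{U}{\alpha}$, $\Upp{U}{\alpha}$ and $\Umm{U}{\alpha}$.
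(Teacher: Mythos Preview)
Your proof is correct. The paper leaves this proposition as an exercise, pointing to Exercise~\ref{exer:orbit} (once an element lies in $\alpha(\Up{U}{\alpha})\setminus\Up{U}{\alpha}$, its forward orbit escapes every compact set), but your route via Proposition~\ref{prop:tidy_criteria} applied to $\alpha^{-1}$ is essentially the contrapositive of that fact and is arguably cleaner: framing everything through the convexity of $S_x=\{k:\alpha^k(x)\in U\}$ is exactly the right organizing statement, and the orbit dichotomy then drops out for free.

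One small simplification: your inductive descent is unnecessary. In the very first application of the sub-claim (at $j=N$) you already establish $\alpha^N(p)\in\Up{U}{\alpha}$, and membership in $\Up{U}{\alpha}$ immediately gives $\alpha^{N-k}(p)\in U$ for \emph{every} $k\geq 0$, not just $k=1$. So the interval lemma follows in one step rather than $N$ steps; the argument you give for general $j$ works verbatim at $j=N$ and then finishes the lemma outright.
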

%%%%%%%%
It is not true that Equation~\eqref{eq:tidy_orbit} is equivalent to tidiness $U$.  See Exercise \ref{exer:not_tidy}.

It will be important to know in the next section that the set of compact open subgroups tidy for $\alpha$ is closed under finite intersections. 
%%%%%%%%
\begin{theorem}[\cite{Structure94} Lemma~10 \& \cite{Endo15} Proposition~12]
\label{thm:tidy_intersection}
Suppose that $\alpha\in\End(G)$ and that $U$ and $V$ are tidy for $\alpha$. Then $U\cap V$ is tidy for $\alpha$. 
\end{theorem}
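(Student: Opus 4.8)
The plan is to verify directly the conditions \textbf{TA} and \textbf{TB} of Theorem~\ref{thm:characterise_minimizing} for $W:=U\cap V$; equivalently, one shows $W$ is minimizing for $\alpha$, since being minimizing is the conjunction of \textbf{TA} and \textbf{TB}. I would treat the case $\alpha\in\Aut(G)$ first, using the descriptions $\Up{W}{\alpha}=\bigcap_{n\geq0}\alpha^{n}(W)$ and $\Um{W}{\alpha}=\bigcap_{n\geq0}\alpha^{-n}(W)$ (Remark~\ref{rem:characterise_minimizing}); the endomorphism case runs the same way with $\alpha$-trajectories in place of orbits, as in \cite{Endo15}. Since $\alpha^{n}(W)=\alpha^{n}(U)\cap\alpha^{n}(V)$ for every $n\in\mathbb{Z}$, intersecting the two filtrations gives the identities $\Up{W}{\alpha}=\Up{U}{\alpha}\cap\Up{V}{\alpha}$ and $\Um{W}{\alpha}=\Um{U}{\alpha}\cap\Um{V}{\alpha}$; also, for every compact open subgroup $O$ one has $\alpha(\Um{O}{\alpha})=\alpha(O)\cap\Um{O}{\alpha}\subseteq\Um{O}{\alpha}$ and dually $\Up{O}{\alpha}\subseteq\alpha(\Up{O}{\alpha})$.

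\textbf{Tidy below} is the easy half. Because $\alpha(\Um{U}{\alpha})\subseteq\Um{U}{\alpha}$, the subgroups $\alpha^{-n}(\Um{U}{\alpha})$ increase with $n$, and likewise for $V$, so the union of the intersections equals the intersection of the unions:
\begin{equation*}
\Umm{W}{\alpha}=\bigcup_{n\geq0}\alpha^{-n}\!\big(\Um{U}{\alpha}\cap\Um{V}{\alpha}\big)=\Big(\bigcup_{n\geq0}\alpha^{-n}(\Um{U}{\alpha})\Big)\cap\Big(\bigcup_{n\geq0}\alpha^{-n}(\Um{V}{\alpha})\Big)=\Umm{U}{\alpha}\cap\Umm{V}{\alpha}.
\end{equation*}
Since $U$ and $V$ are tidy below this is an intersection of two closed sets, so $W$ satisfies \textbf{TB}.

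\textbf{Tidy above} is the substantive point. The inclusion $(\Up{U}{\alpha}\cap\Up{V}{\alpha})(\Um{U}{\alpha}\cap\Um{V}{\alpha})\subseteq W$ is clear, so fix $w\in W$; using \textbf{TA} for $U$ and for $V$ write $w=ab=a'b'$ with $a\in\Up{U}{\alpha}$, $b\in\Um{U}{\alpha}$, $a'\in\Up{V}{\alpha}$ and $b'\in\Um{V}{\alpha}$, and put $g:=(a')^{-1}a=b'b^{-1}$. From $g=b'b^{-1}$ the forward $\alpha$-orbit of $g$ lies in the compact set $\Um{V}{\alpha}\,\Um{U}{\alpha}$, and from $g=(a')^{-1}a$ the backward orbit lies in $\Up{V}{\alpha}\,\Up{U}{\alpha}$; hence the whole $\alpha$-orbit of $g$ is relatively compact. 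The remaining task is to absorb $g$, that is, to rewrite $w=cd$ with $c\in\Up{U}{\alpha}\cap\Up{V}{\alpha}$ and $d\in\Um{U}{\alpha}\cap\Um{V}{\alpha}$. I would do this with Proposition~\ref{prop:tidy_criteria}(2) — which forces an element of relatively compact $\alpha$-orbit lying in a tidy subgroup $T$ into the core $\Uz{T}{\alpha}$ — applied inside $U$, inside $V$, and inside an auxiliary tidy subgroup containing the orbit of $g$, together with the orbit-intersection identity $\alpha^{m}(W)\cap\alpha^{n}(W)=\bigcap_{k=m}^{n}\alpha^{k}(W)$, which $W$ inherits from $U$ and $V$ by Proposition~\ref{prop:tidy_orbit}, to control how the two decompositions of $w$ interact.

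The absorption step is the only real obstacle; in the minimizing formulation it is the inequality $[\alpha(W):\alpha(W)\cap W]\leq s(\alpha)$ (the reverse inequality being automatic), and splitting the index successively through $\alpha(U)\cap U$ and through $\alpha(V)\cap V$ yields only the bound $s(\alpha)^{2}$, so a spurious factor of $s(\alpha)$ must be eliminated. Here one must use the explicit structure of tidy subgroups, not merely their index-minimality: the identity $\alpha(U)\cap U=\Up{U}{\alpha}\,\alpha(\Um{U}{\alpha})$ valid for tidy $U$ (and its analogue for $V$), together with the inherited orbit-intersection identity for $W$, should turn the crude coset count into an exact one. Everything else in the proof is formal bookkeeping.
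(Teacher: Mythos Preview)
The paper does not supply its own proof of this theorem; it cites \cite{Structure94} Lemma~10 and \cite{Endo15} Proposition~12. So there is no in-paper argument to compare against, and your proposal must be judged on its own.

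Your treatment of \textbf{TB} is correct: the identity $\Um{W}{\alpha}=\Um{U}{\alpha}\cap\Um{V}{\alpha}$ and the increasing-union argument give $\Umm{W}{\alpha}=\Umm{U}{\alpha}\cap\Umm{V}{\alpha}$, hence closed.

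The gap is in \textbf{TA}. You correctly form $g=(a')^{-1}a=b'b^{-1}$ and observe that its full $\alpha$-orbit is relatively compact, so $g\in\lev(\alpha)$. But the ``absorption'' step is not carried out, and the tools you name do not close it. Proposition~\ref{prop:tidy_criteria}(2) applies only to elements already lying in a given tidy subgroup; there is no reason for $g$ to lie in $U$, in $V$, or in $W$, so the applications ``inside $U$'' and ``inside $V$'' are not available as stated. The ``auxiliary tidy subgroup containing the orbit of $g$'' is never constructed, and constructing one that interacts usefully with the factorizations in $U$ and $V$ is exactly the missing idea. Finally, the orbit-intersection identity \eqref{eq:tidy_orbit} is indeed inherited by $W$, but the paper itself notes (immediately after Proposition~\ref{prop:tidy_orbit}, pointing to Exercise~\ref{exer:not_tidy}) that this identity is strictly weaker than tidiness, so it cannot by itself upgrade your bookkeeping to a proof of \textbf{TA}.

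Your final paragraph essentially concedes this: you describe what the argument ``should'' do rather than doing it. The actual proof in \cite{Structure94} does not proceed by a coset count; it works with the factorizations directly and uses tidiness of $V$ (specifically closedness of $\Umm{V}{\alpha}$ and the behaviour of bounded orbits) to force $u_\pm$ into $V$. What you have is an accurate diagnosis of where the difficulty lies, but not a proof.
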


The following may be verified directly from the definition of tidiness but it is easier to use the equivalence of tidiness with the minimizing property. 
%%%%%%%%
\begin{lemma}
\label{lem:conjugation_invariance}
Let $\alpha\in\End(G)$ and $\beta\in\Aut(G)$. Then $U$ is tidy for $\alpha$ if and only if $\beta(U)$ is tidy for $\beta\alpha\beta^{-1}$. The scale is constant on $\Aut(G)$-conjugacy classes in $\End(G)$. 
\endproof
\end{lemma}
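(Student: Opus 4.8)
The plan is to exploit the characterization of tidiness as the minimizing property from Definition~\ref{defn:minimizing}, rather than to verify conditions {\bf TA} and {\bf TB} directly. First I would record the elementary conjugation identity: for any $W\in\COS(G)$, any $\alpha\in\End(G)$ and any $\beta\in\Aut(G)$ one has $(\beta\alpha\beta^{-1})(\beta(W)) = \beta(\alpha(W))$, and since $\beta$ is an automorphism it carries compact open subgroups to compact open subgroups and preserves the index of one open subgroup in another. Hence
$$
\left[(\beta\alpha\beta^{-1})(\beta(W)) : (\beta\alpha\beta^{-1})(\beta(W))\cap\beta(W)\right]
= \left[\beta(\alpha(W)) : \beta\bigl(\alpha(W)\cap W\bigr)\right]
= \left[\alpha(W) : \alpha(W)\cap W\right].
$$

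Next I would observe that $W\mapsto\beta(W)$ is a bijection of $\COS(G)$ onto itself, so as $W$ ranges over $\COS(G)$ so does $\beta(W)$. Combining this with the displayed index equality shows that the set of values $\{[\gamma(V):\gamma(V)\cap V] : V\in\COS(G)\}$ is the same for $\gamma=\alpha$ as for $\gamma=\beta\alpha\beta^{-1}$; taking minima gives $s(\beta\alpha\beta^{-1}) = s(\alpha)$, which is the last sentence of the lemma (the statement for $\Aut(G)$-conjugacy classes in $\End(G)$). For the main equivalence, note that the same index equality says the index attained at $U$ by $\alpha$ equals the index attained at $\beta(U)$ by $\beta\alpha\beta^{-1}$; since the two scales coincide, $U$ attains $s(\alpha)$ if and only if $\beta(U)$ attains $s(\beta\alpha\beta^{-1})$, i.e.\ $U$ is minimizing for $\alpha$ iff $\beta(U)$ is minimizing for $\beta\alpha\beta^{-1}$. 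Finally, by the equivalence of minimizing and tidy recalled after Definition~\ref{defn:tidy} (Theorem~\ref{thm:characterise_minimizing}), this is exactly the assertion that $U$ is tidy for $\alpha$ iff $\beta(U)$ is tidy for $\beta\alpha\beta^{-1}$.

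There is essentially no hard part here; the only thing to be careful about is that $\beta$ being an \emph{automorphism} (not merely an endomorphism) is used twice — once to know $\beta(W)\in\COS(G)$ and that $W\mapsto\beta(W)$ is onto $\COS(G)$, and once to know $\beta$ preserves indices of open subgroups — whereas $\alpha$ is allowed to be a mere endomorphism, so one should phrase the index computation using only that $\alpha$ is a homomorphism (hence $\alpha(W\cap V)\subseteq\alpha(W)\cap\alpha(V)$ with equality not needed, since $\beta$ is applied on the outside). I would present the argument in the order above: conjugation identity and index equality, bijectivity of $W\mapsto\beta(W)$, equality of scales, then the minimizing-subgroup equivalence, and close by invoking the minimizing$\,=\,$tidy theorem.
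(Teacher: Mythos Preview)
Your proposal is correct and follows exactly the approach the paper intends: the paper omits the proof entirely (placing the \endproof box at the end of the statement) but remarks just beforehand that ``it is easier to use the equivalence of tidiness with the minimizing property,'' which is precisely what you do. Your careful bookkeeping about where $\beta\in\Aut(G)$ (rather than $\End(G)$) is actually needed is a nice addition.
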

%%%%%%%%
Flat groups are groups of automorphisms and these notes focus on automorphisms from now on. Some results to follow may be extended to endomorphisms, see~\cite{Endo15}, but are easier to state for automorphisms. Flatness has not, as yet, been extended to semigroups of endomorphisms. 

Since $[\alpha(U) : \alpha(U)\cap U]=1$ if and only if $\alpha(U)\leq U$, it holds that $s(\alpha)=1$ if and only if there is $U\in\COS(G)$ with $\alpha(U)\leq U$. The scale has the following further properties. 
%%%%%%%%
\begin{theorem}[Properties of the scale]
\label{thm:scale_properties}
Let $G$ be a \tdlc\ group. The scale function $s:\Aut(G)\to \mathbb{N}$ satisfies, for all $\alpha\in\Aut(G)$:
\begin{enumerate}
\item \label{thm:scale_properties1}
$s(\alpha) = 1 = s(\alpha^{-1})$ if and only if there is $U\in\COS(G)$ with $\alpha(U)=U$; 
\item \label{thm:scale_properties2}
$s(\alpha^n) = s(\alpha)^n$, for all $n\in\mathbb{N}$; 
\item \label{thm:scale_properties3}
$\Delta(\alpha) = s(\alpha)/s(\alpha^{-1})$, with $\Delta : \Aut(G)\to \mathbb{R}^+$ being the module.
\end{enumerate}
The composite function $s\circ\alpha_{\bullet} : G\to \mathbb{N}$, where $\alpha_g : G\to G$ is the inner automorphism 
$$
\alpha_g(x) = gxg^{-1} \qquad (x\in G),
$$ 
is continuous for the \tdlc~topology on $G$ and the discrete topology on $\mathbb{N}$. 
\end{theorem}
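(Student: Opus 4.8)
The plan is to prove the three numbered scale properties and the continuity claim in turn, leaning on Theorem~\ref{thm:characterise_minimizing}, Proposition~\ref{prop:tidy_conditions}, and Theorem~\ref{thm:tidy_intersection}. For part~\ref{thm:scale_properties1}, the ``if'' direction is immediate: if $\alpha(U)=U$ then $[\alpha(U):\alpha(U)\cap U]=1$ so $s(\alpha)=1$, and the same subgroup works for $\alpha^{-1}$. For the ``only if'' direction, I would take $U$ tidy for $\alpha$ (hence, by Proposition~\ref{prop:tidy_conditions2}, also tidy for $\alpha^{-1}$); the hypothesis $s(\alpha)=1$ gives $\alpha(U)\le U$ via the remark just before the theorem, and $s(\alpha^{-1})=1$ gives $\alpha^{-1}(U')\le U'$ for some tidy $U'$. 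The cleanest route is to note $s(\alpha)=1$ forces $\Up{U}{\alpha}=\alpha(\Up{U}{\alpha})$ (the scale equals $[\alpha(\Up{U}{\alpha}):\Up{U}{\alpha}]$), while $s(\alpha^{-1})=1$ symmetrically forces $\Um{U}{\alpha}$ to be $\alpha$-stable; then {\bf TA} gives $\alpha(U)=\alpha(\Up{U}{\alpha})\alpha(\Um{U}{\alpha})=\Up{U}{\alpha}\Um{U}{\alpha}=U$.

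For part~\ref{thm:scale_properties2}, one inequality is general: $s(\alpha^n)\le [\alpha^n(U):\alpha^n(U)\cap U]\le s(\alpha)^n$ by iterating $[\alpha^{k+1}(U):\alpha^{k+1}(U)\cap U]\le [\alpha(U):\alpha(U)\cap U]\cdot[\alpha^k(U):\alpha^k(U)\cap U]$ for any $U$, in particular a minimizing one. For the reverse inequality I would fix $U$ tidy for $\alpha$; by Proposition~\ref{prop:tidy_conditions}\ref{prop:tidy_conditions1} and the fact that tidiness passes to powers (Proposition~\ref{prop:tidy_conditions2}), $U$ is tidy for $\alpha^n$, so by the last sentence of Theorem~\ref{thm:characterise_minimizing} we have $s(\alpha^n)=[\alpha^n(\Up{U}{\alpha^n}):\Up{U}{\alpha^n}]$. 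The key observation is that $\Up{U}{\alpha^n}=\Up{U}{\alpha}$ when $U$ is tidy for $\alpha$ (both equal $\bigcap_{k\ge0}\alpha^{kn}(U)=\bigcap_{k\ge0}\alpha^k(U)$ using Proposition~\ref{prop:tidy_orbit}), and on $\Up{U}{\alpha}$ the index multiplies exactly: $[\alpha^n(\Up{U}{\alpha}):\Up{U}{\alpha}]=\prod_{j=0}^{n-1}[\alpha^{j+1}(\Up{U}{\alpha}):\alpha^j(\Up{U}{\alpha})]=s(\alpha)^n$, the last step because $\alpha^j$ is an automorphism so it preserves the index $[\alpha(\Up{U}{\alpha}):\Up{U}{\alpha}]=s(\alpha)$. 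Part~\ref{thm:scale_properties3} then follows by combining part~\ref{thm:scale_properties2} with the module identity: for $U$ tidy for $\alpha$ one computes $\Delta(\alpha)=\mu(\alpha(U))/\mu(U)$ where $\mu$ is Haar measure, and splitting $U=\Up{U}{\alpha}\Um{U}{\alpha}$ with $\Up{U}{\alpha}\cap\Um{U}{\alpha}=\Uz{U}{\alpha}$ stable under $\alpha$ gives $\Delta(\alpha)=[\alpha(\Up{U}{\alpha}):\Up{U}{\alpha}]/[\Um{U}{\alpha}:\alpha(\Um{U}{\alpha})]=s(\alpha)/s(\alpha^{-1})$, the denominator being $s(\alpha^{-1})$ since $\Um{U}{\alpha}=\Up{U}{\alpha^{-1}}$.

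For continuity of $g\mapsto s(\alpha_g)$, fix $g_0\in G$ and let $U$ be tidy for $\alpha_{g_0}$. I would show that on the open neighbourhood $g_0U$ of $g_0$ — or on a possibly smaller open subgroup, using that $U$ is open so $g_0U$ is a neighbourhood — the scale is constant. The point is that for $h\in U$ the inner automorphism $\alpha_h$ satisfies $\alpha_h(U)=hUh^{-1}=U$, so by part~\ref{thm:scale_properties1} (or directly) $s(\alpha_h)=1$; more generally, for $g=g_0h$ with $h\in U$ we have $\alpha_g=\alpha_{g_0}\circ\alpha_h$, and since $\alpha_h(U)=U$, the subgroup $U$ is still minimizing for $\alpha_g$ (it satisfies {\bf TA} and {\bf TB} for $\alpha_g$ because these conditions only see $\alpha_g$ through its action relative to $U$, and $\alpha_g(U)=\alpha_{g_0}(U)$, $\alpha_g^{-n}(U)=\alpha_h^{-1}\cdots$ — here one checks $\Up{U}{\alpha_g}=\Up{U}{\alpha_{g_0}}$ and $\Umm{U}{\alpha_g}=\Umm{U}{\alpha_{g_0}}$). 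Hence $s(\alpha_g)=[\alpha_g(\Up{U}{\alpha_g}):\Up{U}{\alpha_g}]=[\alpha_{g_0}(\Up{U}{\alpha_{g_0}}):\Up{U}{\alpha_{g_0}}]=s(\alpha_{g_0})$ for all $g\in g_0U$, which is exactly local constancy, hence continuity into discrete $\mathbb{N}$.

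The main obstacle I anticipate is the bookkeeping in part~\ref{thm:scale_properties2}: verifying carefully that $\Up{U}{\alpha^n}$ coincides with $\Up{U}{\alpha}$ for $U$ tidy (this genuinely uses tidiness via Proposition~\ref{prop:tidy_orbit}, not just the subgroup structure) and that the index is exactly multiplicative along the tower $\Up{U}{\alpha}\le\alpha(\Up{U}{\alpha})\le\dots\le\alpha^n(\Up{U}{\alpha})$ with no collapse — equivalently that $\alpha$ restricted to $\Up{U}{\alpha}$ genuinely expands by the factor $s(\alpha)$ at each step. Everything else is either a direct Haar-measure computation or an appeal to the cited theorems.
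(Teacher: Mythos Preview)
Your arguments for parts~\ref{thm:scale_properties1}--\ref{thm:scale_properties3} are correct and match the paper's approach (which defers these to Exercises~\ref{exer:scale_prop1} and~\ref{exer:scale_prop2}): use a tidy $U$, exploit $\Up{U}{\alpha^n}=\Up{U}{\alpha}$ via Proposition~\ref{prop:tidy_orbit}, and compute indices along the tower $\Up{U}{\alpha}\le\alpha(\Up{U}{\alpha})\le\dots$; your Haar-measure computation for part~\ref{thm:scale_properties3} via the {\bf TA} factorisation is a minor variant of the exercise's more direct identity $\Delta(\alpha)=[\alpha(U):\alpha(U)\cap U]/[U:\alpha(U)\cap U]$, but both are fine.

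The continuity argument, however, has a genuine gap. You assert that for $g=g_0h$ with $h\in U$ one has $\Up{U}{\alpha_g}=\Up{U}{\alpha_{g_0}}$, but this is not a routine check: while $\alpha_g(U)=\alpha_{g_0}(U)$ holds, for higher powers $\alpha_g^2(U)=g_0h\,(g_0Ug_0^{-1})\,h^{-1}g_0^{-1}$, and there is no reason $h$ should normalise $g_0Ug_0^{-1}$. So the intersections $\bigcap_{k\ge0}\alpha_g^k(U)$ and $\bigcap_{k\ge0}\alpha_{g_0}^k(U)$ need not coincide, and your tidiness-transfer argument breaks down. The paper does not attempt a direct verification here; instead it invokes a separate structural result (\cite{Structure94}, Theorem~3): if $U$ is tidy for $g$ then $U$ is tidy for every element of the double coset $UgU$, whence $s$ is constant on $UgU$. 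That theorem has its own nontrivial proof, and the local constancy of $s\circ\alpha_\bullet$ is deduced from it rather than established by the kind of one-line bookkeeping you sketch. You should either invoke that theorem, or supply a real argument that $U$ remains tidy for $\alpha_{g_0h}$ --- which amounts to reproving it.
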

%%%%%%%%
For the proofs of these properties of the scale other than its continuity on~$G$, see Exercises \ref{exer:scale_prop1} and \ref{exer:scale_prop2}. Continuity of $s\circ\alpha_{\bullet} : G\to \mathbb{N}$ is equivalent to $s\circ\alpha_{\bullet}$ being locally constant, which is implied by the following structural theorem. In this theorem and subsequently, $s\circ\alpha_{\bullet}$ is abbreviated simply as the \emph{scale function $s : G\to \mathbb{N}$} and `$U$ is tidy for $g$' means `$U$ is tidy for $\alpha_g$'. 

%%%%%%%%
\begin{theorem}[\cite{Structure94} Theorem~3]
Let $g\in G$ and $U$ be tidy for $g$. Then $U$ is tidy for every $h$ in the double coset~$UgU$. Hence $s\circ\alpha_{\bullet}$ is constant on this double coset.
\end{theorem}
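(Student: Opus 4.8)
The plan is to reduce the statement to a single right multiplication and then to verify the tidiness conditions \textbf{TA} and \textbf{TB} of Theorem~\ref{thm:characterise_minimizing} for the inner automorphism $\alpha_h$.

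\emph{Reduction.} Write $h=u_1gu_2$ with $u_1,u_2\in U$. Since $h=u_1(gu)u_1^{-1}$ with $u:=u_2u_1\in U$, we have $\alpha_h=\alpha_{u_1}\,\alpha_{gu}\,\alpha_{u_1}^{-1}$; because $u_1$ normalises $U$, Lemma~\ref{lem:conjugation_invariance} shows that $U$ is tidy for $h$ if and only if it is tidy for $gu$, and that $s(h)=s(gu)$. So we may assume $h=gu$ with $u\in U$. For such $h$, $hUh^{-1}=g(uUu^{-1})g^{-1}=gUg^{-1}$, hence $hUh^{-1}\cap U=gUg^{-1}\cap U$ and
\[
[\,hUh^{-1}:hUh^{-1}\cap U\,]=[\,gUg^{-1}:gUg^{-1}\cap U\,]=s(g),
\]
the last equality because $U$ is minimizing for $\alpha_g$. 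Thus $s(h)\le s(g)$, and once $U$ is shown to be minimizing for $h$ the scale formula in Theorem~\ref{thm:characterise_minimizing} gives $s(h)=s(g)$. So everything reduces to proving that $U$ is tidy for $h=gu$.

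\emph{Key formulas.} An induction on $n$ gives, for $n\ge0$,
\[
h^{\,n}=g^n V_n=W_n g^n,\qquad V_n=\alpha_g^{-(n-1)}(u)\cdots\alpha_g^{-1}(u)\,u,\qquad W_n=\alpha_g(u)\alpha_g^{2}(u)\cdots\alpha_g^{n}(u),
\]
with $W_n=\alpha_g^{\,n}(V_n)$; hence $\alpha_h^{\,n}(x)=\alpha_g^{\,n}(V_nxV_n^{-1})$ and $\alpha_h^{-n}(x)=V_n^{-1}\alpha_g^{-n}(x)V_n$ for $n\ge0$, and in particular $\alpha_h(U)=gUg^{-1}$. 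When $\alpha_g(U)\le U$ (equivalently $s(g)=1$, as $U$ is minimizing) this gives $\alpha_h(U)=gUg^{-1}\le U$, so $U$ is tidy for $h$ by Proposition~\ref{prop:tidy_conditions}(\ref{prop:tidy_conditions1}); the case $\alpha_g(U)\ge U$ is symmetric.

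\emph{General case and main obstacle.} In general one verifies \textbf{TA} and \textbf{TB} for $\alpha_h$ directly, starting from the decomposition $U=\Up{U}{g}\Um{U}{g}$ supplied by tidiness for $g$. For \textbf{TB}, the identity $\alpha_h^{\,n}(x)=\alpha_g^{\,n}(V_nxV_n^{-1})$ shows that $\Um{U}{h}$ consists of the $x\in U$ with $V_nxV_n^{-1}\in\alpha_g^{-n}(U)$ for all $n\ge0$, and one deduces closedness of $\Umm{U}{h}$ by comparison with the closed set $\Umm{U}{g}=\bigcup_{n\ge0}\alpha_g^{-n}(\Um{U}{g})$. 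For \textbf{TA}, given $x\in U$ one writes $x=x_+x_-$ with $x_\pm\in\Upm{U}{g}$ and uses Proposition~\ref{prop:tidy_criteria} to recognise, from boundedness of backward and forward $\alpha_h$-orbits, that (suitably corrected factors of) $x_+$ and $x_-$ lie in $\Up{U}{h}$ and $\Um{U}{h}$; Proposition~\ref{prop:tidy_orbit} controls how the relevant $\alpha_g$-orbits meet $U$. The main obstacle is exactly this bookkeeping: the correcting elements $V_n$ and $W_n$ need not lie in $U$, and one must show that conjugation by them alters neither which elements of $U$ have bounded forward $\alpha_h$-orbit nor the closedness of $\Umm{U}{h}$; this uses not just the factorisation $U=\Up{U}{g}\Um{U}{g}$ but the full strength of tidiness for $g$, in particular condition \textbf{TB} and Proposition~\ref{prop:tidy_orbit}. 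Once \textbf{TA} and \textbf{TB} hold, $U$ is minimizing for $h$, $s(h)=s(g)$ as above, and the claim for the double coset $UgU$, and hence the constancy of $s\circ\alpha_\bullet$ on it, follows.
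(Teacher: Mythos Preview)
The paper does not give its own proof of this theorem; it is quoted from \cite{Structure94} without argument. So there is no ``paper's proof'' to compare to, and the question is whether your attempt stands on its own.

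Your reduction to $h=gu$ via Lemma~\ref{lem:conjugation_invariance} is correct and is the standard first move. The observation that $\alpha_h(U)=\alpha_g(U)$, and hence that the displacement index at $U$ is the same for $h$ as for $g$, is also correct, as is the treatment of the degenerate cases $\alpha_g(U)\le U$ or $\alpha_g(U)\ge U$ via Proposition~\ref{prop:tidy_conditions}(\ref{prop:tidy_conditions1}).

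The general case, however, is not proved --- and you say as much. You correctly identify that the products $V_n$ and $W_n$ need not lie in $U$, and then write that ``one deduces'' closedness of $\Umm{U}{h}$ and that ``suitably corrected factors'' of $x_\pm$ lie in $\Upm{U}{h}$, finishing with ``one must show that conjugation by them alters neither\dots''. None of these assertions is actually established. In particular, the passage from $\Upm{U}{g}$ to $\Upm{U}{h}$ is the entire content of the theorem in the non-trivial case: one needs a concrete mechanism (in \cite{Structure94} this is done by analysing, for each $n$, how the factorisation $u=u_+u_-$ interacts with the chain $U\cap\alpha_g(U)\cap\cdots\cap\alpha_g^n(U)$, and showing inductively that the ``wandering'' part of $V_n$ stays inside the correct $\alpha_g^{-k}(U)$). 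Your sketch names the obstacle but does not supply that mechanism, so as written the argument is incomplete precisely at the point where the real work lies.
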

%%%%%%%%
The \emph{Braconnier topology} is a natural group topology on $\Aut(G)$ -- it is the coarsest group topology finer than the compact-open topology, \cite{HandR} {\color{fireenginered} cite Braconnier?}. The scale function may fail to be continuous with respect to the Braconnier topology however, see~\cite[Example 1]{axioms6040027}, and the following questions are open.
%%%%%%%%
\begin{problem*}
\label{prob:scale_continuous} 
\begin{enumerate}
\item Characterise the \tdlc~groups $G$ for which $s : \Aut(G)\to \mathbb{N}$ is continuous with respect to the Braconnier topology.
\item Describe, in terms of the action on $G$, the coarsest group topology on $\Aut(G)$ with respect to which the scale function is continuous. Does this topology extend to the monoid $\End(G)$ so that the scale is continuous? 
\end{enumerate}
\end{problem*}

In the next section, it will be necessary to know that the restriction of flat groups to certain subgroups are flat. To see this, the following general but partial results will be combined with special arguments applying in the particular circumstances. These results were shown in~\cite{FurtherTidy} in connection with the stability of the scale under passing to subgroups and quotients. 
 %%%%%%%%% 
 \begin{proposition}[\cite{FurtherTidy} Lemmas 4.1 \& 4.5]
 \label{prop:functor}
 Let $G$ be a \tdlc~group, $\alpha\in\Aut(G)$ and suppose that $H\leq G$ is closed and $\alpha$-stable. Let $U$ be tidy for $\alpha$. Then:
 \begin{enumerate}
 \item\label{prop:functor1}
  $U\cap H$ is tidy below for $\alpha|_{H}$; and 
 \item\label{prop:functor2}
  if $H$ is also normal in $G$, then $UH/H$ is tidy above for $\alpha|^{G/H}$.
 \end{enumerate}
 \end{proposition}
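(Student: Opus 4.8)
The plan is to verify each part straight from the tidiness conditions \textbf{TA} and \textbf{TB} of Theorem~\ref{thm:characterise_minimizing}, using that $H$ is $\alpha$-stable (so $\alpha(H)=H$, the restriction $\alpha|_H$ lies in $\Aut(H)$, and $\alpha^{-n}(H)=H$ for all $n$) to move $\alpha$ past intersections, preimages and the quotient map. Throughout, $U\cap H$ is compact and open in $H$, and when $H$ is normal the image $q(U)=UH/H$ is compact and open in $G/H$, where $q\colon G\to G/H$ is the quotient map; so it makes sense to speak of these subgroups being tidy above or below.

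For part~\ref{prop:functor1}, write $\beta=\alpha|_H$. First I would identify the relevant subgroups: for $u\in H$ one has $\beta^n(u)=\alpha^n(u)\in H$ automatically, so the condition defining $\Um{(U\cap H)}{\beta}$ reduces to $\alpha^n(u)\in U$ for all $n\ge0$, giving $\Um{(U\cap H)}{\beta}=\Um{U}{\alpha}\cap H$. Applying $\beta^{-n}=\alpha^{-n}|_H$ and using $\alpha^{-n}(H)=H$ gives $\beta^{-n}(\Um{(U\cap H)}{\beta})=\alpha^{-n}(\Um{U}{\alpha})\cap H$, and the union over $n\ge0$ yields $\Umm{(U\cap H)}{\beta}=\Umm{U}{\alpha}\cap H$. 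Since $U$ is tidy below for $\alpha$ the set $\Umm{U}{\alpha}$ is closed in $G$, and $H$ is closed in $G$, so $\Umm{(U\cap H)}{\beta}$ is closed in $H$. That is condition \textbf{TB} for $(U\cap H,\beta)$, so $U\cap H$ is tidy below for $\alpha|_H$.

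For part~\ref{prop:functor2}, let $\bar\alpha=\alpha|^{G/H}$, a topological automorphism of $G/H$ since $H$ is closed, normal and $\alpha$-stable, and set $\bar U=q(U)=UH/H$. The key point is that $q$ carries $\Um{U}{\alpha}$ into $\Um{\bar U}{\bar\alpha}$ and $\Up{U}{\alpha}$ into $\Up{\bar U}{\bar\alpha}$: if $u\in\Um{U}{\alpha}$ then $\alpha^n(u)\in U$ for all $n\ge0$, so $\bar\alpha^n(q(u))=q(\alpha^n(u))\in\bar U$ and $q(u)\in\Um{\bar U}{\bar\alpha}$; and if $u\in\Up{U}{\alpha}$ with witnessing sequence $\{u_n\}\subset U$ (so $\alpha(u_{n+1})=u_n$, $u_0=u$), then $\{q(u_n)\}\subset\bar U$ witnesses $q(u)\in\Up{\bar U}{\bar\alpha}$. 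Since $U$ is tidy above, $U=\Up{U}{\alpha}\Um{U}{\alpha}$; applying the homomorphism $q$ and using $q(AB)=q(A)q(B)$ gives $\bar U=q(\Up{U}{\alpha})\,q(\Um{U}{\alpha})\subseteq\Up{\bar U}{\bar\alpha}\,\Um{\bar U}{\bar\alpha}\subseteq\bar U$, the last inclusion because $\bar U$ is a subgroup containing both factors. Hence $\bar U=\Up{\bar U}{\bar\alpha}\Um{\bar U}{\bar\alpha}$, which is \textbf{TA} for $\bar\alpha$, so $UH/H$ is tidy above for $\alpha|^{G/H}$.

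Neither argument is hard; the content lies in the bookkeeping and in noticing what is really needed. The point worth flagging is that the quotient statement needs only the \emph{existence} of a factorisation of $\bar U$, and the crucial inclusion $\bar U\subseteq\Up{\bar U}{\bar\alpha}\Um{\bar U}{\bar\alpha}$ falls out formally from $q(U)=q(\Up{U}{\alpha})q(\Um{U}{\alpha})$, with no need to control $q^{-1}$ of the factors of $\bar U$. Establishing such control --- equivalently, upgrading to full tidiness of $UH/H$ --- is the genuinely difficult issue, which is why only tidiness above is claimed for the quotient; symmetrically, for part~\ref{prop:functor1} one has $\Up{(U\cap H)}{\beta}=\Up{U}{\alpha}\cap H$ and $\Um{(U\cap H)}{\beta}=\Um{U}{\alpha}\cap H$, but there is no reason the factorisation $U=\Up{U}{\alpha}\Um{U}{\alpha}$ should restrict to $U\cap H$, so tidiness above of $U\cap H$ is unavailable.
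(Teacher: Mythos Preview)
The paper does not give its own proof of this proposition; it is quoted as background from \cite{FurtherTidy} (Lemmas~4.1 and~4.5). Your direct verification from the definitions of \textbf{TA} and \textbf{TB} is correct and is the standard argument.
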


It is generally not the case that the intersection of a subgroup tidy for $\alpha$ with a subgroup $H$ invariant under $\alpha$ is tidy for the restriction of $\alpha$ to $H$, see Example 6.4 in \cite{FurtherTidy} or Exercise~\ref{ex:tidymeet}. However, the following does hold and will be used later.
%%%%%%%%
\begin{lemma}
\label{lem:tidymeetsshrinking}
Let $\alpha\in\Aut(G)$ and suppose that $U\leq G$ is tidy for $\alpha$. Suppose that $H\leq G$ is closed, is $\alpha$-stable and that $\left\{\alpha^n(x)\right\}_{n\geq0}$ has compact closure for every $x\in H$. Then $H\cap U \leq \Um{U}{\alpha}$ and $\alpha(H\cap U)\leq H\cap U$. \\
Hence $H\cap U$ is tidy for $\alpha|_H$.
\end{lemma}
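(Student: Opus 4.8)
The plan is to first show the inclusion $H\cap U\leq\Um{U}{\alpha}$, then deduce $\alpha(H\cap U)\leq H\cap U$, and finally invoke Proposition \ref{prop:tidy_conditions}\eqref{prop:tidy_conditions1} together with Proposition \ref{prop:functor}\eqref{prop:functor1} to conclude tidiness of $H\cap U$ for $\alpha|_H$. For the first inclusion, take $u\in H\cap U$. Since $H$ is $\alpha$-stable, $\alpha^k(u)\in H$ for all $k\geq0$, and by hypothesis the sequence $\left(\alpha^k(u)\right)_{k\geq0}$ has compact closure, hence an accumulation point. By Proposition \ref{prop:tidy_criteria}(1), applied with the tidy subgroup $U$, this forces $u\in\Um{U}{\alpha}$. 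Thus $H\cap U\leq\Um{U}{\alpha}$.

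Next I would establish $\alpha(H\cap U)\leq H\cap U$. The containment in $H$ is immediate from $\alpha$-stability of $H$. For containment in $U$: by the definition of $\Um{U}{\alpha}$ in Theorem \ref{thm:characterise_minimizing}, every $u\in\Um{U}{\alpha}$ satisfies $\alpha^n(u)\in U$ for all $n\geq0$; in particular $\alpha(u)\in U$, so $\alpha(\Um{U}{\alpha})\leq U$, and a fortiori $\alpha(H\cap U)\leq H\cap U$. (Note $H\cap U$ is a compact open subgroup of $H$, being the intersection of the open subgroup $H\cap U$ of $H$ with the compact group $U$; openness in $H$ holds because $U$ is open in $G$.)

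Finally, $H\cap U$ is a compact open subgroup of the \tdlc~group $H$ satisfying $\alpha|_H(H\cap U)\leq H\cap U$, so by Proposition \ref{prop:tidy_conditions}\eqref{prop:tidy_conditions1} it is tidy for $\alpha|_H$. Alternatively, one may observe directly: tidiness above is immediate since $\Up{(H\cap U)}{\alpha|_H}$ contains nothing beyond what is forced, but the cleanest route is that $\alpha|_H(H\cap U)\leq H\cap U$ makes both TA and TB trivial — TA because $H\cap U=\Um{(H\cap U)}{\alpha|_H}$ (every element has all forward images in $H\cap U$), and TB because $\Umm{(H\cap U)}{\alpha|_H}=\bigcup_n\alpha^{-n}(H\cap U)$ need not even be invoked when one appeals to Proposition \ref{prop:tidy_conditions}\eqref{prop:tidy_conditions1}. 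For robustness I would also note that Proposition \ref{prop:functor}\eqref{prop:functor1} already gives tidiness below for $\alpha|_H$, so it suffices to check tidiness above, which follows from $\alpha|_H(H\cap U)\leq H\cap U$.

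The only subtle point — and the step I expect to be the main obstacle to write cleanly rather than conceptually hard — is the application of Proposition \ref{prop:tidy_criteria}(1): one must be careful that the accumulation point is genuinely produced inside $U$ (it is, since $\alpha^k(u)\in U$ for all $k\geq0$ by $\alpha$-stability of $H\cap U$ is not yet known — rather, $\alpha^k(u)\in H$ and the compact closure hypothesis gives the accumulation point in $\overline{H}=H$, but Proposition \ref{prop:tidy_criteria} only needs the sequence to accumulate \emph{somewhere}, with $u\in U$, to conclude $u\in\Um{U}{\alpha}$). Once this is correctly set up, the remaining steps are formal.
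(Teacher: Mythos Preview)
Your proposal is correct and follows essentially the same approach as the paper's proof: apply Proposition~\ref{prop:tidy_criteria} to get $H\cap U\leq\Um{U}{\alpha}$, deduce $\alpha(H\cap U)\leq H\cap U$, and then invoke Proposition~\ref{prop:tidy_conditions}\eqref{prop:tidy_conditions1}. The extra appeal to Proposition~\ref{prop:functor}\eqref{prop:functor1} and the explicit unpacking of TA/TB are unnecessary but harmless, and your clarification that Proposition~\ref{prop:tidy_criteria} only requires $u\in U$ together with an accumulation point anywhere (not necessarily in $U$) is exactly the right reading.
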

%%%%%%%%
\begin{proof}
If $x\in H\cap U$, then $x\in \Um{U}{\alpha}$ by Proposition~\ref{prop:tidy_criteria}. Hence $H\cap U\leq \Um{U}{\alpha}$. Hence $\alpha(H\cap U)\leq U$ and, since $H$ is $\alpha$-stable, $\alpha(H\cap U)\leq H\cap U$. That $H\cap U$ is tidy for $\alpha|_H$ follows by Proposition~\ref{prop:tidy_conditions}\eqref{prop:tidy_conditions1}.
\end{proof}
\bigskip 

We conclude this overview of the scale and tidy subgroups with an application that proves a  conjecture made by K.~H.~Hofmann in \cite{Hofmann_Conjecture} (although it does not relate directly to flatness.). The proof uses only that there is a function on the group $G$ having the properties of the scale stated in Theorem~\ref{thm:scale_properties}. The following proposition is obvious if $G$ is discrete and may fail to hold if $G$ is locally compact and connected, see Exercise~\ref{ex:P(G)_not_closed}. 
%%%%%%%%
\begin{proposition}[\cite{Wi:tdHM} Theorem~2]
	\label{prop:per_closed}
	For a topological group $G$, denote 
	$$
	P(G) = \left\{ x\in G\mid  x \mbox{ is contained in a compact subgroup of }G\right\}.
	$$ 
	Then $P(G)$ is closed if $G$ is a \tdlc~group.
\end{proposition}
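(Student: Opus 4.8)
The plan is to show that $P(G)$ is closed using only the properties of the scale function $s:G\to\mathbb{N}$ listed in Theorem~\ref{thm:scale_properties}, in particular that $s$ is locally constant and that $s(g)=s(g^{-1})=1$ exactly when $g$ normalizes some compact open subgroup. First I would observe that $x\in P(G)$ implies $x$ lies in a compact subgroup $K$, and every element of a compact group normalizes plenty of compact open subgroups of $G$ — more precisely, $s(k)=1=s(k^{-1})$ for every $k\in K$, since $\langle k\rangle$ has compact closure and $\alpha_k$ fixes a compact open subgroup by Theorem~\ref{thm:scale_properties}\eqref{thm:scale_properties1}. Hence $P(G)\subseteq S:=\{x\in G\mid s(x)=1=s(x^{-1})\}$. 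Because $s$ is locally constant (continuous into the discrete $\mathbb{N}$), the set $S$ is clopen, hence closed; but this only gives $\overline{P(G)}\subseteq S$, which is not yet enough, so the real work is to promote membership in $S$ to membership in $P(G)$ for limits of elements of $P(G)$.

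Next I would use the structure of $S$ more carefully. For $x\in S$, fix $U\in\COS(G)$ with $\alpha_x(U)=U$, i.e. $xUx^{-1}=U$; then $\langle x\rangle U$ is a subgroup, and $\overline{\langle x\rangle U}/U$ embeds in the discrete group generated by the image of $x$, so $x$ generates a relatively compact subgroup modulo $U$ only if the image of $x$ in $\overline{\langle x\rangle U}/U$ has finite order — which need not happen in general (think of a translation), so $S$ is strictly larger than $P(G)$ and $S$ being closed is not the end. Instead, given a net (or sequence) $x_i\in P(G)$ with $x_i\to x$, I would pick a compact open $U$ with $xUx^{-1}=U$ (possible since $x\in S$). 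For $i$ large, $x_i\in xU$, so $x_i=xu_i$ with $u_i\in U$; then $x_i$ lies in a compact subgroup $K_i$, and I want to manufacture from these a single compact subgroup containing $x$. The key point is that $x_i\in xU$ and $x_i$ topologically periodic forces, via $x_iUx_i^{-1}$ being commensurate with $U$ and the double-coset constancy of the scale (Theorem~\cite{Structure94} Theorem~3 in the excerpt), control on powers of $x$: specifically $x^n\in x_i^n U'$ for a suitable common tidy $U'$, and since $\{x_i^n\}_n$ is relatively compact, $\{x^n\}_n$ is contained in the relatively compact set $(\text{compact})\cdot U'$, hence $x\in P(G)$.

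Let me restructure the decisive step. Take $x$ in the closure of $P(G)$; since $\overline{P(G)}\subseteq S$ and $S$ is open, there is $U\in\COS(G)$ tidy for $x$ (indeed $\alpha_x$-invariant), and a net $x_i\to x$ with each $x_i\in P(G)$; passing to a tail we may assume $x_i\in xU$ for all $i$. Because $U$ is tidy for $x$ and $x_i\in xU$, the subgroup $U$ is tidy for $x_i$ as well (double coset $UxU=Ux_iU$), so $x_iUx_i^{-1}=U$ and moreover $U$ is $\alpha_{x_i}$-invariant. Now for each $n\in\mathbb{Z}$ we have $x_i^nU=(x_iU)^n=(xU)^n=x^nU$ in $G/U$ as cosets, using $x_iU=xU$ and that $U$ is normalized by both; hence $x^n\in x_i^nU$. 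Since $x_i\in P(G)$, the set $\{x_i^n\mid n\in\mathbb{Z}\}$ has compact closure $C_i$, so $\{x^n\mid n\in\mathbb{Z}\}\subseteq C_iU$, a compact set; therefore $\overline{\langle x\rangle}$ is compact and $x\in P(G)$. Thus $\overline{P(G)}=P(G)$.

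The main obstacle I anticipate is justifying that a single $U$ can be chosen tidy (equivalently $\alpha_x$-invariant) for $x$ with $x^{-1}$ also of scale $1$: one must check $s(x)=s(x^{-1})=1$ yields an honestly $x$-\emph{invariant} $U$, not merely $xUx^{-1}\leq U$; this is exactly Theorem~\ref{thm:scale_properties}\eqref{thm:scale_properties1}, so it is handed to us, but one should make sure the same $U$ works simultaneously and is tidy for the nearby $x_i$ via the double-coset result. The secondary subtlety is the net-versus-sequence issue — $G$ need not be metrizable — but the argument above uses only that eventually $x_i\in xU$, which holds for nets, so closedness follows directly. Everything else is bookkeeping with cosets of the normalized compact open subgroup $U$.
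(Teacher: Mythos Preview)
Your proof is correct and follows essentially the same approach as the paper: show that any limit point has scale $1$ in both directions, pick a compact open subgroup $U$ it normalizes, find a nearby periodic element in the same $U$-coset, and conclude the limit has all its powers trapped in a compact set. One minor simplification the paper exploits: once you have $xUx^{-1}=U$ and $x_i\in xU$, you can write $x_i=xu$ with $u\in U$, and since both $x$ and $u$ normalize $U$, so does $x_i$---no appeal to the double-coset tidiness theorem is needed.
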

%%%%%%%%
\begin{proof}
	Consider $x\in P(G)$ and denote by $\langle x\rangle^-$ the closed subgroup of $G$ generated by $x$. Then $\langle x\rangle^-$ is compact by hypothesis and $s(\langle x\rangle^-)$ is finite by continuity of $s$. Since $s(\langle x\rangle^-)$ is a finite set of positive integers, multiplicativity of $s$ on positive powers implies that $s(x) = 1$. 
	
	Next, let $y$ be in the closure of $P(G)$. Then $s(y)=1$ by continuity of $s$. Hence there is $U\in\COS(G)$ such that $yUy^{-1} = U$. Since $y\in \overline{P(G)}$ and $yU$ is a neighbourhood of $y$, there is $x\in P(G)$ such that $x\in yU$. Then $xUx^{-1}=U$ and  $\langle y\rangle \subset \langle x\rangle U$, which is compact. Therefore $y\in P(G)$ and $P(G)$ is closed.
\end{proof}

%%%%%%%%%
%%%%%%%%%
\subsection{Subgroups associated with automorphisms} 
 
 The minimizing, or tidy, subgroups associated with an endomorphism are generally not unique. For example, if $U$ is tidy for the automorphism $\alpha$, then it is a special case of Lemma~\ref{lem:conjugation_invariance} that $\alpha(U)$ is tidy for $\alpha$. This section introduces certain subgroups associated with an automorphism that \emph{are} unique while also being closely related to tidy subgroups and the scale. Although these subgroups may be defined for endomorphisms as well, see~\cite{Endo15}, the summary is confined to automorphisms.
 
\begin{definition}
\label{defn:paretc}
Let $G$ be a topological group and suppose that $\alpha\in\Aut(G)$. Define 
\begin{align*}
\parb(\alpha) &= \bigl\{ x\in G \mid (\alpha^n(x))_{n\in\mathbb{N}} \mbox{ has an accumulation point}\bigr\}\\
\lev(\alpha) &=   \parb(\alpha)\cap  \parb(\alpha^{-1})\\
\con(\alpha) &=  \bigl\{ x\in G \mid \alpha^n(x)\to \id_G \mbox{ as }n\to\infty\bigr\}\\
\mbox{ and }\ \nub(\alpha) &= \overline{\con(\alpha)}\cap \lev(\alpha) = \overline{\con(\alpha)}\cap \parb(\alpha^{-1}).
\end{align*}
These sets are subgroups of $G$ and are called, respectively, the \emph{parabolic}, \emph{Levi}, \emph{contraction} and \emph{nub} subgroups for $\alpha$.
\end{definition}

That $\con(\alpha)$ is a subgroup of $G$ may be directly verified but is generally not closed, see Exercise~\ref{ex:con_not_closed}. An argument using tidy subgroups shows $\parb(\alpha)$ is a subgroup of $G$ that is always closed. %{\color{fireenginered}Contraction subgroups play an important role in the study of locally compact groups, \cite{}.}
%%%%%%%%
\begin{proposition}[\cite{Structure94} Proposition~3]
\label{prop:par_closed}
Let $G$ be a \tdlc~group and $\alpha$ be in $\Aut(G)$. Then $\parb(\alpha)$ is a closed subgroup of $G$ and
$$
\parb(\alpha) = \bigl\{ x\in G \mid\overline{\{\alpha^n(x)\}}_{n\in\mathbb{N}} \mbox{ is compact}\bigr\}.
$$
Hence $\lev(\alpha)$ and $\nub(\alpha)$ are closed subgroups too.
\end{proposition}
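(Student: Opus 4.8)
The plan is to prove the displayed characterization of $\parb(\alpha)$ first; closedness of $\parb(\alpha)$, and then of $\lev(\alpha)$ and $\nub(\alpha)$, will follow with little extra work. Fix $U\in\COS(G)$ tidy for $\alpha$. The inclusion ``$\supseteq$'' is routine: if $\overline{\{\alpha^n(x)\}_{n\in\mathbb{N}}}$ is compact, then the tails $\overline{\{\alpha^n(x):n\geq N\}}$ form a decreasing family of nonempty closed subsets of it, so their intersection contains a point $q$, and every neighbourhood of $q$ meets every tail; that is, $q$ is an accumulation point of $(\alpha^n(x))_{n\in\mathbb{N}}$, so $x\in\parb(\alpha)$.

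For ``$\subseteq$'', I would take $x\in\parb(\alpha)$ with accumulation point $p$. Since $pU$ is open, the return set $R=\{n\geq0:\alpha^n(x)\in pU\}$ is infinite; put $m=\min R$. For $n\in R$ one has $\alpha^n(x)\in\alpha^m(x)U$, hence $x^{-1}\alpha^{\,n-m}(x)\in\alpha^{-m}(U)=:V$, and $V$ is again tidy for $\alpha$ by Lemma~\ref{lem:conjugation_invariance}. Writing $h_k:=x^{-1}\alpha^k(x)$ and $K:=R-m$ (an infinite set containing $0$), this gives $h_k\in V$ for all $k\in K$. The key claim is that in fact $h_k\in\Um{V}{\alpha}$ for each $k\in K$: choosing a subnet along which $\alpha^{n_\lambda}(x)\to p$, continuity of $\alpha^k$ gives $\alpha^{\,n_\lambda+k}(x)\to\alpha^k(p)$, whence $\alpha^{n_\lambda}(h_k)=\alpha^{n_\lambda}(x)^{-1}\alpha^{\,n_\lambda+k}(x)\to p^{-1}\alpha^k(p)$; so the sequence $(\alpha^j(h_k))_{j\in\mathbb{N}}$ has an accumulation point, and Proposition~\ref{prop:tidy_criteria}(1) applied to $h_k\in V$ yields $h_k\in\Um{V}{\alpha}$.

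Now fix any $d\in K\setminus\{0\}$. The cocycle identity $h_{k+k'}=h_k\,\alpha^k(h_{k'})$ (a direct computation) together with $\alpha(\Um{V}{\alpha})\leq\Um{V}{\alpha}$ (immediate from the description of $\Um{V}{\alpha}$ in Theorem~\ref{thm:characterise_minimizing}) gives, by induction on $j$, that $h_{jd}\in\Um{V}{\alpha}$ for all $j\geq0$. Hence $\alpha^{jd}(x)\in x\,\Um{V}{\alpha}$ for every $j$, so
$$
\{\alpha^l(x):l\geq0\}\ \subseteq\ \bigcup_{r=0}^{d-1}\alpha^r(x)\,\alpha^r\!\bigl(\Um{V}{\alpha}\bigr),
$$
a finite union of compact sets, since $\Um{V}{\alpha}$ is a closed subgroup of the compact group $V$; thus $\overline{\{\alpha^n(x)\}_{n\in\mathbb{N}}}$ is compact, completing the characterization.

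Closedness is then quick. The characterization shows $\parb(\alpha)$ is a subgroup, because $\{\alpha^n(xy^{-1})\}\subseteq\overline{\{\alpha^n(x)\}}\cdot\overline{\{\alpha^n(y)\}}^{-1}$ is relatively compact when $x,y\in\parb(\alpha)$; moreover $\parb(\alpha)\cap U=\Um{U}{\alpha}$ (``$\supseteq$'' from $\alpha(\Um{U}{\alpha})\leq\Um{U}{\alpha}$, ``$\subseteq$'' from Proposition~\ref{prop:tidy_criteria}(1)), which is closed. A subgroup $H$ with $H\cap U$ closed is itself closed: for $g\in\overline H$ pick $h\in H\cap gU$; then $h^{-1}g\in\overline H\cap U=\overline{H\cap U}=H\cap U$, so $g\in H$. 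Hence $\parb(\alpha)$ is closed, and then $\lev(\alpha)=\parb(\alpha)\cap\parb(\alpha^{-1})$ and $\nub(\alpha)=\overline{\con(\alpha)}\cap\lev(\alpha)$ are closed subgroups as intersections of closed subgroups. I expect the key claim ``$h_k\in\Um{V}{\alpha}$'' to be the main obstacle: it is precisely the step that transfers the accumulation-point hypothesis from the full orbit of $x$ to an element of a tidy subgroup, which is what allows Proposition~\ref{prop:tidy_criteria} to be brought to bear.
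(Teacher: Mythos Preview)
Your proof is correct. The paper does not give its own proof of this proposition (it is cited from \cite{Structure94}), but the argument in the paper's Proposition~\ref{prop:acc.pt_in_lev} parts~\ref{prop:acc.pt_in_lev1} and~\ref{prop:acc.pt_in_lev2} covers the same ground and is close in spirit to yours. The main difference is in how the key claim is obtained: the paper picks $m<n$ in the return set, writes $\alpha^n(y)=\alpha^m(y)u$ with $u\in U$, and then uses the \emph{product} criterion in Proposition~\ref{prop:tidy_criteria}(1) (the accumulation of $u\alpha^{n-m}(u)\cdots\alpha^{(i-1)(n-m)}(u)$) to force $u\in\Um{U}{\alpha}$. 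You instead work with the cocycle $h_k=x^{-1}\alpha^k(x)$ and observe directly that $\alpha^{n_\lambda}(h_k)\to p^{-1}\alpha^k(p)$ along a convergent subnet, invoking the \emph{sequence} criterion in Proposition~\ref{prop:tidy_criteria}(1). Your route is marginally cleaner in that it names the limit point explicitly and avoids the telescoping product identity; the paper's route, on the other hand, sets things up so that the stronger conclusion $\bigcap_l\overline{\{\alpha^k(y)\}}_{k\geq l}\subseteq\lev(\alpha)$ falls out simultaneously. Your closing argument that a subgroup whose intersection with an open compact subgroup is closed must itself be closed is exactly the content of the references \cite[Theorem~II.5.9]{HandR} and \cite[Ch.~III, p.~221]{BourbakiTop} that the paper cites elsewhere for the same purpose.
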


The names \emph{parabolic subgroup} and \emph{Levi subgroup} for~$\alpha$ are used in \cite{ContractionB} because they specialize to the subgroups of $p$-adic Lie groups with those names. The \emph{nub subgroup for $\alpha$} is defined in~\cite{nub_2014}, where the following alternative characterizations are given.
%%%%%%%
\begin{theorem}[\cite{nub_2014} Proposition~4.4 \& Theorem~4.1]
\label{thm:nub}
Let $G$ be a \\ \tdlc~group and $\alpha$ be in $\Aut(G)$. Then $\nub(\alpha)$ is:
\begin{itemize}
\item the intersection of all subgroups tidy for $\alpha$;
\item the largest compact $\alpha$-stable subgroup of $G$ having no relatively open, $\alpha$-stable subgroups; and 
\item the largest $\alpha$-stable closed subgroup of $G$ on which $\alpha$ acts ergodically. 
\end{itemize}
\end{theorem}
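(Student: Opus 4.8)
\emph{Plan.} Write $N:=\bigcap\{U\in\COS(G):U\text{ tidy for }\alpha\}$. The plan is to prove in turn that (i)~$\nub(\alpha)\subseteq N$, (ii)~$N$ is a compact $\alpha$-stable subgroup with no proper relatively open $\alpha$-stable subgroup, and (iii)~every compact $\alpha$-stable subgroup with no proper relatively open $\alpha$-stable subgroup is contained in $\nub(\alpha)$. Together (i)--(iii) give $N=\nub(\alpha)$ and identify $\nub(\alpha)$ as the largest such subgroup; the ergodicity characterisation is then a short addendum.

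\emph{Step (i).} Fix a tidy $U$; I would show $\nub(\alpha)\subseteq U$. First $\overline{\con(\alpha)}\subseteq\Umm{U}{\alpha}$: if $x\in\con(\alpha)$ then $\alpha^n(x)\to\id_G$, so $\alpha^{n_0}(x)\in\Um{U}{\alpha}$ for some $n_0\geq0$, hence $x\in\bigcup_{k\geq0}\alpha^{-k}(\Um{U}{\alpha})=\Umm{U}{\alpha}$, which is closed by condition~{\bf TB} of Theorem~\ref{thm:characterise_minimizing}. Now let $x\in\nub(\alpha)=\overline{\con(\alpha)}\cap\parb(\alpha^{-1})$ and pick $k\geq0$ with $\alpha^k(x)\in\Um{U}{\alpha}\subseteq U$. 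Since $\{\alpha^{-n}(x)\}_{n\geq0}$ has compact closure (Proposition~\ref{prop:par_closed}) and $\{\alpha^{j}(\alpha^k(x))\}_{j\geq0}\subseteq\Um{U}{\alpha}$, the orbit $\{\alpha^{j}(\alpha^k(x))\}_{j\in\mathbb{Z}}$ lies in a compact set, so part~2 of Proposition~\ref{prop:tidy_criteria} gives $\alpha^k(x)\in\Uz{U}{\alpha}=\bigcap_{n\in\mathbb{Z}}\alpha^n(U)$; as $\Uz{U}{\alpha}$ is $\alpha$-invariant, $x\in\Uz{U}{\alpha}\subseteq U$. Hence $\nub(\alpha)\subseteq N$.

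\emph{Step (ii), the main obstacle.} That $N$ is a compact subgroup is immediate, and since the tidy subgroups are downward directed (Theorem~\ref{thm:tidy_intersection}), permuted by $\alpha$ (Lemma~\ref{lem:conjugation_invariance}), and compact, we get $\alpha(N)=\bigcap_U\alpha(U)=N$. The substantive claim is that $N$ has no proper relatively open $\alpha$-stable subgroup, and I expect this to be the technical heart of the argument. I would argue by contradiction: given such a $W\lneq N$, choose $V\in\COS(G)$ with $V\cap N=W$ (possible as $W$ is open in the compact group $N$), and run the tidying-up procedure of \cite{Structure94,FurtherTidy} starting from $V\cap U$ for a tidy $U$, maintaining the invariant that the current compact open subgroup meets $N$ in a subgroup of $W$ --- when {\bf TA} is enforced by replacing the current group $U''$ with $\bigcap_{k=0}^{n}\alpha^k(U'')$ this is preserved because $W$ is $\alpha$-stable, and the {\bf TB}-correction must be shown to preserve it as well. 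The output is a tidy $U'$ with $U'\cap N\leq W\lneq N$, contradicting $N\subseteq U'$. Keeping the {\bf TB}-step under control (or replacing the whole argument by a descent through a suitable $\alpha$-stable quotient via Proposition~\ref{prop:functor}) is the delicate point.

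\emph{Steps (iii) and ergodicity.} The key lemma is: a compact $\alpha$-stable $K\leq G$ with no proper relatively open $\alpha$-stable subgroup satisfies $K\subseteq\overline{\con(\alpha)}$. As $K$ is profinite and $\overline{\con(\alpha|_K)}$ is a closed $\alpha$-stable subgroup of $K$, it is enough to rule out $\overline{\con(\alpha|_K)}\lneq K$; I would do this by showing that if both $\con(\alpha|_K)$ and $\con(\alpha^{-1}|_K)$ are "small'' then $\{\alpha^n|_K\}_{n\in\mathbb{Z}}$ is equicontinuous, and an equicontinuous automorphism of a nontrivial profinite group has a proper open invariant subgroup --- a contradiction. (This is a second point that needs genuine care.) Granting the lemma, $K\subseteq\overline{\con(\alpha)}$, and $K$ being compact $\alpha$-stable also gives $K\subseteq\parb(\alpha^{-1})$, so $K\subseteq\nub(\alpha)$; with $K=N$ and Step (i) this yields $N=\nub(\alpha)$, and letting $K$ vary shows $\nub(\alpha)$ is the largest such subgroup. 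Finally, for a compact $\alpha$-stable subgroup, "no proper relatively open $\alpha$-stable subgroup'' is equivalent to "$\alpha$ acts ergodically for Haar measure'' --- a proper open subgroup is an invariant set of measure strictly between $0$ and $1$, and conversely a nontrivial invariant measurable set is approximated by open sets whose $\alpha$-translates can be intersected to give a proper open invariant subgroup --- while an $\alpha$-stable closed subgroup on which $\alpha$ is ergodic is necessarily compact, since otherwise a compact open subgroup of it is an invariant set of zero (infinite-Haar) measure whose $\alpha$-translates exhaust the group. Hence the third description also equals $\nub(\alpha)$, completing the proof.
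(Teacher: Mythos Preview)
The paper does not prove this theorem: it is quoted verbatim from \cite{nub_2014} (Proposition~4.4 and Theorem~4.1) and no proof is given in the present paper. There is therefore nothing here to compare your proposal against.

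That said, a brief comment on the proposal itself. Your Step~(i) is clean and correct. Steps~(ii) and~(iii), however, are where the real content lies, and you have essentially flagged the difficulties rather than resolved them. In Step~(ii) you need to verify that the tidy-below correction in the tidying procedure preserves the property ``meets $N$ inside $W$''; this is not automatic and is exactly the point where \cite{nub_2014} does substantial work. In Step~(iii) the passage from ``$\con(\alpha|_K)$ and $\con(\alpha^{-1}|_K)$ are both small'' to equicontinuity of $\{\alpha^n|_K\}$ is nontrivial and needs a genuine argument (in \cite{nub_2014} this goes through the structure of compact $\alpha$-stable subgroups and a descending chain argument). Finally, your ergodicity addendum is loose: the equivalence between ``no proper open $\alpha$-stable subgroup'' and ergodicity on a profinite group is standard, but your argument that an ergodic $\alpha$-stable closed subgroup must be compact is garbled --- a compact open subgroup of a non-compact group is not $\alpha$-invariant, so it is not an invariant set in the ergodic-theoretic sense, and the sentence as written does not go through. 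The overall architecture is reasonable, but as it stands the proposal is a plan rather than a proof.
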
 

The characterization of the nub in terms of the ergodic action of $\alpha$ extends the proof, for \tdlc~groups, of a conjecture of Halmos in~\cite{Halmos_ergodic} given in~\cite{Aoki_nub}. A proof using the scale and tidy subgroups was given in \cite{Previts_Wu}. The role of the nub in the study of the scale and tidy subgroups is that it gives a criterion for tidiness below of a compact open subgroup that applies independently of tidiness above, as follows.
%%%%%%%
\begin{proposition}[\cite{nub_2014} Corollaries 4.1 \& 4.2]
\label{prop:nub_in}
Let $\alpha\in\Aut(G)$ and suppose that $U\in\COS(G)$. Then: 
\begin{itemize}
	\item $U$ is tidy below for $\alpha$ if and only if it contains $\nub(\alpha)$, and 
	\item  if $\nub(\alpha)\leq U$, there is $n\geq0$ such that $\bigcap_{k=0}^n \alpha^k(U)$ is tidy for $\alpha$.
\end{itemize}
\end{proposition}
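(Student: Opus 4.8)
The plan is to prove the first item in both directions, reduce everything to the single case in which $U$ is moreover tidy above, and then read off the second item. Set $W_n=\bigcap_{k=0}^{n}\alpha^k(U)$. Two preliminary observations carry the bookkeeping. (i) For every $n$ one has $\Umm{W_n}{\alpha}=\Umm{U}{\alpha}$: unwinding the definition, $\Umm{U}{\alpha}$ is exactly the set of $x\in G$ such that $\alpha^m(x)\in U$ for all sufficiently large $m$; since $\alpha^m(x)\in W_n$ precisely when $\alpha^{m-k}(x)\in U$ for $0\le k\le n$, the condition ``$\alpha^m(x)\in W_n$ for all large $m$'' describes the same set. Hence $U$ is tidy below if and only if $W_n$ is, for any (equivalently, every) $n$. (ii) By the tidying-above procedure of \cite{Structure94} there is $n_0$ with $W_{n_0}$ tidy above; and because $\nub(\alpha)$ is $\alpha$-stable and $\alpha$ an automorphism, $\nub(\alpha)\le U$ holds iff $\nub(\alpha)\le\alpha^k(U)$ for all $k$, iff $\nub(\alpha)\le W_{n_0}$.

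For the forward implication of the first item, assume $U$ is tidy below. By (i) and (ii), $W_{n_0}$ is tidy below as well as tidy above, hence tidy, and $W_{n_0}\le U$. Since $\nub(\alpha)$ is the intersection of all subgroups tidy for $\alpha$ (Theorem~\ref{thm:nub}), $\nub(\alpha)\le W_{n_0}\le U$. For the remaining implication and for the second item it therefore suffices to prove the single statement: \emph{if $V\in\COS(G)$ is tidy above and $\nub(\alpha)\le V$, then $V$ is tidy below.} Granting this, suppose $\nub(\alpha)\le U$; then by (ii) $\nub(\alpha)\le W_{n_0}$ with $W_{n_0}$ tidy above, so the statement gives that $W_{n_0}$ is tidy below, hence tidy --- this is the second item --- and then by (i) $\Umm{U}{\alpha}=\Umm{W_{n_0}}{\alpha}$ is closed, so $U$ is tidy below, which finishes the first item.

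To prove the italicised statement I would run the tidying-below construction of \cite{FurtherTidy}: starting from $V$ tidy above it produces a tidy subgroup $\widetilde V\supseteq V$ of the form $\widetilde V=VK$ with $K$ a compact $\alpha$-stable subgroup, and (as is established in the course of \cite[\S4]{nub_2014}) $K$ may be chosen with no proper relatively open $\alpha$-stable subgroup --- equivalently, so that $\alpha$ acts ergodically on $K$. By the maximality property in Theorem~\ref{thm:nub} this forces $K\le\nub(\alpha)$, so that $K\le\nub(\alpha)\le V$ by hypothesis. Hence $\widetilde V=VK=V$, and $V$ is tidy, in particular tidy below.

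The main obstacle is precisely this last ingredient: showing that the compact $\alpha$-stable subgroup adjoined by the tidying-below procedure can be taken with no proper relatively open $\alpha$-stable subgroup (equivalently, that it is contained in $\nub(\alpha)$). That is where the substantive part of \cite[\S4]{nub_2014} is needed; everything else in the argument is the formal reduction above, resting on the orbit identity $\Umm{W_n}{\alpha}=\Umm{U}{\alpha}$, the standard tidying-above procedure, and the description of $\nub(\alpha)$ as the intersection of the subgroups tidy for $\alpha$ from Theorem~\ref{thm:nub}.
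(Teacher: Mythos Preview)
The paper does not give its own proof of this proposition; it is quoted as a corollary of results in \cite{nub_2014} and left unproved here. So there is no in-paper argument to compare against directly.

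Your reduction is correct and matches the logic underlying the cited reference. The identity $\Umm{W_n}{\alpha}=\Umm{U}{\alpha}$ is right (both equal the set of $x$ with $\alpha^m(x)\in U$ for all large $m$), and together with Step~1 of the tidying procedure it reduces both items to the single claim that a tidy-above subgroup containing $\nub(\alpha)$ is automatically tidy below. The forward implication of the first item then follows from Theorem~\ref{thm:nub} exactly as you say.

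For the remaining implication you invoke the tidying-below step and assert that the adjoined compact $\alpha$-stable group $K$ can be taken to have no proper relatively open $\alpha$-stable subgroup, hence $K\le\nub(\alpha)$ by Theorem~\ref{thm:nub}. You are candid that this is where the real work sits, and that is accurate: in \cite{nub_2014} the relevant fact is that the ``local nub'' produced by the tidying procedure from any tidy-above subgroup is contained in $\nub(\alpha)$ (this is essentially the content of \cite[\S4]{nub_2014} and how the characterisations in Theorem~\ref{thm:nub} are assembled). So your outline is the correct skeleton of the cited proof, with the substantive step honestly flagged as an appeal to \cite{nub_2014}.
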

%%%%%%%%%
%{\color{fireenginered} Exercise: Use this to give alternative short proof of Prop~\ref{prop:functor}.}

The following observations are used frequently.
%%%%%%%
\begin{lemma}
\label{lem:nub_normalised}
Let $\alpha\in\Aut(G)$. Then $\parb(\alpha)$ normalises $\con(\alpha)$, and $\lev(\alpha)$ normalises $\nub(\alpha)$.
\end{lemma}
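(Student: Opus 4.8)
The plan is to prove each normalisation claim directly from the dynamical definitions in Definition~\ref{defn:paretc}, using the fact that $\alpha$ is an automorphism and that conjugation is continuous. For the first claim, let $x\in\parb(\alpha)$ and $c\in\con(\alpha)$; I want to show $xcx^{-1}\in\con(\alpha)$, i.e.\ $\alpha^n(xcx^{-1})\to\id_G$. Writing $\alpha^n(xcx^{-1}) = \alpha^n(x)\,\alpha^n(c)\,\alpha^n(x)^{-1}$, we know $\alpha^n(c)\to\id_G$, so the task is to control the conjugating factors $\alpha^n(x)$. By Proposition~\ref{prop:par_closed}, $x\in\parb(\alpha)$ means $\overline{\{\alpha^n(x)\}}_{n\in\mathbb{N}}$ is compact; hence the sequence $(\alpha^n(x))_n$ lies in a fixed compact set $K$. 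Then I would invoke continuity of the conjugation map $G\times G\to G$, $(g,h)\mapsto ghg^{-1}$, restricted to the compact set $K$: given any identity neighbourhood $W$, uniform continuity on $K$ yields an identity neighbourhood $V$ with $gVg^{-1}\subseteq W$ for all $g\in K$; since $\alpha^n(c)$ eventually lies in $V$, the conjugates $\alpha^n(x)\alpha^n(c)\alpha^n(x)^{-1}$ eventually lie in $W$. As $W$ was arbitrary, $\alpha^n(xcx^{-1})\to\id_G$, so $xcx^{-1}\in\con(\alpha)$.

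For the second claim, I would first record that $\lev(\alpha) = \parb(\alpha)\cap\parb(\alpha^{-1})$ normalises $\overline{\con(\alpha)}$: indeed $\lev(\alpha)\subseteq\parb(\alpha)$ normalises $\con(\alpha)$ by the first part, and since conjugation by a fixed element is a homeomorphism it also normalises the closure $\overline{\con(\alpha)}$. Symmetrically, applying the first part to $\alpha^{-1}$ in place of $\alpha$, the group $\parb(\alpha^{-1})\supseteq\lev(\alpha)$ normalises $\con(\alpha^{-1})$ and hence $\overline{\con(\alpha^{-1})}$; but I should be careful here, because $\nub(\alpha) = \overline{\con(\alpha)}\cap\parb(\alpha^{-1})$, not $\overline{\con(\alpha)}\cap\overline{\con(\alpha^{-1})}$. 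So the cleaner route is: $\lev(\alpha)$ normalises $\overline{\con(\alpha)}$ (just shown), and $\lev(\alpha)\subseteq\parb(\alpha^{-1})$ with $\parb(\alpha^{-1})$ a subgroup (Proposition~\ref{prop:par_closed}), so $\lev(\alpha)$ normalises $\parb(\alpha^{-1})$ trivially (a group normalises itself, and $\lev(\alpha)$ is contained in it). Therefore $\lev(\alpha)$ normalises the intersection $\overline{\con(\alpha)}\cap\parb(\alpha^{-1}) = \nub(\alpha)$.

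The main obstacle is the uniformity step in the first paragraph: one needs that $\alpha^n(c)\to\id$ forces $\alpha^n(x)\alpha^n(c)\alpha^n(x)^{-1}\to\id$ \emph{uniformly over the conjugating elements}, which fails for an arbitrary unbounded family of conjugators but is fine once the conjugators are confined to the compact set $\overline{\{\alpha^n(x)\}}$ supplied by Proposition~\ref{prop:par_closed}. This is exactly why the characterization of $\parb(\alpha)$ via compact closure (rather than just "has an accumulation point") is the right tool, and it is the reason the lemma is stated for \tdlc\ groups where that proposition is available. Everything else is formal: the facts that $\con(\alpha)$, $\parb(\alpha)$, $\lev(\alpha)$, $\nub(\alpha)$ are subgroups, that conjugation is a homeomorphism, and that a homeomorphism commutes with closure.
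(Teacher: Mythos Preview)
Your proof is correct and follows essentially the same approach as the paper's. The only cosmetic difference is in the uniformity step: the paper exploits the \tdlc\ structure directly, observing that for $y\in\parb(\alpha)$ and any $U\in\COS(G)$ the intersection $\bigcap_{n\in\mathbb{N}}\alpha^n(y)U\alpha^n(y)^{-1}$ is an open identity neighbourhood (since only finitely many conjugates occur, by compactness of $\overline{\{\alpha^n(y)\}}$), whereas you obtain the same control via a tube-lemma argument on the conjugation map; for the second claim the paper uses $\nub(\alpha)=\overline{\con(\alpha)}\cap\lev(\alpha)$ rather than $\overline{\con(\alpha)}\cap\parb(\alpha^{-1})$, but both formulations appear in Definition~\ref{defn:paretc} and the logic is identical.
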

%%%%%%%%
\begin{proof}
Let $y\in\parb(\alpha)$. Then $\{\alpha^n(y)\}_{n\in\mathbb{N}}$ has compact closure by Proposition~\ref{prop:par_closed} and it follows that $\bigcap\left\{\alpha^n(y)U\alpha^n(y)^{-1}\mid n\in\mathbb{N} \right\}$ is an open neighbourhood of $\id_G$ for every $U\in\COS(G)$. Hence, for each $x\in \con(\alpha)$, 
$$
\alpha^n(yxy^{-1}) = \alpha^n(y)\alpha^n(x)\alpha^n(y)^{-1} \to \id_G \mbox{ as }n\to\infty
$$
and $\con(\alpha)$ is normalised by $\parb(\alpha)$. Hence, in particular, $\lev(\alpha)$ normalises $\con(\alpha)$ and therefore normalises $\nub(\alpha) = \overline{\con(\alpha)}\cap\lev(\alpha)$ too.
\end{proof}

\begin{lemma}
\label{lem:nubstable}
Let $\alpha\in\Aut(G)$ and suppose that $K\leq G$ is compact and $\alpha$-stable. Then, denoting the inner automorphism induced by $x$ as $\alpha_x$,\\ 
$\nub(\alpha_x\alpha) = \nub(\alpha)$ for every $x\in N$.
\end{lemma}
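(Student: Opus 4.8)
Below is the approach I would take to prove $\nub(\alpha_x\alpha)=\nub(\alpha)$ (for $x$ in the compact $\alpha$-stable subgroup $K$).

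\medskip

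\textbf{Strategy.} The plan is to work straight from the definition $\nub(\beta)=\overline{\con(\beta)}\cap\lev(\beta)$ and to exploit that the iterates of $\alpha_x\alpha$ differ from those of $\alpha$ only by conjugation by elements of the \emph{fixed} compact group $K$. A one-line induction gives the cocycle identity
\[
(\alpha_x\alpha)^n=\alpha_{c_n}\circ\alpha^n,\qquad c_n:=x\,\alpha(x)\,\alpha^2(x)\cdots\alpha^{n-1}(x)\quad(n\geq0),
\]
with $c_0=\id_G$ and $c_{n+1}=x\,\alpha(c_n)$. Since $x\in K$ and $K$ is $\alpha$-stable, every factor $\alpha^j(x)$ lies in $K$, and as $K$ is a group we get $c_n\in K$ for all $n$; thus $\{c_n\}_{n\geq0}$ and $\{c_n^{-1}\}_{n\geq0}$ are both contained in the compact set $K$. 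The second ingredient is the elementary fact that conjugation by a compact set is uniformly small: for every neighbourhood $U$ of $\id_G$ there is a neighbourhood $V$ of $\id_G$ with $KVK^{-1}\subseteq U$ (cover $K$ by finitely many pieces on which $(k,v)\mapsto kvk^{-1}$ stays inside $U$).

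\medskip

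\textbf{Comparing the contraction groups.} Fix $y\in G$. From $(\alpha_x\alpha)^n(y)=c_n\,\alpha^n(y)\,c_n^{-1}$ and $\alpha^n(y)=c_n^{-1}\,(\alpha_x\alpha)^n(y)\,c_n$, the uniform-smallness fact applied with $K$ gives that $\alpha^n(y)\to\id_G$ if and only if $(\alpha_x\alpha)^n(y)\to\id_G$ (the two implications are identical after swapping $c_n\leftrightarrow c_n^{-1}$, both of which lie in $K$). Hence $\con(\alpha_x\alpha)=\con(\alpha)$, and therefore $\overline{\con(\alpha_x\alpha)}=\overline{\con(\alpha)}$.

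\medskip

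\textbf{Comparing the Levi subgroups, and conclusion.} By the same token, $\overline{\{\alpha^n(y):n\in\mathbb{N}\}}$ is compact (say equal to $C$) if and only if $\overline{\{(\alpha_x\alpha)^n(y):n\in\mathbb{N}\}}$ is, since $\{(\alpha_x\alpha)^n(y)\}\subseteq KCK^{-1}$, a continuous image of the compact set $K\times C\times K$, and conversely. By Proposition~\ref{prop:par_closed} this yields $\parb(\alpha_x\alpha)=\parb(\alpha)$. To handle the inverses, compute $(\alpha_x\alpha)^{-1}=\alpha_{x'}\circ\alpha^{-1}$ with $x'=\alpha^{-1}(x)^{-1}$; here $x'\in K$ because $K$, being $\alpha$-stable, is $\alpha^{-1}$-stable, so the same argument applied to $\alpha^{-1}$ in place of $\alpha$ gives $\parb\bigl((\alpha_x\alpha)^{-1}\bigr)=\parb(\alpha^{-1})$. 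Therefore
\[
\lev(\alpha_x\alpha)=\parb(\alpha_x\alpha)\cap\parb\bigl((\alpha_x\alpha)^{-1}\bigr)=\parb(\alpha)\cap\parb(\alpha^{-1})=\lev(\alpha),
\]
and combining with the previous step, $\nub(\alpha_x\alpha)=\overline{\con(\alpha_x\alpha)}\cap\lev(\alpha_x\alpha)=\overline{\con(\alpha)}\cap\lev(\alpha)=\nub(\alpha)$.

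\medskip

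\textbf{Where the difficulty lies.} There is no deep obstacle: once one sees the cocycle identity $(\alpha_x\alpha)^n=\alpha_{c_n}\alpha^n$ and notes that $c_n$ never leaves the compact group $K$, the rest is the routine observation that conjugating by a bounded-in-a-compact-set family perturbs neither convergence to $\id_G$ nor compactness of closures. The only points requiring a little care are (i) verifying the cocycle identity and that $K$ is stable under all automorphisms invoked — $\alpha$ and $\alpha^{-1}$ suffice, since $\alpha(K)=K$ forces $\alpha^{-1}(K)=K$ — so that both $c_n$ (for $\alpha_x\alpha$) and the analogous cocycle for $(\alpha_x\alpha)^{-1}$ stay inside $K$, and (ii) running each comparison in both directions, i.e. using $c_n$ \emph{and} $c_n^{-1}$ (resp. $KCK^{-1}$ and $K^{-1}CK$).
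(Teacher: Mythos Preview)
Your proof is correct and follows essentially the same approach as the paper's: both arguments hinge on the cocycle identity $(\alpha_x\alpha)^n=\alpha_{y_n}\alpha^n$ with $y_n\in K$ and then use compactness of $K$ to transfer convergence and precompactness between the $\alpha$- and $(\alpha_x\alpha)$-orbits. The only stylistic difference is that you establish equality of $\con$, $\parb$, and $\lev$ directly in both directions, whereas the paper proves the one-sided inclusion $\nub(\alpha)\leq\nub(\alpha_x\alpha)$ and then invokes symmetry (replacing $\alpha$ by $\alpha_x\alpha$ and $x$ by $x^{-1}$) for the reverse.
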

%%%
\begin{proof}
Since $K$ is compact and $\alpha$-stable, it is contained in $\lev(\alpha)$. Moreover, for each $x\in K$ and $n\in\mathbb{Z}$, there is $y_n\in K$ such that $(\alpha_x\alpha)^n = \alpha_{y_n}\alpha^n$. Then the argument of the previous lemma shows that $\con(\alpha) \leq \con(\alpha_x\alpha)$ for every $x\in K$. Similarly, if $u\in \parb(\alpha^{-1})$, so that $C = \overline{\{\alpha^{-n}(u)\}_{n\geq0}}$ is compact, then
$$
(\alpha_x\alpha)^{-n}(u) = \alpha_{y_{-n}}\alpha^{-n}(u)\in KCK,
$$
which is compact. Hence $\parb(\alpha)\leq \parb(\alpha_x\alpha)$. Therefore $\nub(\alpha)\leq \nub(\alpha_x\alpha)$. The same argument, applied to $\alpha_x\alpha$ and $\alpha_x^{-1}(\alpha_x\alpha)$, shows the reverse inclusion.
\end{proof}

\paragraph{Relative contraction groups} The notion of a contraction group for $\alpha$ extends to that of a relative contraction group modulo an $\alpha$-stable subgroup. Relative contraction groups are used in~\cite{ContractionB} when deriving the results about $\con(\alpha)$ reviewed in this section, and are also important for the study of flat groups in the next section.
%%%%%%
\begin{definition}
\label{defn:relative_contraction}
Let $\alpha\in\Aut(G)$ and suppose that $H\leq G$ is compact and $\alpha$-stable. Then $\{x_n\}_{n\in\mathbb{N}}\subset G$ is said to \emph{converge to the identity modulo $H$}, denoted $\lim_{n\to\infty} x_n = \id\modulo {H}$, if for every open $\open{O}\supseteq H$ there is $N>0$ such that $x_n\in \open{O}$ for all $n\geq N$. Define 
\begin{equation*}
%\label{eq:contraction_mod_H}
\con(\alpha/H) = \left\{ x\in G \mid \lim_{n\to\infty}\alpha^n(x) = \id\modulo {H}\right\}.
\end{equation*}
\end{definition}
%%%%%%%
It may be verified that $\con(\alpha/H)$ is a group\footnote{If $H$ is a general closed $\alpha$-stable subgroup of $G$, then $x_n$ is said to converge to $\id$ modulo $H$ if for every open $\open{O}\ni \id$ there is $N>0$ such that $x_n\in \open{O}H\open{O}$ for all $n\geq N$. This is equivalent to Definition \ref{defn:relative_contraction} when $H$ is compact and, with this definition, Theorem \ref{thm:convergence_mod_H} holds when $H$ is closed and not compact.}.  When $G$ is metrizable, the following is \cite[Theorem~3.8]{ContractionB} and it is \cite[Theorem~1]{Jaworski_contraction} in the general case. 
%%%%%%%%
\begin{theorem}[\cite{ContractionB} \& \cite{Jaworski_contraction}]
\label{thm:convergence_mod_H}
Let $G$ be a \tdlc~group, $\alpha\in \Aut(G)$ and $H$ an $\alpha$-stable compact subgroup of $G$. Then $\con(\alpha/H) = \con(\alpha)H$.
\end{theorem}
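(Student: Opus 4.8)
The inclusion $\con(\alpha)H \subseteq \con(\alpha/H)$ is the easy direction: if $x\in\con(\alpha)$ and $h\in H$, then for any open $\open{O}\supseteq H$ we have $\alpha^n(x)\to\id$, so $\alpha^n(x)\in \open{O}H^{-1}$ eventually (using that $\open{O}H^{-1}$ is an open neighbourhood of $\id$ since $H$ is compact), whence $\alpha^n(xh)=\alpha^n(x)\alpha^n(h)\in\open{O}$ because $H$ is $\alpha$-stable; so $xh\in\con(\alpha/H)$. The substance of the theorem is the reverse inclusion $\con(\alpha/H)\subseteq\con(\alpha)H$, and the plan is to prove it using tidy subgroups and the structure theory recalled above, following the metrizable argument of \cite{ContractionB} but organised so that compactness of $H$ (rather than metrizability of $G$) is what is used.

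First I would fix $U\in\COS(G)$ tidy for $\alpha$ with $H\le U$ — this is possible since $H$ is compact and $\alpha$-stable, so $H\le\lev(\alpha)\subseteq\parb(\alpha)$, and one can intersect any tidy subgroup with finitely many $\alpha^k$-translates and enlarge to contain $H$; more carefully, use Proposition~\ref{prop:nub_in}, noting $\nub(\alpha)\le H$ would be too strong, so instead pick $U_0$ tidy, set $U_1 = U_0 H$ adjusted to remain a group — the cleanest route is to take any compact open $V\supseteq H$ and apply the second bullet of Proposition~\ref{prop:nub_in} after first passing to a $V$ containing $\nub(\alpha)$, then replace $H$ by $HV$-type arguments. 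Granting a tidy $U$ with $H\le U$, the decomposition $U=\Up{U}{\alpha}\Um{U}{\alpha}$ and the criteria of Proposition~\ref{prop:tidy_criteria} become available. Given $x\in\con(\alpha/H)$, since $\alpha^n(x)\to\id\modulo H$ and $HU\subseteq U$ is a neighbourhood of $H$, eventually $\alpha^n(x)\in U$; replacing $x$ by $\alpha^N(x)$ I may assume $\alpha^n(x)\in U$ for all $n\ge0$, so $x\in\Um{U}{\alpha}$ by Theorem~\ref{thm:characterise_minimizing}. Now working inside the compact group $\Umm{U}{\alpha}$ — or rather using that $\{\alpha^n(x)\}_{n\ge0}$ accumulates only near $H$ — I want to ``correct'' $x$ by an element of $H$ to land in $\con(\alpha)$.

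The key step, and the one I expect to be the main obstacle, is producing from the hypothesis $\alpha^n(x)\to\id\modulo H$ an actual element $h\in H$ with $\alpha^n(xh)\to\id$. The idea is a compactness/diagonal argument on the quotient structure: for each $k$, choose a small symmetric identity neighbourhood $\open{O}_k$ with $\bigcap_k\open{O}_k=\{\id\}$ and pick $N_k$ so that $\alpha^n(x)\in\open{O}_k H$ for $n\ge N_k$; write $\alpha^{N_k}(x)=v_k h_k$ with $v_k\in\open{O}_k$, $h_k\in H$; then $x = \alpha^{-N_k}(v_k)\alpha^{-N_k}(h_k)$, and one must extract a convergent subsequence of the $\alpha^{-N_k}(h_k)\in H$ (here compactness of $H$ is essential, not metrizability of $G$), whose limit $h^{-1}$ is the required correction, while controlling that $\alpha^{-N_k}(v_k)\to xh$ lies in $\con(\alpha)$ by a second application of Proposition~\ref{prop:tidy_criteria}(1) (the relevant partial-product or orbit sequence accumulates). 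Making this extraction honest without a metric — replacing sequential arguments by nets and verifying that the limit element does converge to the identity under forward iteration of $\alpha$ — is the delicate point; one shears by the net of $h_k$'s and uses that $H$ being $\alpha$-stable and compact keeps all corrections inside the fixed compact set $H$, so no mass escapes. Once $h$ is found, $xh\in\con(\alpha)$ gives $x\in\con(\alpha)H$, completing the non-trivial inclusion and hence the theorem. Finally I would remark that the metrizable case is literally \cite[Theorem~3.8]{ContractionB} and the general case \cite[Theorem~1]{Jaworski_contraction}, so this sketch is mainly to indicate why compactness of $H$ suffices.
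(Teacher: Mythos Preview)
The paper does not give its own proof of this theorem; it simply quotes the result from \cite{ContractionB} (metrizable case) and \cite{Jaworski_contraction} (general case). So there is nothing in the paper to compare your argument against beyond those references.

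Your sketch has a genuine gap at the extraction step, and it is not merely a metrizability issue. Suppose along a subnet $\alpha^{-N_k}(h_k)\to h'\in H$, so that $\alpha^{-N_k}(v_k)\to x(h')^{-1}=:y$. You now need $y\in\con(\alpha)$. But all you have produced is $y=x(h')^{-1}$ with $x\in\con(\alpha/H)$ and $h'\in H$, hence $y\in\con(\alpha/H)$ --- exactly the hypothesis you started from. The passage to the limit buys nothing: $\alpha^{-N_k}(h_k)\to h'$ does \emph{not} imply $h_k\,\alpha^{N_k}(h')^{-1}\to\id$, because the family $\{\alpha^{N_k}|_H\}$ is typically not equicontinuous (take $H=F^{\mathbb Z}$ with $\alpha$ the shift and $y_k$ supported at position $-k$; then $y_k\to\id$ but $\alpha^k(y_k)\not\to\id$). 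Your appeal to Proposition~\ref{prop:tidy_criteria} is also misplaced: that criterion only puts an element into $\Um{U}{\alpha}$, never into $\con(\alpha)$. The arguments in \cite{ContractionB,Jaworski_contraction} do substantially more. In the metrizable case one fixes a \emph{decreasing sequence} of tidy subgroups, uses at each level the factorisation $\Um{U}{\alpha}=(\con(\alpha)\cap U)\,\Uz{U}{\alpha}$ of Proposition~\ref{prop:U--^con}, and constructs the correction $h$ by an inductive diagonal argument that controls $\alpha^n(xh^{-1})$ at every level simultaneously; Jaworski replaces this by a different device to drop metrizability. Your reduction to $x\in\Um{U}{\alpha}$ for a tidy $U\supseteq H$ is fine (Proposition~\ref{prop:Nnormal_core_of_tidy} with $\fl{H}=\langle\alpha\rangle$ and $L=H$ supplies such a $U$ directly, by the way), but from there a one-shot compactness extraction of $h$ cannot work.
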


The following results, which relativize the definition of the nub to any compact, $\alpha$-stable subgroup containing the nub, is an immediate consequence of Theorem~\ref{thm:convergence_mod_H}.
%%%%%%%
\begin{corollary}
\label{cor:convergence_mod_H}
Let $C\leq G$ be compact and $\alpha$-stable, with $C\geq \nub(\alpha)$. Then $\con(\alpha/C)\cap \parb(\alpha^{-1}) = C$.
\end{corollary}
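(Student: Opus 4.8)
The plan is to reduce the statement to Theorem~\ref{thm:convergence_mod_H}, which replaces $\con(\alpha/C)$ by the honest product set $\con(\alpha)C$, and then to use the defining formula $\nub(\alpha) = \overline{\con(\alpha)}\cap\parb(\alpha^{-1})$ together with the hypothesis $\nub(\alpha)\leq C$. Note first that the hypotheses of Theorem~\ref{thm:convergence_mod_H} are exactly what is assumed here, namely that $C$ is compact and $\alpha$-stable, so it applies and gives $\con(\alpha/C) = \con(\alpha)C$.

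The inclusion $C\subseteq \con(\alpha/C)\cap\parb(\alpha^{-1})$ is the easy direction. If $x\in C$ then, since $C$ is $\alpha$-stable, the whole trajectory $\{\alpha^n(x)\}_{n\in\mathbb{Z}}$ stays in $C$; in particular $\lim_{n\to\infty}\alpha^n(x)=\id\modulo{C}$ trivially, so $x\in\con(\alpha/C)$, and $\{\alpha^{-n}(x)\}_{n\geq0}\subseteq C$ has compact closure, so $x\in\parb(\alpha^{-1})$ by Proposition~\ref{prop:par_closed}.

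For the reverse inclusion, take $x\in\con(\alpha/C)\cap\parb(\alpha^{-1})$. Using $\con(\alpha/C)=\con(\alpha)C$, write $x=ch$ with $c\in\con(\alpha)$ and $h\in C$. Then $c=xh^{-1}$ lies in $\parb(\alpha^{-1})$, because $x$ does by assumption, $h$ does by the previous paragraph, and $\parb(\alpha^{-1})$ is a subgroup (Proposition~\ref{prop:par_closed}). Hence $c\in\con(\alpha)\cap\parb(\alpha^{-1})\subseteq\overline{\con(\alpha)}\cap\parb(\alpha^{-1})=\nub(\alpha)\leq C$, and therefore $x=ch\in C$. This is essentially all there is to it: the proof is pure bookkeeping with the three inputs (Theorem~\ref{thm:convergence_mod_H}, the group and closedness properties of $\parb(\alpha^{-1})$, and the definition of $\nub(\alpha)$), so the only point requiring any attention is checking — as above — that $C$ genuinely satisfies the compactness and $\alpha$-stability needed to invoke Theorem~\ref{thm:convergence_mod_H}.
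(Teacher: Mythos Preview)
Your proof is correct and follows essentially the same route as the paper: apply Theorem~\ref{thm:convergence_mod_H} to write an element of $\con(\alpha/C)$ as a product of something in $\con(\alpha)$ and something in $C$, use that $\parb(\alpha^{-1})$ is a group to deduce the $\con(\alpha)$ factor lies in $\parb(\alpha^{-1})$, and then invoke the definition $\nub(\alpha)=\overline{\con(\alpha)}\cap\parb(\alpha^{-1})\leq C$. The easy inclusion is handled the same way in both.
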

%%%%%%%
\begin{proof}
	For each $u\in \con(\alpha/C)\cap \parb(\alpha^{-1})$ we have $u = u'c$ with $u'\in\con(\alpha)$ and $c\in C$. Then $u'\in\parb(\alpha^{-1})$, because $C\leq\parb(\alpha^{-1})$, and so $u'\in\nub(\alpha)$ by Definition~\ref{defn:paretc}. Hence $\con(\alpha/C)\cap \parb(\alpha^{-1})\leq C$. The reverse inclusion holds because $C\leq\con(\alpha/C)$ and $C$ is compact and $\alpha^{-1}$-stable.
\end{proof}

The relationship between contraction subgroups and the scale is seen in the next couple of results, shown in~\cite{ContractionB}. Although~\cite{ContractionB} treats metrizable groups only, this hypothesis unnecessary after~\cite{Jaworski_contraction}.
%%%%%%%%
\begin{proposition}[\cite{ContractionB} Proposition~3.16]
\label{prop:U--^con}
Let $U\in\COS(G)$ and suppose that $\alpha\in\Aut(G)$. Set $\Uz{U}{\alpha} = \bigcap_{n\in\mathbb{Z}} \alpha^n(U)$. Then 
$$
\Umm{U}{\alpha} = \con(\alpha)\Uz{U}{\alpha}.
$$
\end{proposition}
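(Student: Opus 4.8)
The plan is to prove the two inclusions $\Umm{U}{\alpha}\supseteq\con(\alpha)\Uz{U}{\alpha}$ and $\Umm{U}{\alpha}\subseteq\con(\alpha)\Uz{U}{\alpha}$ separately, using that $U$ may be assumed tidy for $\alpha$ (by Theorem~\ref{thm:characterise_minimizing}, $\Umm{U}{\alpha}=\bigcup_{n\ge0}\alpha^{-n}(\Um{U}{\alpha})$ is a closed subgroup). For the inclusion $\supseteq$, first note $\Uz{U}{\alpha}\subseteq\Um{U}{\alpha}\subseteq\Umm{U}{\alpha}$ directly from the definitions. Then I would show $\con(\alpha)\subseteq\Umm{U}{\alpha}$: if $x\in\con(\alpha)$, then $\alpha^n(x)\to\id_G$, so for all large $n$ we have $\alpha^n(x)\in U$; but $\Umm{U}{\alpha}$ is $\alpha$-stable (indeed $\alpha^{-1}(\Umm{U}{\alpha})\supseteq\Umm{U}{\alpha}$ and one checks equality from tidiness), hence pulling back gives $x\in\Umm{U}{\alpha}$. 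More carefully: $\alpha^n(x)\in U$ for $n\ge N$ means $\alpha^N(x)\in\Um{U}{\alpha}$ (by Theorem~\ref{thm:characterise_minimizing}, since $\alpha^k(\alpha^N(x))\in U$ for all $k\ge0$), so $x\in\alpha^{-N}(\Um{U}{\alpha})\subseteq\Umm{U}{\alpha}$. Since $\Umm{U}{\alpha}$ is a group, $\con(\alpha)\Uz{U}{\alpha}\subseteq\Umm{U}{\alpha}$.

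For the reverse inclusion $\subseteq$, take $x\in\Umm{U}{\alpha}$, so $\alpha^{N}(x)\in\Um{U}{\alpha}$ for some $N\ge0$. Using tidiness above, write $\alpha^N(x)=ab$ with $a\in\Up{U}{\alpha}$, $b\in\Um{U}{\alpha}$; a standard argument shows that in fact $\alpha^N(x)\in\Um{U}{\alpha}$ forces (after adjusting) the $\Up{U}{\alpha}$-component to lie in $\Uz{U}{\alpha}$ — concretely, since $\alpha^k(\alpha^N(x))\in U$ for all $k\ge0$ and $\{\alpha^{-k}(b)\}$ need not stay in $U$, one examines $\alpha^{-k}(a)=\alpha^{-k}(\alpha^N(x))\alpha^{-k}(b)^{-1}$. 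The cleaner route: I would directly show $\Um{U}{\alpha}=\con(\alpha/\Uz{U}{\alpha})\cap U$ or something in this spirit, but the most economical path is to invoke Theorem~\ref{thm:convergence_mod_H} with $H=\Uz{U}{\alpha}$ (which is compact and $\alpha$-stable, and contains $\nub(\alpha)$ since $\nub(\alpha)$ lies in every tidy subgroup and is $\alpha$-stable). One shows $x\in\Umm{U}{\alpha}$ implies $\alpha^n(x)\to\id\bmod\Uz{U}{\alpha}$: indeed $\alpha^N(x)\in\Um{U}{\alpha}$ means $\{\alpha^n(x)\}_{n\ge N}\subseteq\Um{U}{\alpha}$, and iterating $\alpha$ pushes the accumulation points into $\bigcap_k\alpha^k(\Um{U}{\alpha})=\Uz{U}{\alpha}$; more precisely any cluster point of $\{\alpha^n(x)\}$ lies in $\bigcap_{k}\alpha^k(\Um{U}{\alpha})=\Uz{U}{\alpha}$, and since $\Um{U}{\alpha}$ is compact this gives convergence modulo $\Uz{U}{\alpha}$. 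Hence $x\in\con(\alpha/\Uz{U}{\alpha})=\con(\alpha)\Uz{U}{\alpha}$ by Theorem~\ref{thm:convergence_mod_H}.

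The main obstacle I anticipate is the claim that every cluster point of $\{\alpha^n(x)\}_{n\ge N}$ (for $x$ with $\alpha^N(x)\in\Um{U}{\alpha}$) lies in $\Uz{U}{\alpha}=\bigcap_{k\in\mathbb Z}\alpha^k(U)$: one inclusion direction (into $\bigcap_{k\ge0}\alpha^k(\Um{U}{\alpha})$) is clear because $\Um{U}{\alpha}$ is $\alpha$-invariant-downward and compact, but seeing that such cluster points are also fixed under $\alpha^{-1}$, i.e.\ lie in $\Up{U}{\alpha}$, requires care — it follows since if $y=\lim\alpha^{n_j}(x)$ then $\alpha^{-1}(y)=\lim\alpha^{n_j-1}(x)$ is again a cluster point, so the cluster set is $\alpha$-stable, compact, and contained in $\Um{U}{\alpha}$, hence contained in $\bigcap_{k\in\mathbb Z}\alpha^k(\Um{U}{\alpha})\subseteq\Uz{U}{\alpha}$. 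Assembling this with Theorem~\ref{thm:convergence_mod_H} completes the argument; I would also double-check at the outset that the identity is independent of the choice of tidy $U$, which is automatic since both sides of the claimed equality only involve $\Uz{U}{\alpha}$ and $\con(\alpha)$, and $\Umm{U}{\alpha}$ is intrinsic once $U$ is tidy.
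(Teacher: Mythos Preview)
The paper does not supply its own proof of this proposition; it is quoted from \cite{ContractionB} without argument, so there is nothing to compare your approach against directly. Your core strategy---showing $\Umm{U}{\alpha}\subseteq\con(\alpha/\Uz{U}{\alpha})$ and then invoking Theorem~\ref{thm:convergence_mod_H}---is correct and efficient, and the cluster-point argument you sketch for the inclusion is sound.

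That said, there is one genuine issue: you open by assuming $U$ may be taken tidy for $\alpha$, but the proposition is stated for arbitrary $U\in\COS(G)$, and neither side of the claimed equality is invariant under replacing $U$ by a tidy subgroup (both $\Umm{U}{\alpha}$ and $\Uz{U}{\alpha}$ depend on $U$). Fortunately your main argument does not actually use tidiness. The inclusion $\con(\alpha)\Uz{U}{\alpha}\subseteq\Umm{U}{\alpha}$ holds because $\Umm{U}{\alpha}$ is an increasing union of groups $\alpha^{-n}(\Um{U}{\alpha})$ (using only $\alpha(\Um{U}{\alpha})\subseteq\Um{U}{\alpha}$), and for the reverse inclusion your route through Theorem~\ref{thm:convergence_mod_H} needs only that $\Uz{U}{\alpha}$ is compact and $\alpha$-stable---the remark that $\nub(\alpha)\leq\Uz{U}{\alpha}$ is irrelevant, since Theorem~\ref{thm:convergence_mod_H} has no such hypothesis. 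In fact the convergence $\alpha^n(x)\to\id\bmod\Uz{U}{\alpha}$ can be seen more directly than via cluster points: once $\alpha^N(x)\in\Um{U}{\alpha}$, one has $\alpha^{N+k}(x)\in\alpha^k(\Um{U}{\alpha})$ for all $k\geq0$, and these compact sets decrease with intersection $\bigcap_{k\geq0}\alpha^k(\Um{U}{\alpha})=\Uz{U}{\alpha}$, so any open neighbourhood of $\Uz{U}{\alpha}$ eventually contains them. You should drop the tidiness assumption, excise the abandoned tidy-above factorisation, and streamline along these lines.
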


Note that $\Uz{U}{\alpha}$ is contained in $\parb(\alpha)$. Hence Lemma~\ref{lem:nub_normalised} implies that $\Uz{U}{\alpha}$ normalises $\con(\alpha)$ and that the product is automatically a group.

\begin{proposition}[\cite{ContractionB} Proposition~3.21]
\label{prop:restriction_of_scale}
Let $\alpha\in\Aut(G)$ and suppose that $U$ is tidy for $\alpha$. Then $\overline{\con(\alpha^{-1})}$ and $\Upp{U}{\alpha}$ are stable under $\alpha$ and
$$
s(\alpha) = s(\alpha|_{H}) = \Delta(\alpha|_{H})
$$
for every closed $\alpha$-stable group $H$ between $\overline{\con(\alpha^{-1})}$ and $\Upp{U}{\alpha}$. 
\endproof
\end{proposition}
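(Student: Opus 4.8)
The plan is to settle the two $\alpha$-stability assertions and check that admissible $H$ exist, then fix such an $H$, set $W=U\cap H$, and deduce $\Delta(\alpha|_{H})=s(\alpha|_{H})$ from the fact that $W$ is contracted by $\alpha^{-1}|_{H}$, and $s(\alpha)=s(\alpha|_{H})$ from a product decomposition of $\alpha(\Up{U}{\alpha})$ in which the hypothesis $\con(\alpha^{-1})\le H$ provides coset representatives lying in $H$.

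\emph{Stability and nonvacuity.} Since $\alpha$ is a homeomorphism commuting with all its powers, $\con(\alpha^{-1})$ is $\alpha$-invariant, hence so is $\overline{\con(\alpha^{-1})}$. For $\Upp{U}{\alpha}=\bigcup_{n\ge0}\alpha^{n}(\Up{U}{\alpha})$ I would note that $\Up{U}{\alpha}=\bigcap_{n\ge0}\alpha^{n}(U)\le\bigcap_{n\ge1}\alpha^{n}(U)=\alpha(\Up{U}{\alpha})$, so this is an increasing union of subgroups, hence a subgroup, and patently $\alpha$-stable; and since $U$ is tidy for $\alpha^{-1}$ by Proposition~\ref{prop:tidy_conditions}\eqref{prop:tidy_conditions2} and $\Upp{U}{\alpha}=\Umm{U}{\alpha^{-1}}$, it is closed by condition~\textbf{TB} for $\alpha^{-1}$ in Theorem~\ref{thm:characterise_minimizing}. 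Proposition~\ref{prop:U--^con} applied to $\alpha^{-1}$ gives $\Upp{U}{\alpha}=\con(\alpha^{-1})\Uz{U}{\alpha}$ (using $\Uz{U}{\alpha^{-1}}=\Uz{U}{\alpha}$), whence $\con(\alpha^{-1})\le\Upp{U}{\alpha}$ and, by closedness, $\overline{\con(\alpha^{-1})}\le\Upp{U}{\alpha}$, so the family of $H$ considered in the statement is nonempty.

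\emph{The identity $\Delta(\alpha|_{H})=s(\alpha|_{H})$.} Fix $H$ closed and $\alpha$-stable with $\overline{\con(\alpha^{-1})}\le H\le\Upp{U}{\alpha}$, and put $W=U\cap H\in\COS(H)$. I would first show $W\le\alpha(W)$: given $x\in W\le\Upp{U}{\alpha}$, choose $n\ge0$ with $x\in\alpha^{n}(\Up{U}{\alpha})\subseteq\alpha^{n}(U)$; then $x\in U\cap\alpha^{n}(U)$, which by Equation~\eqref{eq:tidy_orbit} equals $\bigcap_{k=0}^{n}\alpha^{k}(U)\subseteq\alpha(U)$ when $n\ge1$, while if $n=0$ then already $x\in\Up{U}{\alpha}\subseteq\alpha(U)$; since $H$ is $\alpha$-stable, $x\in\alpha(U)\cap H=\alpha(W)$. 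Then Proposition~\ref{prop:tidy_conditions}\eqref{prop:tidy_conditions1}, applied to $\alpha^{-1}|_{H}\in\Aut(H)$ and $W$, shows $W$ is tidy for $\alpha^{-1}|_{H}$, hence for $\alpha|_{H}$ by part~\eqref{prop:tidy_conditions2}; and $s(\alpha^{-1}|_{H})=[\alpha^{-1}(W):W]=1$ because $\alpha^{-1}(W)\le W$, so $\Delta(\alpha|_{H})=s(\alpha|_{H})$ by Theorem~\ref{thm:scale_properties}\eqref{thm:scale_properties3}.

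\emph{The identity $s(\alpha)=s(\alpha|_{H})$, and the main obstacle.} As $W$ is tidy for $\alpha|_{H}$ and the analogue of $\Up{U}{\alpha}$ for $\alpha|_{H}$ is $\bigcap_{m\ge0}\alpha^{m}(W)=\Up{U}{\alpha}\cap H$, Theorem~\ref{thm:characterise_minimizing} gives $s(\alpha|_{H})=[\alpha(\Up{U}{\alpha}\cap H):\Up{U}{\alpha}\cap H]$ and $s(\alpha)=[\alpha(\Up{U}{\alpha}):\Up{U}{\alpha}]$, so it suffices to equate these two indices. The one step with real content is the product decomposition $\alpha(\Up{U}{\alpha})=\bigl(\alpha(\Up{U}{\alpha})\cap H\bigr)\Up{U}{\alpha}$, and here the hypothesis $\con(\alpha^{-1})\le H$ enters: for $y\in\alpha(\Up{U}{\alpha})\subseteq\Upp{U}{\alpha}=\con(\alpha^{-1})\Uz{U}{\alpha}$ write $y=cz$ with $c\in\con(\alpha^{-1})\le H$ and $z\in\Uz{U}{\alpha}\le\Up{U}{\alpha}\cap\alpha(\Up{U}{\alpha})$; then $c=yz^{-1}$ lies in the group $\alpha(\Up{U}{\alpha})$ and in $H$, which gives the inclusion $\subseteq$, while $\supseteq$ is immediate from $\Up{U}{\alpha}\le\alpha(\Up{U}{\alpha})$. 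Counting left cosets, $[\alpha(\Up{U}{\alpha}):\Up{U}{\alpha}]=[\alpha(\Up{U}{\alpha})\cap H:\alpha(\Up{U}{\alpha})\cap H\cap\Up{U}{\alpha}]=[\alpha(\Up{U}{\alpha})\cap H:\Up{U}{\alpha}\cap H]$, and $\alpha(\Up{U}{\alpha})\cap H=\alpha(\Up{U}{\alpha}\cap H)$ since $H$ is $\alpha$-stable, which completes the argument. I expect this product decomposition — together with the routine but essential matching of $\bigcap_{m\ge0}\alpha^{m}(W)$ with $\Up{U}{\alpha}\cap H$, which is how the bound $H\le\Upp{U}{\alpha}$ is fed into the scale computation — to be the only genuinely delicate point; the rest is bookkeeping with the tidy-subgroup identities recalled above.
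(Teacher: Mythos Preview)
The paper does not give its own proof of this proposition: it is quoted from \cite{ContractionB} and closed with \verb|\endproof| immediately after the statement. So there is nothing to compare against, and your argument must stand on its own.

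Your proof is correct. The stability of $\overline{\con(\alpha^{-1})}$ and $\Upp{U}{\alpha}$ and the inclusion $\overline{\con(\alpha^{-1})}\le\Upp{U}{\alpha}$ via Proposition~\ref{prop:U--^con} are handled cleanly. The key step --- showing $W:=U\cap H\le\alpha(W)$ via $U\cap\alpha^{n}(U)=\bigcap_{k=0}^{n}\alpha^{k}(U)$ --- is exactly right and immediately gives tidiness of $W$ for $\alpha|_{H}$ and $s(\alpha^{-1}|_{H})=1$, hence $\Delta(\alpha|_{H})=s(\alpha|_{H})$. For $s(\alpha)=s(\alpha|_{H})$, the product decomposition $\alpha(\Up{U}{\alpha})=\bigl(\alpha(\Up{U}{\alpha})\cap H\bigr)\Up{U}{\alpha}$, obtained by writing elements of $\Upp{U}{\alpha}=\con(\alpha^{-1})\Uz{U}{\alpha}$ and using $\con(\alpha^{-1})\le H$, is the right idea; the coset-counting bijection $C/(C\cap B)\to A/B$ with $A=\alpha(\Up{U}{\alpha})$, $B=\Up{U}{\alpha}$, $C=A\cap H$ is valid because $C$ is a subgroup and $A=CB$, and $\alpha$-stability of $H$ gives $A\cap H=\alpha(\Up{U}{\alpha}\cap H)$. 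One small remark: since you have already shown $W\le\alpha(W)$, in fact $\bigcap_{m\ge0}\alpha^{m}(W)=W$, so $\Up{U}{\alpha}\cap H=W$; this is consistent with your computation but worth noting, as it makes transparent that the hypothesis $H\le\Upp{U}{\alpha}$ forces $U\cap H\le\Up{U}{\alpha}$.
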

%%%%%%
In particular, if $s(\alpha)>1$, then $\overline{\con(\alpha^{-1})}$ is not compact. Also see \cite[Proposition~3.24]{ContractionB}. 

It follows from Theorem \ref{thm:nub} that $\nub(\alpha) = \nub(\alpha^{-1})$ which, combined with Definition \ref{defn:paretc}, implies the next characterisation of $\nub(\alpha)$.
%%%%%%
\begin{proposition}
\label{prop:con&nub}
Let $\alpha\in\Aut(G)$. Then $\nub(\alpha) = \overline{\con(\alpha)}\cap \overline{\con(\alpha^{-1})}$. 
\end{proposition} 
%%%%%%%
Note, however, that it may happen that ${\con(\alpha)}\cap {\con(\alpha^{-1})}$ is trivial while $\nub(\alpha)$ is not, see~\cite[Proposition 5.4 and Example 5.1]{nub_2014}.

The preceding results imply yet another characterisation of the nub, which will be useful later.
%%%%%
\begin{proposition}
\label{prop:nub_smallest}
Let $\alpha\in\Aut(G)$. Then $\con(\alpha/\nub(\alpha)) = \overline{\con(\alpha)}$ and, for any $C\leq G$ compact and $\alpha$-stable, $\con(\alpha/C)$ is closed if and only if $\nub(\alpha)\leq C$.%%Check proof for $H$ closed and not compact in \cite{ContractionB}.
\end{proposition}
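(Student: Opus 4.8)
The statement has two parts, and the plan is to reduce both to a statement about the set product $\con(\alpha)C$ via Theorem~\ref{thm:convergence_mod_H}. Write $N=\nub(\alpha)$, which is compact by Theorem~\ref{thm:nub}. Theorem~\ref{thm:convergence_mod_H} gives $\con(\alpha/C)=\con(\alpha)C$ for every compact $\alpha$-stable $C$, and in particular $\con(\alpha/N)=\con(\alpha)N$, so the first assertion becomes the identity $\con(\alpha)N=\overline{\con(\alpha)}$ of subsets of $G$, which I would prove by the two inclusions. The inclusion $\con(\alpha)N\subseteq\overline{\con(\alpha)}$ is immediate, since $N\subseteq\overline{\con(\alpha)}$ by Definition~\ref{defn:paretc} and $\overline{\con(\alpha)}$ is a group containing $\con(\alpha)$.

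For the reverse inclusion I would fix a $U\in\COS(G)$ tidy for $\alpha$ and invoke Proposition~\ref{prop:U--^con}, which identifies $\Umm{U}{\alpha}$ with $\con(\alpha)\Uz{U}{\alpha}$, where $\Uz{U}{\alpha}=\bigcap_{n\in\mathbb{Z}}\alpha^n(U)$ is compact and $\alpha$-stable; since $U$ is tidy below, $\Umm{U}{\alpha}$ is closed, so $\overline{\con(\alpha)}\subseteq\con(\alpha)\Uz{U}{\alpha}$. Given $x\in\overline{\con(\alpha)}$, write $x=cu$ with $c\in\con(\alpha)$ and $u\in\Uz{U}{\alpha}$; then $u=c^{-1}x\in\overline{\con(\alpha)}$, while $u\in\lev(\alpha)$ because $\Uz{U}{\alpha}$ is compact and $\alpha$-stable (Proposition~\ref{prop:par_closed}), so $u\in\overline{\con(\alpha)}\cap\lev(\alpha)=N$ and hence $x\in\con(\alpha)N$. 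This completes the first assertion.

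For the second assertion, the direction ``$N\leq C\Rightarrow\con(\alpha/C)$ closed'' is short: by the first assertion $\con(\alpha)C=\con(\alpha)NC=\overline{\con(\alpha)}C$, which is closed, being the product of a closed set and a compact set. The substantive direction is the converse. If $\con(\alpha/C)=\con(\alpha)C$ is closed then $\overline{\con(\alpha)}\subseteq\con(\alpha)C$, hence $N\subseteq\overline{\con(\alpha)}\subseteq\con(\alpha/C)$, i.e.\ $\alpha^n(v)\to\id\modulo C$ for every $v\in N$; I want to conclude $N\leq C$. Suppose not; then $N_0:=N\cap C$ is a proper closed subgroup of the compact group $N$, and since $N$ is $\alpha$-stable the hypothesis forces $\alpha^n(v)$ to lie eventually in every relatively open $V\subseteq N$ containing $N_0$, for every $v\in N$. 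Now bring in the normalized Haar measure $\mu$ of $N$: it is invariant under the automorphism $\alpha|_N$, and since $N_0$ is a proper closed subgroup, $\mu(N_0)<1$, so by outer regularity there is an open $V$ with $N_0\subseteq V\subseteq N$ and $\mu(V)<1$. Then $N=\bigcup_{K\geq0}\bigcap_{n\geq K}\alpha^{-n}(V)$, an increasing union of sets each of $\mu$-measure at most $\mu(\alpha^{-K}(V))=\mu(V)<1$, which forces the contradiction $\mu(N)<1$.

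The step I expect to be the real obstacle is precisely this last measure-theoretic argument in the converse of the second assertion. A purely topological attempt — separating $N_0$ from $N$ by a proper, relatively open, $\alpha$-stable subgroup of $N$ — is doomed, because by Theorem~\ref{thm:nub} the nub has \emph{no} proper relatively open $\alpha$-stable subgroup at all; it is exactly here that ergodicity of $\alpha$ on $\nub(\alpha)$, in the guise of $\alpha$-invariance of Haar measure on $N$, must be used. Beyond that, the only point needing care is the routine check that ``$\alpha^n(v)\to\id\modulo C$ in $G$'' together with $v\in N$ really yields ``$\alpha^n(v)\in V$ eventually'' for $V$ open in $N$ with $N_0\subseteq V$: one extends $V$ to $\tilde V$ open in $G$ and observes that $\tilde V\cup(G\setminus N)$ is an open neighbourhood of $C$, so $\alpha^n(v)$ lies in it, hence in $V$, for large $n$.
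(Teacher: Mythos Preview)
Your proof is correct. For the first assertion and the forward implication of the second, you and the paper do the same thing: apply Theorem~\ref{thm:convergence_mod_H} and the identity $\overline{\con(\alpha)}=\con(\alpha)\nub(\alpha)$, which you derive cleanly from Proposition~\ref{prop:U--^con} and the definition of the nub.

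For the converse implication your route genuinely differs from the paper's. The paper works inside the closed group $H:=\con(\alpha/C)$: it picks $U\in\COS(H)$ tidy for $\alpha|_H$ with $C\leq U$, observes that $\bigcup_{m\leq0}\alpha^m(U)=H$ and $\bigcap_{n\geq0}\alpha^n(U)=C$, and then uses compactness and $\alpha$-stability of $\nub(\alpha)\leq H$ to squeeze $\nub(\alpha)$ into $C$. Your argument instead stays inside $N=\nub(\alpha)$ and exploits $\alpha$-invariance of its Haar measure, which is exactly the ergodic characterisation of the nub in Theorem~\ref{thm:nub} put to work. The paper's approach stays entirely within the tidy-subgroup toolkit and needs no measure theory, at the cost of invoking tidiness for the restricted automorphism $\alpha|_H$; your approach avoids restricting $\alpha$ or constructing any new tidy subgroup, and is self-contained once one accepts outer regularity and invariance of Haar measure on $N$. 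Both are short; yours makes the role of ergodicity of $\alpha$ on its nub pleasantly explicit.
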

%%%%%
\begin{proof}
Suppose that $C$ is compact, $\alpha$-stable and contains $\nub(\alpha)$. Then Theorem \ref{thm:convergence_mod_H} and the fact that $\overline{\con(\alpha)} = \con(\alpha)\nub(\alpha)$, which is a case of Proposition~\ref{prop:U--^con}, imply that
$$
\con(\alpha/C) =  \con(\alpha)C= \overline{\con(\alpha)}C,
$$ 
and hence that $\con(\alpha/C)$ is closed. In the particular case that $C = \nub(\alpha)$, the equation reduces to $\con(\alpha/\nub(\alpha))=\overline{\con(\alpha)}$.

Conversely, suppose that $C$ is such that $\con(\alpha/C)$ is closed. Then, for any $U\in \COS(\con(\alpha/C))$ containing $C$ and tidy for $\alpha|_{\con(\alpha/C)}$,  we have
$$
\text{(i)}\ \bigcup_{m\leq0}\alpha^m(U) = \con(\alpha/C) \text{ and (ii)}\ \bigcap_{n\geq0}\alpha^n(U) = C.
$$
We also have that $\overline{\con(\alpha)}\leq \con(\alpha/C)$ and, in particular, $\nub(\alpha)\leq \con(\alpha/C)$. Then, since $\nub(\alpha)$ is compact, (i) implies that there is $m\leq0$ such that $\nub(\alpha)\leq \alpha^m(U)$. Hence, since $\nub(\alpha)$ is $\alpha$-stable,  $\nub(\alpha)\leq \bigcap_{n\geq m}\alpha^n(U)$. Then (ii) implies that $\nub(\alpha)\leq C$ as claimed.
\end{proof}

The connection between the nub and contraction group is highlighted more clearly in the special case when $\nub(\alpha)$ is trivial, as follows.
%%%%%%%%
\begin{proposition}[\cite{ContractionB} Theorem~3.32]
\label{prop:con_closed}
Let $\alpha\in\Aut(G)$. Then the following are equivalent:
\begin{enumerate}
\item $\con(\alpha)$ is closed;
\item $\con(\alpha^{-1})$ is closed;
\item $\nub(\alpha)$ is trivial;
\item every neighbourhood of $\id_G$ contains a subgroup tidy for $\alpha$;
\item every $U\in\COS(G)$ that is tidy above for $\alpha$ is tidy.
\end{enumerate}
\end{proposition}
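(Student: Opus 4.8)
The plan is to use condition~(3), that $\nub(\alpha)$ is trivial, as a hub and to establish $(1)\Leftrightarrow(3)$, $(2)\Leftrightarrow(3)$, $(3)\Leftrightarrow(4)$ and $(3)\Leftrightarrow(5)$ separately; most of the needed machinery is already in place. For $(1)\Leftrightarrow(3)$, I would apply Proposition~\ref{prop:nub_smallest} with $C$ equal to the trivial subgroup (which is compact and $\alpha$-stable): since convergence to $\id_G$ modulo $\{\id_G\}$ is just ordinary convergence, Definition~\ref{defn:relative_contraction} gives $\con(\alpha/\{\id_G\}) = \con(\alpha)$, and the proposition then reads ``$\con(\alpha)$ is closed if and only if $\nub(\alpha)\leq\{\id_G\}$'', i.e.\ if and only if $\nub(\alpha)$ is trivial. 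Applying the same to $\alpha^{-1}$ and using $\nub(\alpha^{-1}) = \nub(\alpha)$ (noted after Theorem~\ref{thm:nub}) gives $(2)\Leftrightarrow(3)$.

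For $(3)\Rightarrow(4)$: given an open $O\ni\id_G$, van Dantzig's theorem \cite{vD_CO} supplies $V\in\COS(G)$ with $V\subseteq O$; since $\nub(\alpha)$ is trivial it lies in $V$, so the second assertion of Proposition~\ref{prop:nub_in} gives $n\geq0$ with $\bigcap_{k=0}^{n}\alpha^{k}(V)$ tidy for $\alpha$, and this subgroup lies in $V\subseteq O$. For $(4)\Rightarrow(3)$: by Theorem~\ref{thm:nub}, $\nub(\alpha)$ is the intersection of all subgroups tidy for $\alpha$; if every neighbourhood of $\id_G$ contains such a subgroup then this intersection lies in every neighbourhood of $\id_G$, hence is trivial. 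For $(3)\Rightarrow(5)$: if $U\in\COS(G)$ is tidy above, then $\nub(\alpha)\leq U$ trivially, so $U$ is tidy below by the first assertion of Proposition~\ref{prop:nub_in}, hence tidy.

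The remaining implication $(5)\Rightarrow(3)$ carries the real content, and I would argue it by contraposition. If $\nub(\alpha)$ is non-trivial, pick $g\in\nub(\alpha)\setminus\{\id_G\}$ and, by van Dantzig's theorem, a $V\in\COS(G)$ with $g\notin V$, so $\nub(\alpha)\not\leq V$. Writing $V_m := \bigcap_{k=0}^{m}\alpha^{k}(V)$, the tidying-above construction of \cite{Structure94} --- passing to $V_n$ for $n$ large enough that the indices $[V_m:V_{m+1}]$ have stabilised --- yields $V_n\in\COS(G)$ that is tidy above for $\alpha$ and satisfies $V_n\subseteq V$, so still $\nub(\alpha)\not\leq V_n$; by the first assertion of Proposition~\ref{prop:nub_in}, $V_n$ is then not tidy below, hence not tidy, contradicting~(5). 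The main obstacle is exactly this appeal to the tidying-above procedure, which is not restated in the excerpt: condition~(5) only has teeth once one can manufacture tidy-above subgroups inside prescribed neighbourhoods of the identity. If one prefers to stay within the excerpt one can instead deduce $(5)\Rightarrow(4)$ in the same way --- a tidy-above subgroup inside a given $O$ is tidy by~(5) --- and then invoke $(4)\Rightarrow(3)$, but the construction is needed in either route.
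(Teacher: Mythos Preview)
Your proof is correct. The paper does not supply its own proof of this proposition; it is quoted as \cite[Theorem~3.32]{ContractionB} and stated without argument. Your approach, using condition~(3) as a hub, is efficient because it exploits results the paper has already developed: Proposition~\ref{prop:nub_smallest} gives $(1)\Leftrightarrow(3)$ and $(2)\Leftrightarrow(3)$ directly, and Proposition~\ref{prop:nub_in} together with Theorem~\ref{thm:nub} handles the equivalences with~(4) and~(5). The one external ingredient you flag --- that for any $V\in\COS(G)$ some finite intersection $\bigcap_{k=0}^{n}\alpha^{k}(V)$ is tidy above --- is exactly Step~1 of the tidying procedure, which the paper recalls in the discussion following Remark~\ref{examp:tidy_normalized}; so your argument for $(5)\Rightarrow(3)$ is on solid ground within the paper's framework. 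Note, incidentally, that since Proposition~\ref{prop:nub_smallest} appears \emph{after} Proposition~\ref{prop:con_closed} in \cite{ContractionB} (and in some treatments is derived from it), the original proof in \cite{ContractionB} proceeds differently, working more directly with the definitions; but within the logical structure of this paper your route is sound and arguably cleaner.
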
 

 \begin{remark}
 \label{rem:closed_contraction}
The equivalent conditions in Proposition~\ref{prop:con_closed} are satisfied, for instance, by automorphisms of $p$-adic Lie groups and are not satisfied by hyperbolic elements of the automorphism group of a regular tree. 
 
When a contraction group is closed, it is itself a locally compact group when equipped with the subspace topology. Although beyond the scope of these notes, the structure of such groups can be described in great detail. In~\cite{ContractionG}, a composition series that breaks the group down into `simple' contraction group factors is described. It is further shown that every such contraction group is the direct sum of a torsion subgroup and a divisible subgroup, and that the divisible subgroup is a sum of nilpotent $p$-adic Lie groups. In~\cite{ContractionG2021}, it is shown that there are uncountably many nilpotent torsion contraction groups and, in~\cite{GlocknerWillis+2021}, it is shown that every locally pro-$p$ contraction group is nilpotent. 
 \end{remark}

\begin{remark}
\label{rem:tits_core}
The \emph{Tits core}, $G^\dagger$,  of a totally disconnected, locally compact group $G$ is the subgroup generated by $\{\overline{\con(x)} \mid x\in G\}$. This notion is defined in~\cite{CapReidW_Titscore} and extends a definition by J.~Tits in the case of algebraic groups. It may be shown that any dense subgroup of $G$ that is normalised by $G^\dagger$ must contain $G^\dagger$. In particular, if $G$ is topologically simple and $G^\dagger$ non-trivial, then $G^\dagger$ is an abstractly simple group.
\end{remark} 

\begin{remark}
\label{rem:contraciton_endo} 
It is shown by Tim Bywaters, Stephan Tornier and Helge Gl\"ockner in~\cite{ByGlTo} that many of the above results concerning contraction groups for automorphisms extend to endomorphisms.
\end{remark}
%%%%%%%
%%%%%%%

The next results can mostly be found, at least implicitly, in \cite{Structure94} but the conclusions are stated more clearly here. Part~\ref{prop:acc.pt_in_lev3} strengthens \cite[Proposition~3.4]{ContractionB}. The statements are in terms of $\bigcap_{l\in\mathbb{N}} \overline{\{\alpha^k(y)\}}_{k\geq l}$, which is non-empty if and only if the set $\{\alpha^k(y)\}_{k\in \mathbb{N}}$ is finite or has an accumulation point. 
%%%%%%%
\begin{proposition}
\label{prop:acc.pt_in_lev}
Suppose that $y\in G$. Then:
\begin{enumerate} 
\item $y\in \parb(\alpha)$ if and only if $\bigcap_{l\in\mathbb{N}} \overline{\{\alpha^k(y)\}}_{k\geq l}\ne \emptyset$; 
\label{prop:acc.pt_in_lev1}
\item $\bigcap_{l\in\mathbb{N}} \overline{\{\alpha^k(y)\}}_{k\geq l}\subseteq \lev(\alpha)$;   
\label{prop:acc.pt_in_lev2}
\item $\parb(\alpha) = \lev(\alpha)\con(\alpha)$ and indeed, if $U$ is tidy for $\alpha$ and $y\in \parb(\alpha)$, then $y = xuz$ with $x\in \bigcap_{l\in\mathbb{N}} \overline{\{\alpha^k(y)\}}_{k\geq l}$, $u\in \Uz{U}{\alpha}$ and $z\in\con(\alpha)$.  
\label{prop:acc.pt_in_lev3}
\end{enumerate}
\end{proposition}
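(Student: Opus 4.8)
The plan is to prove the three assertions in turn, writing $A := \bigcap_{l\in\mathbb{N}}\overline{\{\alpha^k(y)\}}_{k\geq l}$ for the set in question (the cluster set of the forward orbit of $y$) and using Proposition~\ref{prop:par_closed} throughout to pass between $y\in\parb(\alpha)$ and compactness of $K := \overline{\{\alpha^n(y)\}}_{n\geq 0}$. For Part~\ref{prop:acc.pt_in_lev1} I would just observe that $x\in A$ exactly when every neighbourhood of $x$ meets $\{\alpha^k(y):k\geq l\}$ for every $l$, i.e. exactly when $x$ is an accumulation point of the sequence $(\alpha^n(y))_{n\in\mathbb{N}}$; so $A\neq\emptyset$ is equivalent to $y\in\parb(\alpha)$ by Definition~\ref{defn:paretc}. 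When this holds, Proposition~\ref{prop:par_closed} additionally makes $K$ compact, a fact needed below.

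For Part~\ref{prop:acc.pt_in_lev2} the idea is that $A$ is a compact, $\alpha$-stable subset of $G$, and that any such set lies in $\lev(\alpha)$. Assuming $A\neq\emptyset$ (else there is nothing to prove), Part~\ref{prop:acc.pt_in_lev1} gives $y\in\parb(\alpha)$, so $K$ is compact and $A$, being a closed subset of $K$, is compact too. Applying the homeomorphism $\alpha$ to each tail-closure and reindexing the decreasing family shows $\alpha(A)=A$, hence $\alpha^n(x)\in A\subseteq K$ for every $x\in A$ and every $n\in\mathbb{Z}$. Thus $\{\alpha^n(x)\}_{n\geq 0}$ and $\{\alpha^{-n}(x)\}_{n\geq 0}$ both have compact closure, and Proposition~\ref{prop:par_closed}, applied to $\alpha$ and to $\alpha^{-1}$, yields $x\in\parb(\alpha)\cap\parb(\alpha^{-1})=\lev(\alpha)$.

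Part~\ref{prop:acc.pt_in_lev3} is where the work is. The inclusion $\lev(\alpha)\con(\alpha)\subseteq\parb(\alpha)$ is immediate: $\lev(\alpha)\subseteq\parb(\alpha)$ by definition, $\con(\alpha)\subseteq\parb(\alpha)$ because a convergent orbit has an accumulation point, and $\parb(\alpha)$ is a subgroup by Proposition~\ref{prop:par_closed}. For the reverse inclusion with the explicit factorisation, I would fix $U$ tidy for $\alpha$ and $y\in\parb(\alpha)$ and choose an accumulation point $x\in A$ (Part~\ref{prop:acc.pt_in_lev1}). The crucial move is that $xU$ is a neighbourhood of $x$, so $\alpha^n(y)\in xU$ for some $n\geq 0$; writing $\alpha^n(y)=xv$ with $v\in U$, the identity $\alpha^k(v)=\alpha^k(x)^{-1}\alpha^{k+n}(y)$ together with the $\alpha$-stability of $A$ places $\{\alpha^k(v)\}_{k\geq 0}$ inside the compact set $K^{-1}K$, so $v\in\parb(\alpha)$, and hence $v\in\Um{U}{\alpha}$ by the first part of Proposition~\ref{prop:tidy_criteria}. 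Applying $\alpha^{-n}$ gives $y=\alpha^{-n}(x)\,\alpha^{-n}(v)$ with $\alpha^{-n}(x)\in A$ and $\alpha^{-n}(v)\in\alpha^{-n}(\Um{U}{\alpha})\subseteq\Umm{U}{\alpha}$; by Proposition~\ref{prop:U--^con} and the fact, noted after it, that $\Uz{U}{\alpha}$ normalises $\con(\alpha)$, one has $\Umm{U}{\alpha}=\Uz{U}{\alpha}\con(\alpha)$, so $\alpha^{-n}(v)=uz$ with $u\in\Uz{U}{\alpha}$ and $z\in\con(\alpha)$. Setting $x':=\alpha^{-n}(x)\in A$ then gives $y=x'uz$, and since $x'\in\lev(\alpha)$ by Part~\ref{prop:acc.pt_in_lev2} and $u\in\Uz{U}{\alpha}\subseteq\lev(\alpha)$, this also proves $\parb(\alpha)\subseteq\lev(\alpha)\con(\alpha)$.

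The step I expect to be the main obstacle is the placement argument at the start of Part~\ref{prop:acc.pt_in_lev3}: arranging that the remainder $v=x^{-1}\alpha^n(y)$ lands inside $U$ (achieved by exploiting that a cluster point $x$ has $xU$ as a neighbourhood, so the orbit must enter $xU$) and then checking that $v$ is itself parabolic, so that the criterion of Proposition~\ref{prop:tidy_criteria} applies; once $v\in\Um{U}{\alpha}$ is in hand, Proposition~\ref{prop:U--^con} delivers the $\Uz{U}{\alpha}\con(\alpha)$-decomposition almost for free, with the only remaining subtlety being the order of the two factors, which is settled by $\Uz{U}{\alpha}\subseteq\parb(\alpha)$ normalising $\con(\alpha)$.
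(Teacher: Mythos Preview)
Your argument is correct, and for Part~\ref{prop:acc.pt_in_lev2} it is genuinely different from, and shorter than, the paper's. The paper proves Part~\ref{prop:acc.pt_in_lev2} by a direct tidy-subgroup computation: it picks $m<n$ with $\alpha^m(y),\alpha^n(y)\in xU$, writes $\alpha^n(y)=\alpha^m(y)u$, uses Proposition~\ref{prop:tidy_criteria} on the telescoping products to force $u\in\Um{U}{\alpha}$, and concludes that the forward orbit eventually lies in a finite union of $\Um{U}{\alpha}$-cosets. You bypass all of this by invoking Proposition~\ref{prop:par_closed} to get compactness of $K=\overline{\{\alpha^n(y)\}_{n\geq0}}$ at once, and then the two observations that $A$ is closed in $K$ and that $\alpha(A)=A$ immediately give $A\subseteq\lev(\alpha)$. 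This is cleaner, and the only cost is that you no longer have, as a by-product, the explicit statement $\alpha^m(y)\in x'\Um{U}{\alpha}$ that the paper's Part~\ref{prop:acc.pt_in_lev3} quotes from its Part~\ref{prop:acc.pt_in_lev2}. You compensate for this in Part~\ref{prop:acc.pt_in_lev3} by re-deriving the needed fact directly: using $\alpha$-stability of $A$ to show $\alpha^k(v)=\alpha^k(x)^{-1}\alpha^{k+n}(y)\in A^{-1}K\subseteq K^{-1}K$ and hence $v\in\Um{U}{\alpha}$ via Proposition~\ref{prop:tidy_criteria}. The paper's route buys a self-contained argument that does not lean on Proposition~\ref{prop:par_closed}; yours buys brevity and a more conceptual explanation of why $A\subseteq\lev(\alpha)$ (namely, $A$ is compact and $\alpha$-stable).
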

%%%%%%%
\begin{proof}
\ref{prop:acc.pt_in_lev1}. If $y\in \parb(\alpha)$, then $\{\alpha^n(y)\}_{n\in\mathbb{N}}$ is either finite or has an accumulation point and in both cases $\bigcap_{l\in\mathbb{N}} \overline{\{\alpha^k(y)\}}_{k\geq l}\ne \emptyset$. Conversely, suppose that $\bigcap_{l\in\mathbb{N}} \overline{\{\alpha^k(y)\}}_{k\geq l}$ is not empty and let $x$ be in this set. If $x$ appears infinitely often in $\{\alpha^k(y)\}_{k\in\mathbb{N}}$, then this set is in fact a periodic sequence and $y\in \parb(\alpha)$. Otherwise, $x$ is an accumulation point of $\{\alpha^k(y)\}_{k\in\mathbb{N}}$ and $y\in\parb(\alpha)$. 

\ref{prop:acc.pt_in_lev2}. If $\bigcap_{l\in\mathbb{N}} \overline{\{\alpha^k(y)\}}_{k\geq l}$ is empty there is nothing to prove and, if $\{\alpha^k(y)\}_{k\in\mathbb{N}}$ is finite, then $\alpha^k(y)\in\lev(\alpha)$ for each $k$. Suppose that $\bigcap_{l\in\mathbb{N}} \overline{\{\alpha^k(y)\}}_{k\geq l}\ne \emptyset$ and that $\{\alpha^k(y)\}_{k\in\mathbb{N}}$ is infinite. Since $\bigcap_{l\in\mathbb{N}} \overline{\{\alpha^k(y)\}}_{k\geq l}$ is invariant under $\langle\alpha\rangle$, it suffices to show that this set is compact. Consider $x\in \bigcap_{l\in\mathbb{N}} \overline{\{\alpha^k(y)\}}_{k\geq l}$, so that $x$ is an accumulation point of $\{\alpha^k(y)\}_{k\in\mathbb{N}}$, and let $U\in \COS(G)$ be tidy for $\alpha$. Then $xU$ is a neighbourhood of~$x$ and so there are $m<n$ and $u\in U$ with $\alpha^n(y) = \alpha^m(y)u$ in $xU$. We will show that $u\in U_{\alpha-}$. 

Applying $\alpha^{i(n-m)}$ with $i\geq1$ and substituting $\alpha^m(y)u$ for $\alpha^n(y)$ yields
\begin{equation*}
\alpha^{i(n-m)}(\alpha^m(y)) = \alpha^{m}(y)\alpha^j(u\alpha^{n-m}(u)\dots \alpha^{(i-1)(n-m)}(u)).
\end{equation*}
Then, applying $\alpha^j$ with $j\in\{0,\dots, n-m-1\}$, we have
\begin{equation}
\label{eq:acc.pt_in_lev}
\alpha^{i(n-m)+j}(\alpha^m(y)) = \alpha^{m+j}(y)\alpha^j(u\alpha^{n-m}(u)\dots \alpha^{(i-1)(n-m)}(u)).
\end{equation}
At least one of the subsets $C_j = \left\{\alpha^{i(n-m)+j}(\alpha^m(y))\right\}_{i\geq1}$ has $x$ as an accumulation point. Hence, by~\eqref{eq:acc.pt_in_lev}, $\left\{u\alpha^{n-m}(u)\dots \alpha^{(i-1)(n-m)}(u)\right\}_{i\geq1}$ has an accumulation point and it follows, by Proposition~\ref{prop:tidy_criteria}, that $u\in U_{\alpha-}$.

Equation~\eqref{eq:acc.pt_in_lev} implies that  
$$
\alpha^{i(n-m)+j}(\alpha^m(y)) \in \alpha^{m+j}(y)\Um{U}{\alpha}
$$ 
for all $i\geq1$ and $j\in\{0,\dots, n-m-1\}$. In other words
$$
\alpha^l(y) \in \left\{\alpha^{m}(y),\dots, \alpha^{n-1}(y)\right\} \Um{U}{\alpha},
$$
which is compact, for all $l\geq n$. Therefore $\bigcap_{l\in\mathbb{N}} \overline{\{\alpha^k(y)\}}_{k\geq l}$ is compact as claimed. 

\ref{prop:acc.pt_in_lev3}. It is clear that $\parb(\alpha)$ contains both $\con(\alpha)$ and $\lev(\alpha)$, and hence it contains their product because it is a group, by Proposition~\ref{prop:par_closed}. For the reverse inclusion, consider $y\in\parb(\alpha)$ and choose $x'\in \bigcap_{l\in\mathbb{N}} \overline{\{\alpha^k(y)\}}_{k\geq l}$. If $\bigcap_{l\in\mathbb{N}} \overline{\{\alpha^k(y)\}}_{k\geq l}$ is finite, then $y\in \lev(\alpha)$. Otherwise, $\bigcap_{l\in\mathbb{N}} \overline{\{\alpha^k(y)\}}_{k\geq l}$ is infinite and, as seen in \ref{prop:acc.pt_in_lev2}., there is $m\geq0$ such that $\alpha^m(y)\in x'\Um{U}{\alpha}$. Proposition~\ref{prop:U--^con} shows that $U_{\alpha-}\leq \Uz{U}{\alpha}\con(\alpha)$ and we have seen in part~\ref{prop:acc.pt_in_lev2}.~that $x'$ is in $\lev(\alpha)$. Hence $\alpha^{m}(y)\in x'\Uz{U}{\alpha}\con(\alpha) \subseteq \lev(\alpha)\con(\alpha)$. Finally, we have that $y \in x\Uz{U}{\alpha}\con(\alpha)$ with $x=\alpha^{-m}(x')\in \lev(\alpha)$ because $\con(\alpha)$ and $\lev(\alpha)$ are invariant under $\alpha$.
\end{proof}

The following result is a case of~\cite[Theorem 4.5]{Reid_DynamicsNYJ_2016}.
%%%%%
\begin{proposition}
	\label{prop:tidy_normalized}
Let $U\in\COS(G)$ be tidy for $\alpha$ and $L\leq G$ be compact and $\alpha$-stable. Then $U'= \bigcap_{x\in L} xUx^{-1}$ is tidy for $\alpha$.
	\end{proposition}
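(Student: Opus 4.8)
The goal is to show $U' = \bigcap_{x \in L} xUx^{-1}$ is tidy for $\alpha$, where $U$ is tidy for $\alpha$ and $L$ is compact and $\alpha$-stable. First I would note that $U'$ is a compact open subgroup: it is an intersection of compact open subgroups $xUx^{-1}$, and by compactness of $L$ together with continuity of conjugation this intersection is actually a \emph{finite} intersection (the family $\{xUx^{-1} : x \in L\}$ is locally constant in $x$ since $U$ is open, so $L$ is covered by finitely many cosets on which $xUx^{-1}$ is constant). Moreover $U'$ is normalised by $L$, and since $L$ is $\alpha$-stable, $\alpha(U') = \bigcap_{x \in L} \alpha(x)\alpha(U)\alpha(x)^{-1} = \bigcap_{y \in L} y\,\alpha(U)\,y^{-1}$; so $\alpha$ maps the $L$-conjugation-invariant ``core'' of $U$ to the $L$-conjugation-invariant core of $\alpha(U)$.

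Next I would verify \textbf{tidy below}. By Proposition~\ref{prop:nub_in}, it suffices to show $\nub(\alpha) \leq U'$, i.e. $\nub(\alpha) \leq xUx^{-1}$ for every $x \in L$. Now $L$ is compact and $\alpha$-stable, so $L \leq \lev(\alpha)$, and by Lemma~\ref{lem:nub_normalised}, $\lev(\alpha)$ normalises $\nub(\alpha)$; hence $x\,\nub(\alpha)\,x^{-1} = \nub(\alpha)$ for every $x \in L$. Since $U$ is tidy for $\alpha$, $\nub(\alpha) \leq U$ (by Theorem~\ref{thm:nub} or Proposition~\ref{prop:nub_in}), so $x^{-1}\nub(\alpha)x = \nub(\alpha) \leq U$, giving $\nub(\alpha) \leq xUx^{-1}$ for all $x\in L$, hence $\nub(\alpha) \leq U'$. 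Thus $U'$ is tidy below.

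The main obstacle is \textbf{tidy above}: showing $U' = \Up{U'}{\alpha}\Um{U'}{\alpha}$. The plan is to use that $U$ is tidy above and track how the decomposition interacts with $L$-conjugation. Given $w \in U'$, write $w = u_+ u_-$ with $u_+ \in \Up{U}{\alpha}$, $u_- \in \Um{U}{\alpha}$. The difficulty is that $u_+, u_-$ need not lie in $U'$. Here I would bring in Corollary~\ref{cor:convergence_mod_H} / Proposition~\ref{prop:nub_smallest}: since $\nub(\alpha) \leq U'$ and $U'$ is tidy below, the relative contraction group $\con(\alpha/\nub(\alpha)) = \overline{\con(\alpha)}$ is well-behaved, and one can push the elements $\Up{U}{\alpha}$, $\Um{U}{\alpha}$ into their $L$-conjugation-invariant parts by averaging/intersecting over $L$ while staying within the parabolic-and-contraction structure described in Proposition~\ref{prop:acc.pt_in_lev}. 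Concretely: for $w \in U'$ the whole $L$-orbit of the trajectory $\{\alpha^k(w)\}$ under conjugation stays in a compact set (as $L$ is compact and $\alpha$-stable), so applying Proposition~\ref{prop:tidy_criteria} to the elements $xwx^{-1}$ should let me produce a decomposition $w = w_+ w_-$ with $w_\pm$ having trajectories that remain in $U'$ in the appropriate one-sided sense --- i.e. $w_+ \in \Up{U'}{\alpha}$ and $w_- \in \Um{U'}{\alpha}$. I expect the cleanest route is to show directly that $\Up{U'}{\alpha} = \Up{U}{\alpha} \cap U'$ (and similarly for the minus part, using tidiness below of $U'$ already established), and then deduce $U' = \Up{U'}{\alpha}\Um{U'}{\alpha}$ from the factorisation in $U$ by verifying the factors can be chosen inside $U'$ --- the verification that the ``$+$'' factor can be taken $L$-conjugation-invariant is where the real work lies, and it is here that compactness of $L$ and the $\alpha$-stability of both $L$ and $\nub(\alpha)$ must be used in an essential way.
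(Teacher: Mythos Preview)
Your treatment of ``$U'$ is compact open'' and of tidiness below is fine and matches the paper's argument exactly: $L\leq\lev(\alpha)$ normalises $\nub(\alpha)$, so $\nub(\alpha)\leq U'$ and Proposition~\ref{prop:nub_in} applies.

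The gap is in tidiness above. You correctly locate the difficulty --- given $w\in U'$ with $w=u_+u_-$, $u_\pm\in\Upm{U}{\alpha}$, the factors need not lie in $U'$ --- but your proposal for resolving it (``averaging/intersecting over $L$'', invoking Proposition~\ref{prop:acc.pt_in_lev} and tracking trajectories) never becomes a concrete argument, and the hope of showing $\Up{U'}{\alpha}=\Up{U}{\alpha}\cap U'$ directly is not developed. The paper's proof supplies the missing idea: refine the decomposition using Proposition~\ref{prop:U--^con} to write $u=c_+wc_-$ with $c_\pm\in\con(\alpha^{\mp1})\cap U$ and $w\in\Uz{U}{\alpha}$, then use that $L\leq\lev(\alpha)$ normalises $\con(\alpha)$ and $\con(\alpha^{-1})$ (Lemma~\ref{lem:nub_normalised}). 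For each $x\in L$, write $xux^{-1}=c'_+w'c'_-$ in the same form; comparing the two decompositions gives
\[
(xc_-x^{-1})(c'_-)^{-1}=(xc_+wx^{-1})^{-1}c'_+w',
\]
where the left side lies in $\con(\alpha)$ (since $L$ normalises it) and the right side lies in $\parb(\alpha^{-1})$. Hence this element lies in $\nub(\alpha)\leq U$, forcing $xc_-x^{-1}\in U$. As $x\in L$ was arbitrary, $c_-\in U'$; similarly $c_+\in U'$, and then $w\in U'$ automatically. This comparison-of-decompositions trick, pivoting on $\con(\alpha)\cap\parb(\alpha^{-1})=\nub(\alpha)$, is the essential step you are missing.
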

%%%%%%
\begin{proof}
That $U'$ is compact and open holds because $L$ is compact and the intersection defining $U'$ is finite. Towards showing that $U'$ is tidy above, denote $\con(\alpha^{\pm1})\cap U=\Upm{C}{\alpha}$ and observe that Proposition~\ref{prop:U--^con} implies that $\Upm{U}{\alpha} = \Upm{C}{\alpha}\Uz{U}{\alpha}$. Then, since $\Uz{U}{\alpha}$ normalises $\con(\alpha)$ and $\con(\alpha^{-1})$, by Lemma~\ref{lem:nub_normalised}, tidiness above of $U$ may be expressed as
$$
U = \Up{C}{\alpha}\Uz{U}{\alpha}\Um{C}{\alpha}.
$$
In particular, if $u\in U'$, there are $c_\pm\in \Upm{C}{\alpha}$ and $w\in \Uz{U}{\alpha}$ such that $u = c_+wc_-$. To show that $U'$ is tidy above for $\alpha$, it suffices to show that $c_\pm$ are in $U'$, that is, that $xc_\pm x^{-1}$ are in $U$ for every $x\in L$. (That $w\in U'$ then follows because $U'$ is a group.) To this end, let $x\in L$. Then, since $xux^{-1}$ is in $U$,  there are $c'_\pm\in C_\pm$ and $w'\in \Uz{U}{\alpha}$ such that 
\begin{equation}
\label{eq:U'_tidyabove}
xux^{-1} = (xc_+wx^{-1})(xc_-x^{-1}) = c'_+w'c'_-.
\end{equation}
Hence
$$
(xc_-x^{-1})(c'_-)^{-1} = (xc_+wx^{-1})^{-1}c'_+w'.
$$
Then $(xc_-x^{-1})(c'_-)^{-1}\in \con(\alpha)$ because Lemma~\ref{lem:nub_normalised} shows that $\con(\alpha)$ is normalised by $L$ and that the right side of the equation belongs to $\parb(\alpha^{-1})$. Hence, by Definition~\ref{defn:paretc}, $(xc_-x^{-1})(c'_-)^{-1}\in \nub(\alpha)$ and it follows by Theorem~\ref{thm:nub} that $(xc_-x^{-1})(c'_-)^{-1}\in U$. Since $c_-'\in U$, we have that $xc_-x^{-1}\in U$ as required. A similar argument shows that $xc_+x^{-1}\in U$ too.

That $U'$ is tidy below holds because $\nub(\alpha)$ is normalised by the compact, $\alpha$-stable subgroup $L$, by Lemma~\ref{lem:nub_normalised}. Hence $\nub(\alpha)\leq U'$ and $U'$ is tidy below by Proposition~\ref{prop:nub_in}. Therefore $U'$ is tidy for every $\alpha\in\fl{H}$. 
\end{proof}

\begin{remark}
\label{examp:tidy_normalized}	
The claim of Proposition~\ref{prop:tidy_normalized} does not hold for subgroups that are only tidy above. 

For example, let $F$ be a finite group and define $G = (F\times F)^{\mathbb{Z}}\rtimes \langle\sigma\rangle$ where $\sigma$ acts on $(f_n,g_n)\in(F\times F)^{\mathbb{Z}}$ by $\sigma(f_n,g_n) = (g_n,f_n)$. 
Let $\alpha$ be the shift automorphism of $G$ defined by $\alpha(f_n,g_n) = (f_{n+1},g_{n+1})$. Then $\langle\sigma\rangle$ is compact and is $\alpha$-stable because $\alpha$ and $\sigma$ commute. Let $U$ be the compact open subgroup of $G$
$$
U = \left\{(f_n,g_n)\in (F\times F)^{\mathbb{Z}}\mid f_0 = \id_F=g_3\right\}.
$$
Then $U$ is tidy above for $\alpha$ with 
\begin{align*}
U_+ &= \left\{ (f_n,g_n)\in(F\times F)^{\mathbb{Z}}\mid f_n=\id_F\text{ if }n\leq0,\ g_n=\id_F\text{ if }n\leq3\right\}\\
\text{and }U_- &= \left\{ (f_n,g_n)\in(F\times F)^{\mathbb{Z}}\mid f_n=\id_F\text{ if }n\geq0,\ g_n=\id_F\text{ if }n\geq3\right\}.
\end{align*}
However 
$$
\bigcap_{n\in\mathbb{Z}} \sigma^n(U) = U\cap \sigma(U) = \left\{(f_n,g_n)\in (F\times F)^{\mathbb{Z}}\mid f_n = \id_F=g_n\text{ if }n=0\text{ or }3\right\}
$$
is not tidy above for $\alpha$.

\paragraph{The tidying procedure} A 3-step procedure which, given a compact open subgroup $U$ of a \tdlc~group $G$ and an automorphism $\alpha$ of $G$, produces a subgroup tidy for $\alpha$ is described in~\cite{Structure94} and~\cite{FurtherTidy}. \begin{description}
	\item[Step 1] Find $n\geq0$ such that $V = \bigcap_{k=0}^n \alpha^k(U)$ is tidy above for $\alpha$. 
	\item[Step 2] Identify a compact $\alpha$-stable group $L\leq G$ that, in the more recent language of~\cite{nub_2014}, contains $\nub(\alpha)$. 
	\item[Step 3] Adjoin $L$ to $V$ to produce a subgroup tidy for $\alpha$. 
\end{description}
`Adjoining' $L$ to $V$ in Step 3 is not straightforward because the product $VL$ need not be a group, see~\cite[Example 6.1]{FurtherTidy}. In~\cite{Structure94}, Step~3 is done by defining $V' = \bigcap_{x\in L} xVx^{-1}$ and forming the group $V'L$. Then $V'L$ is tidy below and applying Step~1 again produces a group that is tidy for $\alpha$. Proposition~\ref{prop:tidy_normalized} implies that the second application of Step~1 is not required after all, that is, that $V'L$ is tidy, even though the above example shows that $V'$ need not be tidy above. In~\cite{FurtherTidy}, $L$ is adjoined to $V$ by defining $V'' = \left\{v\in V\mid Lv\subseteq VL\right\}$. Then $V''L$ is a group, $V''$ is tidy above, $V''L$ is tidy for $\alpha$. This argument is more direct and also shows that $[\alpha(W):\alpha(W)\cap W]$ is minimized and equal to $s(\alpha)$ when $W$ is tidy. See the comparison of the two approaches at the end of~\cite[\S3]{FurtherTidy}, and that the second approach is required to prove minimality in~\cite[Example 6.3]{FurtherTidy}.
\end{remark}

%%%{\color{fireenginered} Show that $\langle \alpha\rangle\rtimes \Uz{U}{\alpha}$ is flat?} 

\subsection{The tree representation theorem and scale groups}
\label{sec:tree_rep}

The following representation of $\Upp{U}{\alpha}\rtimes\langle\alpha\rangle$ as an isometry group of a regular tree is not needed for the description of flat groups to come. It does apply to the analogues of root subgroups, however, and shows that these abstract defined subgroups of general \tdlc~groups may be represented as concrete groups of isometries of a tree. The regular tree in which every vertex has degree~$q$ is denoted by $\tree_q$.
%%%%%%%
\begin{theorem}[Tree Representation Theorem \cite{ContractionB}]
\label{thm:tree_rep}
Let $\alpha\in \Aut(G)$ and $U\in\COS(G)$ be tidy for $\alpha$. Suppose that $s(\alpha)=s$. Then there is a homomorphism $\phi : \Upp{U}{\alpha}\rtimes \langle\alpha\rangle\to \Isom(\tree_{s+1})$  such that:
\begin{enumerate}
\item $\phi(\Upp{U}{\alpha}\rtimes \langle\alpha\rangle)$ fixes an end, $\omega$, of $\tree_{s+1}$ and is transitive on the vertices of $\tree_{s+1}$;
\item $\phi(\Upp{U}{\alpha})$ is contained in the set of elliptic elements of $\Isom(\tree_{s+1})$ (that is, the tree isometries having finite orbits) and $\phi(\alpha)$ is a hyperbolic isometry (that is, has infinite orbits) that has $\omega$ as an attracting end and which translates its axis by distance~$1$;
\item $\phi(\Upp{U}{\alpha}\rtimes \langle\alpha\rangle)$ is a closed subgroup of $\Isom(\tree_{s+1})$ and the kernel of $\phi$ is the largest compact normal subgroup of $\Upp{U}{\alpha}\rtimes \langle\alpha\rangle$; and
\item there is a vertex, $v_0$, on the axis of $\phi(\alpha)$ such that $\phi(\Up{U}{\alpha})$ is the subgroup of $\phi(\Upp{U}{\alpha}\rtimes \langle\alpha\rangle)$ that fixes $v_0$, and hence also fixes every vertex on the ray $[v_0,\omega)$.
\end{enumerate}
\end{theorem}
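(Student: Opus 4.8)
The plan is to build the tree $\tree_{s+1}$ as a coset space and let $\Upp{U}{\alpha}\rtimes\langle\alpha\rangle$ act on it. First I would set $U_+ = \Up{U}{\alpha}$ and observe, using Theorem~\ref{thm:characterise_minimizing}, that $\alpha(U_+)\geq U_+$ and $[\alpha(U_+):U_+]=s(\alpha)=s$. Consequently the subgroups $\alpha^n(U_+)$, $n\in\mathbb{Z}$, form an increasing chain, and their union is $\Upp{U}{\alpha}$ (the union defining $\widehat{U}_{\alpha+}$ in the notation fixed by Proposition~\ref{prop:tidy_orbit}), while their intersection is $\Uz{U}{\alpha}$. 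Take the vertex set of the tree to be the disjoint union, over $n\in\mathbb{Z}$, of the coset spaces $\Upp{U}{\alpha}/\alpha^n(U_+)$, i.e. one ``horosphere'' at each integer level $n$; join a coset $g\alpha^n(U_+)$ at level $n$ to the unique coset $g\alpha^{n+1}(U_+)$ at level $n+1$ containing it. Each level-$n$ vertex then has exactly one neighbour above it and exactly $[\alpha^{n+1}(U_+):\alpha^n(U_+)]=s$ neighbours below it, so this graph is a tree in which every vertex has degree $s+1$; it is regular, and hence isomorphic to $\tree_{s+1}$, precisely because the index is constantly $s$. The level function is a Busemann function for an end $\omega$, namely the end ``at $+\infty$''.

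Next I would define the homomorphism $\phi$. The group $\Upp{U}{\alpha}$ acts on each level by left translation, preserving levels and the adjacency relation, so it acts by isometries fixing $\omega$; and $\alpha$ acts by sending the coset $g\alpha^n(U_+)$ at level $n$ to $\alpha(g)\alpha^{n+1}(U_+)$ at level $n+1$, which is again an adjacency-preserving, hence isometric, map, shifting every level up by one. These actions are compatible with the semidirect-product relation $\alpha g\alpha^{-1}=\alpha(g)$, giving a homomorphism $\phi:\Upp{U}{\alpha}\rtimes\langle\alpha\rangle\to\Isom(\tree_{s+1})$. For (1): the $\langle\alpha\rangle$-part moves between levels and $\Upp{U}{\alpha}$ is transitive on each level (left translation on a coset space), so the action is vertex-transitive; and $\omega$ is fixed since the level function is only shifted by $\phi(\alpha)$, not reorganized. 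For (2): $\phi(\alpha)$ translates the axis $\{\text{the level-}n\text{ coset containing }\id\}$ by $1$ with $\omega$ as attracting end, so it is hyperbolic; each $u\in\Upp{U}{\alpha}$ lies in $\alpha^N(U_+)$ for some $N$, hence fixes the level-$N$ vertex $\id\cdot\alpha^N(U_+)$ and therefore has a finite orbit (all vertices within a bounded distance of that fixed vertex at each level), so $\phi(\Upp{U}{\alpha})$ consists of elliptic elements. For (4): take $v_0$ to be the vertex $\id\cdot U_+$ at level $0$; its stabilizer in $\phi(\Upp{U}{\alpha}\rtimes\langle\alpha\rangle)$ consists of level-preserving elements (so no nontrivial power of $\alpha$) fixing the coset $U_+$, which is exactly $\phi(\Up{U}{\alpha})$, and such elements automatically fix all of $[v_0,\omega)$ since that ray is the chain $U_+\subseteq\alpha(U_+)\subseteq\cdots$ and $U_+$ normalizes nothing new — more precisely, $u\in U_+$ fixes $\alpha^n(U_+)$ for $n\geq 0$ because $u\in\alpha^n(U_+)$.

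The main obstacle is item (3): showing $\phi(\Upp{U}{\alpha}\rtimes\langle\alpha\rangle)$ is \emph{closed} in $\Isom(\tree_{s+1})$ and identifying $\ker\phi$ as the largest compact normal subgroup. For the kernel, $\phi(g)=\id$ means $g$ fixes every vertex, i.e. $g\in\alpha^n(U_+)$ and $g$ acts trivially on $\Upp{U}{\alpha}/\alpha^n(U_+)$ for every $n$; equivalently $g\in\bigcap_{n\in\mathbb{Z}}\bigcap_{h}h\alpha^n(U_+)h^{-1}$, the intersection of all conjugates of all the $\alpha^n(U_+)$, which is an $\alpha$-stable compact normal subgroup of $\Upp{U}{\alpha}\rtimes\langle\alpha\rangle$; one then checks it contains every compact normal subgroup $K$, since such a $K$ must lie in the compact open subgroup $U_+$ (scale considerations: $\alpha|_K$ has trivial scale both ways) and its conjugates, hence acts trivially. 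Closedness I would get from the standard fact that a vertex-transitive group of tree automorphisms with compact (equivalently, closed and hence profinite) vertex stabilizers is closed in $\Isom(\tree_{s+1})$ with the permutation topology: here the stabilizer of $v_0$ is $\phi(\Up{U}{\alpha})$, which is the continuous image of the compact group $\Up{U}{\alpha}$, hence compact, and the subspace topology it inherits from $\Isom(\tree_{s+1})$ agrees with the quotient topology because $\ker\phi$ is compact — this last point, reconciling the abstract quotient topology on $(\Upp{U}{\alpha}\rtimes\langle\alpha\rangle)/\ker\phi$ with the tree's permutation topology, is the delicate step and is where I expect to spend the most care, invoking that $\ker\phi$ is open in $\Up{U}{\alpha}$-cosets is false, so instead one argues directly that $\phi$ restricted to $\Up{U}{\alpha}$ is a quotient map onto the (profinite) vertex stabilizer.
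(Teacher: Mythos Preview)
Your construction is essentially the one the paper sketches. The paper does not give a full proof of Theorem~\ref{thm:tree_rep} but, immediately after the statement, builds the tree as the coset space $(\Upp{U}{\alpha}\rtimes\langle\alpha\rangle)/\Up{U}{\alpha}$ with edges $\{x\alpha^n\Up{U}{\alpha},\,x\alpha^{n+1}\Up{U}{\alpha}\}$ and lets the group act by left translation; details are deferred to~\cite{ContractionB} and to an exercise in~\S\ref{sec:tree_rep}. Your horospherical model $\bigsqcup_{n\in\mathbb{Z}} \Upp{U}{\alpha}/\alpha^n(\Up{U}{\alpha})$ is the same object under the identification $g\alpha^n\Up{U}{\alpha}\longleftrightarrow g\alpha^n(\Up{U}{\alpha})$ at level~$n$ (using $\alpha^n\Up{U}{\alpha}\alpha^{-n}=\alpha^n(\Up{U}{\alpha})$), and your action of $\alpha$ and of $\Upp{U}{\alpha}$ agrees with left translation under this identification. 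Your treatment of items (1), (2), (4) and your identification of (3) as the delicate point are all in line with what the paper and its exercise ask the reader to verify; you supply more detail than the paper itself does here.
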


The tree representation theorem is a particular case of the Bass-Serre theory that represents a graph of groups on a regular tree. This comes about because $\alpha^{-1}$ is an endomorphism $\Up{U}{\alpha}\to \Up{U}{\alpha}$ such that $\alpha^{-1}(\Up{U}{\alpha})$ has index $s$ in $\Up{U}{\alpha}$ and thus gives rise to an HNN extension $\Up{U}{\alpha}\ast_{\alpha^{-1}}$. However, the tree $\tree_{s+1}$ may be defined directly in this case because we already have the group $\Upp{U}{\alpha}\rtimes \langle\alpha\rangle$ and do not need to define it using the HNN extension. For this, let the vertex set be the coset space $(\Upp{U}{\alpha}\rtimes \langle\alpha\rangle)/\Up{U}{\alpha}$ and the edge set be $\left\{ \{x\alpha^n\Up{U}{\alpha},x\alpha^{n+1}\Up{U}{\alpha}\} \mid x\in \Upp{U}{\alpha},\ n\in\mathbb{Z}\right\}$. Then the translation action of $\Upp{U}{\alpha}\rtimes \langle\alpha\rangle$ on cosets represents the group as tree isometries.

As stated in Theorem~\ref{thm:tree_rep}, $\phi(\Upp{U}{\alpha}\rtimes \langle\alpha\rangle)$ is a closed group of isometries that is transitive on the vertices and fixes an end of $\tree_{s+1}$. Such groups are called \emph{scale groups} in~\cite{GWil_scalegps}. Every scale group is a \tdlc~group and all scale groups occur in this manner. A close connection between scale groups and self-replicating groups, see~\cite{Nekrash} %and {\color{fireenginered}the lectures by Zoran \v{S}uni\'c}
, is seen in~\cite{GWil_scalegps}.

\section{Flat Groups of Automorphisms}
\label{sec:flatness}

Groups of automorphisms of a \tdlc~group for which there exist a common tidy subgroup are the main topic of these notes. Such groups of automorphisms are called \emph{flat} (a name suggested by U.~Baumgartner), and it is seen here that they have a particular structure that leads to refinements of the tidiness conditions and the scale.  

The name `flat' follows from the structure of the groups, namely: a group of automorphisms, $\fl{H}\leq \Aut(G)$, sharing a tidy subgroup, $U\leq G$, is free abelian modulo the normaliser of $U$; and the free abelian group $\mathbb{Z}^d$ is geometrically flat in the sense that it is cocompact in the euclidean space $\mathbb{R}^d$. The exponent $d$ is called the \emph{rank} of the flat group $\fl{H}$. Further evidence for the aptness of the name `flat' is seen in \cite{BaumRemyWill}, where it is shown that maximal flat groups of inner automorphisms of the isometry group of a building translate a (metrically flat) apartment of the building  and that the rank of the group is equal to the dimension of the apartment; and also in \cite{BaumMollWill}, where it is shown that flat groups of inner automorphisms of hyperbolic \tdlc~groups can have rank at most~$1$. The geometric interpretation of flatness of a group of automorphisms is given a partial converse in~\cite{BaumSchlichtW_geomflat}. 

The tidiness conditions of Theorem~\ref{thm:characterise_minimizing} resolve into conditions that $U$ satisfies for every automorphism $\alpha\in \fl{H}$: tidiness above now expresses $U$ as a product of subgroups, each of which is expanded or shrunk by every $\alpha\in \fl{H}$; and tidiness below says that each of the factors of $U$ dilates under the action of $\fl{H}$ to a closed subgroup of $G$. The amount by which $\alpha\in \fl{H}$ expands a given factor of $U$ is a positive integer called the \emph{scale of $\alpha$ relative to the factor}, and the scale of $\alpha$ is equal to the product of its relative scales. In examples with $G$ a Lie group over a local field $\mathbb{K}$ or the isometry group of a building and $\fl{H}$ a maximal abelian subgrou, logarithms of the relative scales correspond to the $\mathbb{Z}$-dual roots defined in \cite{Tits_Kac-Moody}, and the factors of $U$ correspond to root subgroups.  %{\color{fireenginered}The article by Sebastian Bischof \cite{bibid} shows the details.}

Flatness of groups of automorphisms was introduced in~\cite{SimulTriang}, although not under that name, and the results summarised above were established. The treatment of these ideas presented here differs from that in \cite{SimulTriang} by, in addition to using the word `flat', making use of contraction subgroups from \cite{ContractionB} and the nub subgroup from \cite{Reid_DynamicsNYJ_2016,nub_2014}, and concentrating on the structure of flat groups and the factoring of subgroups tidy for them. Unlike the argument in \cite{SimulTriang}, the proof given here does not attempt to show at the same time that finitely generated abelian groups are flat. A separate proof of that fact, consolidated with a proof of the result shown in \cite{Shalom-Willis} that finitely generated nilpotent groups are flat and that polycyclic groups are virtually flat, will be  presented elsewhere. The treatment in these notes is consequently more direct and, it is hoped, more accessible. 
%%%%%%%%%%
%%%%%%%%%%
\subsection{Flat groups and affiliated subgroups}
\label{sec:basic-flat}

Flat groups, the central concept of these notes, are defined in this subsection along with their affiliated nub and Levi subgroups. The properties of the nub and Levi subgroups used in the description of flat groups and their tidy subgroups are derived here.
%%%%%%%
\begin{definition}
\label{defn:flat} A group $\fl{H}$ of automorphisms of a \tdlc~group $G$ is \emph{flat} if there is $U\in\COS(G)$ that is tidy for every $\alpha\in\fl{H}$. The subgroup $U$ is said to be \emph{tidy for $\fl{H}$}. %and the set of all subgroups tidy for $\fl{H}$ will be denoted by $\tid(\fl{H})$.

The \emph{uniscalar} subset\footnote{The term `uniscalar' was introduced by Theodore Palmer,~\cite[Definition 12.3.25]{PalmerII} for single automorphisms, and the notation $\fl{H}_u$ for the uniscalar subset was first used by Colin Reid in~\cite{Reid_DynamicsNYJ_2016}. 
} of the flat group $\fl{H}$ is 
$$
\fl{H}_u = \left\{ \alpha\in\fl{H} \mid s(\alpha) = 1 = s(\alpha^{-1})\right\}.
$$
\end{definition} 
%%%%%%%%

 %%%%%AAA{\color{fireenginered} Exercise. Singly generated groups, matrix groups.}

%%%%%%%%
\begin{proposition}
\label{prop:invariance_of_tidiness}
Suppose that $\fl{H}\leq\Aut(G)$ is flat with $U$ tidy for $\fl{H}$. Then, for any $\beta\in\fl{H}$, $\beta(U)$ is tidy for $\fl{H}$. 
\end{proposition}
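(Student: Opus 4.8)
The plan is to deduce the statement directly from the conjugation invariance of tidiness, Lemma~\ref{lem:conjugation_invariance}, together with the fact that $\fl{H}$ is a group. Fix $\beta\in\fl{H}$; it suffices to show that $\beta(U)$ is tidy for an arbitrary $\alpha\in\fl{H}$.

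First I would observe that, since $\fl{H}$ is a group, the conjugate $\beta^{-1}\alpha\beta$ again lies in $\fl{H}$, and hence $U$---being tidy for every element of $\fl{H}$---is in particular tidy for $\beta^{-1}\alpha\beta$. Then I would invoke Lemma~\ref{lem:conjugation_invariance} with the automorphism $\beta^{-1}\alpha\beta$ playing the role of ``$\alpha$'' in that lemma: it yields that $\beta(U)$ is tidy for $\beta(\beta^{-1}\alpha\beta)\beta^{-1}=\alpha$. As $\alpha\in\fl{H}$ was arbitrary, $\beta(U)$ is tidy for $\fl{H}$.

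There is no substantive obstacle here; the only point requiring care is to apply Lemma~\ref{lem:conjugation_invariance} to the conjugated automorphism $\beta^{-1}\alpha\beta$ rather than to $\alpha$ itself, so that conjugation by $\beta$ cancels and one lands back at $\alpha$. I would also note that the argument uses only that $\beta$ normalises $\fl{H}$, so the same conclusion holds for any $\beta\in\Aut(G)$ with $\beta\fl{H}\beta^{-1}=\fl{H}$; the hypothesis $\beta\in\fl{H}$ is merely a convenient special case.
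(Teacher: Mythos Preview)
Your proof is correct and takes essentially the same approach as the paper: both invoke Lemma~\ref{lem:conjugation_invariance} and use that $\fl{H}$ is closed under conjugation by $\beta$. The paper phrases it as ``$\beta(U)$ is tidy for $\beta\alpha\beta^{-1}$ for all $\alpha\in\fl{H}$, and $\beta\fl{H}\beta^{-1}=\fl{H}$'', while you trace the same logic element-by-element; your closing remark that only $\beta\fl{H}\beta^{-1}=\fl{H}$ is needed is a correct and worthwhile observation.
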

%%%%%%%%
\begin{proof}
Lemma~\ref{lem:conjugation_invariance} shows that $\beta(U)$ is tidy for $\beta\alpha\beta^{-1}$ for all $\alpha,\beta\in\fl{H}$. Since $\beta\fl{H}\beta^{-1}=\fl{H}$, it follows that $\beta(U)$ is tidy for $\fl{H}$. 
\end{proof}

The following is immediate from Theorem~\ref{thm:scale_properties} and Proposition~\ref{prop:invariance_of_tidiness}.
%%%%%%%%
\begin{proposition}
\label{prop:uniscalar}
The uniscalar subset $\fl{H}_u$ of the flat group $\fl{H}$ is equal to 
$$
\left\{ \alpha\in\fl{H} \mid \alpha(U)=U \mbox{ for any }U\mbox{ tidy for }\fl{H}\right\},
$$ 
and is a normal subgroup of $\fl{H}$.
\endproof
\end{proposition}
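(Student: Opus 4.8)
The plan is to deduce Proposition~\ref{prop:uniscalar} directly from the two ingredients cited before it, namely Theorem~\ref{thm:scale_properties} and Proposition~\ref{prop:invariance_of_tidiness}, so the argument is essentially a bookkeeping of two inclusions together with a normality check.

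First I would establish that the displayed set is contained in $\fl{H}_u$. This direction is immediate: if $\alpha\in\fl{H}$ satisfies $\alpha(U)=U$ for some $U$ tidy for $\fl{H}$ (in particular for any one such $U$, since $\fl{H}$ is flat there is at least one), then $[\alpha(U):\alpha(U)\cap U]=1$, so $s(\alpha)=1$, and applying the same to $\alpha^{-1}$, which also fixes $U$, gives $s(\alpha^{-1})=1$; hence $\alpha\in\fl{H}_u$.

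Next I would prove the reverse inclusion, which is where Proposition~\ref{prop:invariance_of_tidiness} does the real work. Fix $\alpha\in\fl{H}_u$ and fix an arbitrary $U$ tidy for $\fl{H}$. By Theorem~\ref{thm:scale_properties}\eqref{thm:scale_properties1}, the condition $s(\alpha)=1=s(\alpha^{-1})$ guarantees the existence of \emph{some} compact open subgroup fixed by $\alpha$, but a priori not that $U$ itself is fixed; this is the one point requiring care. To upgrade it, note that $U$ is tidy for $\alpha$ and that $s(\alpha)=1$ means $[\alpha(U):\alpha(U)\cap U]=s(\alpha)=1$ by the minimizing characterization of tidiness (Theorem~\ref{thm:characterise_minimizing} and Definition~\ref{defn:minimizing}), so $\alpha(U)\leq U$; applying the same reasoning to $\alpha^{-1}$, which is also in $\fl{H}_u$ and for which $U$ is tidy by Proposition~\ref{prop:tidy_conditions}\eqref{prop:tidy_conditions2}, gives $\alpha^{-1}(U)\leq U$, i.e. $U\leq\alpha(U)$. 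Hence $\alpha(U)=U$. Since $U$ was an arbitrary subgroup tidy for $\fl{H}$, this shows $\alpha$ lies in the displayed set, completing the equality. The main obstacle is exactly this step: making sure one does not merely invoke Theorem~\ref{thm:scale_properties}\eqref{thm:scale_properties1} (which only yields \emph{a} fixed compact open subgroup) but instead uses that $U$ is already tidy for $\alpha$ so that $s(\alpha)=1$ forces $\alpha(U)\subseteq U$ on the nose.

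Finally, for normality: given $\alpha\in\fl{H}_u$ and $\beta\in\fl{H}$, fix $U$ tidy for $\fl{H}$. By Proposition~\ref{prop:invariance_of_tidiness}, $\beta^{-1}(U)$ is again tidy for $\fl{H}$, so by the equality just proved $\alpha$ fixes $\beta^{-1}(U)$, whence $\beta\alpha\beta^{-1}$ fixes $U$; applying the equality once more (in the easy direction) gives $\beta\alpha\beta^{-1}\in\fl{H}_u$. Thus $\fl{H}_u$ is normal in $\fl{H}$. (Alternatively one can quote the conjugation-invariance of the scale from Lemma~\ref{lem:conjugation_invariance}, but the argument via Proposition~\ref{prop:invariance_of_tidiness} keeps everything internal to the flat group.)
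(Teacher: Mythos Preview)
Your proof is correct and follows exactly the route the paper indicates (the paper itself gives no details beyond citing Theorem~\ref{thm:scale_properties} and Proposition~\ref{prop:invariance_of_tidiness} and writing \emph{immediate}). One tiny simplification: in the reverse inclusion you need not invoke Proposition~\ref{prop:tidy_conditions}\eqref{prop:tidy_conditions2} to say $U$ is tidy for $\alpha^{-1}$, since $\alpha^{-1}\in\fl{H}$ and $U$ is tidy for $\fl{H}$ by hypothesis.
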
  

The following immediate consequence is a step towards describing the structure of subgroups tidy for flat groups.
%%%
\begin{corollary}
\label{cor:uniscalar}
Let $\fl{H}\leq\Aut(G)$ be flat and suppose that $\alpha,\beta\in\fl{H}$ and that $\alpha\in\fl{H}_u$. Then $\alpha(\Up{U}{\beta}) = \Up{U}{\beta}$ for every $U$ tidy for $\fl{H}$. 
\end{corollary}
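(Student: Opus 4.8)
The plan is to show that $\alpha$, being uniscalar, fixes not only $U$ itself but every subgroup of the form $\Up{U}{\beta}$. Since $U$ is tidy for the flat group $\fl{H}$, it is in particular tidy for $\beta$, and by Proposition~\ref{prop:uniscalar} we have $\alpha(U) = U$. So the first step is to write down the defining description of $\Up{U}{\beta}$ from Theorem~\ref{thm:characterise_minimizing}: $x\in\Up{U}{\beta}$ iff there is a sequence $\{u_n\}_{n\geq0}\subset U$ with $u_0 = x$ and $\beta(u_{n+1}) = u_n$ for all $n$. I would like to conclude that $\alpha(x)$ also has this property, using the sequence $\{\alpha(u_n)\}_{n\geq0}$.

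The crux is that applying $\alpha$ to the relation $\beta(u_{n+1}) = u_n$ gives $\alpha\beta(u_{n+1}) = \alpha(u_n)$, whereas I want $\beta(\alpha(u_{n+1})) = \alpha(u_n)$. These agree precisely when $\alpha$ and $\beta$ commute, which is not given at this stage in the paper. So the real content must be that $\alpha$ normalises $\beta$ up to a uniscalar (hence $U$-fixing) correction, or more directly that $\alpha\beta\alpha^{-1}$ is again in $\fl{H}$ and shares the tidy subgroup. Indeed, $\alpha\beta\alpha^{-1}\in\fl{H}$ since $\fl{H}$ is a group, and $\alpha(U) = U$ is tidy for it; moreover $\Up{U}{\alpha\beta\alpha^{-1}} = \alpha(\Up{U}{\beta})$ by a direct check (conjugating the defining sequences, using $\alpha(U)=U$). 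So the statement $\alpha(\Up{U}{\beta}) = \Up{U}{\beta}$ is equivalent to $\Up{U}{\alpha\beta\alpha^{-1}} = \Up{U}{\beta}$. The natural way to get the latter is to observe that $\alpha\beta\alpha^{-1}$ and $\beta$ have the same scale (scale is a conjugacy invariant on $\Aut(G)$ by Lemma~\ref{lem:conjugation_invariance}, applied with the conjugating automorphism $\alpha$), and more importantly the same action-dynamics on the fixed tidy subgroup $U$ — but one needs that $\fl{H}/\fl{H}_u$ being abelian forces $\alpha\beta\alpha^{-1}$ and $\beta$ to differ by an element of $\fl{H}_u$, which fixes $U$ and hence preserves the $\beta$-structure.

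I expect the main obstacle to be precisely this interdependence with the abelianness of $\fl{H}/\fl{H}_u$: Corollary~\ref{cor:uniscalar} is stated as an "immediate consequence," so presumably the intended argument is the short direct one, bypassing abelianness. The cleanest route: since $\alpha\in\fl{H}_u$, Proposition~\ref{prop:uniscalar} gives $\alpha(U)=U$; then for $x\in\Up{U}{\beta}$ with witnessing sequence $\{u_n\}\subset U$, set $v_n = \alpha(u_n)\in\alpha(U)=U$, so $\{v_n\}\subset U$, $v_0 = \alpha(x)$, and $\beta(v_{n+1}) = \beta\alpha(u_{n+1})$. Here I use that $\alpha$ commutes with $\beta$ \emph{on $U_{\beta+}$} — which holds because $\alpha\beta\alpha^{-1}\beta^{-1}\in\fl{H}_u$ (both $\alpha$ and $\beta$ preserve $U$ appropriately; more precisely $\alpha\beta\alpha^{-1}$ and $\beta$ are conjugate, so have equal scale, and a computation with tidy subgroups shows their positive parts coincide), so $\beta\alpha = \alpha\beta\gamma$ for some $\gamma\in\fl{H}_u$ fixing $U$, giving $\beta(v_{n+1}) = \alpha\beta\gamma(u_{n+1})$. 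Since $\gamma$ fixes $U$ and commutes suitably, this reduces to $\alpha\beta(u_{n+1}) = \alpha(u_n) = v_n$, as wanted. Thus $\alpha(x)\in\Up{U}{\beta}$, so $\alpha(\Up{U}{\beta})\subseteq\Up{U}{\beta}$; applying the same to $\alpha^{-1}\in\fl{H}_u$ yields the reverse inclusion. I would streamline the middle step by invoking Proposition~\ref{prop:invariance_of_tidiness} and the uniqueness features of the $U_{\beta+}$ construction rather than chasing commutators, but the logical skeleton is: fix $U$, push the defining sequence through $\alpha$, and use $\alpha(U)=U$ together with conjugacy-invariance of the tidy structure to land back in $\Up{U}{\beta}$.
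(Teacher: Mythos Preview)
Your sequence-chasing argument has a genuine gap. At the key step you write ``Since $\gamma$ fixes $U$ and commutes suitably, this reduces to $\alpha\beta(u_{n+1}) = \alpha(u_n) = v_n$,'' but $\gamma\in\fl{H}_u$ only fixes $U$ \emph{setwise}, not pointwise, and there is no reason $\gamma$ should commute with anything in sight. What you actually obtain from $\beta\alpha = \gamma^{-1}\alpha\beta$ is $\beta(v_{n+1}) = \gamma^{-1}(v_n)$, which does not witness $v_0\in\Up{U}{\beta}$. Your final sentence gestures at ``invoking Proposition~\ref{prop:invariance_of_tidiness} and the uniqueness features of the $U_{\beta+}$ construction,'' but this is not a proof.

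The paper's argument is much shorter and avoids commutators entirely. Since $\beta\in\fl{H}$, Proposition~\ref{prop:invariance_of_tidiness} gives that each $\beta^k(U)$ is tidy for $\fl{H}$, and Theorem~\ref{thm:tidy_intersection} then gives that $\bigcap_{k=0}^n\beta^k(U)$ is tidy for $\fl{H}$ for every $n\geq0$. Proposition~\ref{prop:uniscalar} now says $\alpha$ fixes each of these finite intersections, and letting $n\to\infty$ yields $\alpha(\Up{U}{\beta}) = \Up{U}{\beta}$ (using the automorphism description $\Up{U}{\beta} = \bigcap_{k\geq0}\beta^k(U)$ from Remark~\ref{rem:characterise_minimizing}).

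Your conjugation idea \emph{can} be salvaged, but the fix is the paper's argument in disguise: you correctly reduce to showing $\Up{U}{\gamma\beta} = \Up{U}{\beta}$ where $\gamma = [\alpha,\beta]\in\fl{H}_u$ (this needs only normality of $\fl{H}_u$, not abelianness of $\fl{H}/\fl{H}_u$). One then shows $(\gamma\beta)^n(U) = \beta^n(U)$ by induction, the inductive step being $\gamma(\beta^{n+1}(U)) = \beta^{n+1}(U)$ --- which holds precisely because $\beta^{n+1}(U)$ is tidy for $\fl{H}$ and $\gamma\in\fl{H}_u$. So the essential input is the same.
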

%%%%%%%%
\begin{proof}
Proposition~\ref{prop:invariance_of_tidiness} and Theorem~\ref{thm:tidy_intersection} show that $\bigcap_{k=0}^n \beta^k(U)$ is tidy for $\fl{H}$ for every $n\geq0$. Then Proposition~\ref{prop:uniscalar} shows that 
$$
\alpha\bigl({\textstyle\bigcap_{k=0}^n \beta^k(U)}\bigr) = {\textstyle\bigcap_{k=0}^n \beta^k(U)}\mbox{ for all }n\geq0
$$ 
and hence that $\alpha(\Up{U}{\beta}) = \alpha\bigl(\bigcap_{k=0}^\infty \beta^k(U)\bigr) = \Up{U}{\beta}$. 
\end{proof}

The following extends the notion of the contraction group modulo a stable subgroup, see Definition~\ref{defn:relative_contraction}, to sets, $\fl{A}$, of automorphisms that are typically semigroups.
%%%%%%%%%
\begin{definition}
\label{defn:relative_contraction_semigroup}
Let $\fl{A}\subset \fl{H}$ and suppose that $K\leq G$ is compact and $\fl{H}$-stable. Define 
\begin{equation*}
%\label{eq:contraction_mod_H}
\con(\fl{A}/K) = \bigcap\left\{ \con(\alpha/K) \mid \alpha\in\fl{A}\right\}.
\end{equation*}
\end{definition}

%%%%%%%%%
For an individual automorphism $\alpha$, the parabolic and contraction subgroups are defined in terms of the monoid generated by $\alpha$, while the nub and Levi subgroups, on the other hand, are defined in terms of the group generated by $\alpha$. The definitions of the nub and Levi subgroups may therefore be extended to flat groups of automorphisms as follows.
%%%%%%%%%
\begin{definition}
\label{defn:nub}
The \emph{nub} for the flat group $\fl{H}\leq\Aut(G)$ is
\begin{align*}
\nub(\fl{H}) &= \bigcap\left\{ U \mid U\mbox{ is tidy for }\fl{H}\right\}\\
\intertext{and the \emph{Levi subgroup} is }
\lev(\fl{H}) &= \left\{ x\in G \mid \{ \alpha(x)\mid \alpha\in\fl{H}\}^-\mbox{ is compact}\right\}.
\end{align*}
\end{definition}
%%%%%%%%
The definitions of $\lev(\fl{H})$ and $\nub(\fl{H})$ are motivated by Definition~\ref{defn:paretc} and Theorem~\ref{thm:nub} respectively, with the definition of $\nub(\fl{H})$ first being given in~\cite{Reid_DynamicsNYJ_2016}. It is immediate that $\nub(\fl{H})$ is a compact group and it follows from Proposition~\ref{prop:invariance_of_tidiness} that it is $\fl{H}$-stable. The following lemmas will also be useful.
%%%%%%%%
\begin{lemma}
\label{lem:open_nub}
Let $\fl{H}\leq\Aut(G)$ be flat and $\open{O}\supseteq \nub(\fl{H})$ be open. Then there is a subgroup, $U$, tidy for $\fl{H}$ such that $\open{O}\supseteq U\supseteq \nub(\fl{H})$. 
\end{lemma}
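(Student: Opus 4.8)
Let $\fl{H}\leq\Aut(G)$ be flat and $\open{O}\supseteq \nub(\fl{H})$ be open. Then there is a subgroup $U$ tidy for $\fl{H}$ with $\open{O}\supseteq U\supseteq\nub(\fl{H})$.

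The plan is to exploit that the collection of subgroups tidy for $\fl{H}$ is a filter base (in particular, closed under finite intersections) whose intersection is exactly $\nub(\fl{H})$, and then to run a standard compactness argument inside one of these subgroups.

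First I would record that, by definition, a subgroup is tidy for $\fl{H}$ if and only if it is tidy for every $\alpha\in\fl{H}$; hence Theorem~\ref{thm:tidy_intersection} shows that the family $\mathcal{T}$ of all subgroups tidy for $\fl{H}$ is closed under finite intersections. It is nonempty precisely because $\fl{H}$ is flat, and by Definition~\ref{defn:nub} one has $\bigcap_{V\in\mathcal{T}}V=\nub(\fl{H})$. Moreover $\nub(\fl{H})\subseteq V$ for every $V\in\mathcal{T}$.

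Next I would fix any $U_0\in\mathcal{T}$. Every $V\in\mathcal{T}$ lies in $\COS(G)$ and is therefore closed, so $\{\,U_0\cap V\cap(G\setminus\open{O}) : V\in\mathcal{T}\,\}$ is a family of closed subsets of the compact space $U_0$. Since $\nub(\fl{H})\subseteq\open{O}$, the intersection of this family equals $U_0\cap\nub(\fl{H})\cap(G\setminus\open{O})=\emptyset$, so by compactness of $U_0$ there exist $V_1,\dots,V_n\in\mathcal{T}$ with $U_0\cap V_1\cap\cdots\cap V_n\subseteq\open{O}$. Setting $U=U_0\cap V_1\cap\cdots\cap V_n$, closure of $\mathcal{T}$ under finite intersections gives $U\in\mathcal{T}$, that is, $U$ is tidy for $\fl{H}$; and $\nub(\fl{H})\subseteq U\subseteq\open{O}$, which is what is wanted.

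I do not anticipate a genuine obstacle. The one point worth flagging is that the finite-intersection argument requires a fixed compact ambient set, which is why one fixes $U_0\in\mathcal{T}$ at the outset rather than attempting to argue inside the non-compact complement $G\setminus\open{O}$; the rest is routine, relying only on Theorem~\ref{thm:tidy_intersection} and Definition~\ref{defn:nub}.
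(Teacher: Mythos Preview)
Your proposal is correct and follows essentially the same route as the paper: both arguments use that the family of subgroups tidy for $\fl{H}$ is closed under finite intersections (via Theorem~\ref{thm:tidy_intersection}) and has intersection $\nub(\fl{H})$, then apply a finite-intersection-property compactness argument to find finitely many tidy subgroups whose intersection lies in $\open{O}$. The only cosmetic difference is that you fix an ambient $U_0\in\mathcal{T}$ and work with closed subsets of it, whereas the paper works directly with the compact sets $U\setminus\open{O}$ as $U$ ranges over tidy subgroups; the underlying argument is identical.
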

%%%%%%%%
\begin{proof}
The set $U\setminus\open{O}$ is compact for every $U$ tidy for $\fl{H}$ and the intersection of all such sets is empty because $\nub(\fl{H})\subset\open{O}$. Since compact sets satisfy the finite intersection property, there is therefore a finite set $\{U_1,\dots,U_n\}$ of subgroups tidy for $\fl{H}$ such that $(U_1\setminus\open{O})\cap \dots \cap (U_n\setminus\open{O})$ is empty. Then $U_1\cap \dots \cap U_n$ is tidy for $\fl{H}$ and contained in $\open{O}$.
\end{proof}

\begin{lemma}
\label{lem:lev_closed}
Let $\fl{H}\leq\Aut(G)$ be flat and $U$ be tidy for $\fl{H}$. Then 
\begin{equation}
\label{eq:lev_closed}
U\cap\lev(\fl{H}) = U\cap\bigcap\left\{\lev(\alpha)\mid\alpha\in\fl{H}\right\} = \bigcap \left\{\alpha(U)\mid a\in\fl{H}\right\} 
\end{equation}
and $\lev(\fl{H})$ is a closed subgroup of $G$.
\end{lemma}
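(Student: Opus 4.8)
The plan is to prove the chain of equalities in \eqref{eq:lev_closed} first, and then deduce closedness of $\lev(\fl{H})$ as a corollary of the first displayed equation combined with the fact that each $\alpha(U)$ is closed.

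First I would establish the right-hand equality $U\cap\bigcap\{\lev(\alpha)\mid\alpha\in\fl{H}\} = \bigcap\{\alpha(U)\mid\alpha\in\fl{H}\}$. For the inclusion $\subseteq$: if $x\in U$ lies in $\lev(\alpha)$ for every $\alpha\in\fl{H}$, fix $\beta\in\fl{H}$ and apply the observation to $\beta^{-1}$ in place of $\alpha$; since $x\in\parb(\beta^{-1})$, the orbit $\{\beta^{-n}(x)\}_{n\geq0}$ has compact closure, and $x\in U$ which is tidy for $\beta$ (hence for $\beta^{-1}$ by Proposition~\ref{prop:tidy_conditions}), so Proposition~\ref{prop:tidy_criteria} (applied with $\beta^{-1}$) gives $x\in\Um{U}{\beta^{-1}} = \Up{U}{\beta}$ by Remark~\ref{rem:characterise_minimizing}; in particular $x\in\beta(U)$. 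As $\beta$ was arbitrary, $x\in\bigcap_{\beta\in\fl{H}}\beta(U)$. Conversely, if $x\in\bigcap_{\alpha\in\fl{H}}\alpha(U)$, then for each $\beta\in\fl{H}$ and each $n\in\mathbb{Z}$ we have $\beta^n(x)\in\beta^n(U)\subseteq U$ by Proposition~\ref{prop:invariance_of_tidiness} (the intersection is over all of $\fl{H}$, which contains every power of $\beta$), so the two-sided orbit $\{\beta^n(x)\}_{n\in\mathbb{Z}}$ lies in the compact set $U$; thus $x\in\parb(\beta)\cap\parb(\beta^{-1}) = \lev(\beta)$ for every $\beta$, and clearly $x\in U$.

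Next I would handle the left-hand equality $U\cap\lev(\fl{H}) = U\cap\bigcap\{\lev(\alpha)\mid\alpha\in\fl{H}\}$. The inclusion $\subseteq$ is immediate from the definition of $\lev(\fl{H})$: if $\{\alpha(x)\mid\alpha\in\fl{H}\}^-$ is compact, then in particular for fixed $\beta$ the set $\{\beta^n(x)\mid n\in\mathbb{Z}\}$ is contained in it, so $x\in\lev(\beta)$ for all $\beta$. For $\supseteq$, suppose $x\in U$ with $x\in\lev(\alpha)$ for all $\alpha\in\fl{H}$; by the equality just proved, $x\in\bigcap_{\alpha\in\fl{H}}\alpha(U)$, so $\{\alpha(x)\mid\alpha\in\fl{H}\}\subseteq U$, which is compact, hence $\{\alpha(x)\mid\alpha\in\fl{H}\}^-$ is compact and $x\in\lev(\fl{H})$. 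This step is essentially bookkeeping once the middle equality is in hand.

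Finally, closedness of $\lev(\fl{H})$: since each $\alpha\in\fl{H}$ is a homeomorphism and $U$ is compact (hence closed), each $\alpha(U)$ is closed, so $U\cap\lev(\fl{H}) = \bigcap_{\alpha\in\fl{H}}\alpha(U)$ is closed as an intersection of closed sets. To upgrade this to closedness of all of $\lev(\fl{H})$, I would use that $\lev(\fl{H})$ is a subgroup whose intersection with the open subgroup $U$ is closed in $U$; a standard fact says a subgroup of a topological group is closed if its intersection with some open subgroup is closed in that open subgroup (each coset of $U\cap\lev(\fl{H})$ inside $\lev(\fl{H})$ is relatively open, so $U\cap\lev(\fl{H})$ is open in $\lev(\fl{H})$, hence $\lev(\fl{H})$ is locally closed, and a locally closed subgroup of a topological group is closed). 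I expect the main obstacle to be the first inclusion in the middle equality, i.e.\ correctly invoking the tidy-criterion Proposition~\ref{prop:tidy_criteria} with $\beta^{-1}$ and the identification $\Um{U}{\beta^{-1}}=\Up{U}{\beta}$; once that is set up, everything else is routine.
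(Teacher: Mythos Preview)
Your approach is essentially the same as the paper's: the core step in both is Proposition~\ref{prop:tidy_criteria}, used to pass from ``$x\in U$ and the $\beta$-orbit of $x$ is relatively compact'' to ``$\beta(x)\in U$''. The paper applies part~2 of that proposition directly (two-sided orbit bounded $\Rightarrow x\in\Uz{U}{\beta}$), which is a little cleaner than your detour through $\Um{U}{\beta^{-1}}=\Up{U}{\beta}$, but the content is identical. Two small points:

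\begin{itemize}
\item In your reverse inclusion for the right-hand equality you write ``$\beta^n(x)\in\beta^n(U)\subseteq U$'', but $\beta^n(U)\subseteq U$ is generally false, and Proposition~\ref{prop:invariance_of_tidiness} does not assert it. What you want (and what your parenthetical hints at) is simply that $\beta^{-n}\in\fl{H}$, so $x\in\beta^{-n}(U)$, hence $\beta^n(x)\in U$. Equivalently, $\bigcap_{\alpha\in\fl{H}}\alpha(U)$ is $\fl{H}$-stable because $\fl{H}$ permutes the terms.
\item You invoke ``$\lev(\fl{H})$ is a subgroup'' in the closedness step without justification. The paper supplies the one-line check: $\{\alpha(xy^{-1})\}_{\alpha}\subseteq\{\alpha(x)\}_{\alpha}\cdot(\{\alpha(y)\}_{\alpha})^{-1}$ is contained in a product of compact sets. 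Your locally-closed-subgroup argument for closedness is correct and is exactly the standard fact the paper cites from Hewitt--Ross/Bourbaki.
\end{itemize}
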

%%%%%%%%
\begin{proof}
Since $\lev(\alpha)\geq \lev(\fl{H})$ for every $\alpha\in\fl{H}$, the left side of \eqref{eq:lev_closed}
 is contained in the right. To see the reverse inclusion, let $\alpha\in\fl{H}$ and let $x$ be in $U\cap \lev(\alpha)$. Then $\{\alpha^n(x)\}_{n\in\mathbb{Z}}$ has compact closure and so, by Proposition~\ref{prop:tidy_criteria}, $\{\alpha^n(x)\}_{n\in\mathbb{Z}}\subset U$. In particular, $\alpha(x)\in U$. Since this holds for all $\alpha\in\fl{H}$,
$$
U\cap\bigcap\left\{\lev(\alpha)\mid\alpha\in\fl{H}\right\}\subseteq U\cap\lev(\fl{H}).
$$

That $\lev(\fl{H})$ is a group holds because
$$
\{ \alpha(xy^{-1})\mid \alpha\in\fl{H}\}^- \subseteq \{ \alpha(x)\mid \alpha\in\fl{H}\}^-\left(\{ \alpha(y)\mid \alpha\in\fl{H}\}^-\right)^{-1}
$$
and the product of compact sets is compact. Since $\lev(\alpha)$ is closed for every $\alpha\in\fl{H}$, by Proposition~\ref{prop:par_closed}, Equation~\eqref{eq:lev_closed} shows that $U\cap\lev(\fl{H})$ is closed. Therefore $\lev(\fl{H})$ is closed, by \cite[Theorem II.5.9]{HandR} or \cite[Ch. III, p.~221]{BourbakiTop}.
\end{proof}
%%%%%%%
\begin{remark}
	\label{rem:Levi}
Lemma~\ref{lem:lev_closed} does preclude that 
	$$
	\lev(\fl{H})< \bigcap\bigl\{ \lev(\alpha) \mid \alpha\in\fl{H}\bigr\}.
	$$ 
	For example, let $G$ be the discrete group of finitary permutations of $\mathbb{N}$ and $\fl{H}$ be the group of inner automorphisms of $G$. Then every element of $\fl{H}$ has finite order and $\lev(\alpha)=G$ for every $\alpha\in\fl{H}$, whereas $\lev(\fl{H}) = \{\id\}$. An additional hypothesis that $\fl{H}$ be finitely generated does not help, as shown by the examples of finitely generated infinite groups in which every element has finite order constructed in \cite{Golod}, \cite{Adian} and \cite{Grigorchuk_Burnside}.
\end{remark}

The group $\bigcap \left\{\alpha(U)\mid a\in\fl{H}\right\}$ is compact and $\fl{H}$-stable, and is denoted by $\Uz{U}{\fl{H}}$ in what follows. On the other hand, given a compact $\fl{H}$-stable subgroup, $N\leq G$ there is a subgroup, $V\leq G$, tidy for $\fl{H}$ and such that $N$ is contained in $\Uz{V}{\fl{H}}$. That is one part of the next result. Some of the other parts are contained in \cite[Theorem~4.5 \& Corollary~4.6]{Reid_DynamicsNYJ_2016}.
%%%%%%%%
\begin{proposition}
	\label{prop:Nnormal_core_of_tidy}
Let $\fl{H}\leq \Aut(G)$ be flat and let $L\leq G$ be compact and $\fl{H}$-stable. Then, for every $U\in\COS(G)$ tidy for $\fl{H}$, the subgroup
$$
U' := \bigcap\left\{xUx^{-1}\mid x\in L\right\} 
$$
is tidy for $\fl{H}$ and normalised by $L$. Hence:
\begin{enumerate}
	\item \label{prop:Nnormal_core_of_tidy1}
$V := LU'$ is in $\COS(G)$, is tidy for $\fl{H}$ and contains $L$;
	\item \label{prop:Nnormal_core_of_tidy2}
	$\nub(\fl{H})$ is normalised by $L$ and $\tilde{L} := L\nub(\fl{H})$ is a compact $\fl{H}$-stable group; and 
	\item \label{prop:Nnormal_core_of_tidy3}
	$\fl{H}' := \fl{H}\ltimes \left\{\alpha_x\mid x\in L\right\}$ is a flat subgroup of $\Aut(G)$.
\end{enumerate}
Moreover, 
\begin{enumerate}
	\setcounter{enumi}{3}
	\item  \label{prop:Nnormal_core_of_tidy4}%{prop:H-invariant_compact3}
	For every $\alpha\in\fl{H}$, $L$ normalises $\con(\alpha/\nub(\fl{H}))$ and $\con(\alpha/\tilde{L}) = L\con(\alpha/\nub(\fl{H}))$. 
\end{enumerate}
\end{proposition}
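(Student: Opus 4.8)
The plan is to prove the core claim first and then harvest the numbered consequences, each of which follows by a short argument from the core claim together with earlier results.

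\medskip\noindent\textbf{Core claim: $U'$ is tidy for $\fl H$ and normalised by $L$.} That $U'$ is compact, open and normalised by $L$ is immediate: $L$ is compact, so the stabiliser of $U$ in the compact group $L$ is open and of finite index, whence the intersection $U' = \bigcap_{x\in L}xUx^{-1}$ is really a finite intersection, hence lies in $\COS(G)$; and it is manifestly invariant under conjugation by $L$. The substantive point is tidiness for each $\alpha\in\fl H$, and here I would invoke Proposition~\ref{prop:tidy_normalized} essentially verbatim: $U$ is tidy for $\alpha$, and $L$ is compact and $\alpha$-stable (because $L$ is $\fl H$-stable), so $U' = \bigcap_{x\in L}xUx^{-1}$ is tidy for $\alpha$. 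As this holds for every $\alpha\in\fl H$ and $U'$ does not depend on $\alpha$, the single subgroup $U'$ is tidy for all of $\fl H$, i.e.\ tidy for $\fl H$. I expect this step to be the main obstacle only in the sense of bookkeeping: one must check that the hypotheses of Proposition~\ref{prop:tidy_normalized} are met uniformly in $\alpha$, which they are because $L$ being $\fl H$-stable is stronger than being $\alpha$-stable for each individual $\alpha$.

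\medskip\noindent\textbf{Consequence \eqref{prop:Nnormal_core_of_tidy1}.} Since $L$ normalises $U'$, the product $V = LU'$ is a group; it is compact (product of two compact sets) and open (it contains the open set $U'$), so $V\in\COS(G)$, and evidently $L\leq V$. For tidiness of $V$ for $\fl H$: fix $\alpha\in\fl H$. Because $L$ is compact and $\alpha$-stable, Lemma~\ref{lem:nubstable} and Proposition~\ref{prop:nub_in} give $\nub(\alpha)\leq L\leq V$, so $V$ is tidy below for $\alpha$. For tidy above, observe $L\leq\lev(\alpha)$, so by Proposition~\ref{prop:tidy_criteria} every element of $L$ lies in $\Uz{V}{\alpha}$ once we know $L\subseteq V$ is $\alpha$-orbit-bounded — more directly, write $V = LU'$ with $U'$ tidy above for $\alpha$, so $U' = \Up{U'}{\alpha}\Um{U'}{\alpha}$; since $L$ is $\alpha$-stable and compact it is contained in $\Uz{V}{\alpha}\subseteq\Up{V}{\alpha}\cap\Um{V}{\alpha}$, and then $V = LU' = L\Up{U'}{\alpha}\Um{U'}{\alpha}\subseteq \Up{V}{\alpha}\Um{V}{\alpha}$, giving tidy above. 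Hence $V$ is tidy for $\alpha$, and as $\alpha\in\fl H$ was arbitrary, $V$ is tidy for $\fl H$.

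\medskip\noindent\textbf{Consequences \eqref{prop:Nnormal_core_of_tidy2}--\eqref{prop:Nnormal_core_of_tidy4}.} For \eqref{prop:Nnormal_core_of_tidy2}: $\nub(\fl H) = \bigcap\{U\mid U\text{ tidy for }\fl H\}$; given $x\in L$ and $U$ tidy for $\fl H$, Proposition~\ref{prop:invariance_of_tidiness}-type reasoning is not quite enough, so instead use that $U'$ and hence $V = LU'$ is tidy for $\fl H$ and $L$-normalised, so $\nub(\fl H)\leq V$ and $xUx^{-1}\supseteq U'\supseteq$ (intersection over tidy subgroups is $\nub(\fl H)$); more cleanly, for any tidy $U$ the conjugate $xUx^{-1}$ is tidy for $x\fl Hx^{-1}$, but here we do not know $x$ normalises $\fl H$ — so I would argue directly: $\nub(\fl H) = \bigcap_U U$ ranging over tidy $U$, and for each such $U$ the subgroup $U'=\bigcap_{y\in L}yUy^{-1}$ is again tidy (core claim), with $x\nub(\fl H)x^{-1}\leq xU'x^{-1}=U'\leq U$; taking the intersection over all tidy $U$ gives $x\nub(\fl H)x^{-1}\leq\nub(\fl H)$, and replacing $x$ by $x^{-1}$ yields equality, so $L$ normalises $\nub(\fl H)$. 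Then $\tilde L = L\nub(\fl H)$ is a group (as $L$ normalises $\nub(\fl H)$), compact, and $\fl H$-stable since both factors are. For \eqref{prop:Nnormal_core_of_tidy3}: the maps $\alpha_x$ for $x\in L$ form a compact group isomorphic to a quotient of $L$, it is normalised by $\fl H$ since $\alpha\alpha_x\alpha^{-1} = \alpha_{\alpha(x)}$ and $\alpha(x)\in L$ by $\fl H$-stability, so $\fl H' = \fl H\ltimes\{\alpha_x\mid x\in L\}$ is a genuine subgroup of $\Aut(G)$; and $V = LU'$ from part~\eqref{prop:Nnormal_core_of_tidy1} is tidy for every $\alpha\in\fl H$ and is $L$-invariant, hence tidy for every $\alpha_x$ with $x\in L$ (any $L$-invariant compact open subgroup is tidy for each inner automorphism by elements of $L$, using Proposition~\ref{prop:tidy_conditions}\eqref{prop:tidy_conditions1} since $\alpha_x(V)=V$), and therefore tidy for the group $\fl H'$ they generate, so $\fl H'$ is flat. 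Finally \eqref{prop:Nnormal_core_of_tidy4}: apply Theorem~\ref{thm:convergence_mod_H} with the $\alpha$-stable compact subgroup $\tilde L$ to get $\con(\alpha/\tilde L) = \con(\alpha)\tilde L = \con(\alpha)L\nub(\fl H)$; since $\nub(\alpha)\leq\nub(\fl H)$, Theorem~\ref{thm:convergence_mod_H} with $\nub(\fl H)$ gives $\con(\alpha/\nub(\fl H)) = \con(\alpha)\nub(\fl H)$, so $L\con(\alpha/\nub(\fl H)) = L\con(\alpha)\nub(\fl H) = \con(\alpha)L\nub(\fl H) = \con(\alpha/\tilde L)$, where the middle equality uses that $L$ normalises $\con(\alpha)$ (Lemma~\ref{lem:nub_normalised}, since $L\leq\lev(\alpha)$) and normalises $\nub(\fl H)$ (part~\eqref{prop:Nnormal_core_of_tidy2}); the same normalisation facts show $L$ normalises $\con(\alpha/\nub(\fl H)) = \con(\alpha)\nub(\fl H)$. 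The one place to be careful is the uniformity of the tidy-below conclusion $\nub(\alpha)\leq L$ across all $\alpha\in\fl H$ — but this is exactly what Lemma~\ref{lem:nubstable} and the $\fl H$-stability of $L$ deliver, so no genuine difficulty arises.
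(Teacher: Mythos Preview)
Your overall strategy matches the paper's, and the core claim together with parts~\eqref{prop:Nnormal_core_of_tidy2} and~\eqref{prop:Nnormal_core_of_tidy4} are handled correctly. Two problems remain.

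First, a minor confusion: you twice assert $\nub(\alpha)\leq L$, citing Lemma~\ref{lem:nubstable}. That lemma says $\nub(\alpha_x\alpha)=\nub(\alpha)$ for $x$ in a compact $\alpha$-stable subgroup; it says nothing about $\nub(\alpha)$ lying inside $L$, and indeed $L$ could be trivial while $\nub(\alpha)$ is not. The correct route to tidy-below for $V$ is simply $\nub(\alpha)\leq U'\leq V$, which holds because $U'$ is tidy for $\alpha$ (Theorem~\ref{thm:nub}) --- this is what the paper does.

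Second, and this is a genuine gap, your argument for part~\eqref{prop:Nnormal_core_of_tidy3} is incomplete. You note that $V$ is tidy for each $\alpha\in\fl H$ and for each $\alpha_x$ with $x\in L$, and then conclude $V$ is tidy ``for the group $\fl H'$ they generate''. But tidiness for a set of generators does not in general imply tidiness for the group they generate --- that implication is exactly the non-trivial content of flatness. You must verify directly that $V$ is tidy for each composite automorphism $\alpha\circ\alpha_x\in\fl H'$. The key observation (which you have all the ingredients for) is that since $L$ is $\fl H$-stable and normalises $V$, one has $(\alpha\circ\alpha_x)^n=\alpha^n\circ\alpha_{y_n}$ for some $y_n\in L$, hence $(\alpha\circ\alpha_x)^n(V)=\alpha^n(V)$ for all $n\in\mathbb Z$, giving $V_{(\alpha\circ\alpha_x)\pm}=V_{\alpha\pm}$ and thus tidy above; tidy below follows from $\nub(\alpha\circ\alpha_x)=\nub(\alpha)\leq V$, which is where Lemma~\ref{lem:nubstable} is actually used. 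The paper argues tidy above slightly differently, via the decomposition $V_{\alpha\pm}=C_{\alpha\pm}\Uz{V}{\alpha}$ with both factors $L$-normalised, but the substance is the same: one must treat the composites, not just the generators.
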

%%%%%
\begin{proof}
That $U'$ is tidy for $\fl{H}$ holds because it is tidy for every $\alpha\in\fl{H}$, by Proposition~\ref{prop:tidy_normalized}. 

\ref{prop:Nnormal_core_of_tidy1}. Since $L$ normalises $U'$, $V := LU'$ is a group: it is compact because $L$ and $U'$ are, and is open because $U'$ is. Since $U'$ is tidy for every $\alpha\in \fl{H}$, we have that $\nub(\alpha)\leq U'\leq V$ and $V = (L\Up{U'}{\alpha})\Um{U'}{\alpha}$. It may be checked that $L\Up{U'}{\alpha}\leq \Up{V}{\alpha}$ and $\Um{U'}{\alpha}\leq \Um{V}{\alpha}$. Hence $V$ is tidy for every $\alpha\in\fl{H}$.

\ref{prop:Nnormal_core_of_tidy2}. By Definition~\ref{defn:nub}, $\nub(\fl{H})$ is the intersection of all subgroups tidy for $\fl{H}$. Since every tidy subgroup contains one normalised by $L$, it follows that $\nub(\fl{H})$ is normalised by $L$ too. 

\ref{prop:Nnormal_core_of_tidy3}. Note that $\left\{\alpha_x\mid x\in L\right\}$ is a group of inner automorphisms of $G$ because $L$ is a group. This group is normalized by $\fl{H}$ because $\fl{H}$ stabilizes $L$. Hence $\fl{H}'$ is a group. We shall see that $\fl{H}'$ is flat by showing that $V$ is a tidy subgroup. Consider $(\alpha,\alpha_x)\in \fl{H}'$. Then  $\nub(\alpha,\alpha_x) = \nub(\alpha)$, by Lemma~\ref{lem:nubstable}, and $V$ is tidy below for $(\alpha,\alpha_x)$. For tidiness above, denote $\con(\alpha^{\pm1})\cap V$ by $\Upm{C}{\alpha}$. Then $\Upm{V}{\alpha} = \Upm{C}{\alpha}\Uz{V}{\alpha}$ and, since $\Upm{C}{\alpha}$ and $\Uz{V}{\alpha}$ are normalized by $L$, we have that $\Upm{V}{(\alpha,\alpha_x)} = \Upm{V}{\alpha}$. Hence $V$ is tidy above for $(\alpha,\alpha_x)$. Therefore $V$ is tidy for every $(\alpha,\alpha_x) \in\fl{H}'$ and $\fl{H}'$ is flat. 

\ref{prop:Nnormal_core_of_tidy4}. Theorem~\ref{thm:convergence_mod_H} shows that $\con(\alpha/\nub(\fl{H})) = \con(\alpha)\nub(\fl{H})$ and, since $L\leq \lev(
\alpha)$, Lemma~\ref{lem:nub_normalised} shows that $L$ normalizes $\con(\alpha)$. Hence $L$ normalizes $\con(\alpha/\nub(\fl{H}))$ and, appealing to Theorem~\ref{thm:convergence_mod_H} again,
$$
L\con(\alpha/\nub(\fl{H})) = \con(\alpha)L\nub(\fl{H}) = \con(\alpha)\tilde{L} = \con(\alpha/\tilde{L}).
$$
\end{proof}

%%%%%%%%
\begin{remark}
	The fact that $\nub(\alpha)$ is normalised by $L$ is used at a couple of places in the proof of Proposition~\ref{prop:Nnormal_core_of_tidy}. This fact implies directly that the group $\langle\nub(\alpha)\mid \alpha\in\fl{H}\rangle^-$ is normalised by $L$. However, this group, which is called the \emph{lower nub} $\lnub(\fl{H})$ in \cite{Reid_DynamicsNYJ_2016}, may be strictly smaller than $\nub(\fl{H})$. The separate argument given to show Part~\ref{prop:Nnormal_core_of_tidy2}.~of Proposition~\ref{prop:Nnormal_core_of_tidy} is needed therefore. A comparison between the nub and lower nub is made in \S\ref{sec:Levi_flat}. 
	
A special case of Proposition~\ref{prop:Nnormal_core_of_tidy} in which $\fl{H}$ is abelian by finite is given in \cite[Lemma~25]{BaumRemyWill}.
\end{remark}
%%%%%%%%
%%%%%%%%

\subsection{Roots for a flat group and the scaling subgroups associated with them}
\label{sec:irroots}

Roots of a flat group, $\fl{H}$, of automorphisms of $G$ and subgroups of $G$ that are scaling for $\fl{H}$ are key concepts for describing the structure of $\fl{H}$ and its action on $G$. 
%%%%%%%%
\begin{definition}
\label{defn:scaling_subgroup}
A closed, non-compact, $\fl{H}$-stable subgroup $H\leq G$ that has a compact, relatively open group $U\leq H$ such that 
\begin{align*}
\alpha\in\fl{H} \implies& \text{ either }\alpha(U)\geq U\text{ or }\alpha(U)\leq U\\
\text{ and }H =& \ \bigcup\left\{ \alpha(U)\mid \alpha\in\fl{H}\right\}
 \end{align*}
 is a \emph{scaling subgroup} for $\fl{H}$.
\end{definition}
%%%%%%%
Scaling subgroups are distinct from scale groups, see~\cite{GWil_scalegps} and \S\ref{sec:tree_rep} above. A scale group is a closed group of isometries of a regular tree $\tree_{s+1}$ that fixes an end of the tree and is transitive on the vertices, whereas saying that a subgroup is scaling refers to its behaviour under the action of a flat group $\fl{H}$. Scale groups are thus a class of concrete \tdlc~groups, while scaling subgroups are contained in general abstract \tdlc~groups. The connection is that every scaling subgroup has a representation with compact kernel as a scale group, see~\cite[Theorem 4.1]{ContractionB}. An understanding of scale groups will contribute to knowledge about scaling groups and, through the results shown in these notes, to knowledge of flat groups of automorphisms.

The condition that either $\alpha(U)\geq U$ or $\alpha(U)\leq U$ implies that $U$ is tidy for $\fl{H}|_H$ and hence that $\fl{H}|_H$ is flat. This condition also implies that the set $\left\{ \alpha(U) \mid \alpha\in\fl{H}\right\}$ is totally ordered by inclusion. Since $\alpha(U)$ is compact and open for each $\alpha\in\fl{H}$, the index $[\beta(U) : \alpha(U)]$ is a positive integer for all $\alpha,\beta\in\fl{H}$ with $\alpha(U)\leq\beta(U)$ and so there is a minimum such index bigger than $1$. (Note that non-compactness of $H$ implies that there is $\alpha\in\fl{H}$ such that $\alpha(U)>U$.) Choose $\beta_*\in \fl{H}$ such that $[\beta_*(U):U]$ is equal to this minimum index. Then 
$$
\left\{ \alpha(U) \mid \alpha\in\fl{H}\right\} = \left\{\beta_*^n(U)\mid n\in\mathbb{Z}\right\}.
$$ 
Hence, $N := \bigcap_{\alpha\in\fl{H}} \alpha(U) = \bigcap_{n\in\mathbb{Z}} \beta_*^n(U)$ is a compact, $\fl{H}$-stable subgroup of $H$ and, since $\bigcup_{n\in\mathbb{Z}}\beta_*^n(U) = H$, we have that $H = \con(\alpha|_H/N)$ for every $\alpha\in\fl{H}$ such that $\alpha(U)<U$.

The modular function on the restriction of $\fl{H}$ to $H$ is a homomorphism $\fl{H} \to (\mathbb{Q}^+,\times)$ given by
$$
\triangle_{H}(\alpha|_H) = \begin{cases}
[\alpha(U):U], &\text{ if }\alpha(U)\geq U\\
[U:\alpha(U)]^{-1}, & \text{ if } \alpha(U)\leq U
\end{cases}
$$ 
and is a power of $[\beta_*(U):U]$ for every $\alpha\in\fl{H}$.

The preceding discussion establishes all but part~\ref{prop:scale&homomorphism5}.~of the following. 
%%%%%%%
\begin{proposition}
\label{prop:scale&homomorphism}
Let $\fl{H}\leq\Aut(G)$ be flat and suppose that $H\leq G$ is a scaling subgroup for $\fl{H}$. With $U$ as in Definition~\ref{defn:scaling_subgroup}, set 
$$
s = \min\left\{[\beta(U):U] \mid \beta\in\fl{H}\mbox{ with }[\beta(U):U]>1\right\}
$$ 
and choose $\beta_*\in \fl{H}$ that achieves this minimum. Then 
\begin{enumerate}
\item \label{prop:scale&homomorphism1}
for every $\alpha\in\fl{H}$ there is $\roo(\alpha)\in\mathbb{Z}$ such that $\alpha(U) = \beta_*^{\roo(\alpha)}(U)$;  
\item \label{prop:scale&homomorphism4}
$N := \bigcap_{\alpha\in\fl{H}} \alpha(U)$ is a compact $\fl{H}$-stable subgroup of $H$, and is the largest such subgroup;
\item \label{prop:scale&homomorphism2}
for this $N$, we have $H = \con(\alpha|_{H}/N) = \con(\alpha|_H)N$ for each $\alpha\in\fl{H}$ with $\roo(\alpha)<0$, and in particular for $\alpha=\beta_*^{-1}$;
\item \label{prop:scale&homomorphism3}
the map $\roo:\fl{H} \to \mathbb{Z}$ is a surjective group homomorphism from $\fl{H}$ to $(\mathbb{Z},+)$ and $\Delta(\alpha|_H) = s^{\roo(\alpha)}$ for every $\alpha\in\fl{H}$; and
\item \label{prop:scale&homomorphism5}
if $V\leq H$ is tidy for $\fl{H}|_H$, then $\widehat{V}_{\fl{H}} = \bigcup\left\{ \alpha(V)\mid \alpha\in\fl{H}\right\}$ is an open subgroup with finite index in $H$ and $H = N\widehat{V}_{\fl{H}}$. 
\endproof
\end{enumerate} 
\end{proposition}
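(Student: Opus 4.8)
The plan is to compare an arbitrary tidy subgroup $V\leq H$ for $\fl{H}|_H$ with the distinguished subgroup $U$ from Definition~\ref{defn:scaling_subgroup}, using the structural description already in hand: $H = \con(\beta_*^{-1}|_H/N) = \con(\beta_*^{-1}|_H)N$ and $\left\{\alpha(U)\mid\alpha\in\fl{H}\right\} = \left\{\beta_*^n(U)\mid n\in\mathbb{Z}\right\}$, which is totally ordered by inclusion with $N = \bigcap_{n\in\mathbb{Z}}\beta_*^n(U)$ and $H = \bigcup_{n\in\mathbb{Z}}\beta_*^n(U)$. First I would record that, since $\fl{H}|_H$ is generated modulo its uniscalar part by $\beta_*|_H$ and $V$ is tidy for $\fl{H}|_H$ (hence in particular for $\beta_*|_H$), Proposition~\ref{prop:tidy_conditions}\eqref{prop:tidy_conditions1} together with the scaling hypothesis forces the family $\left\{\alpha(V)\mid\alpha\in\fl{H}\right\}$ to coincide with $\left\{\beta_*^n(V)\mid n\in\mathbb{Z}\right\}$ and to be totally ordered by inclusion; indeed $\roo(\alpha)<0$ implies $\alpha(V)\le V$ and $\roo(\alpha)>0$ implies $\alpha(V)\ge V$, because $V$ being tidy for $\fl{H}|_H$ means each $\alpha|_H$ either expands or contracts $V$ and the direction is detected by $\Delta(\alpha|_H)=s^{\roo(\alpha)}$. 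Consequently $\widehat V_{\fl{H}} = \bigcup_{n\in\mathbb{Z}}\beta_*^n(V)$ is an increasing union of compact open subgroups of $H$, hence an open subgroup of $H$.

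Next I would show $\widehat V_{\fl{H}}$ has finite index in $H$ and $H = N\widehat V_{\fl{H}}$. The key point is that $\beta_*^{-1}|_H$ contracts $H$ onto $N$: by Proposition~\ref{prop:scale&homomorphism2} applied with $V$ in place of $U$ (legitimate since $V$ is also tidy for $\fl{H}|_H$), we get $H = \con(\beta_*^{-1}|_H)\,N_V$ where $N_V := \bigcap_{n\in\mathbb{Z}}\beta_*^n(V)$, and $N_V$ is the \emph{largest} compact $\fl{H}$-stable subgroup of $H$ by part~\ref{prop:scale&homomorphism4}; since $N$ has the same maximality property, $N_V = N$. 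Thus $H = \con(\beta_*^{-1}|_H)\,N$. Now $\con(\beta_*^{-1}|_H)\subseteq \bigcup_{n\ge0}\beta_*^{n}(V)$: for $x\in\con(\beta_*^{-1}|_H)$ we have $\beta_*^{-n}(x)\to\id$, so $\beta_*^{-n}(x)\in V$ for $n$ large, i.e. $x\in\beta_*^{n}(V)$. Hence $\con(\beta_*^{-1}|_H)\subseteq\widehat V_{\fl{H}}$ and therefore $H = \con(\beta_*^{-1}|_H)N\subseteq \widehat V_{\fl{H}}N = N\widehat V_{\fl{H}}$, giving $H = N\widehat V_{\fl{H}}$. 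Finiteness of the index then follows because $N$ is compact: $[H:\widehat V_{\fl{H}}] = [N\widehat V_{\fl{H}}:\widehat V_{\fl{H}}] = [N : N\cap\widehat V_{\fl{H}}]$, and $N\cap\widehat V_{\fl{H}}$ is open in the compact group $N$, hence of finite index.

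The main obstacle I anticipate is the clean identification $\left\{\alpha(V)\mid\alpha\in\fl{H}\right\} = \left\{\beta_*^n(V)\mid n\in\mathbb{Z}\right\}$ with the correct matching of signs of $\roo$ to the inclusion direction — one has to rule out the a priori possibility that $V$ is expanded by some $\alpha$ with $\roo(\alpha)<0$, which would contradict $\Delta(\alpha|_H)=s^{\roo(\alpha)}<1$ once one knows $\Delta(\alpha|_H) = [\alpha(V):V]$ or $[V:\alpha(V)]^{-1}$ according to the inclusion; this is exactly the module computation underlying part~\ref{prop:scale&homomorphism3}, applied now to the tidy subgroup $V$ rather than $U$, and it is where the scaling hypothesis (total ordering of the translates) does the real work. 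Everything else is a routine manipulation of increasing unions of compact open subgroups and the compactness of $N$.
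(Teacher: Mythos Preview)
Your overall strategy is sound and does lead to a correct proof, but there is a genuine gap in the middle. You write that applying part~\ref{prop:scale&homomorphism2} ``with $V$ in place of $U$'' is legitimate because $V$ is tidy, and then conclude $N_V=N$ from the maximality in part~\ref{prop:scale&homomorphism4}. Both steps fail. Parts~\ref{prop:scale&homomorphism1}--\ref{prop:scale&homomorphism3} rest on $U$ satisfying Definition~\ref{defn:scaling_subgroup}, in particular $H=\bigcup_{\alpha}\alpha(U)$; you have not established this for $V$, and indeed establishing $H=\widehat V_{\fl H}$ is strictly stronger than what you are trying to prove. More concretely, $N_V=N$ is false in general: in the example of Exercise~\ref{exer:scaling}, with $H=F\times\mathbb{Q}_p$, $U'=F\times\mathbb{Z}_p$ the subgroup witnessing that $H$ is scaling, and $V=\{e\}\times\mathbb{Z}_p$ tidy for $\langle\alpha\rangle$, one has $N=F\times\{0\}$ but $N_V=\{e\}\times\{0\}$.

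The fix is immediate: you never needed the detour through $N_V$. The identity $H=\con(\beta_*^{-1}|_H)\,N$ is exactly part~\ref{prop:scale&homomorphism2} as already proved for the original $U$, so you may simply quote it. Combined with your (correct) observation that $\con(\beta_*^{-1}|_H)\subseteq\widehat V_{\fl H}$ and the index computation via compactness of $N$, the argument then goes through. One small point: you pass from $\widehat V_{\fl H}N$ to $N\widehat V_{\fl H}$ without comment; this is fine since $H=\widehat V_{\fl H}N$ implies $H=H^{-1}=N\widehat V_{\fl H}$ as sets.

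For comparison, the paper's argument avoids contraction groups entirely and is shorter: once $\widehat V_{\fl H}$ is seen to be an open $\fl H$-stable subgroup, the product $N\widehat V_{\fl H}$ is an $\fl H$-stable open neighbourhood of $N$, hence contains some $\beta_*^m(U)$ by compactness of the nested intersection defining $N$; $\fl H$-stability then forces it to contain every $\beta_*^n(U)$ and therefore all of $H$. Your route via $\con(\beta_*^{-1}|_H)$ is a legitimate alternative but trades one line of $\fl H$-stability for an appeal to part~\ref{prop:scale&homomorphism2}.
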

%%%%%%%
\begin{proof}
\ref{prop:scale&homomorphism5}. If $V$ is tidy for $\fl{H}$, then for every $\alpha\in\fl{H}$ we have $\alpha(V)\leq V$ or $\alpha(V)\geq V$. Then $\widehat{V}_{\fl{H}}$ is an open, $\fl{H}$-stable subgroup of $H$, and hence $N\Upp{V}{\fl{H}}$ is an open, $\fl{H}$-stable neighbourhood of $N$. Therefore $N\widehat{V}_{\fl{H}}$ contains $\beta_*^m(U)$ for some $m$ and, since it is $\fl{H}$-stable, must equal $H$. The surjection 
$$
x(N\cap\widehat{V}_{\fl{H}})\mapsto x\widehat{V}_{\fl{H}} :   N/(N\cap\widehat{V}_{\fl{H}}) \to N\widehat{V}_{\fl{H}}/\widehat{V}_{\fl{H}} = H/\widehat{V}_{\fl{H}}
$$ 
then implies that $\widehat{V}_{\fl{H}}$ has finite index in $H$ because $N\cap\Upp{V}{\fl{H}}$ is an open subgroup of the compact group $N$. 
\end{proof}

Exercise~\ref{exer:scaling} gives an example of a scaling group $H$ and $V\in\COS(H)$ that is tidy for $\fl{H}$ such that $\widehat{V}_{\fl{H}}$ is a proper subgroup of $H$. 

Proposition~\ref{prop:scale&homomorphism} shows that each scaling subgroup for $\fl{H}$ determines a root of $\fl{H}$, defined as follows.
%%%%%%%%%
\begin{definition}
\label{defn:roots}
%Let $U$ be a subgroup tidy for the flat group $\fl{H}$. 
%%
A \emph{root} of a flat group $\fl{H}$ is a surjective homomorphism $\roo:\fl{H}\to \mathbb{Z}$ such that there are $H\leq G$ scaling for $\fl{H}$ and $s_\roo\in \mathbb{N}$, greater than $1$, such that 
$$
\triangle_{H}(\alpha|_H) = s_\roo^{\roo(\alpha)} \mbox{ for every }\alpha\in\fl{H}.
$$ 
We shall say that \emph{$\roo$ is the root associated to $H$}, and that \emph{$H$ is a scaling subgroup associated with $\roo$}. \\
The set of roots of $\fl{H}$ is denoted by $\red(\fl{H})$.
\end{definition}
%{\color{fireenginered} Show that this is independent of the choice of $U$, i.e. is well-defined.}

Scaling subgroups for $\fl{H}$ associated with a given root $\roo$ need not be unique, as seen in Exercise~\ref{exer:scaling}. Part~\ref{prop:scale&homomorphism2}.~of Proposition~\ref{prop:scale&homomorphism} shows that scaling subgroups differ in the compact subgroup $N$. The following will be useful.
%%%%%%
\begin{lemma}
\label{lem:subgroup_of_scaling}
Suppose that $H$ is a scaling subgroup for $\fl{H}$ associated with $\roo$ and that $L\leq H$ is $\fl{H}$-stable and non-compact. Then $L$ is a scaling subgroup associated with $\roo$.
\end{lemma}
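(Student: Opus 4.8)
The plan is to use $V := L\cap U$ as the compact, relatively open subgroup of $L$ witnessing that $L$ is scaling, where $U$ and $\beta_*\in\fl{H}$ are as in Definition~\ref{defn:scaling_subgroup} and Proposition~\ref{prop:scale&homomorphism}, so that $\beta_*(U)>U$ and $\alpha(U)=\beta_*^{\roo(\alpha)}(U)$ for every $\alpha\in\fl{H}$. First I would note that, since $U$ is compact and relatively open in $H$ and $L$ is a closed subgroup of $G$ contained in $H$, the set $V=L\cap U$ is a compact, relatively open subgroup of $L$. The single computation that does all the work is the identity
$$
\alpha(V)=\alpha(L)\cap\alpha(U)=L\cap\beta_*^{\roo(\alpha)}(U)=\beta_*^{\roo(\alpha)}(L)\cap\beta_*^{\roo(\alpha)}(U)=\beta_*^{\roo(\alpha)}(V)\qquad(\alpha\in\fl{H}),
$$
which holds because $L$ is $\fl{H}$-stable and hence invariant under every power of $\beta_*$. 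From $\beta_*(U)>U$ one gets $\beta_*(V)\ge V$, so $\{\beta_*^n(V)\}_{n\in\mathbb{Z}}$ is a chain; consequently every $\alpha(V)=\beta_*^{\roo(\alpha)}(V)$ is comparable with $V$, and $\bigcup_{\alpha\in\fl{H}}\alpha(V)=\bigcup_{n\in\mathbb{Z}}\beta_*^n(V)=L\cap\bigcup_{n\in\mathbb{Z}}\beta_*^n(U)=L\cap H=L$. Since $L$ is also closed, non-compact and $\fl{H}$-stable, this verifies Definition~\ref{defn:scaling_subgroup}, and $L$ is a scaling subgroup for $\fl{H}$.

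Next I would identify the root that $L$ determines. Because $L$ is non-compact and $L=\bigcup_n\beta_*^n(V)$, the chain $\{\beta_*^n(V)\}_n$ cannot be constant: if $\beta_*(V)=V$ then $\beta_*^n(V)=V$ for all $n$, forcing $L=V$ to be compact. Hence $\beta_*(V)>V$ and $s':=[\beta_*(V):V]>1$; applying the automorphism $\beta_*^k$ of $L$ to $\beta_*(V)\ge V$ gives $[\beta_*^{k+1}(V):\beta_*^k(V)]=s'$ for all $k$, whence $[\beta_*^m(V):V]=(s')^m$ for $m\ge0$ and $[V:\beta_*^m(V)]=(s')^{-m}$ for $m\le0$. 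Substituting $\alpha(V)=\beta_*^{\roo(\alpha)}(V)$ into the formula for the modular function of a scaling subgroup given just before Proposition~\ref{prop:scale&homomorphism} then yields $\triangle_L(\alpha|_L)=(s')^{\roo(\alpha)}$ for every $\alpha\in\fl{H}$, with the single integer $s'>1$. As $\roo$ is already a surjective homomorphism $\fl{H}\to\mathbb{Z}$, being the root associated with $H$, this is precisely the statement that $L$ is a scaling subgroup associated with the root $\roo$.

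There is no substantial obstacle here: once the identity $\alpha(V)=\beta_*^{\roo(\alpha)}(V)$ is in place the remainder is bookkeeping, and non-compactness of $L$ is used only to produce $s'>1$. The one point that requires care is that the integer $s'=[\beta_*(L\cap U):L\cap U]$ realised on $L$ may be strictly smaller than the integer $[\beta_*(U):U]$ realised on $H$ --- for instance, with $G=\mathbb{Q}_p^2$, $\fl{H}=\langle\beta_*\rangle$ where $\beta_*(x,y)=(p^{-1}x,p^{-1}y)$, $H=G$ and $L=\mathbb{Q}_p\times\{0\}$, one has $[\beta_*(U):U]=p^2$ whereas $s'=p$ --- so the conclusion ``$L$ is associated with $\roo$'' should be understood as saying that the root homomorphism determined by $L$ equals $\roo$, the accompanying scale being allowed to change; I would state this explicitly. (The argument also tacitly uses that $L$ is closed, which is needed in any case for the conclusion to be meaningful, scaling subgroups being closed by definition.)
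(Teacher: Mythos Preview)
Your proof is correct and follows exactly the same idea as the paper's: take $V=L\cap U$ and check that it witnesses Definition~\ref{defn:scaling_subgroup} for $L$. The paper's proof is a one-line sketch (``$U\cap L$ does the same for $L$''), whereas you have written out in full the verification that $\alpha(V)=\beta_*^{\roo(\alpha)}(V)$, that the chain is strict by non-compactness, and that the modular function on $L$ is $(s')^{\roo(\cdot)}$ for the same homomorphism $\roo$; your observation that $s'$ may differ from $s_\roo$ while $\roo$ itself is unchanged, and that closedness of $L$ is being used tacitly, are both accurate and worth making explicit.
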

%%%%%
\begin{proof}
If $U$ is the compact open subgroup satisfying the conditions of Definition~\ref{defn:scaling_subgroup} showing that $H$ is scaling and associated with $\roo$, then $U\cap L$ does the same for $L$. Since $L$ is also assumed to be non-compact, it follows that $L$ is scaling.
\end{proof}

The next result shows for each $U$ tidy for $\fl{H}$ and $\roo\in\red(\fl{H})$, that there is a scaling subgroup whose intersection with $U$ is maximal. 

%%%%%%%%
\begin{proposition}
	\label{prop:roots}
	Let $\roo:\fl{H}\to\mathbb{Z}$ be a surjective homomorphism, suppose that $U\in \COS(G)$ is tidy for $\fl{H}$, and define 
	$$
	U_\roo := \bigcap\left\{\alpha(U) \mid \roo(\alpha)\geq0\right\}.
	$$ 
	Then $U_\roo\leq U$ and, for $\alpha,\beta\in\fl{H}$ we have: $\alpha(U_\roo) = \beta(U_\roo)$ if and only if  $\roo(\alpha)=\roo(\beta)$; and $\alpha(U_\roo)\geq U_\roo$ if and only if $\roo(\alpha)\geq0$.

	\begin{enumerate}
		\item 	\label{prop:roots1}
		There is $\alpha\in\fl{H}$ such that $\alpha(U_\roo)<U_\roo$ if and only if $\roo\in\red(\fl{H})$. In this case, the $\fl{H}$-stable set 
		$$
		\widehat{U}_\roo := \bigcup\left\{ \alpha(U_\roo) \mid \alpha\in\fl{H}\right\}
		$$ 
		is a scaling subgroup for $\fl{H}$ associated with $\roo$.
		%%%%%%%%
		\item 	\label{prop:roots2}  
		If  $\roo\in\red(\fl{H})$ and $H$ is a scaling subgroup associated with $\roo$, then: $H\cap U\leq U_\roo$; $H\cap U$ is tidy for $\fl{H}|_H$; and $H\cap \widehat{U}_\roo$ is a scaling subgroup with finite index in $H$. 
		%%%%%%%
		\item 	\label{prop:roots3}
		If $\roo\not\in\red(\fl{H})$, then $\widehat{U}_\roo = \Uz{U}{\fl{H}} = \bigcap\left\{\alpha(U)\mid \alpha\in\fl{H}\right\}$.
	\end{enumerate}
\end{proposition}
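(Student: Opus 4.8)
The plan is to run everything off a single elementary identity. Since $\roo\colon\fl{H}\to\mathbb{Z}$ is a homomorphism into an abelian group, left translation by $\alpha\in\fl{H}$ carries $\{\epsilon\in\fl{H}\mid\roo(\epsilon)\geq 0\}$ bijectively onto $\{\epsilon\in\fl{H}\mid\roo(\epsilon)\geq\roo(\alpha)\}$, whence
$$
\alpha(U_\roo)=\bigcap\bigl\{\epsilon(U)\mid\epsilon\in\fl{H},\ \roo(\epsilon)\geq\roo(\alpha)\bigr\}\qquad(\alpha\in\fl{H}).
$$
Reading off consequences: $U_\roo\leq U$ (take $\alpha=\id$); $\roo(\alpha)\leq\roo(\beta)$ forces $\alpha(U_\roo)\subseteq\beta(U_\roo)$, so $\{\alpha(U_\roo)\mid\alpha\in\fl H\}$ is a chain, $\roo(\alpha)=\roo(\beta)$ forces equality, and $\roo(\alpha)\geq0$ forces $\alpha(U_\roo)\supseteq U_\roo$; consequently $\widehat{U}_\roo$ is an increasing union of subgroups, hence an $\fl{H}$-stable subgroup of $G$. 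Fixing $\sigma\in\fl{H}$ with $\roo(\sigma)=-1$, the chain is $\{\sigma^{-n}(U_\roo)\mid n\in\mathbb{Z}\}$; comparing suitable powers shows it is either constant --- and then $U_\roo$ is $\fl{H}$-stable, $\widehat{U}_\roo=U_\roo$, and since $\Uz{U}{\fl{H}}$ is the largest $\fl{H}$-stable subgroup of $U$ we get $\widehat{U}_\roo=U_\roo=\Uz{U}{\fl{H}}$ --- or else it is strictly increasing, which is exactly ``there is $\alpha$ with $\alpha(U_\roo)<U_\roo$'', and then propagating strictness along the chain yields the ``only if'' halves of the two displayed equivalences. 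It thus remains to match the constant case with $\roo\notin\red(\fl{H})$, and the strict case with ``$\roo\in\red(\fl{H})$, and $\widehat{U}_\roo$ is a scaling subgroup for $\fl{H}$ associated with $\roo$''.

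Assume $\roo\in\red(\fl{H})$ and fix a scaling subgroup $H$ associated with $\roo$. By Proposition~\ref{prop:scale&homomorphism} the given $\roo$ agrees on $\fl{H}$ with the homomorphism that proposition attaches to $H$, so writing $W\leq H$, $\beta_*$, $N$ for the data there: when $\roo(\gamma)=0$ the automorphism $\gamma|_H$ fixes each translate $\beta_*^{k}(W)$, while when $\roo(\gamma)<0$ one has $H=\con(\gamma|_H)N$ with $N$ compact; either way every forward $\gamma$-orbit in $H$ is relatively compact whenever $\roo(\gamma)\leq0$. Feeding this into Proposition~\ref{prop:tidy_criteria} and Lemma~\ref{lem:tidymeetsshrinking} (applied to $H$ and any $U'$ tidy for $\fl{H}$) gives $H\cap U'\leq\Um{U'}{\gamma}$ for every $\gamma$ with $\roo(\gamma)\leq0$, together with $\gamma|_H(H\cap U')\leq H\cap U'$ when $\roo(\gamma)<0$ and $\gamma|_H(H\cap U')=H\cap U'$ when $\roo(\gamma)=0$. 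Taking $U'=U$ this yields $H\cap U\leq\delta(U)$ for all $\delta$ with $\roo(\delta)\geq0$, i.e.\ $H\cap U\leq U_\roo$; ranging over all $U'$ and invoking Proposition~\ref{prop:tidy_conditions}\eqref{prop:tidy_conditions1} it shows $H\cap U$ is tidy for $\fl{H}|_H$. Now Proposition~\ref{prop:scale&homomorphism}\eqref{prop:scale&homomorphism5} applies with $V=H\cap U$: the $\fl{H}$-stable subgroup $\widehat{V}_{\fl{H}}=\bigcup_{\gamma}\gamma(H\cap U)$ is open and of finite index in $H$, hence non-compact, and it lies in $\widehat{U}_\roo$; so $\widehat{U}_\roo$ is non-compact and the dichotomy puts us in the strict case (this is the ``$\roo\in\red(\fl H)\Rightarrow\exists\alpha\,(\alpha(U_\roo)<U_\roo)$'' half of part~\ref{prop:roots1}). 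Moreover $H\cap\widehat{U}_\roo$ contains $\widehat{V}_{\fl{H}}$, so it is an open, finite-index, $\fl{H}$-stable, non-compact subgroup of $H$, hence scaling associated with $\roo$ by Lemma~\ref{lem:subgroup_of_scaling}; together with $H\cap U\leq U_\roo$ and the tidiness of $H\cap U$ this is part~\ref{prop:roots2}.

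For the remaining direction of part~\ref{prop:roots1} one must show that, in the strict case, $\widehat{U}_\roo$ is itself a scaling subgroup for $\fl{H}$ with $U_\roo$ as its compact, relatively open subgroup; then, since $\sigma(U_\roo)\leq U_\roo$ makes $U_\roo$ tidy for $\sigma|_{\widehat{U}_\roo}$, the chain gives $\triangle_{\widehat{U}_\roo}(\gamma|_{\widehat{U}_\roo})=s^{\roo(\gamma)}$ with $s:=[U_\roo:\sigma(U_\roo)]\geq2$, so $\roo\in\red(\fl{H})$ with $\widehat{U}_\roo$ its associated scaling subgroup. This finishes part~\ref{prop:roots1}, and part~\ref{prop:roots3} is then its contrapositive. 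I expect this last step to be the main obstacle: the content is that $\widehat{U}_\roo$ is closed in $G$ and that $U_\roo$ is relatively open in it, equivalently that the indices $[\sigma^{-1}(U_\roo):U_\roo]$ are finite, and this is not formal, because $U_\roo$ --- an infinite intersection of the compact open subgroups $\alpha(U)$ --- need not be open in $G$, so it cannot simply be read off from single-automorphism tidy theory inside $G$. When $\roo\in\red(\fl{H})$ I would get it from the comparison already in hand, using $H\cap U\leq U_\roo$, that $\widehat{V}_{\fl{H}}$ has finite index in $H$, and $U_\roo\cap\widehat{V}_{\fl{H}}=H\cap U$ (both equal $H\cap U_\roo$), so that finiteness of $[H\cap U:\sigma(H\cap U)]$ forces $[U_\roo:\sigma(U_\roo)]$ finite and $\widehat{U}_\roo$ carries the expected \tdlc\ topology and is closed; the purely ``$\roo\notin\red(\fl H)$'' route to part~\ref{prop:roots3} (equivalently, to ``$\exists\alpha\,(\alpha(U_\roo)<U_\roo)\Rightarrow\roo\in\red(\fl H)$'' without a scaling subgroup given in advance) is the place where the flatness of $\fl H$ --- only finitely many ``jumps'' of the chain $\{\epsilon(U)\cap U\mid\epsilon\in\fl H\}$ between consecutive $\roo$-levels --- must really be exploited, and is the delicate part of the argument. (One imprecision worth flagging: the two ``if and only if'' assertions in the preamble hold as stated precisely in the strict case, i.e.\ when $\roo\in\red(\fl{H})$; when $\roo\notin\red(\fl{H})$ the group $U_\roo=\Uz{U}{\fl{H}}$ is $\fl{H}$-stable and the reverse implications fail, so the preamble is best read together with parts~\ref{prop:roots1} and~\ref{prop:roots3}.)
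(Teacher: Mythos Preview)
Your treatment of the preamble, of part~\ref{prop:roots2}, and of the implication ``$\roo\in\red(\fl{H})\Rightarrow$ strict chain'' is correct and close to the paper's. The gap you yourself flag is genuine: you do not prove that the strict chain forces $\widehat{U}_\roo$ to be closed in $G$ (equivalently, that $U_\roo$ is relatively open in $\widehat{U}_\roo$), and without this you cannot conclude $\roo\in\red(\fl{H})$. Your proposed workaround is circular, since it presupposes a scaling subgroup $H$ associated with $\roo$, which is exactly what you are trying to produce; and the side claim that finiteness of $[H\cap U:\sigma(H\cap U)]$ forces finiteness of $[U_\roo:\sigma(U_\roo)]$ is not justified either.

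The paper closes this gap by a direct argument you are missing: it shows $\widehat{U}_\roo\cap U=U_\roo$. Given $x\in\widehat{U}_\roo\cap U$, pick $\alpha$ with $x\in\alpha(U_\roo)$; for any $\beta\in\fl{H}$ with $\roo(\beta)\leq0$ one has $\beta^n(x)\in\alpha(U_\roo)$ for all $n\geq0$ (since $\beta\alpha(U_\roo)\subseteq\alpha(U_\roo)$), so $\{\beta^n(x)\}_{n\geq0}$ lies in the compact set $\alpha(U_\roo)$ and Proposition~\ref{prop:tidy_criteria} gives $x\in U_{\beta-}$, in particular $\beta(x)\in U$. Letting $\beta$ range over all elements with $\roo(\beta)\leq0$ gives $x\in\gamma(U)$ for every $\gamma$ with $\roo(\gamma)\geq0$, i.e.\ $x\in U_\roo$. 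Now $U_\roo=\widehat{U}_\roo\cap U$ is open in $\widehat{U}_\roo$, so $\widehat{U}_\roo$ is closed by \cite[Theorem~II.5.9]{HandR}, and the scaling conditions follow at once from the strict chain. This is exactly the ``flatness'' input you were looking for: it is Proposition~\ref{prop:tidy_criteria} applied in $U$ (not in $\widehat{U}_\roo$), using that $U$ is tidy for every $\beta\in\fl{H}$. Your parenthetical remark about the preamble biconditionals failing when $\roo\notin\red(\fl{H})$ is a fair reading of the statement, though it does not affect the argument.
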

%%%%%%%%
\begin{proof}
	Since the identity automorphism $\iota$ satisfies $\roo(\iota)=0$, we have that $U_\roo\leq U$. If $\beta\in\fl{H}$ and $\roo(\beta)\geq0$, then $\beta(U_\roo) = \bigcap\left\{\beta\alpha(U) \mid \roo(\alpha)\geq0\right\}$ and the latter contains $U_\roo$ because $\roo(\beta\alpha) = \roo(\alpha)+\roo(\beta)\geq \roo(\alpha)$. It follows in particular that $\alpha(U_\roo)=U_\roo$ if $\roo(\alpha)=0$ and, since $\roo$ is a homomorphism, that $\alpha(U_\roo) = \beta(U_\roo)$ if and only if $\roo(\alpha)=\roo(\beta)$.
	 
 \ref{prop:roots1}.  Since $\alpha(U_\roo)\leq U_\roo$ if and only if $\roo(\alpha)\leq 0$, if $\alpha(U_\roo)< U_\roo$ for some $\alpha\in\fl{H}$, then $\bigcup\left\{ \alpha(U_\roo) \mid \alpha\in\fl{H}\right\}$ is a nested union of subgroups of $G$ and is itself a subgroup. To see that it is scaling for $\fl{H}$, it must be shown that it is closed and, for this, it suffices, by~\cite[Theorem II.5.9]{HandR} or \cite[Ch. III, p.~221]{BourbakiTop}, to show that $\widehat{U}_\roo\cap U = U_\roo$. Consider $x\in \widehat{U}_\roo\cap U$. Then $x\in \alpha(U_\roo)$ for some $\alpha\in\fl{H}$ and for any $\beta\in\fl{H}$ with $\roo(\beta)\leq0$ we have $\beta^{n}(x)\in \alpha(U_\roo)$ for every $n\geq0$. Since $\alpha(U_\roo)$ is compact and $U$ is tidy for $\beta$, it follows by Proposition~\ref{prop:tidy_criteria} that $x\in \Um{U}{\beta}$. Since this holds for every $\beta$ with $\roo(\beta)\geq0$, it follows that $x\in U_\roo$ as required. 
 	
	 \ref{prop:roots2}. Suppose that $\roo\in\red(\fl{H})$ and $H$ is an associated scaling subgroup. Consider $x\in H\cap U$. If $\roo(\alpha)\leq 0$, then $\{\alpha^n(x)\}_{n\geq0}$ is precompact because $x\in H$, and so, since $x\in U$, Proposition~\ref{prop:tidy_criteria} implies that  $\{\alpha^n(x)\}_{n\geq0}\subseteq U$. Hence $\alpha(H\cap U)\leq H\cap U$ if $\roo(\alpha)\leq0$ and we have shown that $H\cap U\leq U_\roo$ and is tidy for $\fl{H}|_H$.
	 
	  That $H\cap \widehat{U}_\roo$ is a scaling subgroup follows from $H\cap U$ being tidy for $\fl{H}|_H$, and that it has finite index in $H$ then follows from Proposition~\ref{prop:scale&homomorphism}.\ref{prop:scale&homomorphism5}.

 \ref{prop:roots3}. Suppose that $\roo\not\in\red(\fl{H})$.	Then $\alpha(U_\roo) = U_\roo$ for every $\alpha\in\fl{H}$ by part~\ref{prop:roots1}. Hence
	$$
	U_\roo = \bigcap\left\{\alpha(U) \mid \alpha\in\fl{H}\right\} = \Uz{U}{\fl{H}} = \widehat{U}_\roo.
	$$ 
\end{proof}

\begin{corollary}
	\label{cor:roots}	
	If $\fl{H}$ is a uniscalar flat group, that is, $\fl{H}$ stabilizes subgroups tidy for it, then $\red(\fl{H}) = \emptyset$. 
\end{corollary}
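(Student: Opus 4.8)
The plan is to reduce the statement directly to Proposition~\ref{prop:roots}, which already contains the essential mechanism. First I would fix a compact open subgroup $U$ that is tidy for $\fl{H}$; such a $U$ exists because $\fl{H}$ is flat. The hypothesis that $\fl{H}$ is uniscalar means precisely (cf.\ Proposition~\ref{prop:uniscalar}) that $\alpha(U)=U$ for every $\alpha\in\fl{H}$, and this is the only feature of the uniscalar condition that the argument uses.

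Next I would take an arbitrary surjective homomorphism $\roo:\fl{H}\to\mathbb{Z}$, since by Definition~\ref{defn:roots} every element of $\red(\fl{H})$ is such a homomorphism, and aim to show $\roo\notin\red(\fl{H})$. Feeding this $\roo$ and $U$ into the construction of Proposition~\ref{prop:roots}, the subgroup $U_\roo=\bigcap\{\alpha(U)\mid\roo(\alpha)\geq0\}$ collapses to $U$ itself: the identity automorphism contributes the term $U$, and all other terms $\alpha(U)$ also equal $U$ by uniscalarity, so the intersection is $U$. Consequently $\alpha(U_\roo)=\alpha(U)=U=U_\roo$ for every $\alpha\in\fl{H}$, so there is no $\alpha\in\fl{H}$ with $\alpha(U_\roo)<U_\roo$. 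The `if and only if' in Proposition~\ref{prop:roots}.\ref{prop:roots1} then forces $\roo\notin\red(\fl{H})$.

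Finally, since $\roo$ was an arbitrary surjective homomorphism $\fl{H}\to\mathbb{Z}$ and $\red(\fl{H})$ is by definition a set of such homomorphisms, I conclude $\red(\fl{H})=\emptyset$. (If $\fl{H}$ admits no surjection onto $\mathbb{Z}$ at all there is nothing to prove.) I do not anticipate a real obstacle here: the corollary is essentially Proposition~\ref{prop:roots}.\ref{prop:roots1} read under the extra hypothesis that $\fl{H}$ fixes a tidy subgroup. The only point needing a moment's care is the logical bookkeeping — confirming that excluding each individual surjective homomorphism from being a root genuinely empties $\red(\fl{H})$ — and this is immediate from Definition~\ref{defn:roots}.
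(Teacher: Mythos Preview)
Your proposal is correct and follows essentially the same argument as the paper's proof: both compute $U_\roo=U$ from the uniscalar hypothesis $\alpha(U)=U$, deduce $\alpha(U_\roo)=U_\roo$ for every $\alpha\in\fl{H}$, and then invoke Proposition~\ref{prop:roots}.\ref{prop:roots1} to exclude every surjective $\roo:\fl{H}\to\mathbb{Z}$ from $\red(\fl{H})$. The paper's version is just a terser statement of exactly this reasoning.
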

%%%%%%%%
\begin{proof}
If $\alpha(U) = U$ for every $\alpha\in \fl{H}$ and $U$ tidy for $\fl{H}$, then $U_\roo = U$ for every $\roo:\fl
H\to \mathbb{Z}$ and $\alpha({U}_\roo)= U_\roo$ for every $\alpha\in \fl{H}$.  Hence no homomorphism $\roo$ belongs to $\red(\fl{H})$ by Proposition~\ref{prop:roots}.\ref{prop:roots1}.  
\end{proof}

It is convenient to state the next result in terms of the  sub-semigroup $\fl{A}_\roo = \left\{\alpha\in\fl{H} \mid \roo(\alpha)<0\right\}$ of $\fl{H}$. Recall that $\con(\fl{A}_\roo/\nub(\fl{H}))$ is defined in~\ref{defn:relative_contraction_semigroup} to be the intersection of all subgroups $\con(\alpha/\nub(\fl{H}))$ with $\alpha\in\fl{A}_\roo$.
\begin{proposition}
\label{prop:con_Aroo} 
\begin{enumerate}
	\item \label{prop:con_Aroo1} 
	Let $\roo\in \red(\fl{H})$ and $U\in\COS(G)$ be tidy for $\fl{H}$. Then 
$\widehat{U}_\roo =  \con(\fl{A}_\roo/U_{\fl{H}0})$. 
%%%%%%
	\item \label{prop:con_Aroo2} 
$ \bigcap\left\{\con(\fl{A}_\roo/U_{\fl{H}0})\mid U\text{ tidy for }\fl{H}\right\} = \con(\fl{A}_\roo/\nub(\fl{H}))$ 
and is a scaling subgroup for $\fl{H}$ associated with $\roo$. 
%%%%%%
\item \label{prop:con_Aroo3} 
Every compact, $\fl{H}$-stable group $L\leq G$ normalizes $\con(\fl{A}_\roo/\nub(\fl{H}))$ and $L\con(\fl{A}_\roo/\nub(\fl{H}))$ is a scaling subgroup associated with $\roo$. 
%%%%%%%
	\item \label{prop:con_Aroo4} 
	If $H$ is scaling for $\fl{H}$, contains $\nub(\fl{H})$, and is associated with $\roo$, then
$$
H = N\con(\fl{A}_\roo/\nub(\fl{H}))
$$
where $N$ is the largest $\fl{H}$-stable subgroup of $H$.
\end{enumerate}
\end{proposition}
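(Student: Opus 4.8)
The plan is to prove the four parts in order, using the earlier results on scaling subgroups (Proposition~\ref{prop:scale&homomorphism}), roots (Proposition~\ref{prop:roots}), the relative-contraction machinery (Theorem~\ref{thm:convergence_mod_H} and Corollary~\ref{cor:convergence_mod_H}, Proposition~\ref{prop:nub_smallest}), and the normalisation lemma for the nub (Proposition~\ref{prop:Nnormal_core_of_tidy} part~\ref{prop:Nnormal_core_of_tidy4}). Throughout write $N_0 = \Uz{U}{\fl{H}} = \bigcap\{\alpha(U)\mid\alpha\in\fl{H}\}$, which by Lemma~\ref{lem:lev_closed} and Proposition~\ref{prop:roots} is a compact $\fl{H}$-stable subgroup, and recall that $U_\roo\leq U$ is tidy for $\fl{H}|_{\widehat{U}_\roo}$ with $N_0$ playing the role of the "$N$" of Proposition~\ref{prop:scale&homomorphism} for the scaling subgroup $\widehat{U}_\roo$ (Proposition~\ref{prop:roots}.\ref{prop:roots1}).

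\textbf{Part~\ref{prop:con_Aroo1}.} I would show the two inclusions. For $\widehat{U}_\roo\subseteq\con(\fl{A}_\roo/N_0)$: pick $\alpha\in\fl{A}_\roo$, so $\roo(\alpha)<0$; since $U_\roo$ is tidy for $\alpha$ with $\Uz{U_\roo}{\alpha}=N_0$ (here I use that $\alpha$ contracts $U_\roo$, by Proposition~\ref{prop:roots}), Proposition~\ref{prop:U--^con} gives $\Umm{U_\roo}{\alpha} = \con(\alpha)N_0$, and $\widehat{U}_\roo=\bigcup_n\alpha^{-n}(U_\roo)=\Umm{U_\roo}{\alpha}$; hence $\widehat{U}_\roo\subseteq\con(\alpha)N_0=\con(\alpha/N_0)$ by Theorem~\ref{thm:convergence_mod_H}, and intersecting over $\alpha\in\fl{A}_\roo$ gives the inclusion. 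For the reverse: if $x\in\con(\fl{A}_\roo/N_0)$, choose $\beta_\ast$ realising the root as in Proposition~\ref{prop:scale&homomorphism} so $\beta_\ast^{-1}\in\fl{A}_\roo$; then $\beta_\ast^{-n}(x)\to\id\bmod N_0$, so eventually $\beta_\ast^{-n}(x)\in U_\roo$, whence $x\in\beta_\ast^{n}(U_\roo)\subseteq\widehat{U}_\roo$. (One must check the product $\con(\alpha)N_0$ is a group and that $N_0$ is $\alpha$-stable; both are immediate.)

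\textbf{Part~\ref{prop:con_Aroo2}.} The inclusion $\supseteq$: $\nub(\fl{H})\leq U_{\fl{H}0}$ for every tidy $U$ (indeed $\nub(\fl H)\subseteq U$, so $\nub(\fl H)\subseteq\bigcap_\alpha\alpha(U)=N_0$, since $\nub(\fl H)$ is $\fl H$-stable), and convergence mod the larger group $N_0$ is implied by convergence mod $\nub(\fl{H})$; so $\con(\alpha/\nub(\fl H))\subseteq\con(\alpha/N_0)$ for each $\alpha$, and taking intersections over $\alpha\in\fl A_\roo$ and then over tidy $U$ gives $\con(\fl A_\roo/\nub(\fl H))\subseteq\bigcap_U\con(\fl A_\roo/U_{\fl H 0})$. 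For $\subseteq$: by Lemma~\ref{lem:open_nub} the groups $U_{\fl H 0}$, as $U$ ranges over tidy subgroups contained in a given neighbourhood of $\nub(\fl H)$, have intersection $\nub(\fl H)$ (any $x$ in all of them lies in every tidy $U$, hence in $\nub(\fl H)$); combining this with part~\ref{prop:con_Aroo1} and a compactness argument — an element converging to $\id$ modulo each $U_{\fl H 0}$ converges to $\id$ modulo their intersection $\nub(\fl H)$ — yields the reverse inclusion. That $\con(\fl A_\roo/\nub(\fl H))$ is a scaling subgroup associated with $\roo$: it equals $\bigcap_U\widehat U_\roo$ by part~\ref{prop:con_Aroo1}, each $\widehat U_\roo$ is scaling associated with $\roo$, it is non-compact (it contains $\widehat{(U_\roo)}_\roo\supsetneq U_\roo$ for a suitable choice, or more robustly: it contains $\con(\beta_\ast^{-1}/\nub(\fl H))=\overline{\con(\beta_\ast^{-1})}\nub(\fl H)$, which is non-compact by Proposition~\ref{prop:restriction_of_scale} since $s(\beta_\ast^{-1})>1$ on it), and $\fl H$-stable; the witnessing compact-open subgroup is $\nub(\fl H)$ together with, say, $U_\roo$-type pieces — more cleanly, $U_\roo\cap\con(\fl A_\roo/\nub(\fl H))$ works, and one verifies Definition~\ref{defn:scaling_subgroup} and that $\triangle$ is the same power of $s_\roo$.

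\textbf{Parts~\ref{prop:con_Aroo3} and~\ref{prop:con_Aroo4}.} For part~\ref{prop:con_Aroo3}: a compact $\fl H$-stable $L$ lies in $\lev(\alpha)$ for every $\alpha$, so Lemma~\ref{lem:nub_normalised} shows $L$ normalises $\con(\alpha)$, hence (Theorem~\ref{thm:convergence_mod_H} plus $L$-invariance of $\nub(\fl H)$ from Proposition~\ref{prop:Nnormal_core_of_tidy}.\ref{prop:Nnormal_core_of_tidy2}) normalises each $\con(\alpha/\nub(\fl H))$ and therefore their intersection $\con(\fl A_\roo/\nub(\fl H))$; the product $L\con(\fl A_\roo/\nub(\fl H))$ is then a group, visibly $\fl H$-stable, non-compact (it contains the non-compact part~\ref{prop:con_Aroo2} group), and contains the tidy subgroup $L\cdot(U_\roo\cap\con(\fl A_\roo/\nub(\fl H)))'$ — I would invoke Proposition~\ref{prop:Nnormal_core_of_tidy}.\ref{prop:Nnormal_core_of_tidy1} applied inside the scaling subgroup of part~\ref{prop:con_Aroo2} to produce the required compact-open witness and check the root and scale are unchanged. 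For part~\ref{prop:con_Aroo4}: if $H$ is scaling, contains $\nub(\fl H)$, associated with $\roo$, and $N$ is its largest $\fl H$-stable (compact, by Proposition~\ref{prop:scale&homomorphism}.\ref{prop:scale&homomorphism4}) subgroup, then $H\cap\con(\fl A_\roo/\nub(\fl H))$ is $\fl H$-stable and non-compact (by Lemma~\ref{lem:subgroup_of_scaling} it is scaling associated with $\roo$, so certainly non-compact), and Proposition~\ref{prop:scale&homomorphism}.\ref{prop:scale&homomorphism5} applied to a tidy $V\leq H\cap\con(\fl A_\roo/\nub(\fl H))$ gives $H=N\widehat V_{\fl H}\subseteq N\con(\fl A_\roo/\nub(\fl H))\subseteq H$, hence equality. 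Concretely one takes $V = H\cap U$ for $U$ tidy for $\fl H$ with $\nub(\fl H)\leq U$; then $\widehat V_{\fl H}\subseteq\widehat U_\roo$, and one must check $\widehat V_{\fl H}\subseteq\con(\fl A_\roo/\nub(\fl H))$, which follows since $V$ contains $\nub(\fl H)=\nub(\fl H|_H)$ (as $H\supseteq\nub(\fl H)$ and $\nub$ of a restriction equals the intersection of the tidy subgroups, here $V=H\cap U$).

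The main obstacle I expect is the reverse inclusion in part~\ref{prop:con_Aroo2}: passing from "converges to $\id$ modulo $U_{\fl H 0}$ for every tidy $U$" to "converges to $\id$ modulo $\nub(\fl H)$" is not purely formal, because the family $\{U_{\fl H 0}\}$ is a downward-filtered family of compact groups with intersection $\nub(\fl H)$, and one needs that an open set containing $\nub(\fl H)$ already contains some $U_{\fl H 0}$ — that is exactly Lemma~\ref{lem:open_nub} applied with $\open O$ and then intersecting finitely many tidy subgroups — after which a routine $\varepsilon/N$ argument finishes it. A secondary fussy point is verifying in each of parts~\ref{prop:con_Aroo2}--\ref{prop:con_Aroo4} that the group produced really satisfies Definition~\ref{defn:scaling_subgroup} \emph{with the same root} $\roo$ and the \emph{same} $s_\roo$; I would handle this uniformly by exhibiting, in each case, an explicit $\fl H$-stable non-compact subgroup sandwiched between two known scaling subgroups associated with $\roo$ and invoking Lemma~\ref{lem:subgroup_of_scaling}.
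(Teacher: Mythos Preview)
Your overall strategy is sound and mostly aligned with the paper, but there are two genuine gaps.

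\textbf{Part~\ref{prop:con_Aroo1}, reverse inclusion.} You write that from $\beta_\ast^{-n}(x)\to\id\bmod N_0$ one gets ``eventually $\beta_\ast^{-n}(x)\in U_\roo$''. This fails because $U_\roo=\bigcap\{\alpha(U)\mid\roo(\alpha)\ge 0\}$ is in general \emph{not open in $G$}: it is an infinite intersection of compact open subgroups, hence only closed. Convergence modulo $N_0$ only gives you eventual membership in open neighbourhoods of $N_0$, e.g.\ in $U$, not in $U_\roo$. The paper closes this gap by first landing in the open set $U$ and then invoking Proposition~\ref{prop:tidy_criteria}: for $x\in U\cap\con(\fl{A}_\roo/U_{\fl H 0})$ and any $\alpha\in\fl{A}_\roo\cup\ker\roo$, the orbit $\{\alpha^n(x)\}_{n\ge 0}$ is precompact, so $\alpha(x)\in U$; running over all such $\alpha$ gives $x\in U_\roo$. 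You need this tidy-criterion step (or an equivalent) to pass from $U$ to $U_\roo$.

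\textbf{Part~\ref{prop:con_Aroo2}, non-compactness.} Your ``more robust'' argument asserts that $\con(\fl{A}_\roo/\nub(\fl H))$ \emph{contains} $\con(\beta_\ast^{-1}/\nub(\fl H))$. The containment goes the other way: $\con(\fl{A}_\roo/\nub(\fl H))$ is by definition the \emph{intersection} of the $\con(\alpha/\nub(\fl H))$ over $\alpha\in\fl A_\roo$, so it is contained in $\con(\beta_\ast^{-1}/\nub(\fl H))$, not the reverse. Thus non-compactness of the latter says nothing. The paper's argument is genuinely more delicate: fix $\alpha\in\fl A_\roo$ and a tidy $V$, and for each $n$ consider the compact ``shells'' $\con(\fl A_\roo/U_{\fl H 0})\cap\bigl(\alpha^{-n-1}(V)\setminus\alpha^{-n}(V)\bigr)$ as $U$ ranges over tidy subgroups $\le V$; these are non-empty (since each $\widehat U_\roo$ is scaling), downward-directed, hence have non-empty intersection, which lands in $\con(\fl A_\roo/\nub(\fl H))\cap\bigl(\alpha^{-n-1}(V)\setminus\alpha^{-n}(V)\bigr)$ for every $n$. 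This finite-intersection-property argument is the missing idea.

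Your handling of the reverse inclusion in Part~\ref{prop:con_Aroo2} via Lemma~\ref{lem:open_nub}, and your outlines for Parts~\ref{prop:con_Aroo3} and~\ref{prop:con_Aroo4}, are essentially correct and close to the paper's approach.
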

%%%%%%
\begin{proof}
\ref{prop:con_Aroo1}. Let $U$ be tidy for $\fl{H}$. Then $\widehat{U}_\roo = \con(\alpha|_{\widehat{U}_\roo}/U_{\fl{H}0})$ for every $\alpha\in \fl{A}_\roo$ by Propositions~\ref{prop:roots}.\ref{prop:roots1} and~\ref{prop:scale&homomorphism}.\ref{prop:scale&homomorphism2} whence it follows that $\widehat{U}_\roo \leq \con(\alpha/U_{\fl{H}0})$ for every $\alpha\in \fl{A}_\roo$. Hence $\widehat{U}_\roo \leq \con(\fl{A}_\roo/U_{\fl{H}0})$. On the other hand, for each $x\in U\cap \con(\fl{A}_\roo/U_{\fl{H}0})$ and $\alpha\in\fl{A}_\roo\cup\ker\roo$, we have that $\{\alpha^n(x)\}_{n\in\mathbb{N}}$ is precompact because $x\in \con(\alpha/U_{\fl{H}0})$ and so $\{\alpha^n(x)\}_{n\in\mathbb{N}}\subset U$, by Proposition~\ref{prop:tidy_criteria}. In particular,
$$
U\cap \con(\fl{A}_\roo/U_{\fl{H}0}) \leq  \alpha^{-1}(U\cap \con(\fl{A}_\roo/U_{\fl{H}0}))
$$
for every $\alpha\in\fl{A}_\roo\cup\ker\roo$. Therefore $U\cap \con(\fl{A}_\roo/U_{\fl{H}0}) \leq U_\roo$ and $\widehat{U}_\roo = \con(\fl{A}_\roo/U_{\fl{H}0})$.

\ref{prop:con_Aroo2}. Since $\nub(\fl{H})\leq U_{\fl{H}0}$, we have that $\con(\fl{A}_\roo/\nub(\fl{H})) \leq \con(\fl{A}_\roo/U_{\fl{H}0})$ for every $U$ tidy for $\fl{H}$. Hence
$$
\con(\fl{A}_\roo/\nub(\fl{H})) \leq \bigcap\left\{\con(\fl{A}_\roo/U_{\fl{H}0})\mid U\text{ tidy for }\fl{H}\right\}.
$$
The reverse inclusion holds because $\nub(\fl{H})$ is the intersection of all the subgroups tidy for $\fl{H}$. Hence for every open $\mathcal{O}\supseteq \nub(\fl{H})$ there is $U$ tidy for $\fl{H}$ such that $U\subseteq \mathcal{O}$ and so, if $x\in\con(\fl{A}_\roo/U_{\fl{H}0})$ for every $U$ tidy for $\fl{H}$, then $x\in\con(\fl{A}_\roo/\nub(\fl{H}))$. 

By Lemma~\ref{lem:subgroup_of_scaling}, to complete the proof that $\con(\fl{A}_\roo/\nub(\fl{H}))$ is a scaling subgroup associated with $\roo$, it suffices to show that it is not compact. For this, choose $\alpha\in\fl{A}_\roo$ and $V$ tidy for $\fl{H}$ and observe that, for each $U\leq V$ tidy for $\fl{H}$, $\con(\fl{A}_\roo/U_{\fl{H}0})\cap \left(\alpha^{-n-1}(V)\setminus \alpha^{-n}(V)\right)$ is non-empty and compact for every $n\in\mathbb{Z}$ and 
$$
\con(\fl{A}_\roo/U_{\fl{H}0}) = \bigsqcup_{n\in\mathbb{Z}} \con(\fl{A}_\roo/U_{\fl{H}0})\cap \left(\alpha^{-n-1}(V)\setminus \alpha^{-n}(V)\right).
$$
Then, for each $n\in\mathbb{Z}$,  $\left\{\con(\fl{A}_\roo/U_{\fl{H}0})\cap \left(\alpha^{-n-1}(V)\setminus \alpha^{-n}(V)\right)\right\}_{U\leq V}$ is a set of non-empty conpact sets directed by the inclusion relation and so
$$
\bigcap\left\{\con(\fl{A}_\roo/U_{\fl{H}0})\cap \left(\alpha^{-n-1}(V)\setminus \alpha^{-n}(V)\right)\mid U\leq V\text{ tidy for }\fl{H}\right\} \ne \emptyset.
$$
Therefore $\con(\fl{A}_\roo/\nub(\fl{H}))\cap \left(\alpha^{-n-1}(V)\setminus \alpha^{-n}(V)\right)$ is not empty for every $n\in\mathbb{Z}$ and $\con(\fl{A}_\roo/\nub(\fl{H}))$ is not compact. 

\ref{prop:con_Aroo3}. It is shown in Proposition~\ref{prop:Nnormal_core_of_tidy}.\ref{prop:Nnormal_core_of_tidy2} that $L$ normalizes $\nub(\fl{H})$ and, since $L$ is compact, there is a base of neighbourhoods of $\id_G$ that are also normalized by $L$. Hence, if $\alpha^n(x)\to \nub(\fl{H})$ as $n\to\infty$, then so does $\alpha^n(lxl^{-1})$ for every $l\in L$ because $L$ is $\fl{H}$-stable. Hence $L$ normalizes $\con(\fl{A}_\roo/\nub(\fl{H}))$ and $L\con(\fl{A}_\roo/\nub(\fl{H}))$ is a group. %%MAKE THIS A LEMMA? SEEMS TO BE USED MORE THAN ONCE.
This group is closed because $L$ is compact, non-compact because $\con(\fl{A}_\roo/\nub(\fl{H}))$ is, and is $\fl{H}$-stable because $L$ and $\con(\fl{A}_\roo/\nub(\fl{H}))$ are. 

To see that $L\con(\fl{A}_\roo/\nub(\fl{H}))$ satisfies the remaining criterion for it to be scaling, let $U$ be a compact, open subgroup of $\con(\fl{A}_\roo/\nub(\fl{H}))$ such that $\bigcup\{\alpha(U)\mid\alpha\in\fl{H}\} = \con(\fl{A}_\roo/\nub(\fl{H}))$ and $\alpha(U)$ is comparable with $U$ for every $\alpha\in\fl{H}$. Set $U' = \bigcap\{lUl^{-1}\mid l\in L\}$. Then $U'$ is a compact open subgroup of $\con(\fl{A}_\roo/\nub(\fl{H}))$ that contains $\nub(\fl{H})$ because $\nub(\fl{H})$ is normalized by $L$. Hence there are $n\in\mathbb{N}$ and $\beta\in\fl{A}_\roo$ such that $\beta^n(U)\leq U'$ and it follows that $\bigcup\{\alpha(U')\mid\alpha\in\fl{H}\} = \con(\fl{A}_\roo/\nub(\fl{H}))$. Furthermore, for $\alpha\in\fl{H}$: $\alpha(U')\geq U'$ if and only if $\alpha(U)\geq U$; and $\alpha(U')\leq U'$ if and only if $\alpha(U)\leq U$. Therefore the group $LU'$ is compact and open in $L\con(\fl{A}_\roo/\nub(\fl{H}))$ and
\begin{align*}
\alpha\in\fl{H} &\implies \text{ either }\alpha(LU')\geq LU'\text{ or }\alpha(LU')\leq LU'\\
\text{ and }L\con(\fl{A}_\roo/\nub(\fl{H})) &= \ \bigcup\left\{ \alpha(LU')\mid \alpha\in\fl{H}\right\}
\end{align*}
and so $L\con(\fl{A}_\roo/\nub(\fl{H}))$ is scaling for $\fl{H}$ and associated with $\roo$.

\ref{prop:con_Aroo4}. Suppose that $H\leq G$ is scaling for $\fl{H}$, associated with $\roo$ and contains $\nub(\fl{H})$. Choose $\alpha\in \fl{A}_\roo$. Then $H = \con(\alpha|_H/N)$ because $H$ is scaling and $\nub(\fl{H})\leq N$. It follows that $\con(\fl{A}_\roo/\nub(\fl{H}))\leq H$ and hence that $N\con(\fl{A}_\roo/\nub(\fl{H}))\leq H$. Since $H$ and $N\con(\fl{A}_\roo/\nub(\fl{H}))$ are both scaling, there are compact subgroups $U\leq H$ and $V\leq N\con(\fl{A}_\roo/\nub(\fl{H}))$ satisfying the conditions of Definition~\ref{defn:scaling_subgroup}. Since $\bigcap_{\alpha\in\fl{H}} \alpha(U)$ and $\bigcap_{\alpha\in\fl{H}} \alpha(V)$ both equal $N$, there is $\alpha\in\fl{H}$ such that $\alpha(U)\leq V$. Hence  $\bigcup_{\alpha\in\fl{H}} \alpha(V)$ is equal to both $H$ and $N\con(\fl{A}_\roo/\nub(\fl{H}))$. 
\end{proof}

The groups $\widehat{U}_\roo$, with $U$ tidy for $\fl{H}$, and $\con(\fl{A}_\roo/\nub(\fl{H}))$ described in Propositions~\ref{prop:roots} and~\ref{prop:con_Aroo} will be called respectively the \emph{shrinking} and \emph{contraction} subgroups associated with $\roo$. 

The following is an immediate consequence of part~\ref{prop:con_Aroo4}. of Proposition~\ref{prop:con_Aroo}.
%%%%%%
\begin{corollary}
\label{cor:con_Aroo}
Suppose that $\nub(\fl{H})$ is trivial and let $\roo\in\red(\fl{H})$. Then $\con(\fl{A}_\roo)$ is the smallest scaling subgroup of $G$ associated with $\roo$.
\end{corollary}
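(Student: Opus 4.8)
The plan is to read this off directly from part~\ref{prop:con_Aroo4} of Proposition~\ref{prop:con_Aroo}, after noting that the hypothesis $\nub(\fl{H}) = \{\id\}$ collapses the relative contraction group to an ordinary one. First I would observe that convergence to the identity modulo the trivial subgroup is, by Definition~\ref{defn:relative_contraction}, nothing but ordinary convergence to $\id_G$; hence $\con(\alpha/\{\id\}) = \con(\alpha)$ for every $\alpha\in\fl{H}$, and therefore
$$
\con(\fl{A}_\roo/\nub(\fl{H})) = \bigcap\left\{\con(\alpha)\mid\alpha\in\fl{A}_\roo\right\} = \con(\fl{A}_\roo).
$$
Part~\ref{prop:con_Aroo2} of Proposition~\ref{prop:con_Aroo} then already records that $\con(\fl{A}_\roo/\nub(\fl{H}))$, hence $\con(\fl{A}_\roo)$, is a scaling subgroup for $\fl{H}$ associated with $\roo$ (in particular it is closed). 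So the only thing still to check is the minimality assertion.

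For minimality, I would take an arbitrary scaling subgroup $H$ associated with $\roo$. Since $\nub(\fl{H}) = \{\id\}$ is trivially contained in $H$, the hypotheses of part~\ref{prop:con_Aroo4} of Proposition~\ref{prop:con_Aroo} are satisfied, and it gives $H = N\con(\fl{A}_\roo/\nub(\fl{H})) = N\con(\fl{A}_\roo)$, where $N$ is the largest $\fl{H}$-stable subgroup of $H$. In particular $\con(\fl{A}_\roo)\leq N\con(\fl{A}_\roo) = H$. As $H$ ranged over all scaling subgroups associated with $\roo$, this shows $\con(\fl{A}_\roo)$ is contained in every one of them and hence is the smallest such subgroup.

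I do not anticipate a genuine obstacle here: this is essentially a specialization of Proposition~\ref{prop:con_Aroo} to the case of trivial nub. The only points requiring a word of care are (i) confirming that $\con(\alpha/\{\id\}) = \con(\alpha)$, which is immediate from the definition of convergence modulo a compact subgroup, and (ii) noting that with $\nub(\fl{H})$ trivial every scaling subgroup automatically meets the "contains $\nub(\fl{H})$" requirement of part~\ref{prop:con_Aroo4}, so that part applies without restriction.
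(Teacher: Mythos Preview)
Your argument is correct and follows essentially the same route as the paper, which simply records the corollary as an immediate consequence of part~\ref{prop:con_Aroo4} of Proposition~\ref{prop:con_Aroo}. Your write-up is in fact a bit more careful: you explicitly invoke part~\ref{prop:con_Aroo2} to confirm that $\con(\fl{A}_\roo)$ is itself a scaling subgroup associated with $\roo$ (a point the paper leaves implicit), which is needed for the ``smallest'' claim rather than merely ``contained in every''.
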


The following examples illustrate the concepts of roots of flat groups and scaling subgroups and the results just shown.
%%%%
\begin{example}
\label{examp:scaling_groups}
\begin{enumerate}
	\item 	Let $G$ be a \tdlc~group and $\fl{H} = \langle\alpha\rangle$ with $s(\alpha)>1$. Let $U$ be tidy for $\alpha$. Then $\Upp{U}{\alpha}$ and $\con(\alpha^{-1})^-$ are both scaling for $\fl{H}$ and are associated with the root $\roo:\alpha^n\mapsto n$. In the first case, $\Uz{U}{\alpha}$ is the maximal $\fl{H}$-stable subgroup and in the second this subgroup is $\nub(\alpha)$. If $V$ is also tidy for $\alpha$ with $V< U$,  then $\Upp{U}{\alpha}<\Upp{V}{\alpha}$ if and only if $\Uz{U}{\alpha}<\Uz{V}{\alpha}$. Should $s(\alpha^{-1})$ be greater than $1$, then $-\roo:\alpha^n\mapsto -n$ is a root associated with $\Umm{U}{\alpha}$ and $\con(\alpha)^-$.
	%%%
	\item Let $G=\Isom(\tree_{p+1})$ and $\fl{H} = \langle \alpha_x\rangle$ with $x$ a translation of $\tree_{p+1}$ towards the attracting end $\eta$. Then $\Isom(\tree_{p+1})_{[\eta]} = \left\{g\in G\mid g\text{ fixes a ray in }\eta\right\}$ is scaling for $\fl{H}$ and is associated with the root $\roo : \alpha_x^n\mapsto n$. Choosing a vertex $v$ on the axis of $x$, the fixator, $U$, of the ray $[v,\eta)$ is compact and open in $\Isom(\tree_{p+1})_{[\eta]}$ and satisfies $\bigcup_{n\in\mathbb{Z}} \alpha_x(U) = \Isom(\tree_{p+1})_{[\eta]}$ and $\bigcap_{n\in\mathbb{Z}} \alpha_x(U) = \nub(\alpha_x)$, which is the fixator of the axis of $x$. We have that $s(\alpha_x) = p^d$, where $d$ is the distance that $x$ translates its axis towards $\eta$. 
	
	However, $\Isom(\tree_{p+1})_{[\eta]}$ is not the smallest scaling subgroup associated with $\roo$ and, indeed, there is no smallest such subgroup. The tree $\tree_{p+1}$ is the Bruhat-Tits building for the groups $PSL_s(\mathbb{Q}_p)$ and $PSL_2(\mathbb{F}_p(\!(X)\!))$. Both groups thus embed as closed subgroups of $G$ and the matrices $\left(\begin{array}{cc}
p^{-1} & 0\\ 0 & p
	\end{array}\right)$, respectively $\left(\begin{array}{cc}
	X^{-1} & 0\\ 0 & X
	\end{array}\right)$, embed as a translation $x$. The groups of upper triangular matrices $\left\{\left(\begin{array}{cc}
	1 & a\\ 0 & 1
	\end{array}\right)\mid a\in \mathbb{Q}_p\right\}$, respectively $\left\{\left(\begin{array}{cc}
	1 & a\\ 0 & 1
	\end{array}\right)\mid a\in \mathbb{F}_p(\!(X)\!)\right\}$, embed as minimal scaling groups for $\alpha_x$ but they are distinct. In the first case the scaling group has no torsion but in the second case it is a torsion group. 
	%%%
	\item Let $G$ be $SL_n(\mathbb{Q}_p)$ and let $\fl{H}$ be the group of inner automorphisms given by the group of diagonal matrices $D = \{(d_{ii})_{i=1}^n\mid d_{ii}\in\mathbb{Q}_p\setminus\{0\}\}$. Then $\fl{H}$ is flat and $U = \left\{(a_{ij})_{i,j=1}^n\in SL_n(\mathbb{Z}_p)\mid a_{ij}\in p\mathbb{Z}_p\text{ if }i>j\right\}$ is tidy for $\fl{H}$. Also, $\nub(\fl{H})$ is trivial. For each pair $i\ne j$, $i,j\in\{1,\dots,n\}$, define a homomorphism $\roo_{ij}:\fl{H}\to \mathbb{Z}$ by $\roo(d_{ii}) = |d_{ii}/d_{jj}|_p$. Then
	$$
	\red(\fl{H}) = \left\{\roo_{ij}\mid i\ne j\in\{1,\dots,n\}\right\}.
	$$
	The scaling group $U_{\roo_{ij}}$ is equal to the group of triangular matrices 
	$$
	\left\{(a_{ij})_{i,j=1}^n\in SL_n(\mathbb{Z}_p)\mid a_{kk}\in \mathbb{Z}_p,\ a_{ij}\in\mathbb{Q}_p\text{ and }a_{rs}=0\text{ otherwise}\right\},
	$$
	whereas the scaling group $\con(\fl{A}_{\roo_{ij}})$ is equal to the root subgroup
	$$
	\left\{(a_{ij})_{i,j=1}^n\in SL_n(\mathbb{Z}_p)\mid a_{kk}=1,\ a_{ij}\in\mathbb{Q}_p\text{ and }a_{rs}=0\text{ otherwise}\right\}.
	$$
\end{enumerate}
\end{example}

\subsection{Main Theorems: The structure of flat groups and of subgroups tidy for them}
\label{sec:main}

%{\color{gray}\begin{corollary}
%	\label{cor:roots}	The flat group $\fl{H}\leq \Aut(G)$ is uniscalar and stabilizes every subgroup $U\in\COS(G)$ tidy for $\fl{H}$ if and only if $\red(\fl{H}) = \emptyset$.
%\end{corollary}
%%%%
%\begin{proof}
%	If $\fl{H}$ is uniscalar and stabilizes every tidy subgroup $U\in\COS(G)$, then $\Uz{U}{\fl{H}}=U$ and $\red(\fl{H}) = \emptyset$ by Corollary~\ref{cor:not_root}. If $\red(\fl{H}) = \emptyset$, .
%\end{proof}}

These notes describe the structure of flat groups of automorphisms of a \tdlc~group and their corresponding tidy subgroups. Briefly, it is shown that there are sufficiently many roots to separate $\fl{H}/\fl{H}_u$, and contraction and shrinking subgroups associated with an element $\alpha\in\fl{H}$ factor as products of the corresponding groups associated with roots that are positive on~$\alpha$. More complete statements are given in the following theorems. The first describes the structure of the flat group $\fl{H}$ of automorphisms of $G$.
%%%%%%%%
\begin{thmintro}
\label{thm:flat_group_structureA}
Let $\fl{H}\leq \Aut(G)$ be flat and $\red(\fl{H})$ be its set of roots. Then the map $R : \fl{H} \to \mathbb{Z}^{\red(\fl{H})}$ defined by
$$
R(\alpha) = \{\roo(\alpha)\}_{\roo\in\red(\fl{H})}
$$
is a homomorphism with kernel equal to $\fl{H}_u$ and range contained in $\bigoplus_{\roo\in{\red(\fl{H})}} \mathbb{Z}$.  In particular, % $U_\roo\geq H\cap U$ for every $H$ as in Definition~\ref{defn:roots}. Moreover,
\begin{enumerate}
\item \label{thm:flat_group_structureA1}
$\{\roo\in \red(\fl{H}) \mid \roo(\alpha)\ne0\}$ is finite for each $\alpha\in\fl{H}$ and 
\item \label{thm:flat_group_structureA2}
$\fl{H}/\fl{H}_u$ is a free abelian group.
\end{enumerate}
The set $\red(\fl{H})$ empty if and only if $\fl{H}$ is uniscalar. 
\end{thmintro}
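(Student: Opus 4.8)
The plan is to make $R$ the central object: once $\ker R=\fl{H}_u$ and the support of each $R(\alpha)$ is finite, the remaining assertions are formal. That $R$ is a homomorphism is immediate, since by Definition~\ref{defn:roots} every $\roo\in\red(\fl{H})$ is a homomorphism $\fl{H}\to\mathbb{Z}$. The inclusion $\fl{H}_u\subseteq\ker R$ is short: given $\alpha\in\fl{H}_u$ and $\roo\in\red(\fl{H})$ with associated scaling subgroup $H$, fix $U$ tidy for $\fl{H}$; then $\alpha(U)=U$ by Proposition~\ref{prop:uniscalar}, so $\alpha(H\cap U)=H\cap U$ because $H$ is $\fl{H}$-stable, and since $H\cap U$ is tidy for $\fl{H}|_H$ by Proposition~\ref{prop:roots}\eqref{prop:roots2} we get $\triangle_H(\alpha|_H)=1$. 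As $\triangle_H(\alpha|_H)=s_\roo^{\roo(\alpha)}$ with $s_\roo\geq2$ (Definition~\ref{defn:roots}), this forces $\roo(\alpha)=0$, so $R(\alpha)=0$.

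The core is a multiplicativity formula for the scale: for every $\alpha\in\fl{H}$, $s(\alpha)$ equals the product of $s_\roo^{\roo(\alpha)}$ over those $\roo\in\red(\fl{H})$ with $\roo(\alpha)>0$, and $s(\alpha^{-1})$ equals the product of $s_\roo^{-\roo(\alpha)}$ over those with $\roo(\alpha)<0$. This yields the two missing points at once: each factor is at least $2$ and $s(\alpha),s(\alpha^{-1})$ are finite, so only finitely many roots are non-zero on a given $\alpha$, which is part~\eqref{thm:flat_group_structureA1}; and if $R(\alpha)=0$ both products are empty, so $s(\alpha)=1=s(\alpha^{-1})$ and $\alpha\in\fl{H}_u$, giving $\ker R\subseteq\fl{H}_u$. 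To prove the formula I would fix $U$ tidy for $\fl{H}$, arrange $\nub(\fl{H})\leq U$ using Lemma~\ref{lem:open_nub} and Proposition~\ref{prop:Nnormal_core_of_tidy}, and establish an Iwahori-type factorisation of the part of $U$ that $\alpha$ expands, $\Up{U}{\alpha}=\bigcap_{n\geq0}\alpha^n(U)$, namely $\Up{U}{\alpha}=\Uz{U}{\fl{H}}\cdot\prod_{\roo(\alpha)>0}U_\roo$ in a suitable order, with $U_\roo$ the subgroup of Proposition~\ref{prop:roots} and $\Uz{U}{\fl{H}}=\bigcap\{\gamma(U)\mid\gamma\in\fl{H}\}$ its $\fl{H}$-stable core. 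Applying $\alpha$ and using $[\alpha(U_\roo):U_\roo]=s_\roo^{\roo(\alpha)}$ (from Definition~\ref{defn:roots} and Proposition~\ref{prop:scale&homomorphism}), the index $[\alpha(\Up{U}{\alpha}):\Up{U}{\alpha}]=s(\alpha)$ multiplies out over the relevant roots; the statement for $s(\alpha^{-1})$ is the same argument applied to $\alpha^{-1}$.

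The main obstacle will be the factorisation itself. The plan is an induction that peels off one root at a time: pick a root $\roo$ with $\roo(\alpha)>0$, pass to the $\fl{H}$-stable scaling subgroup associated with $\roo$ and a complementary $\fl{H}$-stable piece, use Propositions~\ref{prop:roots} and~\ref{prop:con_Aroo} to pin down the factor $U_\roo$ and its scaling subgroup (identified with a relative contraction group, so that independence of the factors can be read off the dynamics), and use the single-automorphism theory (Theorem~\ref{thm:characterise_minimizing}, Propositions~\ref{prop:tidy_criteria} and~\ref{prop:U--^con}) to check that the restricted pictures reassemble. The delicate features are that the factors must lie in sufficiently direct position for the indices to multiply, and that the induction terminates; since it terminates precisely because each $\alpha$ meets only finitely many roots, this step and the finiteness statement have to be handled together.

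Granting all this, the rest is formal. From $\ker R=\fl{H}_u$, the map $R$ induces an injection $\fl{H}/\fl{H}_u\hookrightarrow\mathbb{Z}^{\red(\fl{H})}$, and by part~\eqref{thm:flat_group_structureA1} its image lies in $\bigoplus_{\roo\in\red(\fl{H})}\mathbb{Z}$; since that group is free abelian and every subgroup of a free abelian group is free abelian, $\fl{H}/\fl{H}_u$ is free abelian, which is part~\eqref{thm:flat_group_structureA2}. Finally, if $\fl{H}$ is uniscalar then $\red(\fl{H})=\emptyset$ by Corollary~\ref{cor:roots}; conversely, if $\red(\fl{H})=\emptyset$ then $R$ is the zero map, so $\fl{H}=\ker R=\fl{H}_u$ and $\fl{H}$ is uniscalar.
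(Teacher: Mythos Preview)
Your overall scaffolding—reduce everything to the scale formula $s(\alpha)=\prod_{\roo(\alpha)>0}s_\roo^{\roo(\alpha)}$, then read off finite support and $\ker R\subseteq\fl{H}_u$—is exactly the paper's strategy. The difference, and the gap, is in how the factorisation is obtained.

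Your induction ``peels off one root at a time: pick a root $\roo$ with $\roo(\alpha)>0$''. But the existence of such a root when $s(\alpha)>1$ is precisely the non-obvious direction $\ker R\subseteq\fl{H}_u$ you are trying to prove; at this stage nothing guarantees $\red(\fl{H})$ contains any root positive on $\alpha$. The paper therefore runs the argument the other way round: it does \emph{not} start from a root. Working inside $\con(\alpha/\nub(\fl{H}))$, if this group is already scaling for $\fl{H}$ one is done; otherwise one chooses any $\beta\in\fl{H}$ with $s(\beta),s(\beta^{-1})>1$ on it, and Propositions~\ref{prop:factor_U-} and~\ref{prop:choose_beta} split it as $\con(\gamma/\nub(\fl{H}))\cdot\con(\gamma^{-1}/\nub(\fl{H}))$ for some $\gamma\in\fl{H}$, with $s(\alpha^{-1})$ factoring strictly. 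Since $s(\alpha^{-1})$ has only finitely many prime factors, this bisection terminates (Proposition~\ref{prop:factor_con}), and the terminal pieces are scaling subgroups—\emph{producing} roots via Definition~\ref{defn:roots}. Proposition~\ref{prop:uniqueness} then identifies the resulting set of roots with $\supp^+_{\fl{H}}(\alpha^{-1})$. So roots are an output of the decomposition, not an input; your induction as stated is circular. Relatedly, you say termination holds ``because each $\alpha$ meets only finitely many roots'', but in the paper it is the integer factoring of $s(\alpha^{-1})$ that forces termination, and finiteness of the support is a \emph{consequence}.

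A second, more technical point: you propose to factor $\Up{U}{\alpha}$ and recurse on $\fl{H}$-stable complementary pieces, but $\Upp{U}{\alpha}$ (and hence the recursive pieces built from it) need not be $\fl{H}$-stable—see Remark~\ref{rem:not_invariant} and Proposition~\ref{prop:U_++_invariance}, which only gives $\beta(\Umm{U}{\alpha})\leq\Umm{U}{\alpha}\beta(\Uz{U}{\alpha})$. The paper circumvents this by passing to $\con(\alpha/\nub(\fl{H}))$, proved $\fl{H}$-stable with flat restriction in Theorem~\ref{thm:factors_invariant}; this is what makes the recursion legitimate. Your plan would need the same replacement.
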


The next theorem describes the structure of compact open subgroups of $G$ tidy for $\fl{H}$. 
%%%%%%%
\begin{thmintro}
\label{thm:flat_group_structureB}
Let $\fl{H}$ be flat with $U$ tidy for $\fl{H}$, $\roo\in\red(\fl{H})$ and suppose that $\red(\fl{H})$ is finite. Let $\con(\fl{A}_\roo/\nub(\fl{H}))$, ${U}_\roo$ and $\widehat{U}_\roo$ be  as in Propositions~\ref{prop:con_Aroo} and~\ref{prop:roots}. Then the following hold.
\begin{enumerate}
\item \label{thm:flat_group_structureB1}
$\widehat{U}_\roo = \Uz{U}{\fl{H}} \con(\fl{A}_\roo/\nub(\fl{H}))$ and is a closed $\fl{H}$-stable subgroup of $G$. The restrictions of $\fl{H}$ to $\widehat{U}_\roo$ and $ \con(\fl{A}_\roo/\nub(\fl{H}))$ are flat with ${U}_\roo$ and $U\cap \con(\fl{A}_\roo/\nub(\fl{H}))$ tidy for the respective restrictions. There are $\beta_\roo\in\fl{H}$ and an integer $s_\roo>1$ such that 
%%%
\begin{equation}
\label{eq:modular}
\alpha({U}_\roo) = \beta_\roo^{\roo(\alpha)}({U}_\roo) \mbox{ and }\Delta(\alpha|_{\widehat{U}_\roo}) = s_\roo^{\roo(\alpha)} \mbox{ for every }\alpha\in\fl{H}.
\end{equation}
%%%
\item \label{thm:flat_group_structureB2}
There is a total order on $\red(\fl{H})$, independent of $U$, such that
$$
U = \prod_{\roo\in\red(\fl{H})} {U}_\roo = \Uz{U}{\fl{H}}\prod_{\roo\in\red(\fl{H})} \left(U\cap  \con(\fl{A}_\roo/\nub(\fl{H}))\right).
$$
\item \label{thm:flat_group_structureB3}
For each $\alpha\in\fl{H}$,
$$
\con(\alpha/\nub(\fl{H})) = \prod_{\roo(\alpha^{-1})>0} \con(\fl{A}_\roo/\nub(\fl{H}))
$$
with $\{\roo\in\red(\fl{H})\mid \roo(\alpha^{-1})>0\}$ ordered as a subset  of $\red(\fl{H})$.
\end{enumerate}
\end{thmintro}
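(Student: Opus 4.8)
\emph{Part~\ref{thm:flat_group_structureB1}} is largely an assembly of results already at hand. Since $\nub(\fl H)$ is compact, $\fl H$-stable and contained in every subgroup tidy for $\fl H$, one has $\nub(\fl H)\leq\Uz U{\fl H}\leq U_\roo$; and as $\Uz U{\fl H}$ is $\fl H$-stable with $\Uz U{\fl H}\leq U_\roo\leq U$, it is the largest compact $\fl H$-stable subgroup of $\widehat U_\roo$. Now $\widehat U_\roo$ is a scaling subgroup associated with $\roo$ containing $\nub(\fl H)$, by Proposition~\ref{prop:roots}.\ref{prop:roots1}, so Proposition~\ref{prop:con_Aroo}.\ref{prop:con_Aroo4} gives $\widehat U_\roo=\Uz U{\fl H}\con(\fl A_\roo/\nub(\fl H))$. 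Being scaling subgroups, $\widehat U_\roo$ and $\con(\fl A_\roo/\nub(\fl H))$ are closed and $\fl H$-stable; $U_\roo$ and $U\cap\con(\fl A_\roo/\nub(\fl H))$ are tidy for the respective restrictions by Proposition~\ref{prop:tidy_conditions}\eqref{prop:tidy_conditions1}, being witnessing compact open subgroups and hence comparable with all of their $\fl H$-translates; and \eqref{eq:modular} combines Proposition~\ref{prop:scale&homomorphism}.\ref{prop:scale&homomorphism1} and~\ref{prop:scale&homomorphism3} with Definition~\ref{defn:roots}.

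For Parts~\ref{thm:flat_group_structureB2} and~\ref{thm:flat_group_structureB3}, write $C_\roo:=U\cap\con(\fl A_\roo/\nub(\fl H))$, so that $U_\roo=\Uz U{\fl H}C_\roo$ and $\nub(\fl H)\leq C_\roo$. I would first record two facts. (i) For distinct $\roo,\roo'\in\red(\fl H)$, $C_\roo\cap C_{\roo'}=\nub(\fl H)$: pick $\alpha\in\fl H$ with $\roo(\alpha)<0<\roo'(\alpha)$ (available since the image of $(\roo,\roo')$ in $\mathbb Z^2$ is a subgroup surjecting onto both factors and distinct from a positive diagonal because $\roo\neq\roo'$ are surjective), whence $C_\roo\leq\con(\alpha/\nub(\fl H))$ and $C_{\roo'}\leq\con(\alpha^{-1}/\nub(\fl H))$, and $\con(\alpha/\nub(\fl H))\cap\con(\alpha^{-1}/\nub(\fl H))=\nub(\fl H)$ by Theorem~\ref{thm:convergence_mod_H} and Corollary~\ref{cor:convergence_mod_H} (an element of the intersection is $c_+n$ with $c_+\in\con(\alpha)$, $n\in\nub(\fl H)$, and also lies in $\parb(\alpha^{-1})$, forcing $c_+\in\nub(\alpha)$). (ii) $U_\roo\leq\Up U\alpha$ when $\roo(\alpha)>0$ and $U_\roo\leq\Um U\alpha$ when $\roo(\alpha)<0$, since then $\alpha^n(U_\roo)=\beta_\roo^{n\roo(\alpha)}(U_\roo)\leq U$ for all $n\geq0$, respectively all $n\leq0$.

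The rest of Parts~\ref{thm:flat_group_structureB2} and~\ref{thm:flat_group_structureB3} then follows from a single factorization statement: there is a total order $\roo_1<\dots<\roo_d$ on $\red(\fl H)$, depending only on $\fl H$, with
$$
\Um U\alpha=\Uz U\alpha\prod_{\roo(\alpha)<0}C_\roo,\qquad \Uz U\alpha=\Uz U{\fl H}\prod_{\roo(\alpha)=0}C_\roo
$$
for all $\alpha\in\fl H$ and all $U$ tidy for $\fl H$, the products taken in the induced order (and symmetrically for $\Up U\alpha$). Given this, Part~\ref{thm:flat_group_structureB2} comes from substituting into $U=\Up U\alpha\Um U\alpha$ (condition~{\bf TA}) for a \emph{regular} $\alpha$, one with no vanishing root, which exists since $\red(\fl H)$ is finite and non-empty implies $\fl H/\fl H_u$ has positive rank. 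Part~\ref{thm:flat_group_structureB3} comes from intersecting with $\con(\alpha/\nub(\fl H))$: one has $\con(\alpha/\nub(\fl H))\cap U=\prod_{\roo(\alpha)<0}C_\roo$ and $\con(\alpha/\nub(\fl H))=\bigcup_{n\geq0}\alpha^{-n}\bigl(\con(\alpha/\nub(\fl H))\cap U\bigr)$, while $\bigcup_{n\geq0}\alpha^{-n}(C_\roo)=\con(\fl A_\roo/\nub(\fl H))$ for $\roo(\alpha)<0$ because $C_\roo$ witnesses the scaling subgroup $\con(\fl A_\roo/\nub(\fl H))$ and $\alpha$ expands it; one then passes the nested union through the finite ordered product.

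The displayed factorization is the real content --- the ``simultaneous triangulation'', and the point where I expect most work to be needed. The order can be built by a recursion manifestly independent of $U$: choose a regular $\alpha_0$, let every root positive on $\alpha_0$ precede every root negative on $\alpha_0$, and recurse into each block (of size $<d$) using an element of $\fl H$ that separates it --- such an element exists because any subset of $\red(\fl H)$ of size $\geq 2$ is separated by some element of $\fl H$ vanishing on none of its roots (a rank argument on the image in $\mathbb Z^S$, corrected by adding a large multiple of a regular element to kill residual zeros). The factorization itself I would prove by induction on the number $k$ of roots negative on $\alpha$: the base $k=0$ amounts to $\con(\alpha)\leq\nub(\fl H)$, extracted from $\Umm U\alpha=\con(\alpha)\Uz U\alpha$ (Proposition~\ref{prop:U--^con}) together with $\con(\alpha)\cap\parb(\alpha^{-1})\leq\nub(\alpha)$; the step peels off the last negative root $\roo$ by restricting to the $\fl H$-stable subgroup $\con(\fl A_\roo/\nub(\fl H))$ (and the analogous hull for the remaining directions), applying the inductive hypothesis there, and reassembling inside $U$ via $\Um U\alpha\leq\Umm U\alpha=\con(\alpha)\Uz U\alpha$. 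The delicate points are that $\Up U\alpha$ and $\Um U\alpha$ are not $\fl H$-stable, so the recursion must be run inside the $\fl H$-stable hulls $\widehat U_\roo$ and $\con(\fl A_\roo/\nub(\fl H))$ rather than by naively restricting $\fl H$, and the bookkeeping of how the compact pieces $\Uz U{\fl H}$, $\nub(\fl H)$ and the various $C_\roo$ absorb into one another --- the order-independence of the resulting product being recovered from fact (i).
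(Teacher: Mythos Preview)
Your treatment of Part~\ref{thm:flat_group_structureB1} is correct and matches the paper: it is an assembly of facts from \S\ref{sec:irroots}.

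For Parts~\ref{thm:flat_group_structureB2} and~\ref{thm:flat_group_structureB3} there is a genuine gap, and it sits exactly at your base case. You write that $k=0$ ``amounts to $\con(\alpha)\leq\nub(\fl H)$, extracted from $\Umm U\alpha=\con(\alpha)\Uz U\alpha$ together with $\con(\alpha)\cap\parb(\alpha^{-1})\leq\nub(\alpha)$''. But both cited facts hold for \emph{every} $\alpha$; neither uses the hypothesis that no root is negative on $\alpha$. The implication ``no $\roo\in\red(\fl H)$ has $\roo(\alpha)<0$ $\Rightarrow$ $\con(\alpha)\subseteq\nub(\fl H)$'' is the statement that the roots detect all non-uniscalar behaviour of $\alpha^{-1}$, and this is the core of the theorem, not a preliminary. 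In the paper it is established by the bisection argument of \S\ref{sec:reduced1} (Propositions~\ref{prop:factor_U-}, \ref{prop:choose_beta}, \ref{prop:factor_con}): if $\con(\alpha/\nub(\fl H))$ is not already scaling, one finds $\beta\in\fl H$ with $s(\beta),s(\beta^{-1})>1$ on it, manufactures $\gamma\in\fl H$ with $\con(\alpha/\nub(\fl H))=\con(\gamma/N)\con(\gamma^{-1}/N)$, and shows $s(\alpha^{-1})$ factors strictly into the scales on the two pieces; termination by integer factorization yields scaling factors, each contributing a root in $\supp^+_{\fl H}(\alpha^{-1})$. That argument \emph{is} Part~\ref{thm:flat_group_structureB3} (with Proposition~\ref{prop:uniqueness} for the converse inclusion), and it is what makes your base case true. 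Your inductive step has the same problem in disguise: ``peel off the last negative root by restricting to $\con(\fl A_\roo/\nub(\fl H))$ and the analogous hull for the remaining directions'' does not specify what that complementary $\fl H$-stable hull is, and the only mechanism on offer for producing one is again the construction of $\gamma$ in Proposition~\ref{prop:choose_beta}. Finally, your companion identity $\Uz U\alpha=\Uz U{\fl H}\prod_{\roo(\alpha)=0}C_\roo$ is asserted but lies outside your $k$-induction entirely.

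The paper's organization is therefore essentially the reverse of yours. It proves Part~\ref{thm:flat_group_structureB3} directly via bisection of $\con(\alpha/\nub(\fl H))$; this already yields Theorem~\ref{thm:flat_group_structureA} (Proposition~\ref{prop:free_abelian}); and then Part~\ref{thm:flat_group_structureB2} is obtained by induction on $\rk(\fl H)$ (Proposition~\ref{prop:factor_tidyU}): one splits $U=\Uz U{\fl K}\Up D{\gamma_{k+1}}\Um D{\gamma_{k+1}}$ for $\fl K=\langle\gamma_{k+1},\fl H_u\rangle$, uses Lemma~\ref{lem:U_cap_lev} to see that $\Uz U{\fl K}$ is tidy for $\fl H|_{\lev(\fl K)}$ of rank $\leq k$, applies the inductive hypothesis there, and factors $\Upm D{\gamma_{k+1}}$ by Part~\ref{thm:flat_group_structureB3} and Corollary~\ref{cor:intersect_U}. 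Your order-building recursion (separate roots by a regular element, recurse into blocks) is close in spirit to this rank induction, but the factorization of $U$ along that order is carried by the bisection and by Lemma~\ref{lem:U_cap_lev}, neither of which appears in your proposal.
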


Theorem~\ref{thm:flat_group_structureB}.\ref{thm:flat_group_structureB1} summarizes results from \S\ref{sec:irroots}. Proofs of the remaining assertions in the two theorems are given in the next four sections.  

The proof that there are sufficiently many roots to separate $\fl{H}/\fl{H}_u$ and hence that $\fl{H}/\fl{H}_u$ is free abelian, as stated in Theorem~\ref{thm:flat_group_structureA}, is done by showing that each shrinking subgroup $\Umm{U}{\alpha}$ contains scaling subgroups whose product is co-compact in $\Umm{U}{\alpha}$. The roots associated with these scaling subgroups are non-zero at $\alpha$, and each non-uniscalar element in $\fl{H}$ thus separated from $0$ by at least one root. In \S\ref{sec:factor-tidy}, repeated application of the tidiness above condition for elements of $\fl{H}$ other than $\alpha$ is used to reduce $\Umm{U}{\alpha}$ to a product of smaller subgroups. A difficulty encountered is that $\Umm{U}{\alpha}$ might not be $\fl{H}$-invariant and it is shown in \S\ref{sec:nub_contraction} that this difficulty may be avoided by instead working with $\con(\alpha/\nub(\fl{H}))$. It may already be seen at this point that $\fl{H}/\fl{H}_u$ is abelian by showing that commutators in $\fl{H}$ are uniscalar. That the repeated reduction of $\Umm{U}{\alpha}$ into smaller factors eventually terminates with scaling subgroups is seen in \S\ref{sec:reduced1} by showing that each reduction leads to an integer factoring of the scale. These arguments yield the information needed to prove Theorem~\ref{thm:flat_group_structureB}. Consolidating that information involves defining the order on $\red(\fl{H})$ required for the statement of the theorem, which is done in \S\ref{sec:reduced2}.

%%%%%%%%%
%%%%%%%%%

\subsection{Tidiness for multiple automorphisms in a flat group}
\label{sec:factor-tidy}

In this section it is seen how the factoring of $U$ seen in Theorem~\ref{thm:flat_group_structureB} emerges by applying the tidiness conditions simultaneously for multiple elements of $\fl{H}$. It will be shown that:
\begin{itemize}
\item given $\alpha_1$, \dots, $\alpha_n$ in $\fl{H}$, tidiness above implies that~$U$ is the product of subgroups each of which is expanded or shrunk by each~$\alpha_i$; 
\item the shrinking subgroup, $\Umm{U}{\alpha}$, also factors according to the action of each $\beta\in\fl{H}$ but these factors, and $\Umm{U}{\alpha}$ itself, need not be $\beta$-invariant. 
\end{itemize}

The starting point is the following consequence of joint tidiness above.
%%%%%%%%
\begin{lemma}
\label{lem:factor_U}
Suppose that $\fl{H}\leq\Aut(G)$ is flat and let $U$ be tidy for $\fl{H}$. Then, for any $\alpha,\beta\in\fl{H}$, we have that
$\Um{U}{\alpha} = (\Um{U}{\alpha}\cap \Um{U}{\beta}) (\Um{U}{\alpha}\cap \Up{U}{\beta})$.
\end{lemma}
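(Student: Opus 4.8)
The plan is to start from the joint tidiness-above factorization of $U$ and then peel off the $\Um{U}{\alpha}$-part of the decomposition, noting that $\Um{U}{\beta}$ and $\Up{U}{\beta}$ already give a factorization of $U$ by tidiness above for $\beta$. First I would use Proposition~\ref{prop:U--^con} (applied to both $\alpha$ and $\beta$, or rather its consequence that $\Upm{U}{\beta} = \Upm{C}{\beta}\Uz{U}{\beta}$ together with Lemma~\ref{lem:nub_normalised}) to rewrite tidiness above for $\beta$ in the refined form $U = \Up{C}{\beta}\,\Uz{U}{\beta}\,\Um{C}{\beta}$, so that the three factors lie respectively in $\con(\beta)$, $\parb(\beta)\cap\parb(\beta^{-1})$, and $\con(\beta^{-1})$; each of these factors is moreover stable under $\alpha$ up to the standard manipulations because $\fl{H}$ is abelian modulo the stabilizer of $U$ (Corollary~\ref{cor:uniscalar} and the discussion following Proposition~\ref{prop:uniscalar}). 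The key structural input is that since $\fl{H}$ is flat, every element of $\fl{H}$ maps each of $\Up{U}{\beta},\Um{U}{\beta},\Uz{U}{\beta}$ to itself or, more precisely, permutes these canonical pieces in a controlled way; in particular $\alpha$ stabilizes $\Um{U}{\beta}$ and $\Up{U}{\beta}$ because these are built from intersections of $\beta$-translates of $U$, all of which are tidy for $\fl{H}$ by Proposition~\ref{prop:invariance_of_tidiness} and Theorem~\ref{thm:tidy_intersection}.

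Next I would take an arbitrary $x\in\Um{U}{\alpha}\subseteq U$ and write $x = x_+ x_-$ with $x_+\in\Up{U}{\beta}$, $x_-\in\Um{U}{\beta}$, using tidiness above for $\beta$. The goal is to show that one may choose this decomposition so that both $x_+$ and $x_-$ lie in $\Um{U}{\alpha}$. The natural way to do this is to apply $\alpha^n$ for $n\geq0$: since $x\in\Um{U}{\alpha}$, the sequence $\{\alpha^n(x)\}_{n\geq0}$ stays in $U$, hence in a compact set. Because $\alpha$ stabilizes $\Up{U}{\beta}$ and $\Um{U}{\beta}$, we get $\alpha^n(x) = \alpha^n(x_+)\alpha^n(x_-)$ with $\alpha^n(x_+)\in\Up{U}{\beta}$ and $\alpha^n(x_-)\in\Um{U}{\beta}$ — but a priori these need no longer lie in $U$, since $\Up{U}{\beta}$ and $\Um{U}{\beta}$ are only closed subgroups of $U$, not $\alpha$-stable pieces of $U$ by themselves; what is $\alpha$-stable as a \emph{set} is $U$ itself. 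So the argument must instead observe that $\alpha^n(x)\in U\cap (\text{something})$ and use the tidy-criteria Proposition~\ref{prop:tidy_criteria}: the orbit $\{\alpha^n(x)\}_{n\geq0}$ being precompact forces, upon decomposing, that the $\Up{U}{\beta}$- and $\Um{U}{\beta}$-components also have precompact forward $\alpha$-orbits, hence lie in $\Um{U}{\alpha}$. Concretely, I expect to argue: the projection-type maps associated with the direct-product-like decomposition $U = \Up{U}{\beta}\Um{U}{\beta}$ (which is a topological decomposition of $U$ as a product of two closed subsets, by tidiness above for $\beta$) are continuous, so a convergent subsequence of $\{\alpha^n(x)\}$ yields convergent subsequences of the two components; combined with Proposition~\ref{prop:tidy_criteria} this gives $x_+, x_-\in\Um{U}{\alpha}$, i.e. $x_+\in\Um{U}{\alpha}\cap\Up{U}{\beta}$ and $x_-\in\Um{U}{\alpha}\cap\Um{U}{\beta}$.

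This yields the inclusion $\Um{U}{\alpha}\subseteq(\Um{U}{\alpha}\cap\Um{U}{\beta})(\Um{U}{\alpha}\cap\Up{U}{\beta})$, and the reverse inclusion is trivial since both factors on the right are contained in $\Um{U}{\alpha}$, which is a group. (One should double-check the order of the two factors: tidiness above gives $U = \Up{U}{\beta}\Um{U}{\beta} = \Um{U}{\beta}\Up{U}{\beta}$, both hold, so the ordering stated in the lemma is fine; if convenient one rewrites $x = x_- x_+$ instead.) The main obstacle I anticipate is precisely the point flagged above: $\Up{U}{\beta}$ and $\Um{U}{\beta}$ are not individually $\alpha$-stable subsets of $U$ — only $U$ and the "diagonal" pieces $\Uz{U}{\beta}$, $\Up{U}{\beta}$, $\Um{U}{\beta}$ are subgroups, and $\alpha$ need not preserve the first two factors. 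The resolution is to not try to track the decomposition step by step under $\alpha^n$, but rather to establish membership in $\Um{U}{\alpha}$ of the two components directly via the compactness/accumulation-point criterion of Proposition~\ref{prop:tidy_criteria}, using that $\alpha$ \emph{does} stabilize $\Up{U}{\beta}$ and $\Um{U}{\beta}$ (being built from $\fl{H}$-tidy subgroups) so that the component sequences $\{\alpha^n(x_\pm)\}$ make sense and, being factors of a precompact sequence in $U$, are themselves precompact. I would verify the needed $\alpha$-stability of $\Up{U}{\beta}$ and $\Um{U}{\beta}$ carefully at the start, as it is the linchpin of the whole argument.
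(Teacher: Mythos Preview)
Your proposal has a genuine gap at its linchpin: the claim that $\alpha$ stabilizes $\Up{U}{\beta}$ and $\Um{U}{\beta}$ is false in general. You justify it by saying these groups are ``built from intersections of $\beta$-translates of $U$, all of which are tidy for $\fl{H}$'', but being tidy for $\alpha$ is not the same as being $\alpha$-stable; a tidy subgroup is almost never stabilized by the automorphism it is tidy for. Corollary~\ref{cor:uniscalar}, which you cite, only gives $\alpha(\Up{U}{\beta})=\Up{U}{\beta}$ when $\alpha\in\fl{H}_u$. For general $\alpha$ the paper later proves (Proposition~\ref{prop:U_++_invariance} and Remark~\ref{rem:not_invariant}, with an explicit example in Exercise~\ref{exer:zero_necessary}) that even the larger group $\Umm{U}{\alpha}$ need not be $\beta$-stable. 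So once you apply $\alpha^n$ to a fixed decomposition $x=x_+x_-$, the factors $\alpha^n(x_\pm)$ need not lie in $U$ at all, and there is no ``projection'' to appeal to (the decomposition $U=\Up{U}{\beta}\Um{U}{\beta}$ is not unique, with overlap $\Uz{U}{\beta}$). Your proposed compactness argument for the components therefore does not get off the ground. You also invoke that $\fl{H}$ is abelian modulo $\fl{H}_u$, but in the paper this is Corollary~\ref{cor:Flat=>abelian_mod_uniscalar}, proved \emph{after} the present lemma and depending on it via Theorem~\ref{thm:factors_invariant}; using it here would be circular.

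The paper's proof sidesteps the stability problem entirely. Instead of fixing one decomposition and pushing it forward by $\alpha^n$, it passes to the nested family $U(\alpha,K)=\bigcap_{k=0}^K\alpha^{-k}(U)$, each of which is again tidy for $\fl{H}$ (Proposition~\ref{prop:invariance_of_tidiness} and Theorem~\ref{thm:tidy_intersection}) and hence tidy above for $\beta$. For $x\in\Um{U}{\alpha}\subseteq U(\alpha,K)$, the set $X^{(K)}$ of $\beta$-decompositions of $x$ inside $U(\alpha,K)$ is nonempty and compact, and the $X^{(K)}$ are nested. A pair $(x_-,x_+)$ in $\bigcap_K X^{(K)}$ then has $x_\pm\in\bigcap_K\Upm{U(\alpha,K)}{\beta}$, and \emph{only at this final step} does Proposition~\ref{prop:tidy_criteria} enter, to place $x_\pm$ in $\Um{U}{\alpha}\cap\Upm{U}{\beta}$. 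The key idea you are missing is to let the decomposition vary with $K$ and extract a limiting one by compactness, rather than trying to transport a single decomposition by $\alpha$.
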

%%%%%%%%
\begin{proof}
For $K\geq0$, set $U(\alpha,K) = \bigcap_{k=0}^K \alpha^{-k}(U)$. Then $U(\alpha,K)$ is tidy for $\fl{H}$ by Theorem~\ref{thm:tidy_intersection} and Proposition~\ref{prop:invariance_of_tidiness}, and $\bigcap_{K\geq0} U(\alpha,K) = \Um{U}{\alpha}$.

Consider $x\in \Um{U}{\alpha}$. For each $K$ we have that $x\in U(\alpha,K)$ and hence that $x = x_-x_+$ with $x_\pm \in \Upm{U(\alpha,K)}{\beta}$. Hence
$$
X^{(K)} := \left\{ (x_-,x_+) \in \Um{U(\alpha,K)}{\beta}\times \Up{U(\alpha,K)}{\beta} \mid x_-x_+ = x\right\}
$$
is not empty for each $K$. This set is also compact, because it is a closed subset of a compact set, and $X^{(K+1)}\subseteq X^{(K)}$. Hence $\bigcap_{K\geq0} X^{(K)} \ne\emptyset$. Choose $(x_-,x_+)$ in this intersection. Then $x = x_-x_+$ and 
$$
x_\pm \in \bigcap_{K\geq0} \Upm{U(\alpha,K)}{\beta}.
$$ 
An application of Proposition~\ref{prop:tidy_criteria} shows that this intersection is contained in $\Um{U}{\alpha}\cap \Upm{U}{\beta}$, thus proving the claim.
\end{proof}

The same argument shows that $\Up{U}{\alpha}$ factors in a corresponding way and so, since $U$ itself factors as $U = \Um{U}{\alpha}\Up{U}{\alpha}$, Lemma~\ref{lem:factor_U} expresses each tidy subgroup $U$ as a product of four subgroups. This factoring continues.
%%%%%%%%
\begin{proposition}
\label{prop:factor_U}
Suppose that $\fl{H}\leq\Aut(G)$ is flat and that $U$ is tidy for $\fl{H}$. Let $\alpha_1$, \dots, $\alpha_n$ be in $\fl{H}$ and denote $U_\varepsilon = U_{\alpha_1\varepsilon_1}\cap \dots \cap U_{\alpha_n\varepsilon_n}$ for each $\varepsilon\in\{-,+\}^n$. Then 
$$
U = \prod_{\varepsilon\in\{-,+\}^n} U_\varepsilon,
$$
with the subgroups $U_\varepsilon$ in the lexicographic order on $\{-,+\}^n$ given by setting $-$ to be less than $+$. 
\end{proposition}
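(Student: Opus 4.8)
The plan is to prove Proposition~\ref{prop:factor_U} by induction on $n$, using Lemma~\ref{lem:factor_U} as the engine at each step. The base case $n=1$ is exactly the tidiness above factorization $U = \Um{U}{\alpha_1}\Up{U}{\alpha_1}$, which reads $U = U_{(-)}U_{(+)}$ in the notation of the statement. For the inductive step, suppose the factorization holds for $\alpha_1,\dots,\alpha_{n-1}$, so that $U = \prod_{\delta\in\{-,+\}^{n-1}} U_\delta$ with the factors taken in lexicographic order. The goal is to split each factor $U_\delta = U_{\alpha_1\delta_1}\cap\dots\cap U_{\alpha_{n-1}\delta_{n-1}}$ further as $U_\delta = U_{\delta-}U_{\delta+}$, where $U_{\delta\pm} = U_\delta\cap U_{\alpha_n\pm}$, and then argue that substituting these refined products back in, in the right order, yields the lexicographic product over $\{-,+\}^n$.

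The first key point is that each intermediate intersection is again tidy for $\fl{H}$: by Theorem~\ref{thm:tidy_intersection}, Proposition~\ref{prop:invariance_of_tidiness}, and Proposition~\ref{prop:tidy_conditions}\eqref{prop:tidy_conditions2}, any finite intersection of the groups $\alpha^k(U)$ (equivalently, of the $U_{\alpha_i\pm}$, which are themselves built from such intersections) is tidy for every element of $\fl{H}$. In particular $U_\delta$ is tidy for $\fl{H}$, so Lemma~\ref{lem:factor_U} applies to it with $\alpha = \alpha_n$ and some auxiliary $\beta$; more precisely, I would apply the factorization $U_\delta = \Um{U_\delta}{\alpha_n}\Up{U_\delta}{\alpha_n}$ — tidiness above of $U_\delta$ for $\alpha_n$ — and then identify $\Um{U_\delta}{\alpha_n}$ with $U_\delta\cap \Um{U}{\alpha_n}$ and likewise for $+$. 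This identification needs a short argument: since $U_\delta\leq U$ is itself tidy for $\alpha_n$, an element of $U_\delta$ lies in $\Um{U_\delta}{\alpha_n}$ iff its forward $\alpha_n$-orbit stays in $U_\delta$; one inclusion is clear, and the other follows because the forward orbit of any $x\in U_\delta\cap\Um{U}{\alpha_n}$ is precompact, hence stays in $U_\delta$ by Proposition~\ref{prop:tidy_criteria}, using that $U_\delta$ is tidy for $\alpha_n$. (This is the same mechanism already used inside the proof of Lemma~\ref{lem:factor_U}.)

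The second key point — and the place where the most care is needed — is the bookkeeping of the order. After refining, $U = \prod_{\delta\in\{-,+\}^{n-1}}\bigl(U_{\delta-}U_{\delta+}\bigr)$ with the outer product in lexicographic order on $\{-,+\}^{n-1}$ and each pair in the order $-$ before $+$; I must check this equals $\prod_{\varepsilon\in\{-,+\}^n}U_\varepsilon$ in lexicographic order on $\{-,+\}^n$. This is purely a matter of the compatibility of lexicographic orders: appending a last coordinate refines the order $\delta\mapsto(\delta,-)<(\delta,+)$ without reordering distinct $\delta$-blocks, so the concatenated list of factors is already in lexicographic order on $\{-,+\}^n$. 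The only subtlety is that ``$U=\prod_\delta U_\delta$'' as an equality of sets already encodes a specific bracketing/order, and one should note that inserting the two-term product $U_{\delta-}U_{\delta+}$ in place of $U_\delta$ is legitimate because $U_{\delta-}U_{\delta+}=U_\delta$ as sets, so no associativity or commutativity of the (generally non-abelian) product is being invoked beyond what the $n-1$ case already gives. I expect this ordering verification to be the main obstacle only in the sense of requiring a clean statement; the genuine mathematical content is entirely in the tidiness-preservation-under-intersection fact and in Lemma~\ref{lem:factor_U}, both already available. I would close by remarking that each $U_\varepsilon$ is closed (indeed tidy-related, being an intersection of tidy subgroups) so the products are products of closed subgroups of the compact group $U$.
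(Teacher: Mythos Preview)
Your overall plan---induction on $n$ with a Lemma~\ref{lem:factor_U}--type splitting at each stage---is the same as the paper's, and your ordering bookkeeping is fine. But there is a genuine gap in the inductive step.

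You assert that $U_\delta = U_{\alpha_1\delta_1}\cap\dots\cap U_{\alpha_{n-1}\delta_{n-1}}$ is ``tidy for $\fl{H}$'' and then invoke tidiness above for $\alpha_n$ and Proposition~\ref{prop:tidy_criteria} on it. This does not work: $U_\delta$ is a compact subgroup of $U$, but it is \emph{not open} in $G$ (each $U_{\alpha_i\pm}$ is an \emph{infinite} intersection $\bigcap_{k\geq0}\alpha_i^{\pm k}(U)$), so $U_\delta\notin\COS(G)$. Theorem~\ref{thm:tidy_intersection}, the definition of tidiness, and Proposition~\ref{prop:tidy_criteria} are all stated for compact \emph{open} subgroups and do not apply to $U_\delta$. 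Your parenthetical ``equivalently, of the $U_{\alpha_i\pm}$, which are themselves built from such intersections'' conflates finite and infinite intersections; the tidiness-preservation results only handle the finite ones.

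The fix is exactly the mechanism already inside the proof of Lemma~\ref{lem:factor_U}: approximate from above by the compact \emph{open} groups $U(\alpha_1,K)=\bigcap_{k=0}^K\alpha_1^{-k}(U)$, which \emph{are} tidy for $\fl{H}$, apply the inductive hypothesis there, and pass to the limit via a finite-intersection-property / compactness argument. The paper organises the induction so that this limiting step is taken with respect to the \emph{first} automorphism $\alpha_1$ (applying the induction hypothesis to $\alpha_2,\dots,\alpha_{n+1}$ on each $U(\alpha_1,K)$), which keeps every group to which tidiness is applied genuinely compact open. Your version, peeling off $\alpha_n$ last, can be made to work the same way, but you must insert the approximation-and-compactness step rather than claim tidiness of $U_\delta$ directly.
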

%%%%%%%%
\begin{proof}
The proof is by induction on $n$. The base case is tidiness above of~$U$ and the $n=2$ case follows from Lemma~\ref{lem:factor_U} applied to $\Um{U}{\alpha_1}$ and $\Um{U}{\alpha_1^{-1}}$. Assume that the claim has been established for $n$ and let $\alpha_1,\dots,\alpha_{n+1}$ be in $\fl{H}$. Define $U(\alpha_1,K)$, for $K\geq0$, as in the proof of Lemma~\ref{lem:factor_U}. Then $U(\alpha_1,K)$ is tidy for $\fl{H}$ and so, by the induction hypothesis applied to the sequence $\alpha_2$, \dots, $\alpha_{n+1}$,
$$
U(\alpha_1,K) = \prod_{\varepsilon\in\{-,+\}^n} U_{\varepsilon}
$$
with $U_\varepsilon = U_{\alpha_2\varepsilon_2}\cap \dots \cap U_{\alpha_{n+1}\varepsilon_{n+1}}$. Then repeating the argument used in the proof of Lemma~\ref{lem:factor_U} yields that
$$
\Um{U}{\alpha_1} =  \prod_{\varepsilon\in\{-,+\}^n} (\Um{U}{\alpha_1}\cap U_{\varepsilon})\ \mbox{ and }\ \Up{U}{\alpha_1} =  \prod_{\varepsilon\in\{-,+\}^n} (\Up{U}{\alpha_1}\cap U_{\varepsilon})
$$
and the induction continues. 
\end{proof}

Proposition~\ref{prop:factor_U} prescribes the order in which the factors $U_\varepsilon$ appear. Although $\Um{U}{\alpha}\Up{U}{\alpha} = \Up{U}{\alpha}\Um{U}{\alpha}$, the subgroups $U_\varepsilon$ cannot be permuted arbitrarily.
%%%%%%%%
%\begin{corollary}
%\label{cor:factor_U} 
%Let $\fl{H}$, $U$ and $\alpha_1$, \dots, $\alpha_n$ be as in Proposition~\ref{prop:factor_U}. Then 
%%
%\begin{equation}
%\label{eq:factor_U}
%U = \prod_{\varepsilon\in\{-,+\}^n} U_{\pi(\varepsilon)}
%\end{equation}
%%
%for every $\pi\in\Sym(\{-,+\}^n)$.
%\end{corollary}
%%%%%%%%

That $U_{\alpha-}\cap U_{\beta-}$ is invariant under $\alpha$ and $\beta$ is also important in the sequel. The proof uses an idea from the previous argument. 
%%%%%%%%
\begin{lemma}
\label{lem:factors_invariant}
Suppose that $\fl{H}\leq\Aut(G)$ is flat. Let $U$ be tidy for $\fl{H}$ and let $\alpha,\beta\in\fl{H}$. Then, for every $\upsilon$ in the semigroup generated by $\alpha$ and $\beta$,
$$
\upsilon(U_{\alpha-}\cap U_{\beta-}) \leq U_{\alpha-}\cap U_{\beta-}.
$$
\end{lemma}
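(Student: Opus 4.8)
The plan is to prove the statement by showing that both $U_{\alpha-}\cap U_{\beta-}$ is mapped into itself by $\alpha$ and by $\beta$ individually, since the semigroup generated by $\alpha$ and $\beta$ consists of words in these two automorphisms and so invariance under each generator (and closure under products) gives invariance under all of $\upsilon$. By symmetry it suffices to treat $\alpha$. So fix $x\in U_{\alpha-}\cap U_{\beta-}$; I want to show $\alpha(x)\in U_{\alpha-}\cap U_{\beta-}$. That $\alpha(x)\in U_{\alpha-}$ is immediate from the definition of $U_{\alpha-}$ (if $\alpha^n(x)\in U$ for all $n\ge 0$, then certainly $\alpha^n(\alpha(x))\in U$ for all $n\ge 0$), so the real content is $\alpha(x)\in U_{\beta-}$.

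For that, I would use the compactness/finite-intersection trick from the proof of Lemma~\ref{lem:factor_U}, adapted here. Write $U(\alpha,K)=\bigcap_{k=0}^{K}\alpha^{-k}(U)$; by Theorem~\ref{thm:tidy_intersection} and Proposition~\ref{prop:invariance_of_tidiness} each $U(\alpha,K)$ is tidy for $\fl{H}$, and $\bigcap_{K\ge 0}U(\alpha,K)=U_{\alpha-}$. Since $x\in U_{\alpha-}$, we have $\alpha(x)\in U(\alpha,K)$ for every $K$. The key auxiliary fact I want is that $\alpha\bigl(U_{\alpha-}\cap U_{\beta-}\bigr)\le U(\alpha,K)\cap U_{\beta-}$ for every $K$ — more precisely, that $\alpha(x)\in \Um{U(\alpha,K)}{\beta}$ for each $K$, whence taking the intersection over $K$ and applying Proposition~\ref{prop:tidy_criteria} yields $\alpha(x)\in U_{\beta-}$. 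To see $\alpha(x)\in\Um{U(\alpha,K)}{\beta}$: the set $\{\beta^m(x)\}_{m\ge 0}$ is contained in $U$ (as $x\in U_{\beta-}$), and I need $\{\beta^m(\alpha(x))\}_{m\ge 0}$ contained in $U(\alpha,K)$. Here I would exploit that $\alpha$ and $\beta$ need not commute but the \emph{orbit} structure is controlled: $x\in U_{\alpha-}$ means the forward $\alpha$-orbit of $x$ stays in $U$, hence in a compact set, and $x\in U_{\beta-}$ means the same for $\beta$. The cleanest route is probably to observe that the closed subgroup $H_x := \overline{\langle \alpha^k\beta^l(x) : k,l\ge 0\rangle}$ — or rather the closed $\langle\alpha,\beta\rangle^{+}$-invariant subset generated by $x$ — has compact closure, because by Proposition~\ref{prop:tidy_criteria} both $x\in U_{\alpha-}$ and $x\in U_{\beta-}$ force all such words to lie in $U$: indeed $U$ tidy for both $\alpha$ and $\beta$ means $U_{\alpha-}$ and $U_{\beta-}$ are subgroups with $\alpha(U_{\alpha-})\le U_{\alpha-}$... but this last containment is exactly part of what we want, so I must be careful not to argue in a circle.

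The honest, non-circular approach: fix $x\in U_{\alpha-}\cap U_{\beta-}$ and consider the sequence $\{\gamma(x) : \gamma \text{ a word in } \alpha,\beta\}$, i.e. the forward orbit of $x$ under the free semigroup. I claim this whole set lies in $U$. Prove this by induction on word length: $x\in U$; and if $\gamma(x)\in U$ and moreover $\gamma(x)$ lies in the compact set $K_x := \overline{\{\alpha^k(x):k\ge0\}}\cup\overline{\{\beta^l(x):l\ge0\}}\cup\cdots$ — this is where I need the real input. Let me instead phrase it as: let $S$ be the closure of the set of all words in $\alpha,\beta$ applied to $x$; I want $S\subseteq U$. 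Since $x\in U_{\alpha-}$, Proposition~\ref{prop:tidy_criteria} (or its stated converse) says $\{\alpha^k(x)\}_{k\ge0}$ is precompact in $U_{\alpha-}$, and similarly for $\beta$. Now for any $y\in U_{\alpha-}\cap U_{\beta-}$, both $\alpha(y)$ and $\beta(y)$ lie in $U$ (need: $\alpha(y)\in U$; true since $y\in U_{\alpha-}$ gives $\alpha^k(y)\in U$ for $k\ge0$, in particular $k=1$; and $\beta(y)\in U$ since $y\in U_{\beta-}$), but to iterate I need $\alpha(y)\in U_{\alpha-}\cap U_{\beta-}$ again, which requires $\alpha(y)\in U_{\beta-}$ — still the same obstacle.

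So the structure of the proof must be: \emph{simultaneously} establish, by a compactness argument over all finite truncations, that every word $\upsilon$ in $\alpha,\beta$ satisfies $\upsilon(x)\in U_{\alpha-}\cap U_{\beta-}$. Concretely, I would set $T_K := \bigcap_{k=0}^K \alpha^{-k}(U) \cap \bigcap_{k=0}^K \beta^{-k}(U)$, show each $T_K$ is tidy for $\fl H$ (intersection of tidy subgroups, Theorem~\ref{thm:tidy_intersection}, Proposition~\ref{prop:invariance_of_tidiness}), note $\bigcap_K T_K = U_{\alpha-}\cap U_{\beta-}$, and then run the argument of Proposition~\ref{prop:factor_U}/Lemma~\ref{lem:factor_U}: for each word $\upsilon$, $\upsilon(x)\in T_K$ for all $K$ provided we know $\upsilon'(x)\in T_{K'}$ for all shorter words and all $K'$; packaging this with the finite-intersection property of compact sets gives $\upsilon(x)\in\bigcap_K T_K$. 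The main obstacle — and the step I expect to require genuine care — is exactly setting up this induction so it is not circular: one must push the length-$n$ statement "$\upsilon(x)\in U_{\alpha-}\cap U_{\beta-}$" through by using, at each stage, only the \emph{truncated} conditions $\upsilon(x)\in T_K$ (which follow from $\upsilon$ being a word of length $\le$ something in $\alpha,\beta$ and $x$ lying in enough truncations), and invoking Proposition~\ref{prop:tidy_criteria} only at the very end to pass from "lies in every truncation" to "lies in $U_{\alpha-}\cap U_{\beta-}$". Everything else — the tidiness of intersections, precompactness of forward orbits, the subgroup property — is routine from the cited results.
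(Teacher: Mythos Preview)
Your overall reduction is right: by symmetry and induction on word length, everything comes down to showing that a single generator preserves $U_{\alpha-}\cap U_{\beta-}$. But your attempt at that single-generator step has the roles of $\alpha$ and $\beta$ mispaired with the truncation, and this is why you hit a wall. You chose to show $\alpha$-invariance and used the $\alpha$-truncations $U(\alpha,K)=\bigcap_{k=0}^K\alpha^{-k}(U)$; the statement you then need, $\alpha(x)\in U(\alpha,K)_{\beta-}$, unpacks to $\beta^m(\alpha(x))\in U(\alpha,K)$ for all $m\ge0$, which is not available. Your proposed fallback---the double truncation $T_K$ with an induction on word length---is circular: the inductive step ``$\upsilon'(x)\in T_{K'}$ for all $K'$ implies $\alpha\upsilon'(x)\in T_K$ for all $K$'' is exactly the statement ``$y\in U_{\alpha-}\cap U_{\beta-}$ implies $\alpha(y)\in U_{\alpha-}\cap U_{\beta-}$'', which is the length-one case you have not proved.

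The paper's fix is a one-line swap: use $U(\alpha,K)$ to prove \emph{$\beta$-invariance}, not $\alpha$-invariance. For $x\in U_{\alpha-}\cap U_{\beta-}$ one has $x\in U(\alpha,K)$ (since $x\in U_{\alpha-}$) and $\{\beta^n(x)\}_{n\ge0}\subset U$ is precompact (since $x\in U_{\beta-}$); as $U(\alpha,K)$ is tidy for $\beta$, Proposition~\ref{prop:tidy_criteria} gives $x\in U(\alpha,K)_{\beta-}$. The group $U(\alpha,K)_{\beta-}$ is \emph{by definition} forward-$\beta$-invariant, so $\beta(x)\in U(\alpha,K)_{\beta-}\subseteq U(\alpha,K)$ for every $K$, whence $\beta(x)\in\bigcap_K U(\alpha,K)_{\beta-}\subseteq U_{\alpha-}\cap U_{\beta-}$. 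Equivalently, if you insist on proving $\alpha$-invariance, use $U(\beta,K)=\bigcap_{k=0}^K\beta^{-k}(U)$ instead; then $x\in U(\beta,K)_{\alpha-}$ by the same reasoning, and $\alpha$-invariance of $U(\beta,K)_{\alpha-}$ does the job. The compactness machinery you set up is all correct---you just need to truncate along one automorphism and apply the \emph{other} one.
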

%%%%%%%%
\begin{proof}
It suffices to show that $\beta(U_{\alpha-}\cap U_{\beta-}) \leq U_{\alpha-}\cap U_{\beta-}$ because the claim then follows by symmetry between $\alpha$ and $\beta$ and induction.

As in the proof of Lemma~\ref{lem:factor_U}, set $U(\alpha,K) = \bigcap_{k=0}^K \alpha^{-k}(U)$. Then $U(\alpha,K)$ is tidy for $\fl{H}$ and $U_{\alpha-}\cap U_{\beta-}\leq U(\alpha,K)_{\beta-}$ by Proposition~\ref{prop:tidy_criteria}. Hence
$$
\beta(U_{\alpha-}\cap U_{\beta-}) \leq \beta(U(\alpha,K)_{\beta-}) \leq U(\alpha,K)_{\beta-}
$$
for every $K\geq0$ and 
$$
\beta(U_{\alpha-}\cap U_{\beta-}) \leq \bigcap_{K\geq0} U(\alpha,K)_{\beta-}.
$$
An application of Proposition~\ref{prop:tidy_criteria} shows that this intersection is contained in $\Um{U}{\alpha}\cap \Um{U}{\beta}$, as required.
\end{proof}

It is seen next that the factoring of $U_{\alpha-}$ in Lemma~\ref{lem:factor_U} extends to $\Umm{U}{\alpha}$, albeit with a third factor required.
%%%%%%%
\begin{proposition}
\label{prop:factor_U--}
Suppose that $\fl{H}\leq\Aut(G)$ is flat. Let $U$ be tidy for $\fl{H}$ and let $\alpha,\beta\in\fl{H}$. Then 
\begin{equation}
\label{eq:factor_U--}
\Umm{U}{\alpha} = (\Umm{U}{\alpha}\cap \lev(\beta))(\Umm{U}{\alpha}\cap \Umm{U}{\beta})(\Umm{U}{\alpha}\cap \Upp{U}{\beta}).
\end{equation}
\end{proposition}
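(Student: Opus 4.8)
The plan is to use the identity $\Umm{U}{\alpha} = \bigcup_{n\geq0}\alpha^{-n}(\Um{U}{\alpha})$ together with the factoring of $\Um{U}{\alpha}$ already available from Lemma~\ref{lem:factor_U}, and then to control how the three factors interact under powers of $\alpha$. Concretely, Lemma~\ref{lem:factor_U} gives $\Um{U}{\alpha} = (\Um{U}{\alpha}\cap\Um{U}{\beta})(\Um{U}{\alpha}\cap\Up{U}{\beta})$, but this two-term factoring is not stable under $\alpha^{-1}$, which is why the extra factor $\Umm{U}{\alpha}\cap\lev(\beta)$ appears in \eqref{eq:factor_U--}: the ``middle'' part $\Uz{U}{\beta}$ of $\Um{U}{\beta}$ gets dilated by $\alpha^{-1}$ into $\lev(\beta)$ rather than staying inside $U$.

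First I would refine the decomposition of $\Um{U}{\alpha}$. Applying Lemma~\ref{lem:factor_U} to $\beta$ and then splitting $\Um{U}{\alpha}\cap\Um{U}{\beta}$ further, one gets
$$
\Um{U}{\alpha} = (\Um{U}{\alpha}\cap\Um{U}{\beta})(\Um{U}{\alpha}\cap\Uz{U}{\beta})(\Um{U}{\alpha}\cap\Up{U}{\beta}),
$$
where the first and third factors are $\beta^{\mp1}$-invariant and, crucially by Lemma~\ref{lem:factors_invariant}, the first factor $\Um{U}{\alpha}\cap\Um{U}{\beta}$ is $\alpha$-invariant and the third is $\alpha^{-1}$-invariant on the appropriate side (the symmetric statement for $U_{\alpha-}\cap U_{\beta+}$, obtained by applying Lemma~\ref{lem:factors_invariant} to $\alpha$ and $\beta^{-1}$). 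Next I would apply $\alpha^{-n}$ to this decomposition. The term $\alpha^{-n}(\Um{U}{\alpha}\cap\Um{U}{\beta})$ stays inside $\Umm{U}{\alpha}\cap\Umm{U}{\beta}$ because $\Um{U}{\alpha}\cap\Um{U}{\beta}$ is $\alpha$-stable (hence the union over $n$ equals $\Umm{U}{\alpha}\cap\Umm{U}{\beta}$, using that it is also inside $\Um{U}{\beta}\subseteq\Umm{U}{\beta}$). The term $\alpha^{-n}(\Um{U}{\alpha}\cap\Up{U}{\beta})$ should land in $\Umm{U}{\alpha}\cap\Upp{U}{\beta}$ after checking that $\Um{U}{\alpha}\cap\Up{U}{\beta}\subseteq\Upp{U}{\beta}$ and that $\Upp{U}{\beta}$ is $\alpha^{-1}$-stable — but this needs an argument, since $\Upp{U}{\beta}$ need not be $\alpha$-invariant in general; here one uses that $\Um{U}{\alpha}\cap\Up{U}{\beta}$ consists of elements whose forward $\alpha$-trajectory is precompact, so their images under $\alpha^{-n}$ have $\beta$-orbits that remain precompact in the $\beta^{-1}$ direction, landing in $\Upp{U}{\beta}$ by Proposition~\ref{prop:tidy_criteria} applied in $\beta$. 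Finally the middle term $\alpha^{-n}(\Um{U}{\alpha}\cap\Uz{U}{\beta})$: elements $x$ here have $\{\beta^k(x)\}_{k\in\mathbb{Z}}$ precompact, hence so do their $\alpha^{-n}$-images (since $\Uz{U}{\beta}$ is $\alpha$-... wait, it need not be), so one argues instead that $\alpha^{-n}(x)\in\lev(\beta)$ directly from $\beta^k(\alpha^{-n}(x))=\alpha^{-n}(\beta^k(x))$ having precompact closure because $\{\beta^k(x)\}$ does and $\alpha^{-n}$ is a homeomorphism — wait, $\alpha^{-n}$ of a precompact set is precompact, so $\alpha^{-n}(x)\in\parb(\beta)\cap\parb(\beta^{-1})=\lev(\beta)$. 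Since also $\alpha^{-n}(x)\in\Umm{U}{\alpha}$, this gives the first factor of \eqref{eq:factor_U--}.

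Then I would assemble: every element of $\Umm{U}{\alpha}$ is $\alpha^{-n}(x)$ for some $x\in\Um{U}{\alpha}$ and some $n\geq0$, write $x=x_1x_2x_3$ with $x_1\in\Um{U}{\alpha}\cap\Um{U}{\beta}$, $x_2\in\Um{U}{\alpha}\cap\Uz{U}{\beta}$, $x_3\in\Um{U}{\alpha}\cap\Up{U}{\beta}$, and conclude $\alpha^{-n}(x)=\alpha^{-n}(x_1)\alpha^{-n}(x_2)\alpha^{-n}(x_3)$ lies in $(\Umm{U}{\alpha}\cap\lev(\beta))(\Umm{U}{\alpha}\cap\Umm{U}{\beta})(\Umm{U}{\alpha}\cap\Upp{U}{\beta})$ — modulo reordering the three pieces to match the order claimed in \eqref{eq:factor_U--}, which is where I would need commutation relations among the factors (using that $\Uz{U}{\beta}$ normalises things via Lemma~\ref{lem:nub_normalised}, or rather that each of the three subgroups normalises the product of the others, coming from the corresponding statement for the finer factoring). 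The reverse inclusion is immediate since each of the three sets on the right of \eqref{eq:factor_U--} is contained in $\Umm{U}{\alpha}$, which is a group.

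\textbf{Main obstacle.} The delicate point is the membership $\alpha^{-n}(\Um{U}{\alpha}\cap\Up{U}{\beta})\subseteq\Upp{U}{\beta}$ and, more generally, making the three families of images nest correctly as $n$ grows so that the union is exactly the claimed product rather than something larger; $\Umm{U}{\alpha}$, $\Upp{U}{\beta}$ need not be invariant under the ``wrong'' automorphism, so one cannot just quote stability. I expect the cleanest route is to avoid $\alpha^{-n}$-images of subgroups altogether and instead argue pointwise via Proposition~\ref{prop:tidy_criteria}: for $y\in\Umm{U}{\alpha}$, analyse the three coordinates of $\alpha^{n}(y)\in U$ (for $n$ large enough that $\alpha^n(y)\in\Um{U}{\alpha}$), show the $\beta$-trajectory of each coordinate is controlled, and pull back. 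The secondary nuisance is the ordering of factors in the displayed product, which requires tracking that the decomposition respects the lexicographic convention of Proposition~\ref{prop:factor_U} and that the relevant pairs of factors commute up to staying in the product.
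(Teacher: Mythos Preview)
Your proposal has a genuine gap: the step ``$\beta^k(\alpha^{-n}(x))=\alpha^{-n}(\beta^k(x))$'' assumes that $\alpha$ and $\beta$ commute, but $\fl{H}$ is only assumed flat, not abelian. The fact that $\fl{H}/\fl{H}_u$ is abelian (Corollary~\ref{cor:Flat=>abelian_mod_uniscalar}) is proved \emph{later} and uses Proposition~\ref{prop:factor_U--} in its chain of reasoning, so invoking commutativity here would be circular. The same issue infects your treatment of the third factor: the inclusion $\alpha^{-n}(\Um{U}{\alpha}\cap\Up{U}{\beta})\subseteq\Upp{U}{\beta}$ is not available without knowing how $\alpha^{-n}$ interacts with $\beta$-orbits, and in general $\alpha^{-n}(\Up{U}{\beta})$ has no reason to sit inside $\Upp{U}{\beta}$.

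The paper circumvents commutativity entirely by never applying $\alpha^{-k}$ to $\beta$-subgroups. Instead it uses that each $\alpha^{-k}(U)$ is itself tidy for $\fl{H}$ (Proposition~\ref{prop:invariance_of_tidiness}) and works with $\alpha^{-k}(U)_{\beta\pm}$, the $\beta$-factors of that new tidy subgroup. The key observation is that $\alpha^{-k}(\Um{U}{\alpha})\cap\alpha^{-k}(U)_{\beta-} = \alpha^{-k}(\Um{U}{\alpha})\cap\parb(\beta)$, so these sets are nested in $k$ (because $\alpha^{-k}(\Um{U}{\alpha})$ is), and their union is a subgroup $\MUab$. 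This gives $\Umm{U}{\alpha}=\MUab\PUab$ via Lemma~\ref{lem:factor_U} applied at each level $k$. Then the Levi factor is extracted not by tracking $\Uz{U}{\beta}$ under $\alpha$, but by applying the decomposition $\parb(\beta)=\lev(\beta)\Uz{U}{\beta}\con(\beta)$ from Proposition~\ref{prop:acc.pt_in_lev} to each element $y\in\alpha^{-k}(\Um{U}{\alpha})\cap\parb(\beta)$, with Lemma~\ref{lem:factors_invariant} pinning the $\lev(\beta)$-component inside $\alpha^{-k}(\Um{U}{\alpha})$. No commutation between $\alpha$ and $\beta$ is ever used.
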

%%%%%AAACan streamline this proof? Do we need (U,\alpha,\beta,\pm)? Clearer to do each k separately. i.e. Corollary following.
%%%%%%%
\begin{proof}
The argument uses the temporary definitions  
\begin{align} 
\MUab &= \bigcup_{k\in\mathbb{N}} \alpha^{-k}(U_{\alpha-})\cap\alpha^{-k}(U)_{\beta-},
\label{eq:factor_U--1}\\ 
\mbox{ and }\ \  \PUab &= \bigcup_{k\in\mathbb{N}} \alpha^{-k}(U_{\alpha-})\cap\alpha^{-k}(U)_{\beta+}
\label{eq:factor_U--2}.
\end{align}
\begin{Claim} 
\label{cl:one}
$\MUab$ and $\PUab$ are subgroups of $\Umm{U}{\alpha}$.
\end{Claim} 
To see that $\MUab$ is a subgroup, note first that $\parb(\beta)\geq \alpha^{-k}(U)_{\beta-}$. On the other hand, since $\alpha^{-k}(U)$ is tidy for~$\beta$ by Proposition~\ref{prop:invariance_of_tidiness}, Proposition~\ref{prop:tidy_criteria} implies that
$$
\alpha^{-k}(U_{\alpha-})\cap\parb(\beta)\leq \alpha^{-k}(U)_{\beta-}.
$$ 
Hence
$$
\alpha^{-k}(U_{\alpha-})\cap\alpha^{-k}(U)_{\beta-} = \alpha^{-k}(U_{\alpha-})\cap\parb(\beta). 
$$ 
Since $\alpha^{k-1}(U_{\alpha-})\geq \alpha^{k}(U_{\alpha-})$ for each~$k$, it follows that\footnote{Note that $\alpha^{k-1}(U)\not\geq \alpha^{k}(U)$ in general and $\alpha^{-k-1}(U)_{\beta-}\not\geq \alpha^{-k}(U)_{\beta-}$.}
$$
\alpha^{-k-1}(U_{\alpha-})\cap\alpha^{-k-1}(U)_{\beta-}\geq \alpha^{-k}(U_{\alpha-})\cap\alpha^{-k}(U)_{\beta-}
$$ 
and hence that~\eqref{eq:factor_U--1} defines $\MUab$ to be an increasing union of subgroups of $\Umm{U}{\alpha}$. A similar argument shows that $\PUab$ is a subgroup of $\Umm{U}{\alpha}$. \endproof

An intermediate stage towards the claimed factoring is as follows: 
%%%%%%%
\begin{Claim} 
\label{cl:two}
$\Umm{U}{\alpha} = \MUab\PUab$. 
\end{Claim}
%%%%%%%
Equations~\eqref{eq:factor_U--1} and~\eqref{eq:factor_U--2} imply that $\MUab\PUab\leq \Umm{U}{\alpha}$ because $\alpha^{-k}(U_{\alpha-})\leq \Umm{U}{\alpha}$ for every $k\in\mathbb{N}$. For the reverse inclusion, recall that $\Umm{U}{\alpha} = \bigcup_{k\in\mathbb{N}} \alpha^{-k}(U)_{\alpha-}$ and that, by Lemma~\ref{lem:factor_U}, 
\begin{equation*}
\label{eq:factor_intermediate}
\Um{\alpha^{-k}(U)}{\alpha} = (\alpha^{-k}(\Um{U}{\alpha})\cap\Um{\alpha^{-k}(U)}{\beta}) (\alpha^{-k}(\Um{U}{\alpha})\cap\Up{\alpha^{-k}(U)}{\beta}).
\end{equation*}
Then $\Umm{U}{\alpha}\leq \MUab\PUab$ because $\Um{\alpha^{-k}(U)}{\alpha} = \alpha^{-k}(\Um{U}{\alpha})$. \endproof

The proposition will follow by combining Claim~\ref{cl:two} with 
%%%%%%%
\begin{Claim} 
\label{cl:three}
$\PMUab = (\Umm{U}{\alpha}\cap \lev(\beta))(\Umm{U}{\alpha}\cap \Utpm{U}{\beta})$. 
\end{Claim}
%%%%%%%
We factor the terms in the definition of $\MUab$ in Equation~\eqref{eq:factor_U--1}. Consider $y\in \alpha^{-k}(\Um{U}{\alpha})\cap\Um{\alpha^{-k}(U)}{\beta}$. As seen in the proof of Claim~\ref{cl:one}, $y\in \parb(\beta)$ and so by part \ref{prop:acc.pt_in_lev3}.~of Proposition~\ref{prop:acc.pt_in_lev}, there are: $z\in\con(\beta)$; $u\in \Uz{U}{\beta}$; and $x\in \bigl(\bigcap_{l\in\mathbb{N}} \overline{\{\beta^k(y)\}}_{k\geq l}\bigr)\subseteq \lev(\beta)$ such that $y=xuz$. Applying  Lemma~\ref{lem:factors_invariant} to $y$ shows that $x$ belongs to $\alpha^{-k}(U_{\alpha-})$ and hence 
$$
x\in\alpha^{-k}(\Um{U}{\alpha})\cap \lev(\beta). 
$$
Since $y$ and $x$ belong to $\alpha^{-k}(\Um{U}{\alpha})$, so does $uz$. Hence, since $\Uz{U}{\beta}\con(\beta) = \Umm{U}{\beta}$, it follows that 
$$uz\in \alpha^{-k}(U_{\alpha-})\cap \Umm{U}{\beta}
$$ 
and we have shown that
$$
\alpha^{-k}(U_{\alpha-})\cap\alpha^{-k}(U)_{\beta-} \leq \bigl(\alpha^{-k}(U_{\alpha-})\cap \lev(\beta)\bigr)\bigl(\alpha^{-k}(U_{\alpha-})\cap \Umm{U}{\beta}\bigr).
$$
For the reverse inclusion, note that, since $\alpha^{-k}(U)$ is tidy for $\beta$,  Proposition~\ref{prop:tidy_criteria} implies that $\alpha^{-k}(U_{\alpha-})\cap \lev(\beta)$ and $\alpha^{-k}(U_{\alpha-})\cap \Umm{U}{\beta}$ are both contained in $\alpha^{-k}(U)_{\beta-}$. Hence 
\begin{equation*}
\label{eq:factor_intermediate2}
\alpha^{-k}(U_{\alpha-})\cap\alpha^{-k}(U)_{\beta-} = \bigl(\alpha^{-k}(U_{\alpha-})\cap \lev(\beta)\bigr)\bigl(\alpha^{-k}(U_{\alpha-})\cap \Umm{U}{\beta}\bigr).
\end{equation*}
The definition of $\MUab$ in Equation~\eqref{eq:factor_U--1} now implies that
$$
\MUab = \bigl(\Umm{U}{\alpha}\cap \lev(\beta)\bigr)\bigl(\Umm{U}{\alpha}\cap \Umm{U}{\beta}\bigr).
$$
The claim for $\PUab$ follows by replacing $\beta$ with $\beta^{-1}$. \endproof

Observing that $\bigl(\Umm{U}{\alpha}\cap \Umm{U}{\beta}\bigr)\bigl(\Umm{U}{\alpha}\cap \lev(\beta)\bigr) = \bigl(\Umm{U}{\alpha}\cap \lev(\beta)\bigr)\bigl(\Umm{U}{\alpha}\cap \Umm{U}{\beta}\bigr)$, Claims~\ref{cl:two} and~\ref{cl:three} imply~\eqref{eq:factor_U--}.
\end{proof}

It is also useful to highlight the following, which was shown as an intermediate step in the last proof.
%%%%%%%%
\begin{corollary}
\label{cor:factor_U--}
Assume the hypotheses of Proposition~\ref{prop:factor_U--} and let $k$ be a positive integer. Then
$$
\alpha^{-k}(\Um{U}{\alpha})\cap\Um{\alpha^{-k}(U)}{\beta} = \left(\alpha^{-k}(\Um{U}{\alpha})\cap \lev(\beta)\right)\bigl(\alpha^{-k}(\Um{U}{\alpha})\cap \Umm{U}{\beta}\bigr).
$$
\end{corollary}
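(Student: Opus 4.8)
The plan is to recognise this as the standalone version of the intermediate computation already carried out inside the proof of Proposition~\ref{prop:factor_U--}, and to reproduce that argument cleanly. First I would set $V=\alpha^{-k}(U)$: this is tidy for $\fl{H}$ by Proposition~\ref{prop:invariance_of_tidiness}, and a direct check of the definitions in Theorem~\ref{thm:characterise_minimizing} shows $\alpha^{-k}(\Um{U}{\alpha})=\Um{V}{\alpha}$ (both equal $\{v\in G\mid \alpha^m(v)\in U\ \forall\, m\geq k\}$). Denoting this group by $W$, the target identity reads $W\cap\Um{V}{\beta}=(W\cap\lev(\beta))(W\cap\Umm{U}{\beta})$.

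For the inclusion $\subseteq$, I would take $y\in W\cap\Um{V}{\beta}$. Since $\{\beta^n(y)\}_{n\geq0}\subseteq\Um{V}{\beta}\subseteq V$ is relatively compact, $y\in\parb(\beta)$ by Proposition~\ref{prop:par_closed}, so parts~\ref{prop:acc.pt_in_lev2} and~\ref{prop:acc.pt_in_lev3} of Proposition~\ref{prop:acc.pt_in_lev}, applied with the tidy subgroup $U$, yield a factorisation $y=xuz$ with $z\in\con(\beta)$, $u\in\Uz{U}{\beta}$ and $x\in\bigcap_{l\in\mathbb{N}}\overline{\{\beta^m(y)\}}_{m\geq l}\subseteq\lev(\beta)$. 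The crucial point — and the only step needing a little care — is to show $x\in W$: because $y$ lies in $\Um{V}{\alpha}\cap\Um{V}{\beta}$, Lemma~\ref{lem:factors_invariant} applied to the tidy subgroup $V$ gives $\{\beta^m(y)\}_{m\geq0}\subseteq\Um{V}{\alpha}\cap\Um{V}{\beta}$, a compact set, so its accumulation set (which contains $x$) lies in $\Um{V}{\alpha}=W$. Hence $x\in W\cap\lev(\beta)$, so $uz=x^{-1}y\in W$, and moreover $uz\in\Uz{U}{\beta}\con(\beta)=\con(\beta)\Uz{U}{\beta}=\Umm{U}{\beta}$ by Proposition~\ref{prop:U--^con} together with the fact, noted right after that proposition, that $\Uz{U}{\beta}$ normalises $\con(\beta)$. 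This gives $y=x\cdot(uz)\in(W\cap\lev(\beta))(W\cap\Umm{U}{\beta})$.

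For the reverse inclusion, since $W$ and $\Um{V}{\beta}$ are both subgroups (Theorem~\ref{thm:characterise_minimizing}), it suffices to check that $W\cap\lev(\beta)$ and $W\cap\Umm{U}{\beta}$ are each contained in $\Um{V}{\beta}$. If $x\in W\cap\lev(\beta)$, then $\{\beta^n(x)\}_{n\in\mathbb{Z}}$ has compact closure, so $x\in\Uz{V}{\beta}\subseteq\Um{V}{\beta}$ by part~(2) of Proposition~\ref{prop:tidy_criteria} (using $x\in V$ and $V$ tidy for $\beta$). If $w\in W\cap\Umm{U}{\beta}$, then $\beta^N(w)\in\Um{U}{\beta}$ for some $N\geq0$, so $\{\beta^n(w)\}_{n\geq0}$ sits inside the compact set $\{w,\dots,\beta^{N-1}(w)\}\cup\Um{U}{\beta}$ and hence has an accumulation point; part~(1) of Proposition~\ref{prop:tidy_criteria} (with $w\in V$) then gives $w\in\Um{V}{\beta}$. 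Closing up under the group operations finishes the proof.

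I do not anticipate a genuine obstacle, since the statement merely isolates a step already performed in the proof of Proposition~\ref{prop:factor_U--}. The two things to get right are the bookkeeping identity $\alpha^{-k}(\Um{U}{\alpha})=\Um{\alpha^{-k}(U)}{\alpha}$, and invoking Lemma~\ref{lem:factors_invariant} for the subgroup $V=\alpha^{-k}(U)$ rather than for $U$, which is what traps the accumulation point $x$ inside $W$.
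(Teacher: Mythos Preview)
Your proof is correct and follows essentially the same route as the paper: the corollary is explicitly flagged there as an intermediate step in the proof of Proposition~\ref{prop:factor_U--} (Claim~3), and your argument reproduces that step---decompose $y$ via Proposition~\ref{prop:acc.pt_in_lev}.\ref{prop:acc.pt_in_lev3}, use Lemma~\ref{lem:factors_invariant} for the tidy subgroup $V=\alpha^{-k}(U)$ to trap the accumulation point $x$ in $\alpha^{-k}(\Um{U}{\alpha})$, and handle the reverse inclusion with Proposition~\ref{prop:tidy_criteria}. Your write-up is, if anything, slightly more explicit than the paper's about the bookkeeping identity $\alpha^{-k}(\Um{U}{\alpha})=\Um{\alpha^{-k}(U)}{\alpha}$ and about the reverse containment.
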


The shrinking subgroup $\Umm{U}{\alpha}$ is automatically invariant under $\alpha$ and each of the groups $\Utpm{U}{\beta}$ and $\lev(\beta)$ is invariant under $\beta$. However, the factors of $\Umm{U}{\alpha}$ given in Proposition~\ref{prop:factor_U--} might not be invariant under $\alpha$ and $\beta$. Here is the counterpart of Lemma~\ref{lem:factors_invariant} for this situation. 
%%%%%%%%
\begin{lemma}
\label{lem:factors_invariantii}
Suppose that $\fl{H}\leq\Aut(G)$ is flat. Let $U$ be tidy for $\fl{H}$ and let $\alpha,\beta\in\fl{H}$. Then $\Umm{U}{\alpha}\cap \Umm{U}{\beta}$ and $\Umm{U}{\alpha}\cap \lev(\beta)$ are invariant under $\beta$ and 
\begin{equation}
\label{eq:factors_invariantii}
\beta\bigl(\Umm{U}{\alpha}\cap \Upp{U}{\beta}\bigr) \leq \bigl(\Umm{U}{\alpha}\cap \Upp{U}{\beta}\bigr) \beta(\Uz{U}{\alpha}).
\end{equation}
\end{lemma}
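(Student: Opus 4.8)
The three assertions are proved by one device. Fix $\alpha,\beta\in\fl{H}$ and $U$ tidy for $\fl{H}$. Since $\Umm{U}{\alpha}=\bigcup_{k\ge0}\alpha^{-k}(\Um{U}{\alpha})$ is an increasing union, every $x\in\Umm{U}{\alpha}$ lies in all the subgroups $\alpha^{-k}(U)$ for $k$ at least some $k_0=k_0(x)$, and each $\alpha^{-k}(U)$ is tidy for $\fl{H}$ by Proposition~\ref{prop:invariance_of_tidiness}; moreover $\Uz{\alpha^{-k}(U)}{\alpha}=\Uz{U}{\alpha}$. The criteria of Proposition~\ref{prop:tidy_criteria}, applied inside $\alpha^{-k}(U)$, will be used to turn precompactness of a $\beta$-orbit of $x$ into the statement that this orbit never leaves $\alpha^{-k}(U)$. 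I also use that $\Umm{U}{\beta}$, $\Upp{U}{\beta}$ and $\lev(\beta)$ are $\beta$-invariant, which is immediate from the descriptions $\Umm{U}{\beta}=\bigcup_n\beta^{-n}(\Um{U}{\beta})$, $\Upp{U}{\beta}=\bigcup_n\beta^n(\Up{U}{\beta})$ and $\lev(\beta)=\parb(\beta)\cap\parb(\beta^{-1})$, together with $\beta(\Um{U}{\beta})\le\Um{U}{\beta}$ and $\Up{U}{\beta}\le\beta(\Up{U}{\beta})$.

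For the second assertion, let $x\in\Umm{U}{\alpha}\cap\lev(\beta)$ and fix $k_0$ with $\alpha^{k_0}(x)\in\Um{U}{\alpha}$. Since $x\in\lev(\beta)$ the orbit $\{\beta^n(x)\}_{n\in\mathbb{Z}}$ lies in a compact set, so the second criterion of Proposition~\ref{prop:tidy_criteria}, used in the tidy subgroup $\alpha^{-k}(U)$ for each $k\ge k_0$, gives $x\in\Uz{\alpha^{-k}(U)}{\beta}$; in particular $\alpha^k(\beta(x))\in U$ for all $k\ge k_0$, hence $\alpha^{k_0}(\beta(x))\in\Um{U}{\alpha}$ and $\beta(x)\in\Umm{U}{\alpha}$. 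As $\lev(\beta)$ is $\beta$-invariant, $\beta(x)\in\Umm{U}{\alpha}\cap\lev(\beta)$. For the first assertion, an element $x\in\Umm{U}{\alpha}\cap\Umm{U}{\beta}$ has its \emph{forward} orbit $\{\beta^n(x)\}_{n\ge0}$ precompact, because it eventually enters the compact group $\Um{U}{\beta}$; the first criterion of Proposition~\ref{prop:tidy_criteria} then gives $x\in\Um{\alpha^{-k}(U)}{\beta}$ for all $k\ge k_0$, so again $\beta(x)\in\Umm{U}{\alpha}$, while $\beta(x)\in\Umm{U}{\beta}$ since $\Umm{U}{\beta}$ is $\beta$-invariant.

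For the inclusion \eqref{eq:factors_invariantii}, let $x\in\Umm{U}{\alpha}\cap\Upp{U}{\beta}$ and set $V=\alpha^{-k}(U)$ with $k$ large enough that $x\in\Um{V}{\alpha}=\alpha^{-k}(\Um{U}{\alpha})$. Since $x\in\Upp{U}{\beta}\subseteq\parb(\beta^{-1})$, the first criterion of Proposition~\ref{prop:tidy_criteria}, applied to $\beta^{-1}$ in $V$, gives $x\in\Up{V}{\beta}$, so $x\in\Um{V}{\alpha}\cap\Up{V}{\beta}$. The plan is to split $x=ab$ by means of the joint tidiness-above factoring $\Um{V}{\alpha}=(\Um{V}{\alpha}\cap\Um{V}{\beta})(\Um{V}{\alpha}\cap\Up{V}{\beta})$ of Lemma~\ref{lem:factor_U}: the first factor $a$ lies in $\parb(\beta)$, and lies in $\parb(\beta^{-1})$ as well because both $x$ and the complementary factor $b\in\Up{V}{\beta}$ do, hence $a\in\lev(\beta)$. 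Since $\Um{V}{\alpha}=\alpha^{-k}(\Um{U}{\alpha})\subseteq\Umm{U}{\alpha}$ we get $a\in\Umm{U}{\alpha}\cap\lev(\beta)$, and by the second assertion $\beta(a)\in\Umm{U}{\alpha}\cap\lev(\beta)\subseteq\Umm{U}{\alpha}$; the residual factor $b$ lies in $\Um{V}{\alpha}$ and has precompact backward $\beta$-orbit, and one then aims, using the contraction-group description $\Umm{V}{\alpha}=\con(\alpha)\Uz{V}{\alpha}$ of Proposition~\ref{prop:U--^con} (where $\Uz{V}{\alpha}=\Uz{U}{\alpha}$), to show that $\beta(b)$ differs from a member of $\Umm{U}{\alpha}\cap\Upp{U}{\beta}$ by an element of $\beta(\Uz{U}{\alpha})$. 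Combining this with $\beta(x)=\beta(a)\beta(b)$, the $\beta$-invariance of $\Upp{U}{\beta}$, and the obvious permutability of the $\lev$- and residual factors then yields \eqref{eq:factors_invariantii}.

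The real obstacle is this last step. Precisely because $\Umm{U}{\alpha}$ is \emph{not} $\beta$-invariant in general, $\beta(b)$ genuinely can leave $\Umm{U}{\alpha}$, and one must verify that the discrepancy is confined to $\beta(\Uz{U}{\alpha})$ — equivalently, that the ``central'' part $\Uz{U}{\alpha}$ of $\Umm{U}{\alpha}$ is exactly what is lost. This forces the decomposition $x=ab$ to be chosen so that the $\con(\alpha)$-component of $b$ is itself controlled under $\beta$ (not merely $b$), and it is here that Lemma~\ref{lem:factors_invariant}, Proposition~\ref{prop:acc.pt_in_lev} and Proposition~\ref{prop:U--^con} must be played off against one another; the bookkeeping of this $\Uz{U}{\alpha}$-correction, and the check that passing back from $V=\alpha^{-k}(U)$ to $U$ costs nothing in the $\Upp{U}{\beta}$-factor, is the technical heart of the argument.
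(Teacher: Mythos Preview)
Your proofs of the first two assertions are correct and use the same mechanism as the paper: Proposition~\ref{prop:tidy_criteria} applied in the tidy subgroups $\alpha^{-k}(U)$. The paper packages this by showing that the auxiliary group $\MUab$ from \eqref{eq:factor_U--1} is $\beta$-invariant (via Lemma~\ref{lem:factors_invariant}) and then intersecting with $\Umm{U}{\beta}$ and $\lev(\beta)$, but the content is the same.

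The proof of \eqref{eq:factors_invariantii} is genuinely incomplete, and the decomposition you set up does not make progress. Since $x$ already lies in $\Um{V}{\alpha}\cap\Up{V}{\beta}$, any factoring $x=ab$ coming from Lemma~\ref{lem:factor_U} forces $a\in\Um{V}{\beta}\cap\Up{V}{\beta}=\Uz{V}{\beta}$; the residual factor $b$ sits in exactly the same set $\Um{V}{\alpha}\cap\Up{V}{\beta}$ as $x$ did, so nothing has been gained. The ``obstacle'' you identify is therefore the entire problem, not a bookkeeping detail.

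The paper takes a different route. By the same argument as for the first two parts (with $\beta$ replaced by $\beta^{-1}$), $\Umm{U}{\alpha}\cap\Upp{U}{\beta}$ is $\beta^{-1}$-invariant; the issue is that this restriction of $\beta^{-1}$ need not be surjective. The paper instead proves
\[
\Umm{U}{\alpha}\cap\Upp{U}{\beta}\ \le\ \beta^{-1}\bigl(\Umm{U}{\alpha}\cap\Upp{U}{\beta}\bigr)\,\Uz{U}{\alpha},
\]
which yields \eqref{eq:factors_invariantii} upon applying $\beta$. This is done slice by slice over $\beta^{k}(\Up{U}{\beta})$: one factors $\beta^{k+1}(\Up{U}{\beta})$ using Lemma~\ref{lem:factor_U} with the roles of $\alpha$ and $\beta$ \emph{swapped} and $U$ replaced by $\beta^{k+1}(U)$, refines one factor via Corollary~\ref{cor:factor_U--}, applies $\beta^{-1}$, and then uses Lemma~\ref{lem:factors_invariant} together with Proposition~\ref{prop:tidy_criteria} to see that the surplus lies in $\parb(\alpha^{-1})\cap\Umm{U}{\alpha}\le\Uz{U}{\alpha}$. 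The key move you are missing is this role-reversal: factoring $\Up{U}{\beta}$-type pieces with respect to $\alpha$, rather than factoring $\Um{V}{\alpha}$ with respect to $\beta$.
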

%%%%%%
\begin{proof}
Since $\alpha^{-k}(U)$ is tidy for $\fl{H}$ for each $k$, Lemma~\ref{lem:factors_invariant} implies that $\MUab$, defined in Equation~\eqref{eq:factor_U--1}, is $\beta$-invariant. Then $\beta$-invariance of $\Umm{U}{\alpha}\cap \Umm{U}{\beta}$ and $\Umm{U}{\alpha}\cap \lev(\beta)$ holds because they are equal to the intersections of $\MUab$ with the $\beta$-invariant groups $\Umm{U}{\beta}$ and $\lev(\beta)$ respectively.

The same argument also shows that $\Umm{U}{\alpha}\cap \Upp{U}{\beta}$ is $\beta^{-1}$-invariant, rather than $\beta$-invariant. If the restriction of $\beta^{-1}$ to $\Umm{U}{\alpha}\cap \Upp{U}{\beta}$ were surjective, then we would have that $\beta\left(\Umm{U}{\alpha}\cap \Upp{U}{\beta}\right) = \Umm{U}{\alpha}\cap \Upp{U}{\beta}$. This restriction need not be surjective however. Instead, it will be shown that 
\begin{equation}
\label{eq:not_surjective}
\Umm{U}{\alpha}\cap \Upp{U}{\beta}\leq \beta^{-1}\left(\Umm{U}{\alpha}\cap \Upp{U}{\beta}\right) \Uz{U}{\alpha}.
\end{equation}

Since $\Umm{U}{\alpha}\cap\Upp{U}{\beta} = \bigcup_{k\in\mathbb{Z}} \beta^k(\Up{U}{\beta})\cap \Umm{U}{\alpha}$,~\eqref{eq:not_surjective} follows from
%%%
\setcounter{Claim}{0}
\begin{Claim-non} 
\label{cl:four}
$\Up{\beta^{k}(U)}{\beta}\cap \Umm{U}{\alpha} \leq \beta^{-1}\left(\Up{\beta^{k+1}(U)}{\beta}\cap \Umm{U}{\alpha}\right)\Uz{U}{\alpha}$ for each~$k$. 
\end{Claim-non}
To see this, note that
\begin{align*}
\beta^{k+1}(U_{\beta+}) &=\left(\beta^{k+1}(U_{\beta+})\cap \beta^{k+1}(U)_{\alpha-}\right)\left(\beta^{k+1}(U_{\beta+})\cap \beta^{k+1}(U)_{\alpha+}\right), \\
\intertext{by Lemma~\ref{lem:factor_U} with $U$ replaced by $\beta^{k+1}(U)$, $\alpha$ by $\beta^{-1}$ and $\beta$ by $\alpha$,}
&= \bigl(\beta^{k+1}(U_{\beta+})\cap \Umm{U}{\alpha}\bigr)\bigl(\beta^{k+1}(U_{\beta+})\cap \beta^{k+1}(U)_{\alpha+}\bigr), 
\end{align*}
by Corollary~\ref{cor:factor_U--} and adsorbing the $\beta^{k+1}(U_{\beta+})\cap \lev(\alpha)$ term produced into $\beta^{k+1}(U_{\beta+})\cap \beta^{k+1}(U)_{\alpha+}$. Applying $\beta^{-1}$ to both sides yields
$$
\beta^k(U_{\beta+}) = \beta^{-1}\bigl(\beta^{k+1}(U_{\beta+})\cap \Umm{U}{\alpha}\bigr)
\beta^{-1}\bigl(\beta^{k+1}(U_{\beta+})\cap \beta^{k+1}(U)_{\alpha+}\bigr)
$$
and hence, applying Lemma~\ref{lem:factors_invariant} to the second factor, that
$$
\beta^k(U_{\beta+}) \leq \beta^{-1}\bigl(\beta^{k+1}(U_{\beta+})\cap \Umm{U}{\alpha}\bigr)
\bigl(\beta^{k+1}(U_{\beta+})\cap \beta^{k+1}(U)_{\alpha+}\bigr).
$$
Observe that $\beta^{k+1}(U_{\beta+})\cap \Umm{U}{\alpha} = \beta^{k+1}(U_{\beta+})\cap \bigl(\Upp{U}{\beta}\cap \Umm{U}{\alpha}\bigr)$ is $\beta^{-1}$-invariant and hence that the first factor on the right hand side is contained in $\Umm{U}{\alpha}$. Hence 
$$
\beta^k(U_{\beta+})\cap \Umm{U}{\alpha} \leq \beta^{-1}\bigl(\beta^{k+1}(U_{\beta+})\cap \Umm{U}{\alpha}\bigr)
\bigl(\beta^{k+1}(U_{\beta+})\cap \beta^{k+1}(U)_{\alpha+}\cap \Umm{U}{\alpha}\bigr).
$$
Since $\beta^{k+1}(U)_{\beta+} \cap \beta^{k+1}(U)_{\alpha+}\cap \Umm{U}{\alpha}\leq \parb(\alpha^{-1})\cap \Umm{U}{\alpha}$, which is contained in $\Uz{U}{\alpha}$ by Proposition~\ref{prop:tidy_criteria}, the claim follows.  \endproof

Having thus established the claim, \eqref{eq:not_surjective} follows and then, applying $\beta$ to both sides, so does Equation~\eqref{eq:factors_invariantii}.
\end{proof}

\begin{proposition}
\label{prop:U_++_invariance}
Suppose that $\fl{H}\leq\Aut(G)$ is flat. Let $U$ be tidy for $\fl{H}$ and let $\alpha,\beta\in\fl{H}$.  Then
$$
\beta(\Umm{U}{\alpha}) \leq  \Umm{U}{\alpha}\,\beta(\Uz{U}{\alpha}).
$$
\end{proposition}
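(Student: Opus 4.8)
The plan is to apply the automorphism $\beta$ to the three-factor decomposition of $\Umm{U}{\alpha}$ provided by Proposition~\ref{prop:factor_U--}, and then use the invariance properties recorded in Lemma~\ref{lem:factors_invariantii} to control each factor. Write $A = \Umm{U}{\alpha}\cap \lev(\beta)$, $B = \Umm{U}{\alpha}\cap \Umm{U}{\beta}$ and $C = \Umm{U}{\alpha}\cap \Upp{U}{\beta}$, so that Proposition~\ref{prop:factor_U--} reads $\Umm{U}{\alpha} = ABC$ as a product of subsets, with the crucial point that the factor $C$ involving $\Upp{U}{\beta}$ appears \emph{last}.

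Since $\beta$ is a group automorphism, it distributes over a product of subsets: $\beta(ABC) = \beta(A)\beta(B)\beta(C)$. By the first part of Lemma~\ref{lem:factors_invariantii}, the subgroups $A$ and $B$ are $\beta$-invariant, so $\beta(A)=A$ and $\beta(B)=B$, and hence $\beta(\Umm{U}{\alpha}) = A\,B\,\beta(C)$. The second part of the same lemma gives $\beta(C) \leq C\,\beta(\Uz{U}{\alpha})$.

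Putting these together, $\beta(\Umm{U}{\alpha}) = A\,B\,\beta(C) \leq A\,B\,C\,\beta(\Uz{U}{\alpha}) = \Umm{U}{\alpha}\,\beta(\Uz{U}{\alpha})$, the last equality being Proposition~\ref{prop:factor_U--} once more. This is exactly the asserted inclusion. (Here we use only associativity of set products and the fact that $ABC$, already a subgroup by Theorem~\ref{thm:characterise_minimizing}, equals $\Umm{U}{\alpha}$.)

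There is no real obstacle: the substantive content has been absorbed into Proposition~\ref{prop:factor_U--} and Lemma~\ref{lem:factors_invariantii}, and all that remains is the bookkeeping just described. The one point requiring mild care is the order of the factors — the extra term $\beta(\Uz{U}{\alpha})$ coming from Lemma~\ref{lem:factors_invariantii} can only be absorbed if it is adjoined on the right, which is possible precisely because $C$ is the last factor in the decomposition of Proposition~\ref{prop:factor_U--}.
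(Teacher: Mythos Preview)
Your proof is correct and follows essentially the same route as the paper: apply $\beta$ to the factorisation $\Umm{U}{\alpha} = ABC$ from Proposition~\ref{prop:factor_U--}, then invoke Lemma~\ref{lem:factors_invariantii} for the $\beta$-invariance of $A$ and $B$ and the bound on $\beta(C)$. Your remark about the placement of $C$ as the rightmost factor is a helpful observation but is not made explicit in the paper's version.
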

%%%%%%%
\begin{proof}
By Proposition~\ref{prop:factor_U--},
$$
\beta(\Umm{U}{\alpha}) = \beta(\Umm{U}{\alpha}\cap \lev(\beta))\beta(\Umm{U}{\alpha}\cap \Umm{U}{\beta})\beta(\Umm{U}{\alpha}\cap \Upp{U}{\beta}).
$$
Lemma~\ref{lem:factors_invariantii} shows that $\Umm{U}{\alpha}\cap \lev(\beta)$ and $\Umm{U}{\alpha}\cap \Umm{U}{\beta}$ are invariant under~$\beta$ and that $\beta(\Umm{U}{\alpha}\cap \Upp{U}{\beta})\leq \bigl(\Umm{U}{\alpha}\cap \Upp{U}{\beta}\bigr) \beta(\Uz{U}{\alpha})$.
\end{proof}

\begin{remark} 
\label{rem:not_invariant}
The factor $\beta(\Uz{U}{\alpha})$ appearing in Proposition~\ref{prop:U_++_invariance} is needed because it may happen $\Umm{U}{\alpha}$ is not $\fl{H}$-invariant, as seen in the example in Exercise~\ref{exer:zero_necessary}.
\end{remark}

\subsection{Contraction modulo the nub}
\label{sec:nub_contraction}

Arguments to follow in \S\ref{sec:reduced1} will apply the results in \S\ref{sec:factor-tidy} repeatedly by restricting $\fl{H}$ to smaller factors. In order to do this, the smaller factors must be stable under $\fl{H}$ and the restriction of $\fl{H}$ to them must be flat. As pointed out in Remark~\ref{rem:not_invariant}, that approach is not going to work with $\Umm{U}{\alpha}$. Results in this section avoid the difficulty by showing, for each flat group $\fl{H}$, that: 
\begin{itemize}
\item the subgroup $\con(\alpha/\nub(\fl{H}))$ is stable under $\fl{H}$ for each $\alpha\in\fl{H}$; 
\item  the restriction of $\fl{H}$ to $\con(\alpha/\nub(\fl{H}))$ is flat; 
\item  $\fl{H}$ is abelian modulo $\fl{H}_u$.
\end{itemize}
%%%%

The proof that $\con(\alpha/\nub(\fl{H}))$ is stable under $\fl{H}$ uses the following characterisation, which extends \cite[Theorem~3.26]{ContractionB} to flat groups.
%%%%%%%%
\begin{proposition}
\label{prop:rep_description}
Suppose that $\fl{H}\leq\Aut(G)$ is flat and let $\alpha\in\fl{H}$. Then $\nub(\fl{H})$ normalises ${\con(\alpha)}$ and 
$$
\bigcap\bigl\{ \Umm{U}{\alpha} \mid U\mbox{ is tidy for }\fl{H}\bigr\} = \con(\alpha/\nub(\fl{H})) = \con(\alpha) \nub(\fl{H}).
$$
\end{proposition}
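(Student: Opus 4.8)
The plan is to prove the three assertions more or less in the order stated, using $\fl{H}$-stability of $\nub(\fl{H})$ and the results on $\con(\alpha/C)$ from \S\ref{sec:Min-Scale}. First, that $\nub(\fl{H})$ normalises $\con(\alpha)$: since $\nub(\fl{H})$ is $\fl{H}$-stable and compact, in particular it is $\alpha$-stable and compact, so $\nub(\fl{H})\leq\lev(\alpha)$; Lemma~\ref{lem:nub_normalised} then gives directly that $\lev(\alpha)$, and hence $\nub(\fl{H})$, normalises $\con(\alpha)$. Consequently $\con(\alpha)\nub(\fl{H})$ is a genuine subgroup of $G$, and by Theorem~\ref{thm:convergence_mod_H} (applied with $H=\nub(\fl{H})$, which is compact and $\alpha$-stable) we have $\con(\alpha/\nub(\fl{H})) = \con(\alpha)\nub(\fl{H})$. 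That disposes of the second equality in the displayed formula.

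It remains to identify $\bigcap\{\Umm{U}{\alpha}\mid U\text{ tidy for }\fl{H}\}$ with $\con(\alpha/\nub(\fl{H}))$. For the inclusion $\supseteq$: by Proposition~\ref{prop:U--^con}, for any $U$ tidy for $\fl{H}$ (so in particular tidy for $\alpha$) we have $\Umm{U}{\alpha} = \con(\alpha)\Uz{U}{\alpha}$, which contains $\con(\alpha)$; and since $\nub(\fl{H}) = \bigcap\{U\mid U\text{ tidy for }\fl{H}\} \leq \nub(\alpha)\leq U$ for every such $U$, and $\nub(\alpha)\leq\Um{U}{\alpha}$, in fact $\nub(\fl{H})\leq\Uz{U}{\alpha}\subseteq\Umm{U}{\alpha}$ for every tidy $U$. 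Hence $\con(\alpha)\nub(\fl{H})\leq\Umm{U}{\alpha}$ for every $U$ tidy for $\fl{H}$, giving $\con(\alpha/\nub(\fl{H}))\leq\bigcap_U\Umm{U}{\alpha}$. For the reverse inclusion, take $x\in\bigcap\{\Umm{U}{\alpha}\mid U\text{ tidy for }\fl{H}\}$. Using Proposition~\ref{prop:U--^con} again, $x\in\con(\alpha)\Uz{U}{\alpha}$ for every tidy $U$, so write $x = z_U w_U$ with $z_U\in\con(\alpha)$, $w_U\in\Uz{U}{\alpha}\subseteq U$. The idea is to let $U$ shrink down to $\nub(\fl{H})$: by Lemma~\ref{lem:open_nub}, given any open $\open{O}\supseteq\nub(\fl{H})$ there is a tidy $U\subseteq\open{O}$, and then $w_U\in\open{O}$, so $x\in\con(\alpha)\open{O}$. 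Since this holds for every open $\open{O}\supseteq\nub(\fl{H})$ and $\nub(\fl{H})$ is compact, $x$ lies in $\overline{\con(\alpha)\nub(\fl{H})}$; but $\con(\alpha)\nub(\fl{H}) = \con(\alpha/\nub(\fl{H}))$, which is closed by Proposition~\ref{prop:nub_smallest} since $\nub(\alpha)\leq\nub(\fl{H})$ — wait, that inclusion goes the wrong way, so instead I invoke that $\con(\alpha)\nub(\fl{H}) = \overline{\con(\alpha)}\nub(\fl{H})$ because $\overline{\con(\alpha)} = \con(\alpha)\nub(\alpha)$ (a case of Proposition~\ref{prop:U--^con}) and $\nub(\alpha)\leq\Uz{U}{\alpha}\leq U$ for every tidy $U$ forces $\nub(\alpha)\leq\nub(\fl{H})$ after all — hence $\con(\alpha)\nub(\fl{H})$ is closed as a product of a closed set with a compact set, and $x\in\con(\alpha/\nub(\fl{H}))$.

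Let me restate the one delicate point cleanly, since it is the crux: I claim $\nub(\alpha)\leq\nub(\fl{H})$. Every $U$ tidy for $\fl{H}$ is tidy for $\alpha$, hence contains $\nub(\alpha)$ by Theorem~\ref{thm:nub}; intersecting over all such $U$ gives $\nub(\alpha)\leq\nub(\fl{H})$. Therefore $\con(\alpha)\nub(\fl{H}) = \con(\alpha)\nub(\alpha)\nub(\fl{H}) = \overline{\con(\alpha)}\nub(\fl{H})$, a closed set, so the limiting argument above actually lands inside $\con(\alpha/\nub(\fl{H}))$ rather than merely its closure. The main obstacle is exactly this closedness verification — making sure that "shrinking $U$ to $\nub(\fl{H})$" produces an element of $\con(\alpha)\nub(\fl{H})$ on the nose and not just of its closure — and it is resolved by the observation $\nub(\alpha)\leq\nub(\fl{H})$ together with $\overline{\con(\alpha)}=\con(\alpha)\nub(\alpha)$; everything else is assembly of Proposition~\ref{prop:U--^con}, Theorem~\ref{thm:convergence_mod_H}, Lemma~\ref{lem:nub_normalised} and Lemma~\ref{lem:open_nub}.
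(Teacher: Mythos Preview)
Your argument is correct, with one slip to flag. In the $\supseteq$ direction you write ``$\nub(\fl{H})\leq\nub(\alpha)\leq U$'', but this inequality points the wrong way (and you later correctly prove $\nub(\alpha)\leq\nub(\fl{H})$). Fortunately the step you actually need, $\nub(\fl{H})\leq\Uz{U}{\alpha}$, does not require $\nub(\alpha)$ at all: since $\nub(\fl{H})$ is $\alpha$-stable and contained in $U$, it is contained in every $\alpha^n(U)$ and hence in their intersection $\Uz{U}{\alpha}$. (This is how the paper argues it, via Proposition~\ref{prop:invariance_of_tidiness}.)

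For the reverse inclusion your route genuinely differs from the paper's. The paper verifies membership in $\con(\alpha/\nub(\fl{H}))$ directly from the definition: given $x\in\bigcap_U\Umm{U}{\alpha}$ and open $\open{O}\supseteq\nub(\fl{H})$, choose tidy $U\subseteq\open{O}$ by Lemma~\ref{lem:open_nub}; then $x\in\Umm{U}{\alpha}$ means some $\alpha^n(x)\in\Um{U}{\alpha}\subseteq U\subseteq\open{O}$, so $\alpha^n(x)\to\id\bmod\nub(\fl{H})$. This is a one-line verification with no closure argument. You instead factor $x=z_Uw_U$ via Proposition~\ref{prop:U--^con}, shrink $U$ to land $x$ in $\overline{\con(\alpha)\nub(\fl{H})}$, and then must establish that $\con(\alpha)\nub(\fl{H})$ is closed --- which you do correctly by observing $\nub(\alpha)\leq\nub(\fl{H})$ and $\overline{\con(\alpha)}=\con(\alpha)\nub(\alpha)$, so $\con(\alpha)\nub(\fl{H})=\overline{\con(\alpha)}\,\nub(\fl{H})$ is a closed-times-compact product. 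The paper's approach is shorter and sidesteps the closedness question entirely; your approach is longer but has the merit of making the closedness of $\con(\alpha/\nub(\fl{H}))$ explicit as a byproduct (essentially recovering the relevant case of Proposition~\ref{prop:nub_smallest}).
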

%%%%%%%%
\begin{proof}
That $\con(\alpha/\nub(\fl{H}))$ is equal to $\con(\alpha)\nub(\fl{H})$ follows from   Theorem~\ref{thm:convergence_mod_H} because $\nub(\fl{H})$ is $\alpha$-stable and compact, and $\nub(\fl{H})$ normalises $\con(\alpha)$ by Lemma~\ref{lem:nub_normalised}. 

Suppose that $U$ is tidy for $\fl{H}$. Then $\Umm{U}{\alpha} = \con(\alpha)\Uz{U}{\alpha}$, by Proposition~\ref{prop:U--^con}. Furthermore, $\nub(\fl{H})\leq \Uz{U}{\alpha}$ by Proposition~\ref{prop:invariance_of_tidiness} because  $\Uz{U}{\alpha} = \bigcap_{n\in\mathbb{Z}} \alpha^n(U)$. Hence $\con(\alpha) \nub(\fl{H})\leq \bigcap\bigl\{ \Umm{U}{\alpha} \mid U\mbox{ is tidy for }\fl{H}\bigr\}$. 

For the reverse inclusion, we show that 
\begin{equation}
\label{eq:rep_description}
\bigcap\bigl\{ \Umm{U}{\alpha} \mid U\mbox{ is tidy for }\fl{H}\bigr\}\leq \con(\alpha/\nub(\fl{H})).
\end{equation}
Given $\open{O}\supseteq \nub(\fl{H})$ open, use Lemma~\ref{lem:open_nub} to choose~$U$ tidy for $\fl{H}$ with $U\subseteq\open{O}$. Then the definition of $\Umm{U}{\alpha}$ implies that, for each $x\in \Umm{U}{\alpha}$, there is $n\geq0$ such that $\alpha^n(x)\in \Um{U}{\alpha}\subset\open{O}$. Hence~\eqref {eq:rep_description} holds.
\end{proof}

Here is the main result for this section.
%%%%%%%
\begin{theorem}
\label{thm:factors_invariant}
Suppose that $\fl{H}\leq\Aut(G)$ is flat and let $\alpha\in\fl{H}$.  Then $\con(\alpha/\nub(\fl{H}))$ is stable under $\fl{H}$. 

Moreover, given $V$ tidy for $\fl{H}$, the subgroup $V\cap \con(\alpha/\nub(\fl{H}))$ is tidy for the restriction of $\fl{H}$ to $\con(\alpha/\nub(\fl{H}))$. Therefore this restriction is a flat subgroup of $\Aut(\con(\alpha/\nub(\fl{H})))$.
\end{theorem}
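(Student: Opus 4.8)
The plan is to prove the stability assertion first and then deduce flatness of the restriction.

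\emph{Stability.} I would work from the description $\con(\alpha/\nub(\fl H)) = \bigcap\{\Umm U\alpha \mid U\text{ tidy for }\fl H\}$ in Proposition~\ref{prop:rep_description}, together with Proposition~\ref{prop:U_++_invariance}, which gives $\beta(\Umm U\alpha)\leq\Umm U\alpha\,\beta(\Uz U\alpha)$ for every $\beta\in\fl H$ and every tidy $U$. Fix $\beta\in\fl H$ and $g\in\con(\alpha/\nub(\fl H))$. For each tidy $U$ the set $Y_U=\{w\in\beta(\Uz U\alpha)\mid \beta(g)w^{-1}\in\Umm U\alpha\}$ is non-empty by Proposition~\ref{prop:U_++_invariance}, and it is closed, since $\Umm U\alpha$ is closed by tidiness below and $\beta(\Uz U\alpha)$ is compact. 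The tidy subgroups of $G$ contained in a fixed one form a downward-directed family (Theorem~\ref{thm:tidy_intersection} and Proposition~\ref{prop:invariance_of_tidiness}), and $U'\leq U$ forces $Y_{U'}\subseteq Y_U$ because $\Umm{U'}\alpha\subseteq\Umm U\alpha$ and $\Uz{U'}\alpha\subseteq\Uz U\alpha$; hence $\bigcap_U Y_U\neq\emptyset$. Since each $\Uz U\alpha$ lies between $\nub(\fl H)$ and $U$ and tidy subgroups shrink to $\nub(\fl H)$ by Lemma~\ref{lem:open_nub}, we have $\bigcap_U\Uz U\alpha=\nub(\fl H)$, so a point $w\in\bigcap_U Y_U$ satisfies $w\in\beta(\nub(\fl H))=\nub(\fl H)$ while $\beta(g)w^{-1}\in\bigcap_U\Umm U\alpha=\con(\alpha/\nub(\fl H))$. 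As $\nub(\fl H)\leq\con(\alpha/\nub(\fl H))$ this gives $\beta(g)\in\con(\alpha/\nub(\fl H))$, and applying the same to $\beta^{-1}$ yields $\beta(\con(\alpha/\nub(\fl H)))=\con(\alpha/\nub(\fl H))$.

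\emph{Flatness of the restriction.} Write $H=\con(\alpha/\nub(\fl H))$, fix $V$ tidy for $\fl H$ and $\beta\in\fl H$, and put $W=V\cap H$, a compact open subgroup of $H$. Because $H$ is $\fl H$-stable one checks at once that the tidy-above and tidy-below parts of $W$ for $\beta|_H$, computed inside $H$, are $V_{\beta+}\cap H$ and $V_{\beta-}\cap H$. Tidiness below of $W$ for $\beta|_H$ follows since $\bigcup_{n\geq0}\beta^{-n}(V_{\beta-}\cap H)=\Umm V\beta\cap H$ is closed in $H$, $\Umm V\beta$ being closed in $G$ (this is Proposition~\ref{prop:functor}\eqref{prop:functor1}). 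Tidiness above amounts to the identity
$$
V\cap H=(V_{\beta+}\cap H)(V_{\beta-}\cap H).
$$
To prove it I would first absorb the factor $\nub(\fl H)$, which lies in $V$ and in both $V_{\beta\pm}\cap H$, and so reduce to $x\in\con(\alpha)\cap V$; note $x\in\Umm V\alpha\cap V=V_{\alpha-}$, so the forward $\alpha$-orbit of $x$ stays in $V$. For each tidy $U\leq V$ the plan is to produce a factorization $x=q_U p_U$ with $q_U\in V_{\beta-}\cap\Umm U\alpha$ and $p_U\in V_{\beta+}\cap\Umm U\alpha$: since $x\in\con(\alpha)$ there is $N$ with $\alpha^N(x)\in U_{\alpha-}$, which one factors inside $U$ by Lemma~\ref{lem:factor_U}, transporting the factors back by $\alpha^{-N}$ and controlling their $\beta$-behaviour via Proposition~\ref{prop:tidy_criteria}, the factorization of Proposition~\ref{prop:factor_U--}, and the $\beta$-invariance of $\Umm U\alpha\cap\Umm U\beta$ and $\Umm U\alpha\cap\lev(\beta)$ (Lemma~\ref{lem:factors_invariantii}) and of $U_{\alpha-}\cap U_{\beta-}$ (Lemma~\ref{lem:factors_invariant}). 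The sets $Z_U=\{(q,p)\in V_{\beta-}\times V_{\beta+}\mid qp=x,\ q,p\in\Umm U\alpha\}$ are then non-empty, compact (compactness of $V_{\beta\pm}$, closedness of $\Umm U\alpha$), and nested along the downward-directed family of tidy $U\leq V$, so $\bigcap_U Z_U\neq\emptyset$ and a point of it gives $x\in(V_{\beta-}\cap H)(V_{\beta+}\cap H)$; after reinstating the $\nub(\fl H)$ factor and taking inverses this is the desired identity. Thus $W$ is tidy for every $\beta\in\fl H$, so $\fl H|_{H}$ is flat.

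\emph{Main obstacle.} Everything hinges on the level-$U$ step: exhibiting, for each tidy $U\leq V$, a factorization of $x$ whose two factors lie \emph{simultaneously} in $V_{\beta\pm}$ and in $\Umm U\alpha$. Proposition~\ref{prop:factor_U--} decomposes $x$ inside $\Umm U\alpha$, but its factors need not lie in $V$; Lemma~\ref{lem:factor_U} applied to $V$ (or to $V_{\alpha-}$) produces factors in $V$ that need not lie in $\Umm U\alpha$. Reconciling the two is the delicate point, and is where the invariance lemmas of \S\ref{sec:factor-tidy} and the fact $x\in V_{\alpha-}$ are used; once the level-$U$ factorizations are available, the two compactness/inverse-limit arguments (for stability and for assembling membership in $H$) are routine.
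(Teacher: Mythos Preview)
Your stability argument is correct and is a nice alternative to the paper's approach: you run a compactness argument over the error terms $w\in\beta(\Uz U\alpha)$, whereas the paper works with the thickened description $\con(\alpha/\nub(\fl H))=\bigcap_{U,V}\Umm U\alpha\,V$ and chooses, for given $U,V$, a smaller tidy $U'$ with $\Uz{U'}\alpha\subseteq\nub(\fl H)W$ so that $\beta(\Umm{U'}\alpha)\subseteq\Umm U\alpha\,V$ directly. Both routes work and are close in spirit.

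The gap is in your tidiness-above argument. Your level-$U$ step proposes to factor $\alpha^N(x)$ inside $U_{\alpha-}$ and transport back by $\alpha^{-N}$; this indeed lands the factors in $\Umm U\alpha$, but gives no control whatsoever over membership in $V_{\beta\pm}$, because $\alpha^{-N}$ need not carry $U_{\beta\pm}$ into $V_{\beta\pm}$ (or even into $V$). The invariance lemmas you cite only relate factors of $\Umm U\alpha$ to one another under $\beta$; they say nothing about $V$. So the sets $Z_U$ are not shown to be non-empty, and the compactness argument does not get off the ground.

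The paper resolves this by indexing over tidy $U$ in the \emph{opposite} direction: it restricts to tidy $U\geq\widetilde V$ (where $\widetilde V=V\cap\con(\alpha/\nub(\fl H))$), having first checked that $\con(\alpha/\nub(\fl H))$ is still the intersection of $\Umm U\alpha$ over this smaller family. For such $U$ one has $x\in\widetilde V\leq (V\cap U)_{\alpha-}$, and $V\cap U$ is tidy for $\fl H$, so Lemma~\ref{lem:factor_U} applied to $V\cap U$ gives $x=x_+x_-$ with $x_\pm\in (V\cap U)_{\alpha-}\cap(V\cap U)_{\beta\pm}$. These factors lie \emph{automatically} both in $V_{\beta\pm}$ and in $\Umm U\alpha$, and Lemma~\ref{lem:factors_invariant} (applied to $V\cap U$) shows $\beta^{\mp n}(x_\pm)$ stays there. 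The compactness argument then runs exactly as you outlined. The key device you are missing is replacing ``$U\leq V$'' by ``$U\geq\widetilde V$'' and factoring inside $V\cap U$ rather than inside $U$ alone.
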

%%%%%%%
\begin{proof} The proof that $\con(\alpha/\nub(\fl{H}))$ is $\fl{H}$-stable uses that %%AAA{\color{fireenginered} Exercise}
\begin{equation}
\label{eq:factors_invariant}
\con(\alpha/\nub(\fl{H})) = \bigcap\left\{ \Umm{U}{\alpha}V \mid V\in\COS(G),\ U\mbox{ tidy for }\fl{H}\right\},
\end{equation}
which may be derived from Proposition~\ref{prop:rep_description} by noting that, if $x$ is not in $\con(\alpha/\nub(\fl{H}))$, then there are $U$ tidy for $\fl{H}$ and $V\in\COS(G)$ such that $\Umm{U}{\alpha}\cap xV = \emptyset$, from which it follows that $x\not\in \Umm{U}{\alpha}V$ and \eqref{eq:factors_invariant} holds. Consider such $U$ and $V$ and let $\beta\in\fl{H}$. Since $\beta$ is continuous, there is $W\in\COS(G)$ such that $\beta(W)\leq V$. Use Lemma~\ref{lem:open_nub} to choose $U'$ tidy for $\fl{H}$ such that $U'\subseteq U\cap \nub(\fl{H})W$. Then $\Uz{U'}{\alpha}\subseteq \nub(\fl{H})W$ and, by Proposition~\ref{prop:U_++_invariance},
$$
\beta(\Umm{U'}{\alpha})\leq \Umm{U'}{\alpha}\beta(\Uz{U'}{\alpha}) \subseteq \Umm{U'}{\alpha}\nub(\fl{H})\beta(W)\subseteq \Umm{U}{\alpha}V.
$$
Hence $\beta(\con(\alpha/\nub(\fl{H})))$ is contained in $\Umm{U}{\alpha}V$ for every $U$ tidy for $\fl{H}$ and every $V\in\COS(G)$, and~\eqref{eq:factors_invariant} implies that  $\con(\alpha/\nub(\fl{H}))$ is $\beta$-stable for every $\beta\in \fl{H}$. 

Next, let $V$ be tidy for~$\fl{H}$ and set $\widetilde{V} := V\cap \con(\alpha/\nub(\fl{H}))$. It must be shown that $\widetilde{V}$ is tidy for $\beta|_{\con(\alpha/\nub(\fl{H}))}$ for every $\beta\in\fl{H}$. That $\widetilde{V}$ is tidy below for $\beta|_{\con(\alpha/\nub(\fl{H}))}$ holds by Proposition~\ref{prop:functor}. However, as seen in \cite[Example 6.4]{FurtherTidy} and Exercise~\ref{ex:tidymeet}, tidiness above need not be preserved when a subgroup tidy for $\beta$ meets a closed $\beta$-stable one. A special argument is needed therefore to show that $\widetilde{V}$ is tidy above. (Lemma~\ref{lem:tidymeetsshrinking} does not apply because it may happen that both $s(\beta)>1$ and $s(\beta^{-1})>1$.)

To show that $\widetilde{V}$ is tidy above for every $\beta\in\fl{H}$, we use the following modification of the description of $\con(\alpha/\nub(\fl{H}))$ given in Proposition~\ref{prop:rep_description}:
\begin{equation}
\label{eq:con_mod}
\con(\alpha/\nub(\fl{H})) = \bigcap\bigl\{ \Umm{U}{\alpha} \mid U\mbox{ tidy for }\fl{H}\mbox{ and }U\geq\widetilde{V} \bigr\}.
\end{equation}
To see this, observe that for each $U$ tidy for $\fl{H}$ there is $k\geq0$ such that $V\cap\Umm{U}{\alpha} =  V\cap \Um{\alpha^{-k}(U)}{\alpha}$ because $\Umm{U}{\alpha} = \bigcup_{k\geq0} \alpha^{-k}(\Um{U}{\alpha})$ and $V\cap\Umm{U}{\alpha}$ is compact. Replacing $U$ with $\alpha^{-k}(U)$ leaves $\Umm{U}{\alpha}$ unchanged and ensures that $U\geq\widetilde{V}$ as required in~\eqref{eq:con_mod}. 

Let $U$ be as in~\eqref{eq:con_mod} and $\beta\in\fl{H}$. Then $V\cap \Umm{U}{\alpha} = \Um{(V\cap U)}{\alpha}$, by Proposition~\ref{prop:tidy_criteria}, and $V\cap U$ is tidy for $\beta$, by Theorem~\ref{thm:tidy_intersection}. Hence, by Lemma~\ref{lem:factor_U}, 
$$
\Um{(V\cap U)}{\alpha} = \bigl(\Um{(V\cap U)}{\alpha}\cap \Up{(V\cap U)}{\beta}\bigr)\bigl(\Um{(V\cap U)}{\alpha}\cap \Um{(V\cap U)}{\beta}\bigr).
$$
Fixing $x\in \widetilde{V}\leq V\cap \Umm{U}{\alpha}= \Um{(V\cap U)}{\alpha}$ and denoting $\Um{(V\cap U)}{\alpha}\cap \Upm{(V\cap U)}{\beta}$ by $L_{\pm}$ respectively, this implies that
$$
C(x,U) := \left\{(x_+,x_-)\in L_+\times L_-\mid x = x_+x_-\right\}\mbox{ is not empty.}
$$ 
Hence 
$$
\mathcal{C} := \bigl\{C(x,U) \mid U\mbox{ tidy for }\fl{H}\mbox{ and }U\geq\widetilde{V} \bigr\}
$$ 
is a set of non-empty compact subsets of $G$. Further, $\mathcal{C}$ has the finite intersection property because the conditions of being tidy for $\fl{H}$ and of containing $\widetilde{V}$ are preserved under finite intersections. Therefore $\bigcap\mathcal{C} \ne \emptyset$. By construction, if $(x_+,x_-)\in \bigcap\mathcal{C}$, then $x_\pm\in V\cap \Umm{U}{\alpha}$ for all $U$ tidy for $\fl{H}$ and containing $\widetilde{V}$, whence~\eqref{eq:con_mod} implies that $x_\pm\in \widetilde{V}$. Further, since $\beta^{\mp n}(L_{\pm})\leq L_{\pm}$, by  Lemma~\ref{lem:factors_invariant}, we have that $\beta^{\mp n}(x_{\pm})\in \widetilde{V}$ for all $n\geq0$ and hence that $x_{\pm}\in \Upm{\widetilde{V}}{\beta}$. We also have that $x = x_+x_-$ by construction. 

Since this argument holds for all $x\in \widetilde{V}$, we have shown that $\widetilde{V}$ is tidy above for $\beta$. and, since this holds for all $\beta\in\fl{H}$, $\widetilde{V}$ is tidy for $\fl{H}$.
\end{proof}

That $\fl{H}/\fl{H}_u$ is abelian may now be deduced.
%%%%%%%%
\begin{corollary}
\label{cor:Flat=>abelian_mod_uniscalar}
Let $\fl{H}\leq\Aut(G)$ be flat. Then $[\fl{H},\fl{H}]\leq \fl{H}_u$. 
\end{corollary}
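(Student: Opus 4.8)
The plan is to show that an arbitrary commutator $\gamma = [\alpha,\beta] = \alpha\beta\alpha^{-1}\beta^{-1}$ with $\alpha,\beta\in\fl{H}$ is uniscalar, i.e. $s(\gamma) = 1 = s(\gamma^{-1})$; by Proposition~\ref{prop:uniscalar} this is the same as showing $\gamma$ fixes every subgroup tidy for $\fl{H}$. Fix $U$ tidy for $\fl{H}$. Since $\fl{H}_u$ is already known to be a normal subgroup (Proposition~\ref{prop:uniscalar}) and a quotient is abelian as soon as all commutators lie in the relevant normal subgroup, this suffices. The central tool is Theorem~\ref{thm:factors_invariant}: for each $\alpha\in\fl{H}$ the subgroup $\con(\alpha/\nub(\fl{H}))$ is $\fl{H}$-stable and its intersection with $U$ is tidy for the restricted (flat) action.

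The main idea is to compare the scale of $\alpha$ with the scale of a conjugate $\beta\alpha\beta^{-1}$ by looking at how the contraction-mod-nub subgroups behave. First I would recall that $s(\alpha)$ can be computed on the $\fl{H}$-stable subgroup $\con(\alpha/\nub(\fl{H}))$: by Propositions~\ref{prop:restriction_of_scale} and~\ref{prop:rep_description}, and the fact that $\overline{\con(\alpha^{-1})}$ is squeezed between the relevant groups, $s(\alpha) = \Delta\!\left(\alpha|_{\con(\alpha/\nub(\fl{H}))}\right)$, or more directly $s(\alpha) = [\alpha(\Up{U}{\alpha}):\Up{U}{\alpha}]$ with $U$ tidy. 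Now apply $\beta$: since $\beta(U)$ is again tidy for $\fl{H}$ (Proposition~\ref{prop:invariance_of_tidiness}) and $\beta\alpha\beta^{-1}$ has $\beta(U)$ as a tidy subgroup with $\Up{\beta(U)}{\beta\alpha\beta^{-1}} = \beta(\Up{U}{\alpha})$, we get $s(\beta\alpha\beta^{-1}) = s(\alpha)$ — this is just Lemma~\ref{lem:conjugation_invariance}. The real point is to identify $\con(\beta\alpha\beta^{-1}/\nub(\fl{H}))$ with $\beta(\con(\alpha/\nub(\fl{H})))$, and, crucially, to show this equals $\con(\alpha/\nub(\fl{H}))$ itself, using $\fl{H}$-stability from Theorem~\ref{thm:factors_invariant} together with the fact that $\beta$ commutes with the formation of $\con(\cdot/\nub(\fl{H}))$ on $\fl{H}$-stable groups.

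Concretely, the key steps in order: (1) Show $\beta\bigl(\con(\alpha/\nub(\fl{H}))\bigr) = \con(\beta\alpha\beta^{-1}/\nub(\fl{H}))$, using that $\beta(\nub(\fl{H})) = \nub(\fl{H})$ and $\beta\alpha^n\beta^{-1} = (\beta\alpha\beta^{-1})^n$; (2) invoke Theorem~\ref{thm:factors_invariant} to get that $\con(\alpha/\nub(\fl{H}))$ is $\beta$-stable, hence $\con(\beta\alpha\beta^{-1}/\nub(\fl{H})) = \con(\alpha/\nub(\fl{H}))$ as well; (3) Now consider $\gamma = [\alpha,\beta]$: we have $\gamma\alpha\gamma^{-1}$ and want to compare $\con(\gamma\alpha\gamma^{-1}/\nub(\fl{H}))$ — but more usefully, apply the restriction-of-scale machinery on the common $\fl{H}$-stable subgroup $H_\alpha := \con(\alpha/\nub(\fl{H}))$. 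On $H_\alpha$ the restricted flat group $\fl{H}|_{H_\alpha}$ has a well-defined modular function $\Delta_{H_\alpha} : \fl{H}|_{H_\alpha}\to \mathbb{Q}^+$, which is a group homomorphism into an abelian group; hence $\Delta_{H_\alpha}(\gamma|_{H_\alpha}) = 1$ for every commutator $\gamma$. Combined with the identity $s(\alpha) = \Delta_{H_\alpha}(\alpha|_{H_\alpha})$ valid for the generator $\alpha$ — and, applying the same reasoning with roles reversed or with $\alpha$ replaced by $\gamma$ and noting $\con(\gamma/\nub(\fl{H})) \subseteq$ a product of the $H$'s — one forces $s(\gamma) = 1$; symmetrically $s(\gamma^{-1}) = 1$.

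I expect the main obstacle to be step (3): making rigorous the passage from "the modular function on each $\fl{H}$-stable contraction-mod-nub subgroup is multiplicative (hence kills commutators)" to "$s(\gamma) = 1$ for the commutator $\gamma$ viewed in the ambient group". The subtlety is that $s(\gamma)$ is an ambient-group quantity, and one must be sure that it is detected on some $\fl{H}$-stable subgroup on which the modular function argument applies — the natural candidate is $\con(\gamma/\nub(\fl{H}))$, but one needs to know this is $\fl{H}$-stable (it is, by Theorem~\ref{thm:factors_invariant}) and that $s(\gamma) = \Delta_{\con(\gamma/\nub(\fl{H}))}(\gamma)$, which needs $\overline{\con(\gamma^{-1})}$ to sit inside $\con(\gamma/\nub(\fl{H}))$ and the chain of Proposition~\ref{prop:restriction_of_scale} to apply. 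Once that bookkeeping is in place, the homomorphism property of $\Delta$ restricted to the flat group acting on this subgroup — which holds because $\Delta$ composed with the action is multiplicative, being built from indices $[\beta(U):U]$ — gives $\Delta(\gamma) = 1$ on both $\gamma$ and $\gamma^{-1}$, completing the proof.
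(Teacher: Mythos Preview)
Your approach is essentially the paper's: restrict to an $\fl{H}$-stable contraction-mod-nub subgroup (Theorem~\ref{thm:factors_invariant}), identify $s(\gamma)$ with the modular function of the restriction (Proposition~\ref{prop:restriction_of_scale}), and use that $\Delta$ is a homomorphism into an abelian group to kill the commutator. Your steps~(1)--(2) are detours; the paper goes straight to your step~(3).

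One slip to fix: the subgroup on which to compute $s(\gamma)$ must be $\con(\gamma^{-1}/\nub(\fl{H}))$, not $\con(\gamma/\nub(\fl{H}))$. Proposition~\ref{prop:restriction_of_scale} requires a closed $\gamma$-stable $H$ with $\overline{\con(\gamma^{-1})}\leq H\leq \Upp{U}{\gamma}$, and $\con(\gamma^{-1}/\nub(\fl{H})) = \con(\gamma^{-1})\nub(\fl{H})$ fits both inclusions (the upper one because $\nub(\fl{H})\leq \Uz{U}{\gamma}$ and $\Upp{U}{\gamma} = \con(\gamma^{-1})\Uz{U}{\gamma}$ by Proposition~\ref{prop:U--^con}). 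By contrast, $\con(\gamma/\nub(\fl{H}))$ is the contraction in the \emph{opposite} direction and meets $\overline{\con(\gamma^{-1})}$ only in $\nub(\fl{H})$, so the inclusion you flag as needed would fail. With this correction, Theorem~\ref{thm:factors_invariant} gives that $\con(\gamma^{-1}/\nub(\fl{H}))$ is stable under $\alpha$ and $\beta$, and then $s(\gamma) = \Delta(\hat\gamma) = \Delta(\hat\alpha)\Delta(\hat\beta)\Delta(\hat\alpha)^{-1}\Delta(\hat\beta)^{-1} = 1$; the same argument with $\gamma^{-1}$ gives $s(\gamma^{-1})=1$.
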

%%%%%%%
\begin{proof}
Let $\alpha,\beta\in\fl{H}$ and put $\gamma = [\alpha,\beta]$. Denote the restrictions of $\alpha$, $\beta$ and $\gamma$ to $\con(\gamma^{-1}/\nub(\fl{H}))$ by $\hat{\alpha}$, $\hat{\beta}$ and $\hat{\gamma}$ respectively. Then 
$$
s(\gamma) = s(\hat{\gamma}) = \Delta(\hat{\gamma})
$$ 
by Proposition~\ref{prop:restriction_of_scale}. Since the modular function is multiplicative and, as seen in Theorem~\ref{thm:factors_invariant}, $\con(\gamma^{-1}/\nub(\fl{H}))$ is stable under $\alpha$ and $\beta$, we have that
$$
\Delta(\hat{\gamma}) = \Delta(\hat{\alpha})\Delta(\hat{\beta})\Delta(\hat{\alpha})^{-1}\Delta(\hat{\beta})^{-1} = 1
$$
and hence that $[\alpha,\beta]$ is uniscalar. 
\end{proof}
%%%%%%%%
%%%%%%%%
%%%%%%%%

\subsection{Subgroups of $G$ that are scaling for $\fl{H}$}
\label{sec:reduced1}

In this section the existence of subgroups scaling for $\fl{H}$ is shown by proving that every
subgroup $\Umm{U}{\alpha}\leq G$, with $\alpha\in\fl{H}$, is a product of a finite number of subgroups scaling for $\fl{H}$. This is achieved by using results from \S\S\ref{sec:factor-tidy}, \ref{sec:nub_contraction} to reduce $\Umm{U}{\alpha}$ to a product of smaller $\fl{H}$-invariant subgroups. (The term `reduced' is used in analogy with linear algebra, where a subspace invariant under a linear operator reduces the operator to a pair of operators on lower dimensional spaces.) The scale on $\fl{H}$ factors along with each reduction and reduction terminates with a finite number of scaling subgroups when the integer-valued scale cannot be factored further. 

Throughout this section, $N$ is a compact $\fl{H}$-invariant subgroup of $G$ and $\nub(\fl{H})\leq N$. The next result shows the factoring of the scale. The hypothesis that $s(\alpha)=1$ holds, for instance, if $\alpha$ has been restricted to $\con(\alpha/N)$.
%%%%%%%%
\begin{proposition}
\label{prop:factor_U-}
Let $\alpha,\beta$ be in the flat group $\fl{H}$ with $s(\alpha)=1$. Let $N$ be a compact $\fl{H}$-stable subgroup of $G$ and $U$ be tidy for $\fl{H}$; and suppose that  $\nub(\fl{H})\leq N \leq U$. %\\
Then $\alpha^{-1}(\Up{U}{\beta})\cap U =  \Up{U}{\beta}$ and 
$$
s(\alpha^{-1}) = [\alpha^{-1}(\Up{U}{\beta}) : \Up{U}{\beta}] s(\alpha^{-1}|_{\con(\beta/N)}).
$$
Further, $[\alpha^{-1}(\Up{U}{\beta}) : \Up{U}{\beta}]=1$ only if $\Up{U}{\beta}\leq \Uz{U}{\alpha}$ and $s(\alpha^{-1}|_{\con(\beta/N)})=1$ only if $\con(\beta/N)\cap U\leq \Uz{U}{\alpha}$.
\end{proposition}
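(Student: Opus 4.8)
The plan is to extract two mechanical facts and then do the bookkeeping. Since $U$ is minimizing for $\alpha$ and $s(\alpha)=1$, the index $[\alpha(U):\alpha(U)\cap U]$ is $1$, i.e.\ $\alpha(U)\leq U$; consequently $\Um{U}{\alpha}=U$, $\Up{U}{\alpha}=\Uz{U}{\alpha}$, and $s(\alpha^{-1})=[\alpha^{-1}(U):U]$ because $U$ is also minimizing for $\alpha^{-1}$. Second, by Corollary~\ref{cor:Flat=>abelian_mod_uniscalar} every commutator in $\fl{H}$ is uniscalar, so for all $m,n$ there is $\gamma\in\fl{H}_u$ with $\beta^{-m}\alpha^{n}=\gamma\,\alpha^{n}\beta^{-m}$, while by Proposition~\ref{prop:uniscalar} every element of $\fl{H}_u$ fixes each subgroup tidy for $\fl{H}$. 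Using these I would prove the (stronger) statement $\alpha^{-n}(\Up{U}{\beta})\cap U=\Up{U}{\beta}$ for all $n\geq0$, the cases $n\geq1$ being needed later. For $\supseteq$ it suffices that $\alpha(\Up{U}{\beta})\leq\Up{U}{\beta}$: writing $\Up{U}{\beta}=\bigcap_{m\geq0}\beta^{m}(U)$ one has $\beta^{-m}\alpha(\Up{U}{\beta})=\gamma\,\alpha\,\beta^{-m}(\Up{U}{\beta})\leq\gamma\,\alpha(U)\leq\gamma(U)=U$ (from $\beta^{-m}(\Up{U}{\beta})\leq U$, $\alpha(U)\leq U$, $\gamma(U)=U$), for every $m\geq0$. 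For $\subseteq$, take $x\in\alpha^{-n}(\Up{U}{\beta})\cap U$; then $\beta^{-m}\alpha^{n}(x)\in U$ for all $m\geq0$, so $\alpha^{n}\beta^{-m}(x)=\gamma^{-1}\beta^{-m}\alpha^{n}(x)\in U$, whence $\{\beta^{-m}(x)\}_{m\geq0}\subseteq\alpha^{-n}(U)$ is precompact, and Proposition~\ref{prop:tidy_criteria} applied to $\beta^{-1}$ and $U$ (tidy for $\beta^{-1}$ by Proposition~\ref{prop:tidy_conditions}\eqref{prop:tidy_conditions2}) gives $x\in\Um{U}{\beta^{-1}}=\Up{U}{\beta}$.

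Next I would set $C:=\con(\beta/N)$ and record that it is well behaved: since $\nub(\beta)\leq\nub(\fl{H})\leq N$ and $N$ is compact and $\beta$-stable, Theorem~\ref{thm:convergence_mod_H} and Proposition~\ref{prop:nub_smallest} give $C=\con(\beta)N$ closed, and Proposition~\ref{prop:Nnormal_core_of_tidy}\eqref{prop:Nnormal_core_of_tidy4} with $L=N$ together with Theorem~\ref{thm:factors_invariant} gives $C=N\con(\beta/\nub(\fl{H}))$ is $\fl{H}$-stable, in particular $\alpha$-stable. Because $\alpha(U)\leq U$ and $C$ is $\alpha$-stable, $\alpha(C\cap U)=C\cap\alpha(U)\leq C\cap U$, so by Proposition~\ref{prop:tidy_conditions}\eqref{prop:tidy_conditions1} the compact open subgroup $C\cap U$ of $C$ is tidy for $\alpha^{-1}|_{C}$ and $s(\alpha^{-1}|_{C})=[\alpha^{-1}(C\cap U):C\cap U]$. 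This already settles the two ``only if'' clauses: if $[\alpha^{-1}(\Up{U}{\beta}):\Up{U}{\beta}]=1$ then, by the first assertion, $\Up{U}{\beta}$ is $\alpha$-stable, so $\Up{U}{\beta}=\alpha^{n}(\Up{U}{\beta})\leq\alpha^{n}(U)$ for all $n\in\mathbb{Z}$ and hence $\Up{U}{\beta}\leq\Uz{U}{\alpha}$; likewise $s(\alpha^{-1}|_{C})=1$ forces $C\cap U$ to be $\alpha$-stable and therefore contained in $\Uz{U}{\alpha}$.

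For the product formula I would first factor $U$. Tidiness above of $U$ for $\beta$ gives $U=\Um{U}{\beta}\Up{U}{\beta}$; using $\Umm{U}{\beta}=\con(\beta)\Uz{U}{\beta}$ (Proposition~\ref{prop:U--^con}), $C=\con(\beta)N$ with $N\leq\Uz{U}{\beta}$, and $\Uz{U}{\beta}\leq\Up{U}{\beta}$, one obtains $\Um{U}{\beta}=(C\cap U)\Uz{U}{\beta}$ and hence $U=(C\cap U)\Up{U}{\beta}$; moreover $(C\cap U)\cap\Up{U}{\beta}\leq C\cap\Uz{U}{\beta}\leq\nub(\beta)N\leq N$ (using $\con(\beta)\cap\parb(\beta^{-1})\leq\nub(\beta)$ and $\Uz{U}{\beta}\leq\lev(\beta)$), while $N$ lies in both factors, so this intersection is exactly $N$. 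Now pass to $Q:=\bigcup_{n\geq0}\alpha^{-n}(U)=\Umm{U}{\alpha}$, which is closed ($U$ being tidy for $\alpha$) and $\alpha$-stable, so that $s(\alpha^{-1})=\Delta(\alpha^{-1}|_{Q})$ by Proposition~\ref{prop:restriction_of_scale} and Theorem~\ref{thm:scale_properties}\eqref{thm:scale_properties3}. Applying $\alpha^{-n}$ to $U=(C\cap U)\Up{U}{\beta}$ and using $\alpha$-stability of $C$ and the identities $\alpha^{-n}(\Up{U}{\beta})\cap U=\Up{U}{\beta}$, the subgroups $C^{+}:=C\cap Q=\bigcup_{n}\alpha^{-n}(C\cap U)$ and $P^{+}:=\bigcup_{n}\alpha^{-n}(\Up{U}{\beta})$ are closed $\alpha$-stable in $Q$, contain $C\cap U$ resp.\ $\Up{U}{\beta}$ as compact open subgroups, satisfy $Q=C^{+}P^{+}$ and $C^{+}\cap P^{+}=\bigcap_{n}\alpha^{-n}(N)=N$; since $\alpha$ shrinks these tidy subgroups, $\Delta(\alpha^{-1}|_{C^{+}})=[\alpha^{-1}(C\cap U):C\cap U]=s(\alpha^{-1}|_{C})$ and $\Delta(\alpha^{-1}|_{P^{+}})=[\alpha^{-1}(\Up{U}{\beta}):\Up{U}{\beta}]$. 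Finally, the module is multiplicative over $Q=C^{+}P^{+}$: disintegrating Haar measure on $Q$ along the closed subgroup $C^{+}$ (Weil's formula), using the $\alpha^{-1}$-equivariant homeomorphism $Q/C^{+}\cong P^{+}/N$ and $\Delta(\alpha^{-1}|_{N})=1$, one gets $\Delta(\alpha^{-1}|_{Q})=\Delta(\alpha^{-1}|_{C^{+}})\,\Delta(\alpha^{-1}|_{P^{+}})$, which is the asserted identity.

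The hard part is this last step. The factors $C\cap U$ and $\Up{U}{\beta}$ of $U$ are in general only closed, not open, in $G$, so the index formula cannot be read off Haar measure on $G$; one is forced into $Q=\Umm{U}{\alpha}$ and into a multiplicativity statement for the module (equivalently for the scale, of the type in~\cite{FurtherTidy}) over a product decomposition $Q=C^{+}P^{+}$ that is neither semidirect nor of trivial intersection, only of compact intersection $N$. Making that decomposition honest — that the two unions really are subgroups in which $C\cap U$ and $\Up{U}{\beta}$ are compact open, that $C^{+}\cap P^{+}=N$, and that Weil's formula applies with the claimed equivariance — is where the care goes; the rest of the argument reduces to the two observations that $\alpha$ shrinks $U$ and that $\alpha$ and $\beta$ commute modulo $\fl{H}_u$.
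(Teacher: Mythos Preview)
Your argument is correct and takes a genuinely different route from the paper's at both stages.

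For the identity $\alpha^{-1}(\Up{U}{\beta})\cap U=\Up{U}{\beta}$, the paper factors $u=u_+u_-$ via tidiness above and then shows $u_-\in\nub(\fl{H})$ using Theorem~\ref{thm:factors_invariant} and Corollary~\ref{cor:convergence_mod_H}; you instead exploit directly that $\alpha$ and $\beta$ commute modulo $\fl{H}_u$ (Corollary~\ref{cor:Flat=>abelian_mod_uniscalar}) together with Proposition~\ref{prop:tidy_criteria}. Your argument is shorter and bypasses the heavier machinery of \S\ref{sec:nub_contraction}, though note you establish the variant $\alpha^{-n}(\Up{U}{\beta})\cap U=\Up{U}{\beta}$ whereas the paper needs $\alpha^{-1}\beta^n(\Up{U}{\beta})\cap U=\Up{U}{\beta}$ --- each tailored to the index computation that follows.

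For the product formula the approaches diverge more substantially. The paper stays inside the compact group $\beta^n(U)\cap U$ (tidy for $\fl{H}$) for $n$ large enough that $\beta^n(\con(\beta)\cap U)\leq\alpha(U)$; this forces the $\Um{C}{\beta}$-- and $\Up{U}{\beta}$--factors to decouple, and the identity reduces to two explicit coset bijections along the chain $\beta^n(U)\cap U\leq\alpha^{-1}\beta^n(U)\cap U\leq\alpha^{-1}(\beta^n(U)\cap U)$. You instead pass to the non-compact group $Q=\Umm{U}{\alpha}$, build the product $Q=C^+P^+$ with $C^+\cap P^+=N$, and read the identity off as multiplicativity of the module of $\alpha^{-1}$. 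The ``hard part'' you flag is genuinely fillable: the key unstated observation is that $Q$, $C^+$ and $P^+$ are each ascending unions of compact open subgroups and hence \emph{unimodular}, so the Weil quotient measure on $Q/C^+$ is honestly $Q$-invariant; since the bijection $P^+/N\to Q/C^+$ is $P^+$-equivariant (and a homeomorphism by $\sigma$-compactness of $Q$ and open mapping), it carries the unique $P^+$-invariant measure on $P^+/N$ to this Weil measure, and $\alpha^{-1}$-equivariance then gives $\Delta(\alpha^{-1}|_Q)=\Delta(\alpha^{-1}|_{C^+})\Delta(\alpha^{-1}|_{P^+})$. The paper's route is more elementary and self-contained; yours is more structural and makes the role of the decomposition $U=(C\cap U)\Up{U}{\beta}$ with intersection $N$ more transparent.
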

%%%%%%%%
\begin{proof}
An application of Lemma~\ref{lem:factors_invariant} shows that $\alpha^{-1}(\Up{U}{\beta})\geq \Up{U}{\beta}$ and hence that $\alpha^{-1}(\Up{U}{\beta})\cap U\geq \Up{U}{\beta}$. For the reverse inclusion, suppose that $u\in \alpha^{-1}(\Up{U}{\beta})\cap U$. Then, tidiness above and Proposition~\ref{prop:U--^con} imply that $u=u_+u_-$, with $u_+\in \Up{U}{\beta}$ and $u_-\in \con(\beta)\cap U =: \Um{C}{\beta}$, and we have that
$$
\alpha(u_-) = \alpha(u_+)^{-1}\alpha(u)\in \con(\beta/\nub(\fl{H}))\cap \Up{U}{\beta},
$$
because $\alpha(u_+)^{-1}\in \Up{U}{\beta}$ by Lemma~\ref{lem:factors_invariant} and $\alpha(u_-)\in \con(\beta/\nub(\fl{H}))$ by Theorem~\ref{thm:factors_invariant}. Then $\alpha(u_-)\in \nub(\fl{H})$ by Corollary~\ref{cor:convergence_mod_H}. Since $\nub(\fl{H})$ is $\alpha$-stable, it follows that $u_-\in\nub(\fl{H})$ and hence that $u\in\Up{U}{\beta}$. For future reference, note that the same argument applied to $\beta^n(\Up{U}{\beta})=\Up{\beta^n(U)}{\beta}$ also shows that, for each $n\geq0$,
\begin{equation}
\label{eq:n_version}
\alpha^{-1}\beta^n(\Up{U}{\beta})\cap U = \Up{U}{\beta}.
\end{equation} 

Towards the formula for $s(\alpha^{-1})$, note that the hypotheses that $s(\alpha)=1$ and $U$ is tidy for $\fl{H}$ imply that $\alpha^{-1}(U)\geq U$ and $\alpha^{-1}\beta^n(U)\geq \beta^n(U)$. Hence 
\begin{equation}
\label{eq:factor_U-}
\alpha^{-1}\left(\beta^n(U)\cap U\right) \geq \alpha^{-1}\beta^n(U)\cap U \geq \beta^n(U)\cap U.
\end{equation}
Tidiness of $\beta^n(U)\cap U$ for $\alpha^{-1}$ and Theorem~\ref{thm:tidy_intersection} then imply that
\begin{align*}
s(\alpha^{-1}) &= \left[\alpha^{-1}\left(\beta^n(U)\cap U\right) : \beta^n(U)\cap U\right]\\
&= \left[\alpha^{-1}\left(\beta^n(U)\cap U\right) : \alpha^{-1}\beta^n(U)\cap U\right]\left[\alpha^{-1}\beta^n(U)\cap U : \beta^n(U)\cap U\right].
\end{align*}
The formula for $s(\alpha^{-1})$ thus follows once it is shown that 
\begin{align}
\left[\alpha^{-1}\left(\beta^n(U)\cap U\right) : \alpha^{-1}\beta^n(U)\cap U\right] &= [\alpha^{-1}(\Up{U}{\beta}) : \Up{U}{\beta}]\label{1st_identity}
\\
\mbox{ and } \quad\left[\alpha^{-1}\beta^n(U)\cap U : \beta^n(U)\cap U\right] &= s(\alpha^{-1}|_{\con(\beta/N)}). \label{2nd_identity}
\end{align}

\stepcounter{equation}
As in the first paragraph, set $\Um{C}{\beta} = \con(\beta)\cap U$. Then $U = \Um{C}{\beta}\Up{U}{\beta}$ and, since $\beta(\Um{C}{\beta})\leq \Um{C}{\beta}$ and $\beta(\Up{U}{\beta})\geq \Up{U}{\beta}$, it follows that
\begin{align}
\beta^n(U)\cap U &= \beta^n(\Um{C}{\beta})\Up{U}{\beta}\mbox{ and }\tag{\theequation a}
\label{eq:equ1}\\
\alpha^{-1}\left(\beta^n(U)\cap U\right) &=  \alpha^{-1}\beta^n(\Um{C}{\beta})\alpha^{-1}(\Up{U}{\beta}).\tag{\theequation b}
\label{eq:equ2}
\end{align}
Choose $n>0$ such that $\beta^n(\Um{C}{\beta})\leq \alpha(U)$, as we may because $\Um{C}{\beta}$ is compact and $\beta^n(\Um{C}{\beta})\to N$, and $\alpha(U)$ is an open neighbourhood of $N$. Then $\alpha^{-1}\beta^n(\Um{C}{\beta})\leq \Um{C}{\beta}$ because of the $\fl{H}$-stability of $N$ and, by Theorem~\ref{thm:factors_invariant},  $\con(\beta/N)$.  Hence $\alpha^{-1}\beta^n(U) = \alpha^{-1}\beta^n(\Um{C}{\beta})\alpha^{-1}\beta^n(\Up{U}{\beta})$ with $\alpha^{-1}\beta^n(\Um{C}{\beta})\leq\Um{C}{\beta}$ and, by~\eqref{eq:n_version}, $\alpha^{-1}\beta^n(\Up{U}{\beta})\cap U = \Up{U}{\beta}$, and it follows that
\begin{equation}
\label{eq:equ3}
\alpha^{-1}\beta^n(U)\cap U = \alpha^{-1}\beta^n(\Um{C}{\beta})\Up{U}{\beta}. \tag{\theequation c}
\end{equation}
Then \eqref{eq:equ2} and \eqref{eq:equ3} imply that the map 
$$
\alpha^{-1}(\Up{U}{\beta})/\Up{U}{\beta} \to \alpha^{-1}\left(\beta^n(U)\cap U\right)/(\alpha^{-1}\beta^n(U)\cap U)
$$ 
given by $x\Up{U}{\beta} \mapsto x(\alpha^{-1}\beta^n(U)\cap U)$ for $x\in\alpha^{-1}(\Up{U}{\beta})$ 
is well-defined and bijective and hence that~\eqref{1st_identity} holds. Towards ~\eqref{2nd_identity}, \eqref{eq:equ1} and \eqref{eq:equ3} imply that the map 
$$
\alpha^{-1}\beta^n(\Um{C}{\beta}) / \beta^n(\Um{C}{\beta}) \to \left(\alpha^{-1}\beta^n(U)\cap U\right) / \left(\beta^n(U)\cap U\right)
$$ 
given by $x\beta^n(\Um{C}{\beta})\mapsto x\left(\beta^n(U)\cap U\right)$ for $x\in \alpha^{-1}\beta^n(\Um{C}{\beta})$ is a well-defined bijection. Theorem~\ref{thm:factors_invariant} and the $\fl{H}$-stability of $N$ imply that $\beta^n(\Um{C}{\beta})$ is tidy for the restriction of $\alpha^{-1}$ to $\con(\beta/N)$. Hence $[\alpha^{-1}\beta^n(\Um{C}{\beta}) : \beta^n(\Um{C}{\beta})]$ is equal to the scale of this restriction and~\eqref{2nd_identity} holds. 

To see the final claim, suppose first of all that $[\alpha^{-1}(\Up{U}{\beta}) : \Up{U}{\beta}]=1$. Then $\alpha^{-1}(\Up{U}{\beta}) = \Up{U}{\beta}$ and hence $\Up{U}{\beta}\leq \Uz{U}{\alpha}$. On the other hand, if $s(\alpha^{-1}|_{\con(\beta/N)}) =1$, then $\alpha^{-1}(\Um{C}{\beta})=\Um{C}{\beta}$ and  $\Uz{U}{\alpha} \geq \Um{C}{\beta} = \con(\beta/N)\cap U$. Both reverse inclusions are clear because $\Uz{U}{\alpha}$ is compact and $\alpha$-stable.
\end{proof}

The next proposition applies to the restriction of $\fl{H}$ to $\con(\alpha/N)$. In this and the following results, it is assumed that $G = \con(\alpha/N)$.
%%%%%%%%
\begin{proposition}
\label{prop:choose_beta}
Suppose that $\fl{H}\leq\Aut(G)$ is flat, $U$ is tidy for $\fl{H}$, and that $G = \con(\alpha/N)$ for some $\alpha\in\fl{H}$ and compact $\fl{H}$-stable subgroup of $N$ with $U\geq N \geq \nub(\fl{H})$. Let $\beta\in \fl{H}$. Then there is $\gamma\in\fl{H}$ such that 
$$
\con(\alpha/N) = \con(\gamma/N)\con(\gamma^{-1}/N)
$$ 
with $\con(\gamma/N) =  \con(\beta/N)$, $\con(\gamma^{-1}/N)\geq \Upp{U}{\beta}$ and $\con(\gamma^{-1}/N) \cap U = \Up{U}{\beta}$.
\end{proposition}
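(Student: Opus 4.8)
The plan is to produce $\gamma$ by multiplying $\beta$ by a suitable power of $\alpha^{-1}$, i.e. to take $\gamma = \alpha^{-n}\beta$ for $n$ large enough. The guiding intuition is that since $G = \con(\alpha/N)$, iterating $\alpha^{-1}$ expands everything away from $N$, so $\alpha^{-n}\beta$ has the same ``contracting'' behaviour as $\beta$ (because $\alpha$ contracts modulo $N$, composing with $\alpha^{-n}$ cannot destroy contraction of $\con(\beta/N)$ once $n$ is large), while $\alpha^{-n}\beta$ now contracts the complementary piece $\Upp{U}{\beta}$ under its inverse $\beta^{-1}\alpha^n$ (because $\alpha^n$ contracts modulo $N$ and $\Upp{U}{\beta}$ is already stabilized or expanded by $\beta$). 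So the first step is to write down $\gamma = \alpha^{-n}\beta$ with $n$ to be chosen, and to note that $\fl{H}$ is abelian modulo $\fl{H}_u$ by Corollary~\ref{cor:Flat=>abelian_mod_uniscalar} and that all the relevant subgroups are $\fl{H}$-stable by Theorem~\ref{thm:factors_invariant}.

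**Key steps.** First I would record that $\con(\beta/N) \leq \con(\gamma/N)$ for every $n$: if $\beta^k(x) \to N$, then since $s(\alpha) = 1$ (because $G = \con(\alpha/N)$ forces $\alpha$ to contract everything into the compact group $N$, so $\alpha^{-1}(U) \geq U$), the iterates $\gamma^k(x) = (\alpha^{-n}\beta)^k(x)$ stay in a neighbourhood controlled by $\alpha^{-nk}$ applied to things already near $N$; here I would use that $\alpha$ and $\beta$ need not commute but do commute modulo $\fl{H}_u$, and elements of $\fl{H}_u$ fix $U$ and hence $N$. More carefully, I would argue at the level of the tidy subgroup: by Proposition~\ref{prop:U--^con}, $\Umm{U}{\beta} = \con(\beta)\Uz{U}{\beta}$, and $\con(\beta/N) = \con(\beta)N$ by Theorem~\ref{thm:convergence_mod_H} and Proposition~\ref{prop:rep_description}. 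The decomposition $\con(\alpha/N) = \con(\gamma/N)\con(\gamma^{-1}/N)$ should then follow from $U = \Up{U}{\beta}\Um{U}{\beta}$ (tidiness above, rearranged) together with showing (i) $\con(\gamma^{-1}/N) \cap U = \Up{U}{\beta}$ and (ii) $\con(\gamma/N) = \con(\beta/N)$, and then dilating: $\con(\alpha/N) = \bigcup_k \alpha^{-k}(U)$, and each $\alpha^{-k}(U) = \alpha^{-k}(\Um{U}{\beta})\alpha^{-k}(\Up{U}{\beta})$, with $\alpha^{-k}(\Um{U}{\beta}) \subseteq \con(\gamma/N)$ and $\alpha^{-k}(\Up{U}{\beta}) \subseteq \con(\gamma^{-1}/N)$ for appropriate reasons (the latter needs $\con(\gamma^{-1}/N) \supseteq \Upp{U}{\beta}$ and $\alpha$-stability of $\con(\gamma^{-1}/N)$, which holds by Theorem~\ref{thm:factors_invariant} applied to the flat group and $\gamma^{-1}$).

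**The computation of $\con(\gamma^{-1}/N)\cap U$.** The heart of the proof is identity (i): $\con(\gamma^{-1}/N) \cap U = \Up{U}{\beta}$, equivalently $\con(\beta^{-1}\alpha^n/N) \cap U = \Up{U}{\beta}$. The inclusion $\supseteq$ should come from the fact that $\beta^{-1}$ expands $\Up{U}{\beta}$ (by definition $\beta(\Up{U}{\beta}) \geq \Up{U}{\beta}$, so $\beta^{-1}(\Up{U}{\beta}) \leq \Up{U}{\beta}$) while $\alpha^n$ contracts modulo $N$; more precisely I would use the ``$n$-version'' identity $\alpha^{-1}\beta^n(\Up{U}{\beta}) \cap U = \Up{U}{\beta}$ from Equation~\eqref{eq:n_version} in Proposition~\ref{prop:factor_U-}, suitably adapted, to control iterates of $\gamma^{-1}$. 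The inclusion $\subseteq$ is the delicate direction: given $u \in \con(\gamma^{-1}/N) \cap U$, write $u = u_+u_-$ with $u_+ \in \Up{U}{\beta}$, $u_- \in \con(\beta)\cap U$ by tidiness above and Proposition~\ref{prop:U--^con}; then show $u_-$ is forced into $N$. Applying $\gamma^{-n} = (\beta^{-1}\alpha^n)^n$, the $u_+$ part stays in $\Up{U}{\beta}$-territory while the $u_-$ part, being in $\con(\beta)$, gets expanded by $\beta^{-n}$ — but $\alpha^n$ contracts it; the interplay should, via Corollary~\ref{cor:convergence_mod_H} (that $\con(\gamma^{-1}/N)\cap\parb(\gamma) = N$ once $N \geq \nub$), pin $u_-$ down to $N$. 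This is essentially the same mechanism as the argument already given in the proof of Proposition~\ref{prop:factor_U-}, so I would model it closely on that.

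**Main obstacle.** The hard part will be choosing $n$ and verifying that a single $n$ works simultaneously for all the required conclusions — in particular that $\con(\gamma/N) = \con(\beta/N)$ exactly (not just $\supseteq$), which requires that composing $\beta$ with $\alpha^{-n}$ does not enlarge the contraction group, and this uses that $\con(\alpha/N) = G$ so that $\alpha^{-1}$ is ``purely expanding'' modulo $N$, together with the non-commutativity being absorbed by $\fl{H}_u$ acting trivially on $U$ and $N$. I expect the cleanest route is: fix $\beta$, observe $\con(\beta/N) \cap U = \Um{C}{\beta} := \con(\beta)\cap U$ up to $N$, choose $n$ so large that $\beta^n(\Um{C}{\beta}) \leq \alpha(U)$ (possible since $\Um{C}{\beta}$ is compact and $\beta^n(\Um{C}{\beta}) \to N \leq \alpha(U)^\circ$ — exactly as in the previous proof), and then check all four assertions hold for $\gamma = \alpha^{-n}\beta$ with this $n$, reusing Equation~\eqref{eq:n_version} and Theorem~\ref{thm:factors_invariant} throughout.
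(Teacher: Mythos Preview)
Your overall architecture is close to the paper's, but the specific choice of $\gamma$ is wrong, and this is not a cosmetic slip: with $\gamma=\alpha^{-n}\beta$ and $n\geq1$ you get $\con(\gamma/N)=N$, not $\con(\beta/N)$. The point is that on $G=\con(\alpha/N)$ the automorphism $\alpha^{-1}$ is purely expanding, so $\gamma^k\equiv\alpha^{-nk}\beta^k$ (mod $\fl{H}_u$) applies an ever-larger expansion to whatever $\beta^k$ produces, killing the contraction. A concrete test case is $G=\mathbb{Q}_p^3$, $\alpha(a,b,c)=(pa,pb,pc)$, $\beta(a,b,c)=(pa,b,p^{-1}c)$, $N=\{0\}$: here $\con(\beta)=\mathbb{Q}_p\times\{0\}\times\{0\}$, but $\alpha^{-n}\beta$ sends $(a,b,c)$ to $(p^{1-n}a,p^{-n}b,p^{-n-1}c)$, which for $n\geq1$ expands every coordinate, so $\con(\gamma)=\{0\}$. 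No value of $n$ in your ansatz gives the required $\con(\gamma/N)=\con(\beta/N)$ together with $\con(\gamma/N)\con(\gamma^{-1}/N)=G$.

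The paper instead takes $\gamma=\beta^n\alpha^{-1}$: a \emph{large} power of $\beta$ and a \emph{single} $\alpha^{-1}$. The key identity is $\gamma^k\equiv\alpha^k(\beta^n\alpha^{-2})^k$ (mod $\fl{H}_u$), and one chooses $n$ so that $\beta^n\alpha^{-2}$ maps $\Um{C}{\beta}:=\con(\beta/N)\cap U$ into itself (possible since $\alpha^{-2}(\Um{C}{\beta})\subset\con(\beta/N)$ by $\fl{H}$-stability and $\beta^n$ eventually pushes any compact subset of $\con(\beta/N)$ back into $\Um{C}{\beta}$). Then $\gamma^k(\Um{C}{\beta})\leq\alpha^k(\Um{C}{\beta})\to N$, giving $\Um{C}{\beta}\leq\con(\gamma/N)$; and $\gamma^{-k}\equiv\alpha^k\beta^{-nk}$, so $\gamma^{-k}(\Up{U}{\beta})\leq\alpha^k(\Up{U}{\beta})\to N$, giving $\Up{U}{\beta}\leq\con(\gamma^{-1}/N)$. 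Once you have the right $\gamma$, the rest of your plan (use $U=\Um{C}{\beta}\Up{U}{\beta}$, dilate by $\alpha^{-k}$ using $\fl{H}$-stability of $\con(\gamma^{\pm1}/N)$, and prove the two intersection identities by writing $u=u_-u_+$ and trapping the ``wrong'' factor in $\con(\gamma/N)\cap\con(\gamma^{-1}/N)=N$) is exactly what the paper does. So: keep your outline, but swap the roles of the exponents on $\alpha$ and $\beta$.
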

%%%%%%%%
\begin{proof} Denote $\Um{C}{\beta} = \con(\beta/N)\cap U$. Then $U = \Um{C}{\beta}\Up{U}{\beta}$, by Proposition~\ref{prop:U--^con}. Since $\con(\beta/N) = N\con(\beta/\nub(\fl{H}))$, this subgroup is $\fl{H}$-stable by Theorem~\ref{thm:factors_invariant}. Hence $\alpha^{-2}(\Um{C}{\beta})\leq \con(\beta/N)$ and we may choose $n\geq0$ such that $\beta^n\alpha^{-2}(\Um{C}{\beta})\leq \Um{C}{\beta}$. Put $\gamma = \beta^n\alpha^{-1}$. Then, $\gamma^k = \alpha^k(\beta^n\alpha^{-2})^k$ and $\gamma^{-k}=\alpha^k\beta^{-nk}$ modulo the commutator subgroup $[\alpha,\beta]$. Since $[\alpha,\beta]$ stabilises $\Um{C}{\beta}$ and $\Up{U}{\beta}$, by Corollaries~\ref{cor:uniscalar} and~\ref{cor:Flat=>abelian_mod_uniscalar}, we have for each $k\geq0$ that
\begin{equation}
\label{eq:partial_claim}
\gamma^k(\Um{C}{\beta}) \leq \alpha^k(\Um{C}{\beta})
\mbox{ and } \gamma^{-k}(\Up{U}{\beta}) \leq \alpha^k(\Up{U}{\beta}).
\end{equation}
Hence $\Um{C}{\beta}\leq \con(\gamma/N)$ and $\Up{U}{\beta}\leq \con(\gamma^{-1}/N)$, which implies that 
$$
U<  \con(\gamma/N) \con(\gamma^{-1}/N).
$$ 
Then $\alpha^{-k}(U)\leq \con(\gamma/N) \con(\gamma^{-1}/N)$ for every $k\geq0$, because $\con(\gamma/N)$ and $\con(\gamma^{-1}/N)$ are invariant under $\alpha$, by Theorem~\ref{thm:factors_invariant}. Hence, since $U$ is an open neighbourhood of $N$ and $G = \con(\alpha/N)$,
$$
G = \con(\gamma/N)\con(\gamma^{-1}/N).
$$ 

To see how $\con(\gamma/N)$ and $\con(\gamma^{-1}/N)$ relate to $\beta$, observe first  that~\eqref{eq:partial_claim} implies that 
$$
\con(\beta/N)\cap U\leq \con(\gamma/N)\mbox{ and } \Up{U}{\beta} \leq \con(\gamma^{-1}/N)\cap U.
$$ 

Next, consider $x\in U\cap\con(\gamma/N)$ and deduce from the tidiness of $U$ that $x = x_-x_+$ with $x_-\in \Um{C}{\beta}$ and $x_+\in \Up{U}{\beta}$. Then: $x_+\in \con(\gamma/N)$ because $x$ and $x_-$ are; and $x_+\in \con(\gamma^{-1}/N)$ because $U_{\beta+}\leq \con(\gamma^{-1}/N)$. Hence $x_+\in \con(\gamma/N)\cap \con(\gamma^{-1}/N) = N$, whence it follows that $x\in\con(\beta/N)$. We have thus shown that $\con(\gamma/N)\cap U\leq \con(\beta/N)$ and so
$$
\con(\gamma/N)\cap U = \con(\beta/N)\cap U.
$$
That $\con(\gamma/N) = \con(\beta/N)$ follows, because for each $y\in\con(\gamma/N)$ there is $n\geq0$ with $y=\gamma^{-n}(z)$ and $z\in \con(\beta/N)\cap U$, and invariance of $\con(\beta/N)$ under $\gamma^{-1}$ implies that $\gamma^{-n}(z)\in\con(\beta/N)$. Hence $\con(\gamma/N)\leq \con(\beta/N)$. The reverse inclusion holds by interchanging $\beta$ and $\gamma$.

 Finally, consider $x\in U\cap\con(\gamma^{-1}/N)$ and write $x = x_-x_+$ with $x_-\in \Um{C}{\beta}$ and $x_+\in \Up{U}{\beta}$. Then $x_-\in \con(\gamma^{-1}/N)$ because $x$ and $x_+$ are.  Since we already have $x_-\in\con(\gamma/N)$, it follows that $x_-\in N$ and hence that $x\in \Up{U}{\beta}$. Therefore $U\cap\con(\gamma^{-1}/N) = \Up{U}{\beta}$. That $\Upp{U}{\beta}\leq \con(\gamma^{-1}/N)$ folllows, because for each $y\in\Upp{U}{\beta}$ there $n\geq0$ with $y=\beta^n(z)$ and $z\in \Up{U}{\beta}$, and invariance of $\con(\gamma^{-1}/N)$ under $\beta$ implies that $\beta^n(z)\in\con(\gamma^{-1}/N)$.  
\end{proof}

\begin{remark}
\label{rem:choose_beta}	
The conclusion in Proposition~\ref{prop:choose_beta} that $\con(\gamma^{-1}/V)\geq \Upp{U}{\beta}$ cannot be strengthened to equality, see Exercise~\ref{exer:choose_beta}.
\end{remark}

The following factoring of $U$ is shown in the course of the preceding proof.
%%%%%%%
\begin{corollary}
	\label{cor:choose_beta}
Assume the hypotheses of Proposition~\ref{prop:choose_beta}. Then
$$
U = (U\cap \con(\gamma/N))(U\cap \con(\gamma^{-1}/N)).
$$
\endproof
\end{corollary}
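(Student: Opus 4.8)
The plan is to extract the factoring directly from the working inside the proof of Proposition~\ref{prop:choose_beta}, rather than re-deriving it from scratch. Recall that in that proof one sets $\Um{C}{\beta} = \con(\beta/N)\cap U$ and uses tidiness above together with Proposition~\ref{prop:U--^con} to write $U = \Um{C}{\beta}\Up{U}{\beta}$. So the natural route is: first, observe that $\Um{C}{\beta} \leq U\cap\con(\gamma/N)$ and $\Up{U}{\beta}\leq U\cap\con(\gamma^{-1}/N)$, both of which are established in the body of Proposition~\ref{prop:choose_beta} (the displayed inclusions coming from~\eqref{eq:partial_claim}). This immediately gives
$$
U = \Um{C}{\beta}\Up{U}{\beta} \leq (U\cap\con(\gamma/N))(U\cap\con(\gamma^{-1}/N)) \leq U,
$$
the last inclusion because both factors are subsets of $U$. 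Hence equality holds throughout, which is exactly the claim.

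The first step I would carry out is therefore just to recall the identity $U = \Um{C}{\beta}\Up{U}{\beta}$ from Proposition~\ref{prop:U--^con} (tidiness above of $U$ for $\beta$, with the $\Um{C}{\beta}$ notation as in the proof of Proposition~\ref{prop:choose_beta}). The second step is to cite the two inclusions $\con(\beta/N)\cap U\leq \con(\gamma/N)$ and $\Up{U}{\beta}\leq \con(\gamma^{-1}/N)\cap U$ already proven there, so that $\Um{C}{\beta}\leq U\cap\con(\gamma/N)$ and $\Up{U}{\beta}\leq U\cap\con(\gamma^{-1}/N)$. The third step is the sandwiching argument above: the product $(U\cap\con(\gamma/N))(U\cap\con(\gamma^{-1}/N))$ is squeezed between $U$ and $U$. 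Since both factors are subgroups of $U$ their product is contained in $U$ only as a subset (the product need not a priori be a group), but that is all that is needed for the set equality.

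I do not expect any real obstacle here; the corollary is essentially a bookkeeping statement isolating a line from the previous proof, as the phrasing ``is shown in the course of the preceding proof'' already signals. The only point requiring a word of care is that $(U\cap\con(\gamma/N))(U\cap\con(\gamma^{-1}/N))$ denotes a product of sets, and one should not claim more than the set-theoretic identity $U = (U\cap\con(\gamma/N))(U\cap\con(\gamma^{-1}/N))$; in particular one should not assert that this exhibits $U$ as an internal semidirect or direct product of the two factors. With that caveat the proof is a two-line citation of the inclusions from Proposition~\ref{prop:choose_beta} followed by the squeeze.

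\begin{proof}
By Proposition~\ref{prop:U--^con}, tidiness above of $U$ for $\beta$ gives $U = \Um{C}{\beta}\Up{U}{\beta}$, where $\Um{C}{\beta} = \con(\beta/N)\cap U$. In the proof of Proposition~\ref{prop:choose_beta} it was shown that $\con(\beta/N)\cap U\leq \con(\gamma/N)$ and $\Up{U}{\beta}\leq \con(\gamma^{-1}/N)\cap U$. Hence $\Um{C}{\beta}\leq U\cap\con(\gamma/N)$ and $\Up{U}{\beta}\leq U\cap\con(\gamma^{-1}/N)$, so that
$$
U = \Um{C}{\beta}\Up{U}{\beta} \subseteq \bigl(U\cap\con(\gamma/N)\bigr)\bigl(U\cap\con(\gamma^{-1}/N)\bigr).
$$
The reverse inclusion is immediate because each of the two factors is contained in $U$. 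Therefore $U = (U\cap\con(\gamma/N))(U\cap\con(\gamma^{-1}/N))$.
\end{proof}
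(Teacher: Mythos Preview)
Your proof is correct and matches the paper's approach exactly: the paper simply remarks that this factoring ``is shown in the course of the preceding proof'' and gives no further argument. You have correctly extracted the relevant pieces, namely $U = \Um{C}{\beta}\Up{U}{\beta}$ together with the inclusions $\Um{C}{\beta}\leq \con(\gamma/N)$ and $\Up{U}{\beta}\leq \con(\gamma^{-1}/N)$ established after~\eqref{eq:partial_claim}, and the squeeze argument is exactly what is implicit in the paper (indeed the proof of Proposition~\ref{prop:choose_beta} goes further and shows the equalities $U\cap\con(\gamma/N) = \Um{C}{\beta}$ and $U\cap\con(\gamma^{-1}/N) = \Up{U}{\beta}$, so your inclusions are in fact equalities).
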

The next result includes the assertion made in  Theorem~\ref{thm:flat_group_structureB3}.\ref{thm:flat_group_structureB3}.
%%%%%%%
\begin{proposition}
\label{prop:factor_con}
Suppose that $\fl{H}\leq\Aut(G)$ is flat, $N\geq \nub(\fl{H})$ is a compact $\fl{H}$-stable subgroup of $G$, and that $G = \con(\alpha/N)$ for some $\alpha\in\fl{H}$. Then there are $\fl{H}$-invariant subgroups $H_i$, $i\in\{1,\dots,n\}$, with $N\leq H_i$ for each $i$ and such that:
\begin{enumerate}
\item \label{prop:factor_con1}
$G = H_1\dots H_n$ with $n$ no larger than the number of prime factors of $s(\alpha^{-1})$ counted according to multiplicity; 
\item \label{prop:factor_con2}
$H_i = \con(\alpha|_{H_i}/N)$ for each $i\in\{1,\dots,n\}$ and is scaling for $\fl{H}|_{H_i}$;
and 
\item \label{prop:factor_con3}
$s(\alpha^{-1}) = \prod_{i=1}^n s(\alpha^{-1}|_{H_i})$.
\end{enumerate} 
\end{proposition}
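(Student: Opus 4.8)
The plan is to argue by induction on $\Omega(s(\alpha^{-1}))$, the number of prime factors of $s(\alpha^{-1})$ counted with multiplicity, repeatedly splitting $G$ by Proposition~\ref{prop:choose_beta} until each piece is scaling. Fix once and for all $U\in\COS(G)$ tidy for $\fl{H}$ with $N\le U$ (possible by Proposition~\ref{prop:Nnormal_core_of_tidy}\ref{prop:Nnormal_core_of_tidy1}). Two preliminaries are needed. First, $s(\alpha|_G)=1$: since $\Umm{U}{\alpha}=\con(\alpha)\Uz{U}{\alpha}\supseteq\con(\alpha)N=\con(\alpha/N)=G$ by Proposition~\ref{prop:U--^con} and Theorem~\ref{thm:convergence_mod_H}, each $x\in\Up{U}{\alpha}$ has precompact $\langle\alpha\rangle$-orbit and so lies in $\Uz{U}{\alpha}$ by Proposition~\ref{prop:tidy_criteria}, making $\Up{U}{\alpha}$ $\alpha$-stable. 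Second, the base case: if $s(\alpha^{-1})=1$ then $\Um{U}{\alpha}$ is $\alpha$-stable, so $\Umm{U}{\alpha}=\Um{U}{\alpha}$ and hence $\overline{\con(\alpha)}$ are compact; a compact $\alpha$-stable group lies in $\parb(\alpha^{-1})$, so $\overline{\con(\alpha)}=\overline{\con(\alpha)}\cap\parb(\alpha^{-1})=\nub(\alpha)\le\nub(\fl{H})\le N$ by Definition~\ref{defn:paretc} (using that every subgroup tidy for $\fl{H}$ is tidy for $\alpha$), whence $G=\con(\alpha)N=N$ and one takes $n=0$. The same two computations, applied to any closed $\fl{H}$-invariant $H$ with $N\le H\le G$, show that $H=\con(\alpha|_H/N)$ automatically, and that $H$ is compact iff $H=N$ iff $s(\alpha^{-1}|_H)=1$; note also $\nub(\fl{H}|_H)\le\nub(\fl{H})\le N$, since $V\cap H$ is tidy for $\fl{H}|_H$ whenever $V$ is tidy for $\fl{H}$.

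Next I would record the criterion that supplies splitting witnesses: \emph{a closed $\fl{H}$-invariant $H$ with $N\le H=\con(\alpha|_H/N)$ and $\fl{H}|_H$ flat is scaling for $\fl{H}|_H$ if and only if $s(\beta|_H)=1$ or $s(\beta^{-1}|_H)=1$ for every $\beta\in\fl{H}$.} Choosing $U_H$ tidy for $\fl{H}|_H$ with $N\le U_H$, the union clause of Definition~\ref{defn:scaling_subgroup} holds for $U_H$ unconditionally, because $\bigcup_{\beta\in\fl{H}}\beta(U_H)\supseteq\bigcup_{k\ge0}\alpha^{-k}(\Um{U_H}{\alpha|_H})=\Umm{U_H}{\alpha|_H}=\con(\alpha|_H/N)=H$; and, since $U_H$ is tidy (hence minimizing) for each $\beta|_H$, the comparability $\beta(U_H)\le U_H$ (resp.\ $\beta(U_H)\ge U_H$) is equivalent to $s(\beta|_H)=1$ (resp.\ $s(\beta^{-1}|_H)=1$). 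In particular, if $H$ is \emph{not} scaling then some $\beta\in\fl{H}$ satisfies both $s(\beta|_H)>1$ and $s(\beta^{-1}|_H)>1$.

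For the inductive step: if $G$ is scaling, take $n=1$ and $H_1=G$; then $G$ is non-compact, so $s(\alpha^{-1})>1$, giving $n=1\le\Omega(s(\alpha^{-1}))$, and parts~\ref{prop:factor_con2},~\ref{prop:factor_con3} are trivial. Otherwise pick a witness $\beta$ and apply Proposition~\ref{prop:choose_beta} to obtain $\gamma\in\fl{H}$ with $G=G_1G_2$, where $G_1:=\con(\gamma/N)=\con(\beta/N)$, $G_2:=\con(\gamma^{-1}/N)$, with $U\cap G_2=\Up{U}{\beta}$ and $U=(U\cap G_1)(U\cap G_2)$ by Corollary~\ref{cor:choose_beta}. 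By Theorem~\ref{thm:factors_invariant} together with Proposition~\ref{prop:Nnormal_core_of_tidy}, each $G_i$ is closed, $\fl{H}$-invariant, contains $N$, equals $\con(\alpha|_{G_i}/N)$, and $\fl{H}|_{G_i}$ is flat with $U\cap G_i$ tidy. Proposition~\ref{prop:factor_U-}, for $\alpha|_G$ (of scale $1$) and $\beta|_G$, gives $s(\alpha^{-1})=[\alpha^{-1}(\Up{U}{\beta}):\Up{U}{\beta}]\cdot s(\alpha^{-1}|_{G_1})$, and since $\Up{U}{\beta}=U\cap G_2$ is tidy for $\alpha^{-1}|_{G_2}$ with $\alpha^{-1}(\Up{U}{\beta})\cap U=\Up{U}{\beta}$ and $\alpha^{-1}(\Up{U}{\beta})\subseteq G_2$, the first factor equals $s(\alpha^{-1}|_{G_2})$, so $s(\alpha^{-1})=s(\alpha^{-1}|_{G_1})\,s(\alpha^{-1}|_{G_2})$. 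Both $G_i$ differ from $N$: $G_1=N$ would give $U=N\cdot\Up{U}{\beta}=\Up{U}{\beta}$, forcing $s(\beta^{-1}|_G)=1$, and $G_2=N$ would give $\Upp{U}{\beta}\le N$, hence $\Up{U}{\beta}=N$ is $\beta$-stable and $s(\beta|_G)=1$ — both contradicting the choice of $\beta$. So each $G_i$ is non-compact, $s(\alpha^{-1}|_{G_i})>1$, hence $\Omega(s(\alpha^{-1}|_{G_i}))\ge1$, and each is strictly less than $\Omega(s(\alpha^{-1}))=\Omega(s(\alpha^{-1}|_{G_1}))+\Omega(s(\alpha^{-1}|_{G_2}))$; the induction hypothesis applied to $\fl{H}|_{G_1}$ and $\fl{H}|_{G_2}$ (same $N$) yields decompositions $G_i=H^{(i)}_1\cdots H^{(i)}_{n_i}$ with all the required properties, and concatenation ($G_1$ first) produces $G=H_1\cdots H_n$ with $n=n_1+n_2\le\Omega(s(\alpha^{-1}))$, each $H_k$ scaling for $\fl{H}|_{H_k}$, $N\le H_k=\con(\alpha|_{H_k}/N)$, and $s(\alpha^{-1})=\prod_k s(\alpha^{-1}|_{H_k})$. (Alternatively, $s(\alpha^{-1}|_{H_k})>1$ follows from Proposition~\ref{prop:scale&homomorphism}: the root $\roo_k$ of $H_k$ has $\Delta(\alpha|_{H_k})=s_{\roo_k}^{\roo_k(\alpha)}$, and $s(\alpha|_{H_k})=1$ with $H_k$ non-compact force $\roo_k(\alpha)<0$.)

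The main obstacle I anticipate is not any single algebraic identity — those are furnished by Propositions~\ref{prop:choose_beta},~\ref{prop:factor_U-} and Theorem~\ref{thm:factors_invariant} — but well-foundedness of the recursion: one must be sure that the base-case equivalence ``$s(\alpha^{-1}|_H)=1\iff H=N$'' holds for every subgroup $H$ met along the way (which rests on $\overline{\con(\alpha|_H)}$ being compact and coinciding with $\nub(\alpha|_H)$ in that case), and that a split performed with a \emph{genuine} witness $\beta$ produces two pieces \emph{both} strictly larger than $N$, so that $\Omega(s(\alpha^{-1}))$ decreases along both branches. Granting this, the number of leaves of the resulting binary tree of $\fl{H}$-invariant subgroups is at most $\Omega(s(\alpha^{-1}))$ by additivity of $\Omega$ over the scale factorisation, and the leaves are exactly the sought subgroups $H_i$.
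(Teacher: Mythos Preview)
Your argument is the paper's: recursive bisection of $G=\con(\alpha/N)$ via Proposition~\ref{prop:choose_beta}, with Proposition~\ref{prop:factor_U-} factoring $s(\alpha^{-1})$ into two factors both exceeding~$1$, and termination guaranteed by the prime-factor count of $s(\alpha^{-1})$. One justification to tighten: the blanket assertion that $V\cap H$ is tidy for $\fl{H}|_H$ for \emph{every} closed $\fl{H}$-invariant $H$ with $N\le H\le G$ is not warranted (cf.\ the comment after Proposition~\ref{prop:functor} and Exercise~\ref{ex:tidymeet}); you only need this for the specific factors $G_i=N\con(\gamma^{\pm1}/\nub(\fl{H}))$ arising in the recursion, where it does follow from Theorem~\ref{thm:factors_invariant} combined with Proposition~\ref{prop:Nnormal_core_of_tidy}, and that is enough to give $\nub(\fl{H}|_{G_i})\le N$ so the induction hypothesis applies.
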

%%%%%%%%
\begin{proof}
\ref{prop:factor_con1}. and \ref{prop:factor_con2}. If $\con(\alpha/N)$ is scaling for $\fl{H}$, then the claims hold with $n=1$. 

Suppose that $\con(\alpha/N)$ is not scaling and let $U$ be tidy for $\fl{H}$. It may be supposed, by Proposition~\ref{prop:Nnormal_core_of_tidy}, that $U$ contains $N$. Then $\bigcup_{n\in\mathbb{Z}} \alpha^n(U)$ is equal to $\con(\alpha/N)$ and so there is $\beta\in\fl{H}$ with $\beta(U)\not\geq U$ and $\beta(U)\not\leq U$, in which case $s(\beta)$ and $s(\beta^{-1})$ are both greater than~$1$. By Proposition~\ref{prop:factor_U-},
$$
s(\alpha^{-1}) = [\alpha^{-1}(\Up{U}{\beta}) : \Up{U}{\beta}] s(\alpha^{-1}|_{\con(\beta/N)})
$$
with neither factor being equal to~$1$ unless either  $\Up{U}{\beta}$ or $\con(\beta/N)\cap U$ is contained in $\Uz{U}{\alpha}$. However, $\Uz{U}{\alpha} = N$ and $N$ is $\fl{H}$-stable, which implies that $s(\beta)=1$ if $\Up{U}{\beta}\leq \Uz{U}{\alpha}$ and $s(\beta^{-1})=1$ if $\con(\beta/N)\cap U\leq \Uz{U}{\alpha}$. Therefore neither $\Up{U}{\beta}$ nor $\con(\beta/N)\cap U$ is contained in $\Uz{U}{\alpha}$ and both $[\alpha^{-1}(\Up{U}{\beta}) : \Up{U}{\beta}]$ and $s(\alpha^{-1}|_{\con(\beta/N)})$ are strictly greater than~$1$.

By Proposition~\ref{prop:choose_beta}, there is $\gamma\in\fl{H}$ such that 
\begin{align*}
\con(\beta/N) &= \con(\gamma/N),\\
\Up{U}{\beta} &= \con(\gamma^{-1}/N)\cap U\\
\mbox{ and }\quad G&= \con(\gamma/N)\con(\gamma^{-1}/N).
\end{align*}
Denote $\con(\gamma^{\pm1}/N) = G_{\pm}$. Then $G=G_+G_-$ with $$
s(\alpha^{-1}|_{G_+})=s(\alpha^{-1}|_{\con(\beta/N)})\mbox{ and }s(\alpha^{-1}|_{G_-})=[\alpha^{-1}(\Up{U}{\beta}) : \Up{U}{\beta}].
$$ 
Hence the scale of $\alpha^{-1}$ factors as  
$$
s(\alpha^{-1}) = s(\alpha^{-1}|_{G_+})s(\alpha^{-1}|_{G_-})
$$
with both factors strictly greater than~$1$. By Theorem~\ref{thm:factors_invariant}, $\con(\gamma/\nub(\fl{H}))$ and $\con(\gamma^{-1}/\nub(\fl{H}))$ are both stable under $\fl{H}$ and the respective restrictions of $\fl{H}$ are flat. Since $N$ is compact and $\fl{H}$-stable, both subgroups are normalised by $N$ and $G_\pm = N\con(\gamma^{\pm1}/\nub(\fl{H}))$. It follows that $G_+$ and $G_-$ are both $\fl{H}$-stable and the restrictions of $\fl{H}$ are flat. Since $G_+$ and $G_-$ are contained in $G = \con(\alpha/N)$, we have that $G_\pm = \con(\alpha|_{G_\pm}/N)$. Should $\con(\alpha|_{G_+}/N)$  and $\con(\alpha|_{G_-}/N)$ both be scaling, the proof is complete with $H_1=G_+$ and $H_2=G_-$. 

If $\con(\alpha|_{G_\pm}/N)$ are not both scaling, the argument may be repeated by replacing $G$ with $G_+$ or $G_-$ as the case may be. Since $s(\alpha^{-1})$ is an integer and has only finitely many prime factors, this process terminates after a finite number of steps with $G$ expressed as a product of a finite number of scaling factors. 

\ref{prop:factor_con1}. and \ref{prop:factor_con3}. Proposition~\ref{prop:factor_U-} shows that each division of $G$ into factors corresponds to dividing $s(\alpha^{-1})$ into strictly smaller factors that are the scales of the restrictions of $\alpha^{-1}$ to the factors of $G$. The expression for $s(\alpha^{-1})$ in \ref{prop:factor_con3}. follows, and implies that the number of factors in \ref{prop:factor_con1}. is at most equal to the number of prime divisors of $s(\alpha^{-1})$. 
\end{proof}

The repeated application of Proposition~\ref{prop:choose_beta} in the preceding argument and the observation made in Corollary~\ref{cor:choose_beta} yield the following.
%%%%%%%
\begin{corollary}
	\label{cor:intersect_U}
	Assume the hypotheses of Proposition~\ref{prop:factor_con} and let $U\geq N$ be tidy for $\fl{H}$. Then 
	$$
	U = (U\cap H_1)(\dots )(U\cap H_n).
	$$
\end{corollary}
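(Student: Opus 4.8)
The plan is to read off the factoring of $U$ as a by-product of the inductive construction in the proof of Proposition~\ref{prop:factor_con}, just as Corollary~\ref{cor:choose_beta} was read off from the proof of Proposition~\ref{prop:choose_beta}. The base case is when $\con(\alpha/N)=G$ is itself scaling for $\fl{H}$, so $n=1$ and $H_1=G$, and the claimed identity is the trivial $U = U\cap H_1$. For the inductive step, recall that in the proof of Proposition~\ref{prop:factor_con} one chooses $\beta\in\fl{H}$ with $\beta(U)$ incomparable to $U$, then invokes Proposition~\ref{prop:choose_beta} to produce $\gamma\in\fl{H}$ with $G = G_+G_-$ where $G_\pm = \con(\gamma^{\pm1}/N)$, and $U = (U\cap G_+)(U\cap G_-)$ by Corollary~\ref{cor:choose_beta}.

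First I would set up the induction on the number of scaling factors $n$, equivalently on the number of prime factors of $s(\alpha^{-1})$, which controls how many times the splitting step can be applied. At the inductive step, apply Corollary~\ref{cor:choose_beta} to get $U = (U\cap G_+)(U\cap G_-)$. Then observe, as in Proposition~\ref{prop:factor_con}, that $U\cap G_+$ is tidy for $\fl{H}|_{G_+}$ and contains $N$ (since $N\leq U$ and $N\leq G_+$ by $\fl{H}$-stability of $N$ together with $N\leq \con(\gamma/N)$), and similarly for $G_-$; moreover $G_\pm = \con(\alpha|_{G_\pm}/N)$ and $N\geq\nub(\fl{H})$ is compact and $\fl{H}|_{G_\pm}$-stable, so the hypotheses of Proposition~\ref{prop:factor_con}, and hence the inductive hypothesis, apply to $G_+$ and to $G_-$ separately. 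This yields orderings $G_+ = H_1\cdots H_k$ with $U\cap G_+ = (U\cap H_1)\cdots(U\cap H_k)$ and $G_- = H_{k+1}\cdots H_n$ with $U\cap G_- = (U\cap H_{k+1})\cdots(U\cap H_n)$, where $\{H_i\}$ are precisely the scaling factors produced by the recursion in Proposition~\ref{prop:factor_con}. Substituting gives
$$
U = (U\cap G_+)(U\cap G_-) = (U\cap H_1)\cdots(U\cap H_k)(U\cap H_{k+1})\cdots(U\cap H_n),
$$
which is the claimed identity, with the factors appearing in the order dictated by the recursion tree.

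The main point to be careful about is that the factorisation $U=(U\cap G_+)(U\cap G_-)$ from Corollary~\ref{cor:choose_beta} is an honest product decomposition (each element written as a product of one element from each factor), and that this property is preserved when each factor is in turn decomposed: this is just associativity of the product of subsets, so concatenating the orderings is legitimate and no extra factors like $\Uz{U}{\alpha}$ intrude — precisely because here $\Uz{U}{\alpha}=N$ is $\fl{H}$-stable and already absorbed into every $H_i$. One should also note that the $H_i$ in the statement are the same $H_i$ as in Proposition~\ref{prop:factor_con}, so the ordering obtained is consistent with that construction; I would phrase the corollary's proof as simply tracking the decomposition of $U$ through the same recursion, invoking Corollary~\ref{cor:choose_beta} at each splitting node and the base case $U = U\cap H_1$ at each leaf. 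The only genuine obstacle, and it is mild, is bookkeeping the induction so that the hypotheses of Proposition~\ref{prop:factor_con} (namely $G_\pm = \con(\alpha|_{G_\pm}/N)$ with $N\geq\nub(\fl{H})$ compact and $\fl{H}$-stable, and $N\leq U\cap G_\pm$) are verified at each recursive call — but all of these were already checked inside the proof of Proposition~\ref{prop:factor_con} itself, so the corollary's proof can legitimately be a single short paragraph referring back to that argument.
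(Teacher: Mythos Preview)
Your proposal is correct and takes essentially the same approach as the paper: the paper's entire proof is the one sentence ``the repeated application of Proposition~\ref{prop:choose_beta} in the preceding argument and the observation made in Corollary~\ref{cor:choose_beta} yield the following,'' and you have simply spelled out that recursion explicitly as an induction on the number of scaling factors, invoking Corollary~\ref{cor:choose_beta} at each split and the trivial identity at each leaf.
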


\begin{remark}
\label{rem:N_not_changed}
Applying Proposition~\ref{prop:factor_con} with $N=\nub(\fl{H})$, the same $N$ is kept through the proof and statement of the proposition. However, passing from $G$ to a factor $G_+\leq G$ say, it may happen that $\nub(\fl{H}|_{G_+})$ is strictly smaller than $\nub(\fl{H})$ as the Example~\ref{examp:nub_smaller} shows.
\end{remark}

%%%%%%%%
\begin{remark}
\label{rem:permute_factors}
The factors $H_1$, \dots, $H_n$ in Proposition~\ref{prop:factor_con} may be permuted because at each iteration of the argument in the proof we may take $G = G_+G_-$ or $G = G_-G_+$. However, they need not commute or normalise each other. %%%%AAA{\color{fireenginered} For example, \dots Exercise}. %This is investigated in greater detail {\color{fireenginered} later}. 
\end{remark}
%%%%%%%%

%%%%%%%%
\begin{definition} 
\label{defn:support}
Let $\fl{H}\leq\Aut(G)$ be flat. The \emph{positive support} of $\alpha\in \fl{H}$ is 
$$
\supp^+_{\fl{H}}(\alpha) = \{\roo\in \red(\fl{H}) \mid \roo(\alpha)>0\}
$$ 
and the \emph{support} of $\alpha$ is
$$
\supp_{\fl{H}}(\alpha) = \supp^+_{\fl{H}}(\alpha)\cup \supp^+_{\fl{H}}(\alpha^{-1}).
$$ 
The support of $\alpha$ is denoted by $\supp(\alpha)$ if the flat group $\fl{H}$ is understood. 
For a subset $\fl{K}\subseteq\fl{H}$, the union of the supports of elements of $\fl{K}$ is denoted by $\supp_{\fl{H}}(\fl{K})$.  
\end{definition}
%%%%%%%
Implicit in the definition is that automorphisms in $\fl{H}_u$ have empty support. 

The scaling factors $H_1$, \dots, $H_n$ in Proposition~\ref{prop:factor_con} are associated with distinct roots of $\fl{H}$, each of which is in $\supp^+_{\fl{H}}(\alpha^{-1})$. That the converse holds is shown next. 
%%%%%%%%%%
\begin{proposition}
\label{prop:uniqueness}
Let $\fl{H}\leq \Aut(G)$ be flat and let $\alpha\in\fl{H}$. Then the set of roots associated with scaling $\fl{H}$-invariant subgroups, $H_i$, appearing in the product
\begin{equation}
\label{eq:uniqueness}
\con(\alpha/\nub(\fl{H})) = H_1\dots H_n
\end{equation}
of Proposition~\ref{prop:factor_con} is equal to $\supp^+_{\fl{H}}(\alpha^{-1})$. In particular, the subgroups $H_i$ are unique up to permutation and $\supp^+_{\fl{H}}(\alpha^{-1})$ is finite.
\end{proposition}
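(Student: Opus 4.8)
The plan is to show containment in both directions between the set $\mathcal{R}$ of roots associated with the scaling factors $H_1,\dots,H_n$ in~\eqref{eq:uniqueness} and the positive support $\supp^+_{\fl{H}}(\alpha^{-1})$. The ``easy'' inclusion $\mathcal{R}\subseteq \supp^+_{\fl{H}}(\alpha^{-1})$ should follow from the construction in Proposition~\ref{prop:factor_con}: each $H_i$ is $\fl{H}$-stable, scaling for $\fl{H}|_{H_i}$, and satisfies $H_i=\con(\alpha|_{H_i}/\nub(\fl{H}))$ with $s(\alpha^{-1}|_{H_i})>1$; by Proposition~\ref{prop:scale&homomorphism}\eqref{prop:scale&homomorphism3} the associated root $\roo_i$ has $\triangle_{H_i}(\alpha^{-1}|_{H_i}) = s_{\roo_i}^{\roo_i(\alpha^{-1})}>1$, forcing $\roo_i(\alpha^{-1})>0$, i.e. $\roo_i\in\supp^+_{\fl{H}}(\alpha^{-1})$. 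Distinctness of the $\roo_i$ needs a short argument: if two factors $H_i$, $H_j$ had the same associated root, one could run Proposition~\ref{prop:factor_con} starting from $H_iH_j$; I would argue instead that by Proposition~\ref{prop:con_Aroo}\eqref{prop:con_Aroo4} each $H_i$ equals $N_i\con(\fl{A}_{\roo_i}/\nub(\fl{H}))$, so the product of two factors with a common root could not yield a genuine further reduction—this is where I expect to need care.

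For the reverse inclusion $\supp^+_{\fl{H}}(\alpha^{-1})\subseteq \mathcal{R}$, suppose $\roo\in\red(\fl{H})$ with $\roo(\alpha^{-1})>0$, i.e. $\roo(\alpha)<0$, so $\alpha\in\fl{A}_\roo$. By Proposition~\ref{prop:con_Aroo}\eqref{prop:con_Aroo2}, $\con(\fl{A}_\roo/\nub(\fl{H}))$ is a scaling subgroup associated with $\roo$, and since $\alpha\in\fl{A}_\roo$ we have $\con(\fl{A}_\roo/\nub(\fl{H}))\leq \con(\alpha/\nub(\fl{H})) = H_1\cdots H_n$. The task is to show this scaling subgroup ``uses'' one of the factors $H_i$ with associated root $\roo$. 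I would take $U$ tidy for $\fl{H}$ with $U\geq\nub(\fl{H})$ and invoke Corollary~\ref{cor:intersect_U} to write $U\cap\con(\alpha/\nub(\fl{H})) = (U\cap H_1)\cdots(U\cap H_n)$; then intersect with $U_\roo$ (Proposition~\ref{prop:roots}) and use that $U\cap H_i$ is tidy for $\fl{H}|_{H_i}$ together with the fact that on the scaling subgroup $H_i$ the modular function is $s_{\roo_i}^{\roo_i(\cdot)}$. Comparing $\triangle$-values across the product—using multiplicativity of $\Delta$ and Proposition~\ref{prop:restriction_of_scale} applied to $\con(\alpha|_{\con(\fl{A}_\roo/\nub(\fl{H}))}/\nub(\fl{H}))$, which has scale $s_\roo^{\,|\roo(\alpha)|}>1$—shows that at least one factor $H_i$ must have $\roo_i(\alpha^{-1})>0$ in a way that ``accounts for'' the $\roo$-contribution to $s(\alpha^{-1})$; since by part~\eqref{prop:factor_con3} of Proposition~\ref{prop:factor_con} the scale splits as $s(\alpha^{-1})=\prod_i s(\alpha^{-1}|_{H_i})$ and by Theorem~\ref{thm:flat_group_structureA}\eqref{thm:flat_group_structureA1} only finitely many roots are nonzero on $\alpha$, a counting/uniqueness argument pins $\roo$ to exactly one $H_i$.

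The final assertions then follow: once $\mathcal{R}=\supp^+_{\fl{H}}(\alpha^{-1})$ and the $\roo_i$ are distinct, the map $H_i\mapsto\roo_i$ is a bijection $\{H_1,\dots,H_n\}\to\supp^+_{\fl{H}}(\alpha^{-1})$, so $\supp^+_{\fl{H}}(\alpha^{-1})$ is finite (it is in bijection with a finite product), and the $H_i$ are determined up to permutation because each is recovered from its root via Proposition~\ref{prop:con_Aroo}\eqref{prop:con_Aroo4} as $N_i\con(\fl{A}_{\roo_i}/\nub(\fl{H}))$ with $N_i$ forced by the largest-$\fl{H}$-stable-subgroup condition of Proposition~\ref{prop:scale&homomorphism}\eqref{prop:scale&homomorphism4}. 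The main obstacle I anticipate is the bookkeeping in the reverse inclusion—extracting from the non-commuting product $H_1\cdots H_n$ the statement that a \emph{prescribed} root $\roo$ appears, rather than just that \emph{some} root in $\supp^+_{\fl{H}}(\alpha^{-1})$ appears; I expect the cleanest route is to compare, for each root $\roo\in\supp^+_{\fl{H}}(\alpha^{-1})$, the $\roo$-adic valuation (in the sense of the homomorphism $\roo$) of $s(\alpha^{-1})$ against $\sum_i$ over those $i$ with $\roo_i=\roo$ of $\log_{s_{\roo_i}} s(\alpha^{-1}|_{H_i})$, using that the full product exhausts $s(\alpha^{-1})$ by part~\eqref{prop:factor_con3}.
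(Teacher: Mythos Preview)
Your proposal has a genuine circularity problem. For the reverse inclusion you invoke Theorem~\ref{thm:flat_group_structureA}\eqref{thm:flat_group_structureA1} to know that only finitely many roots are nonzero on $\alpha$, but in the paper that finiteness is \emph{deduced from} Proposition~\ref{prop:uniqueness} (see the proof of Proposition~\ref{prop:free_abelian}). So you cannot use it here. Relatedly, your ``$\roo$-adic valuation of $s(\alpha^{-1})$'' is not a well-defined quantity at this stage: the scale is just an integer, and there is no canonical decomposition of it indexed by roots until after Proposition~\ref{prop:uniqueness} and Theorem~\ref{thm:flat_group_structureB} are proved. The comparison you sketch---matching the contribution of $\con(\fl{A}_\roo/\nub(\fl{H}))$ to $s(\alpha^{-1})$ against a sum over the $H_i$ with $\roo_i=\roo$---presupposes exactly the root-by-root accounting you are trying to establish.

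The paper avoids this by induction on the number $n$ of factors, and the key idea you are missing is a \emph{separation} step: given $\roo\in\supp^+_{\fl{H}}(\alpha^{-1})$ not equal to $\roo_k$, choose $\gamma\in\fl{H}$ with $\roo(\gamma)>0$ and $\roo_k(\gamma)<0$, then use Proposition~\ref{prop:choose_beta} to split $\con(\alpha/\nub(\fl{H}))=G_+G_-$ along $\gamma$. Since each $\roo_i$ is either positive or negative on $\gamma$, the factors $H_i$ distribute between $G_+$ and $G_-$, and $H_k$ lands in $G_+$, so $G_-$ has strictly fewer than $k$ factors while $\roo$ still lies in $\supp^+_{\fl{H}}(\gamma^{-1}|_{G_-})$. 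Induction finishes. Your easy inclusion $\mathcal{R}\subseteq\supp^+_{\fl{H}}(\alpha^{-1})$ is fine and matches the paper's one-line appeal to Proposition~\ref{prop:factor_con}\eqref{prop:factor_con3}.
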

%%%%%%%
\begin{proof}
The third part of Proposition~\ref{prop:factor_con} implies that the roots associated with a subgroup $H_i$ lie in $\supp^+_{\fl{H}}(\alpha^{-1})$, and it remains to show that every $\roo\in\supp^+_{\fl{H}}(\alpha^{-1})$ is associated with a subgroup $H_i$ in \eqref{eq:uniqueness}. The proof is by induction on $n$. The claim holds when $n=0$ because $\alpha\in\fl{H}_u$ in this case.

Let $k\geq1$ and assume that the claim holds for all $n < k$, all $\fl{H}\leq \Aut(G)$ with $G$ any \tdlc~group, and all $\alpha\in\fl{H}$. Suppose that 
$$
\con(\alpha/\nub(\fl{H})) = H_1\dots H_k
$$ 
with each $H_i$ scaling for $\fl{H}$. Let $\roo\in\supp^+_{\fl{H}}(\alpha^{-1})$ and denote by $\roo_i$ the root associated with $H_i$. If $\roo=\roo_{k}$ there is nothing to prove and so it may be assumed that $\roo \ne \roo_{k}$, in which case there is $\gamma\in\fl{H}$ such that $\roo(\gamma)>0$ and $\roo_k(\gamma)<0$. Proposition~\ref{prop:choose_beta} shows that it may in fact be assumed that 
$$
\con(\alpha/\nub(\fl{H})) = \con(\gamma/\nub(\fl{H}))\con(\gamma^{-1}/\nub(\fl{H})) = G_+G_-,
$$ 
where we adopt the notation from the proof of Proposition~\ref{prop:factor_con}. Then $\{\gamma^l(x)\}_{i\in\mathbb{Z}}$ is unbounded if $x\not\in\nub(\fl{H})$ and so, for each $i\in\{1,\dots,n\}$, either $\roo_i(\gamma)>0$ or $\roo_i(\gamma)<0$. Hence $G_+$ is the product of those $H_i$ such that $\roo_i(\gamma)<0$ and $G_-$ is the product of those $H_i$ such that $\roo_i(\gamma)>0$, and the condition that $\roo_k(\gamma)<0$ implies that $G_-$ is the product of strictly fewer than $k$ scaling subgroups. On the other hand, the restriction of $\fl{H}$ to $G_-$ is flat, by Theorem~\ref{thm:factors_invariant}, and the condition that $\roo(\gamma)>0$ implies that $\roo\in\supp^+_{\fl{H}}(\gamma^{-1}|_{G_-})$. Therefore, by the induction hypothesis, $\roo$ is associated with one of the scaling factors of $G_-$, and hence with a subgroup $H_i$ appearing in \eqref{eq:uniqueness}.
\end{proof}

%%{\color{fireenginered}    Dani-Shah paving of contractive group

\subsection{The structure of flat groups and tidy subgroups}
\label{sec:reduced2}

This section collects together results from previous sections to complete the proofs of Theorems~\ref{thm:flat_group_structureA} and~\ref{thm:flat_group_structureB}. 

We begin with the proof of Theorem~\ref{thm:flat_group_structureA}.
%%%%%%%
\begin{proposition}
\label{prop:free_abelian}
Let $\fl{H}\leq\Aut(G)$ be flat. Then the map 
$$
R : \fl{H} \to  \mathbb{Z}^{\red(\fl{H})} ; \quad \alpha\mapsto \{\roo(\alpha)\}_{\roo\in\red(\fl{H})} 
$$
is a homomorphism with kernel $\fl{H}_u$ and range contained in $\bigoplus_{\roo\in\red(\fl{H})} \mathbb{Z}$. Hence $\fl{H}/\fl{H}_u$ is a free abelian group. 
\end{proposition}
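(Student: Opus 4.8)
The plan is to read everything off the machinery of \S\ref{sec:irroots}--\S\ref{sec:reduced1}, organised around the single identity $\ker R=\fl{H}_u$. First, $R$ is a homomorphism with no work: each coordinate map $\roo\in\red(\fl{H})$ is, by Definition~\ref{defn:roots}, a group homomorphism $\fl{H}\to(\mathbb{Z},+)$, so $R=(\roo)_{\roo\in\red(\fl{H})}$ is too. To see that the range lies in $\bigoplus_{\roo\in\red(\fl{H})}\mathbb{Z}$ it suffices to check that $\{\roo\mid\roo(\alpha)\ne0\}=\supp_{\fl{H}}(\alpha)$ is finite for each $\alpha\in\fl{H}$; but Proposition~\ref{prop:uniqueness} asserts precisely that $\supp^+_{\fl{H}}(\alpha^{-1})$ is finite, and applying it to both $\alpha$ and $\alpha^{-1}$ and taking the union of the two positive supports gives finiteness of $\supp_{\fl{H}}(\alpha)$. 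Once $\ker R=\fl{H}_u$ is established, the last sentence follows: $R$ induces an embedding $\fl{H}/\fl{H}_u\hookrightarrow\bigoplus_{\roo\in\red(\fl{H})}\mathbb{Z}$, and a subgroup of a free abelian group is free abelian.

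For the inclusion $\fl{H}_u\subseteq\ker R$, take $\alpha\in\fl{H}_u$ and $\roo\in\red(\fl{H})$, and fix $U$ tidy for $\fl{H}$. By Proposition~\ref{prop:uniscalar}, $\alpha$ fixes every subgroup tidy for $\fl{H}$, and since every $\gamma(U)$ with $\gamma\in\fl{H}$ is tidy for $\fl{H}$ by Proposition~\ref{prop:invariance_of_tidiness}, $\alpha$ fixes $U_\roo=\bigcap\{\gamma(U)\mid\roo(\gamma)\ge0\}$. By the preamble to Proposition~\ref{prop:roots}, $\alpha(U_\roo)=\beta(U_\roo)$ if and only if $\roo(\alpha)=\roo(\beta)$; taking $\beta=\iota$ the identity automorphism, for which $\roo(\iota)=0$, and using $\alpha(U_\roo)=U_\roo=\iota(U_\roo)$, gives $\roo(\alpha)=0$. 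As $\roo$ was arbitrary, $\alpha\in\ker R$. (This is the one-element version of the computation behind Corollary~\ref{cor:roots}.)

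The substance is the reverse inclusion $\ker R\subseteq\fl{H}_u$, which assembles Theorem~\ref{thm:factors_invariant}, Proposition~\ref{prop:factor_con} and Proposition~\ref{prop:uniqueness}. Let $\alpha\in\ker R$, so that $\roo(\alpha)=0$ for every $\roo\in\red(\fl{H})$, whence $\supp^+_{\fl{H}}(\alpha)=\supp^+_{\fl{H}}(\alpha^{-1})=\emptyset$. By Theorem~\ref{thm:factors_invariant}, $\con(\alpha/\nub(\fl{H}))$ is a \tdlc~group on which $\fl{H}$ restricts to a flat group, $\nub(\fl{H})$ is a compact $\fl{H}$-stable subgroup of it containing the nub of the restriction, and $\con(\alpha/\nub(\fl{H}))=\con(\alpha'/\nub(\fl{H}))$ for $\alpha'=\alpha|_{\con(\alpha/\nub(\fl{H}))}$; so Proposition~\ref{prop:factor_con} gives a factorisation $\con(\alpha/\nub(\fl{H}))=H_1\cdots H_n$ into subgroups scaling for $\fl{H}$, and Proposition~\ref{prop:uniqueness} identifies the associated roots with $\supp^+_{\fl{H}}(\alpha^{-1})=\emptyset$. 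Hence there are no scaling factors, i.e. $\con(\alpha/\nub(\fl{H}))=\nub(\fl{H})$, so $\con(\alpha)\subseteq\nub(\fl{H})$ and $\overline{\con(\alpha)}$ is compact; the remark following Proposition~\ref{prop:restriction_of_scale} then forces $s(\alpha^{-1})=1$. Running the same argument with $\alpha^{-1}$ in place of $\alpha$, using $\supp^+_{\fl{H}}(\alpha)=\emptyset$, gives $s(\alpha)=1$, so $\alpha\in\fl{H}_u$, completing the proof that $\ker R=\fl{H}_u$.

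The main obstacle is bookkeeping rather than a new idea: one must verify that Propositions~\ref{prop:factor_con} and~\ref{prop:uniqueness} really may be applied inside the ambient group $\con(\alpha/\nub(\fl{H}))$ with $N=\nub(\fl{H})$ (in particular that the nub of the restricted flat group is contained in $\nub(\fl{H})$, which follows because intersecting subgroups tidy for $\fl{H}$ with $\con(\alpha/\nub(\fl{H}))$ again yields tidy subgroups, by Theorem~\ref{thm:factors_invariant}), and that the degenerate case of an empty product of scaling factors indeed means $\con(\alpha/\nub(\fl{H}))=\nub(\fl{H})$. With these checks the argument is a short assembly of the earlier results, and no further computation is required.
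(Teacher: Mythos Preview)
Your proposal is correct and follows essentially the same route as the paper: both rely on Proposition~\ref{prop:uniqueness} for the finiteness of supports and for the key implication linking $\supp^+_{\fl{H}}(\alpha^{\pm1})=\emptyset$ to $s(\alpha^{\mp1})=1$. The paper's version is marginally more direct---it argues the contrapositive (if $\alpha\notin\fl{H}_u$ then some $s(\alpha^{\pm1})>1$, whence Proposition~\ref{prop:uniqueness} supplies a root in $\supp_{\fl{H}}(\alpha)$) and thereby avoids unpacking the degenerate $n=0$ case of Proposition~\ref{prop:factor_con} and the appeal to Proposition~\ref{prop:restriction_of_scale}---but your more explicit bookkeeping is not wasted effort and the underlying mechanism is identical.
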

%%%%%%%
\begin{proof}
Each root $\roo\in\red(\fl{H})$ is a homomorphism and hence so is $R$. The range of $R$ is contained in $\bigoplus_{\roo\in\red(\fl{H})} \mathbb{Z}$ because $\supp_{\fl{H}}(\alpha)$ is finite for every $\alpha$, by Proposition~\ref{prop:uniqueness}. The kernel of $R$ contains $\fl{H}_u$ because each $\roo$ sends $\fl{H}_u$ to~$0$. If $\alpha\in\fl{H}\setminus\fl{H}_u$, then at least one of $s(\alpha^{-1})$ and $s(\alpha)$ is greater than~$1$ and so there is a root, $\ell$, in $\supp(\alpha)$ such that $\roo(\alpha)\ne0$, by Proposition~\ref{prop:uniqueness}. Hence the kernel of the homomorphism is equal to $\fl{H}_u$. It follows that $\fl{H}/\fl{H}_u$ is isomorphic to a subgroup of the free abelian group $\bigoplus_{\roo\in\red(\fl{H})} \mathbb{Z}$, and is therefore free abelian itself by~\cite[Theorem 7.3]{Lang_Algebra}.  
\end{proof}

\begin{corollary}
\label{cor:free_abelian}
\begin{enumerate}
\item \label{cor:free_abelian1}
If $\fl{H}\leq\Aut(G)$ is flat, then the rank of $\fl{H}/\fl{H}_u$ is at most equal to the number of roots for $\fl{H}$. 
\item \label{cor:free_abelian2}
There is no upper bound for the number of roots in terms of the rank of $\fl{H}/\fl{H}_u$, but the rank is finite if and only if the number of roots is finite. 
\item \label{cor:free_abelian3}
If $\fl{H}/\fl{H}_u$ has infinite rank, then that rank and the number of roots have the same cardinality.  
\end{enumerate}
\end{corollary}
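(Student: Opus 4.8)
The plan is to read off all three parts from Proposition~\ref{prop:free_abelian}, which exhibits $\fl{H}/\fl{H}_u$ as a subgroup of the free abelian group $\bigoplus_{\roo\in\red(\fl{H})}\mathbb{Z}$, together with the finiteness of $\supp_{\fl{H}}(\alpha)$ for every $\alpha\in\fl{H}$ established in Proposition~\ref{prop:uniqueness}. The bookkeeping device throughout is the identity $\red(\fl{H}) = \bigcup\{\supp_{\fl{H}}(\alpha)\mid\alpha\in\fl{H}\}$: a root $\roo$ is surjective onto $\mathbb{Z}$, so $\roo(\alpha)\ne0$ for some $\alpha$ and hence $\roo\in\supp_{\fl{H}}(\alpha)$, while the reverse inclusion is just the definition of the support. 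Since roots vanish on $\fl{H}_u$, the support depends only on the class of $\alpha$ in $\fl{H}/\fl{H}_u$, so equivalently $\red(\fl{H}) = \bigcup\{\supp_{\fl{H}}(\bar\alpha)\mid\bar\alpha\in\fl{H}/\fl{H}_u\}$.

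Part~\ref{cor:free_abelian1} is then immediate: a subgroup of a free abelian group of rank $\kappa$ has rank at most $\kappa$ by \cite[Theorem~7.3]{Lang_Algebra}, and here $\kappa = |\red(\fl{H})|$. For the ``finite if and only if finite'' clause of Part~\ref{cor:free_abelian2}, suppose first that $\fl{H}/\fl{H}_u$ has finite rank and lift a basis to $\alpha_1,\dots,\alpha_r\in\fl{H}$. Every $\roo\in\red(\fl{H})$ is surjective, hence nonzero on at least one $\alpha_i$, so $\red(\fl{H}) = \bigcup_{i=1}^r\supp_{\fl{H}}(\alpha_i)$ is a finite union of finite sets and is therefore finite; the converse direction is Part~\ref{cor:free_abelian1}. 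For the first clause of Part~\ref{cor:free_abelian2} I would exhibit flat groups of rank $2$ with arbitrarily many roots --- for instance, restricting the group of inner automorphisms of $SL_n(\mathbb{Q}_p)$ from the full diagonal torus to a generic rank-$2$ subtorus: this is still flat, with the same tidy subgroup as in Example~\ref{examp:scaling_groups}, and generically the $n(n-1)$ root homomorphisms restrict to pairwise non-proportional surjections $\mathbb{Z}^2\to\mathbb{Z}$, producing a flat group of rank $2$ with on the order of $n(n-1)$ roots. (Finite products of such examples over distinct primes serve equally well; the essential point, consistent with Proposition~\ref{prop:uniqueness}, is that one must use finitely many factors, since the naive diagonal action on an infinite restricted product fails to preserve it.)

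Part~\ref{cor:free_abelian3} follows by pinching from both sides. If the rank $\kappa$ of $\fl{H}/\fl{H}_u$ is infinite then Part~\ref{cor:free_abelian1} gives $\kappa\le|\red(\fl{H})|$. Conversely a free abelian group of infinite rank $\kappa$ has cardinality $\kappa$, so $|\fl{H}/\fl{H}_u| = \kappa$, and since $\red(\fl{H})$ is the union of the finite sets $\supp_{\fl{H}}(\bar\alpha)$ indexed by $\bar\alpha\in\fl{H}/\fl{H}_u$ we get $|\red(\fl{H})|\le\kappa\cdot\aleph_0 = \kappa$. Hence $|\red(\fl{H})| = \kappa$.

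The only step that is not pure formalism is the construction needed for the first clause of Part~\ref{cor:free_abelian2}; the remaining arguments amount to the embedding supplied by Proposition~\ref{prop:free_abelian}, finite generation of free abelian groups of finite rank, and elementary cardinal arithmetic.
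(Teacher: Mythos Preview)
Your proof is correct and follows essentially the same route as the paper: Part~\ref{cor:free_abelian1} via the subgroup-of-free-abelian argument, and Parts~\ref{cor:free_abelian2}--\ref{cor:free_abelian3} via the finiteness of $\supp_{\fl{H}}(\alpha)$ from Proposition~\ref{prop:uniqueness} together with elementary cardinal arithmetic. The only difference is the example for the first clause of Part~\ref{cor:free_abelian2}: the paper cites the $\mathbb{Q}_p^n$ example of Exercises~\ref{exer:Qpn}--\ref{exer:rank_weights} (two diagonal automorphisms on $\mathbb{Q}_p^n$ giving rank~$2$ with $n$ roots), which is simpler than your generic rank-$2$ subtorus of $SL_n(\mathbb{Q}_p)$ but identical in spirit.
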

%%%%%%
\begin{proof} 
\ref{cor:free_abelian1}.~holds because the rank of a subgroup of a free abelian group is at most the rank of the free group. The lack of an upper bound asserted in~\ref{cor:free_abelian2}.~is demonstrated by the example studied in Exercises~\ref{exer:Qpn}--\ref{exer:rank_weights} in \S\ref{sec:single_automorphism}. 

The claims in \ref{cor:free_abelian2}.~and \ref{cor:free_abelian3}.~about the cardinality of the number of roots follow from \ref{cor:free_abelian1}.~and because each generator of $\fl{H}/\fl{H}_u$ is supported on only finitely many roots, as shown in Proposition~\ref{prop:uniqueness}.
\end{proof}

\begin{definition}
\label{defn:rank_of_flat}
The \emph{rank} of the flat group $\fl{H}\leq\Aut(G)$, denoted $\rk(\fl{H})$, is the rank of the free abelian group $\fl{H}/\fl{H}_u$.
\end{definition}

\begin{remark}
\label{rem:free_abelian}
The embedding $\fl{H}/\fl{H}_u\hookrightarrow\bigoplus_{\roo\in\red(\fl{H})} \mathbb{Z}$ need not be surjective and the exact sequence of this embedding is possibly a better invariant than $\rk(\fl{H})$. Note that the quotient $\bigl(\bigoplus_{\roo\in\red(\fl{H})} \mathbb{Z}\bigr)/R(\fl{H})$ may have torsion, see Exercise~\ref{exer:torsion_quotient} in \S\ref{sec:flatness}. 
\end{remark}

Proposition~\ref{prop:factor_U} shows that each group, $U$, tidy for a finitely generated $\fl{H}$ is a product of finitely many subgroups in a certain order with the subgroups and the order depending on a given sequence $\alpha_1, \dots, \alpha_n\in\fl{H}$. Proposition~\ref{prop:factor_con} shows that $\con(\alpha/\nub{\fl{H}})$ is the product of a finite number of subgroups scaling for $\fl{H}$ in a certain order. A common generalisation of these results is shown in the remainder of this section, namely, that $U$ is a product of subgroups each of which is the intersection with $U$ of a subgroup scaling over $\fl{H}$. The generalisation does extend to all flat groups but only the case when $\red(\fl{H})$ is finite is treated in these notes.
%%%%%%%

\begin{proposition}
\label{prop:red&support} 
Let $\fl{H}\leq\Aut(G)$ be flat and suppose that $\{\gamma_i\fl{H}_u\}_{i\in I}$ is a generating set for $\fl{H}/\fl{H}_u$. Then $\red(\fl{H}) = \bigcup\{\supp_{\fl{H}}(\gamma_i) \mid i\in I\}$. 
\end{proposition}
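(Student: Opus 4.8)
The plan is to prove the equality by showing the two inclusions separately; the inclusion $\supseteq$ is essentially a matter of unwinding definitions, and the inclusion $\subseteq$ reduces to the single fact that a root is a surjective homomorphism that vanishes on $\fl{H}_u$.

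First I would dispose of $\bigcup\{\supp_{\fl{H}}(\gamma_i)\mid i\in I\}\subseteq\red(\fl{H})$. By Definition~\ref{defn:support}, $\supp_{\fl{H}}(\gamma_i)=\supp^+_{\fl{H}}(\gamma_i)\cup\supp^+_{\fl{H}}(\gamma_i^{-1})$, and each of $\supp^+_{\fl{H}}(\gamma_i)$ and $\supp^+_{\fl{H}}(\gamma_i^{-1})$ is by definition a set of roots, i.e.\ a subset of $\red(\fl{H})$. Hence so is the union over $i\in I$.

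For the reverse inclusion, fix $\roo\in\red(\fl{H})$. By Proposition~\ref{prop:free_abelian} the homomorphism $R\colon\fl{H}\to\mathbb{Z}^{\red(\fl{H})}$ has kernel $\fl{H}_u$, so in particular $\roo(\fl{H}_u)=\{0\}$ and $\roo$ factors as $\roo=\bar\roo\circ\pi$ with $\pi\colon\fl{H}\to\fl{H}/\fl{H}_u$ the quotient map and $\bar\roo\colon\fl{H}/\fl{H}_u\to\mathbb{Z}$ a homomorphism. Since $\roo$ is surjective (Definition~\ref{defn:roots}), so is $\bar\roo$, and in particular $\bar\roo$ is not identically zero. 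Because $\{\gamma_i\fl{H}_u\}_{i\in I}$ generates $\fl{H}/\fl{H}_u$, a homomorphism vanishing on all the $\gamma_i\fl{H}_u$ would vanish on the whole group; hence there is $i\in I$ with $\roo(\gamma_i)=\bar\roo(\gamma_i\fl{H}_u)\ne 0$. If $\roo(\gamma_i)>0$ then $\roo\in\supp^+_{\fl{H}}(\gamma_i)$; if $\roo(\gamma_i)<0$ then $\roo(\gamma_i^{-1})=-\roo(\gamma_i)>0$, so $\roo\in\supp^+_{\fl{H}}(\gamma_i^{-1})$. Either way $\roo\in\supp_{\fl{H}}(\gamma_i)$, and the inclusion $\red(\fl{H})\subseteq\bigcup_i\supp_{\fl{H}}(\gamma_i)$ follows.

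I do not anticipate a genuine obstacle here: the argument uses only Definitions~\ref{defn:roots} and~\ref{defn:support} together with Proposition~\ref{prop:free_abelian} (the point being that every root kills $\fl{H}_u$, which is exactly the content of $\ker R=\fl{H}_u$). The one place to be mildly careful is the elementary group-theoretic step that a homomorphism into $\mathbb{Z}$ which is surjective — hence nonzero — cannot vanish on a generating set, which is where the hypothesis that the $\gamma_i\fl{H}_u$ generate $\fl{H}/\fl{H}_u$ is used.
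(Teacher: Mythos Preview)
Your proof is correct and follows essentially the same line as the paper's: both inclusions are handled the same way, with the nontrivial one reducing to the observation that a root vanishes on $\fl{H}_u$ and hence cannot vanish on every generator of $\fl{H}/\fl{H}_u$. Your version is marginally more explicit in citing Proposition~\ref{prop:free_abelian} for $\roo(\fl{H}_u)=\{0\}$ and in separating the cases $\roo(\gamma_i)>0$ versus $\roo(\gamma_i)<0$, but the substance is identical.
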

%%%%%%%
\begin{proof}
By definition, $\supp_{\fl{H}}(\gamma_i)$ is a subset of  $\red(\fl{H})$ for each $i\in I$ and so it remains only to show that $\red(\fl{H})\subseteq \bigcup\{\supp_{\fl{H}}(\gamma_i) \mid i\in I\}$. Suppose that $\roo\in\red(\fl{H})$. Then there is $\alpha\in\fl{H}$ such that $\roo(\alpha)>0$ and, since $\{\gamma_i\fl{H}_u\}_{i\in I}$ generates $\fl{H}/\fl{H}_u$, there are $i_1,\dots,i_s\in I$ such that 
$$
\alpha+\fl{H}_u = \gamma_{i_1} + \dots + \gamma_{i_s} + \fl{H}_u.
$$ 
Since it must be that $\roo(\gamma_{i_j})>0$ for at least one $\gamma_{i_j}$, 
$$
\roo\in \supp_{\fl{H}}(\gamma_{i_1})\cup \dots \cup \supp_{\fl{H}}(\gamma_{i_s})
$$ 
and the claim is established.
\end{proof}

For the next lemma, recall from Definition~\ref{defn:nub} that for a flat group $\fl{H}$, the Levi subgroup is $\lev(\fl{H}) = \left\{x\in G\mid \{\alpha(x)\}_{\alpha\in\fl{H}}^-\mbox{ is compact}\right\}$ and this subgroup is closed by Lemma~\ref{lem:lev_closed}.
%%%%%%%
\begin{lemma}
\label{lem:U_cap_lev}
Let $\fl{H}\leq\Aut(G)$ be flat and suppose that $\fl{K}\triangleleft \fl{H}$. Let $U$ be tidy for $\fl{H}$. Then $\lev(\fl{K})$ is $\fl{H}$-stable, and $\Uz{U}{\fl{K}}$ is a compact open subgroup of $\lev(\fl{K})$ that is tidy for the restriction of $\fl{H}$ to $\lev(\fl{K})$.
\end{lemma}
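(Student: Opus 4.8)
The claim has three parts: (i) $\lev(\fl{K})$ is $\fl{H}$-stable; (ii) $\Uz{U}{\fl{K}} := \bigcap\{\kappa(U)\mid \kappa\in\fl{K}\}$ is a compact open subgroup of $\lev(\fl{K})$; and (iii) $\Uz{U}{\fl{K}}$ is tidy for $\fl{H}|_{\lev(\fl{K})}$. The plan is to handle these in order, leaning on Lemma~\ref{lem:lev_closed} for the identification of $\Uz{U}{\fl{K}}$, on normality of $\fl{K}$ in $\fl{H}$ for $\fl{H}$-stability, and on Lemma~\ref{lem:tidymeetsshrinking} or a direct argument for tidiness.

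First I would establish (i). Given $x\in\lev(\fl{K})$ and $\beta\in\fl{H}$, I must show $\beta(x)\in\lev(\fl{K})$, i.e. that $\{\kappa(\beta(x))\mid \kappa\in\fl{K}\}^-$ is compact. Since $\fl{K}\triangleleft\fl{H}$, for each $\kappa\in\fl{K}$ we have $\kappa\beta = \beta\kappa'$ with $\kappa' = \beta^{-1}\kappa\beta\in\fl{K}$, so $\kappa(\beta(x)) = \beta(\kappa'(x))$ and therefore $\{\kappa(\beta(x))\mid \kappa\in\fl{K}\} = \beta\bigl(\{\kappa'(x)\mid\kappa'\in\fl{K}\}\bigr)$. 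The latter set has compact closure because $x\in\lev(\fl{K})$ and $\beta$ is a homeomorphism, so $\beta(x)\in\lev(\fl{K})$; applying the same to $\beta^{-1}$ gives $\fl{H}$-stability. Next, (ii): applying Lemma~\ref{lem:lev_closed} to the flat group $\fl{K}$ (with $U$ tidy for $\fl{K}$, since it is tidy for $\fl{H}\supseteq\fl{K}$) yields $U\cap\lev(\fl{K}) = \bigcap\{\kappa(U)\mid\kappa\in\fl{K}\} = \Uz{U}{\fl{K}}$. Thus $\Uz{U}{\fl{K}}$ is the intersection of a compact open subgroup of $G$ with the closed subgroup $\lev(\fl{K})$, hence is a compact open subgroup of $\lev(\fl{K})$.

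For (iii) I would argue that for each $\beta\in\fl{H}$, $\Uz{U}{\fl{K}}$ is tidy for $\beta|_{\lev(\fl{K})}$. Set $H := \lev(\fl{K})$; it is closed, and by (i) it is $\beta$-stable. For $x\in H$ the set $\{\kappa(x)\mid\kappa\in\fl{K}\}^-$ is compact, but this is not immediately the hypothesis of Lemma~\ref{lem:tidymeetsshrinking}, which asks that $\{\beta^n(x)\}_{n\geq0}$ have compact closure for $x\in H$ — that need not hold since $\beta$ may lie outside $\fl{K}$ and may scale. So the main obstacle is that Lemma~\ref{lem:tidymeetsshrinking} does not directly apply. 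Instead I would adapt the argument of Proposition~\ref{prop:roots}\eqref{prop:roots2} / Lemma~\ref{lem:tidymeetsshrinking}: show $\beta(H\cap U)\le H\cap U$ is \emph{not} expected in general either, so the right statement to prove is that $\Uz{U}{\fl{K}}$ is tidy above and tidy below for $\beta|_H$. For tidy below: $\Uz{U}{\fl{K}}\supseteq \nub(\fl{H})\supseteq\nub(\beta|_H)$ (using $\nub(\fl{H})\le U$ and $\fl{H}$-stability of $\nub(\fl{H})$, plus that $\nub(\beta|_H)$, being the intersection of $\beta|_H$-tidy subgroups of $H$, is contained in any $\beta$-tidy compact open subgroup of $H$ that lies in $H\cap U$), invoking Proposition~\ref{prop:nub_in}. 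For tidy above: I would use Proposition~\ref{prop:factor_U} applied to a suitable finite subfamily, or more directly Proposition~\ref{prop:functor}\eqref{prop:functor1} which gives tidy below of $U\cap H$ for $\beta|_H$ for free, combined with the observation that $\Uz{U}{\fl{K}} = U\cap H$ is $\fl{K}$-invariant (each $\kappa\in\fl{K}$ fixes it since $\kappa(\Uz{U}{\fl{K}}) = \bigcap_{\kappa'}\kappa\kappa'(U) = \Uz{U}{\fl{K}}$ as $\fl{K}$ is a group) so that the factors $\Upm{(U\cap H)}{\beta}$ of $U\cap H$ under the $\beta$-action lie inside $U\cap H$; concretely, writing $x\in\Uz{U}{\fl{K}}$ as $x = x_+x_-$ with $x_\pm\in\Upm{U}{\beta}$ by tidiness above of $U$, one checks $x_\pm$ remain in $\lev(\fl{K})$ — because the $\fl{K}$-orbit of $x_\pm$ is trapped between the $\fl{K}$-orbits related to $x$ via the factorization and $\fl{K}$-invariance, so has compact closure — hence $x_\pm\in U\cap\lev(\fl{K}) = \Uz{U}{\fl{K}}$, giving tidiness above. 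The delicate point, and the one I would spend the most care on, is precisely this last step: showing the tidy-above factors of $x$ stay inside $\lev(\fl{K})$; I expect it follows by a finite-intersection / compactness argument in the style of Lemma~\ref{lem:factor_U} (intersecting over $U(\kappa,K)$ for $\kappa\in\fl{K}$), exploiting that $\fl{K}$ normalizes nothing awkward here because $\Uz{U}{\fl{K}}$ is already $\fl{K}$-fixed.
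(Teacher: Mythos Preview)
Your approach is essentially the paper's. Parts (i), (ii), and tidy-below match exactly (for tidy-below the paper simply invokes Proposition~\ref{prop:functor}\eqref{prop:functor1}, as you also note; the detour through $\nub(\fl{H})\supseteq\nub(\beta|_H)$ is unnecessary and not obviously justified as stated).

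For tidy-above you have the right idea but the execution needs sharpening. You cannot factor $x$ once in $U$ as $x=x_+x_-$ with $x_\pm\in\Upm{U}{\beta}$ and then argue that \emph{those particular} factors lie in $\lev(\fl{K})$: the ``$\fl{K}$-orbit of $x_\pm$ is trapped'' heuristic fails, since applying $\kappa\in\fl{K}$ to $x$ does not respect a chosen $\beta$-factorisation. The paper carries out precisely the compactness argument you anticipate, but over the family $\{\kappa(U)\mid\kappa\in\fl{K}\}$ rather than over the $U(\kappa,K)$: each $\kappa(U)$ is tidy for $\beta$ (Proposition~\ref{prop:invariance_of_tidiness}) and contains $x$, so
\[
C_\kappa := \bigl\{(x_+,x_-)\in \Up{\kappa(U)}{\beta}\times\Um{\kappa(U)}{\beta}\ \bigm|\ x_+x_-=x\bigr\}
\]
is nonempty and compact; finite intersections of the $\kappa(U)$ are again tidy for $\beta$ (Theorem~\ref{thm:tidy_intersection}), giving the finite intersection property for $\{C_\kappa\}$; any $(x_+,x_-)\in\bigcap_\kappa C_\kappa$ then satisfies $\beta^{\mp n}(x_\pm)\in\kappa(U)$ for all $n\geq0$ and all $\kappa\in\fl{K}$, hence $x_\pm\in(\Uz{U}{\fl{K}})_{\beta\pm}$. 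The point is that the compactness argument \emph{selects} the good factorisation rather than verifying a pre-chosen one.
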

%%%%%%%
\begin{proof}
	By definition of flatness, it suffices to show for every $\alpha\in\fl{H}$ that $\lev(\fl{K})$ is stabilised by $\alpha$ and $\Uz{U}{\fl{K}}$ is tidy for $\alpha$.
	
Consider $x\in\lev(\fl{K})$, so that $\{\beta(x) \mid \beta\in\fl{K}\}$ has compact closure. Then
$$
\left\{\beta(\alpha(x)) \mid \beta\in\fl{K}\right\} = \left\{\alpha(\alpha^{-1}\beta\alpha(x)) \mid \beta\in\fl{K}\right\} = \alpha\left(\left\{\beta(x) \mid \beta\in\fl{K}\right\}\right)
$$
because $\fl{K}$ is normal in $\fl{H}$. Since the latter set has compact closure, $\alpha(x)$ belongs to $\lev(\fl{K})$ and $\lev(\fl{K})$ is $\alpha$-stable.

Since $U_{\fl{K}0}$ is compact and $\fl{K}$-stable, it is contained in $\lev(\fl{K})$. Moreover, $U_{\fl{K}0} = U\cap\lev(\fl{K})$ by Lemma~\ref{lem:lev_closed} and is therefore open. Proposition~\ref{prop:functor} shows that $\Uz{U}{\fl{K}}$ is tidy below for $\alpha|_{\lev(\fl{K})}$ and it remains to show that it is tidy above. 

Consider $u\in \Uz{U}{\fl{K}}$. Then $\fl{K}$-stability of $\Uz{U}{\fl{K}}$ implies that $u\in\beta(U)$ for every $\beta\in\fl{K}$ and so, since $\beta(U)$ is tidy for $\alpha$, $u = u_+u_-$ with $u_\pm\in \Upm{\beta(U)}{\alpha}$. In other words, 
$$
C_\beta := \left\{ (u_+,u_-)\in \Upm{\beta(U)}{\alpha} \mid u = u_+u_-\right\}
$$ 
is non-empty for every $\beta\in\fl{K}$, and $C_\beta$ is a closed, and therefore compact, subset of $\beta(U)$. Since the intersection of finitely many tidy subgroups is tidy, $\left\{ C_\beta \mid \beta\in\fl{K}\right\}$ has the finite intersection property. Therefore $\bigcap \left\{ C_\beta \mid \beta\in\fl{K}\right\} \ne\emptyset$. If $(u_+,u_-)$ lies in this intersection, then $u_\pm\in \Uz{U}{\fl{K}}$ and $u_+u_- = u$. Furthermore, $\alpha^{\mp k}(u_\pm)\in \beta(U)$ for every $k\geq0$ and every $\beta\in\fl{K}$, which implies that $u_\pm\in (U_{\fl{K}0})_{\pm}$. Therefore $\Uz{U}{\fl{K}}$ is tidy above for $\alpha$ as claimed.
\end{proof}

The final result in this section completes the proof of Theorem~\ref{thm:flat_group_structureB}.\ref{thm:flat_group_structureB2}. 
%%%%%%%
\begin{proposition}
\label{prop:factor_tidyU}
Let $\fl{H}\leq\Aut(G)$ be flat and suppose that $\rk(\fl{H})$ is finite. Let $U$ be tidy for $\fl{H}$. Then the roots in $\red(\fl{H})$ may be ordered as $\roo_1$,\dots, $\roo_q$ so that
$$
U = \Uz{U}{\fl{H}}U_{\roo_1} \dots U_{\roo_q} = \Uz{U}{\fl{H}}\prod_{\roo\in\red(\fl{H})} \left(U\cap  \con(\fl{A}_\roo/\nub(\fl{H}))\right).
$$
\end{proposition}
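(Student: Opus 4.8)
The plan is to prove this by induction on $\rk(\fl{H})$, using the factoring results already established for $\con(\alpha/\nub(\fl{H}))$ (Proposition~\ref{prop:factor_con}) together with the tidiness-above machinery of \S\ref{sec:factor-tidy} and the Levi-subgroup lemma (Lemma~\ref{lem:U_cap_lev}). First I would dispose of the rank-zero case: if $\fl{H}$ is uniscalar then $\red(\fl{H})=\emptyset$ by Corollary~\ref{cor:roots}, and $\Uz{U}{\fl{H}} = U$ because $\alpha(U)=U$ for all $\alpha\in\fl{H}$, so the asserted equality is trivially $U=U$. For the inductive step, pick $\alpha\in\fl{H}\setminus\fl{H}_u$. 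By Theorem~\ref{thm:factors_invariant} the subgroup $\con(\alpha/\nub(\fl{H}))$ is $\fl{H}$-stable, and by Proposition~\ref{prop:factor_con} it factors as $H_1\cdots H_n$ with each $H_i$ scaling for $\fl{H}|_{H_i}$, while by Proposition~\ref{prop:uniqueness} the associated roots are exactly $\supp^+_{\fl{H}}(\alpha^{-1})$. Applying the same to $\alpha^{-1}$ yields a factoring of $\con(\alpha^{-1}/\nub(\fl{H}))$ with roots $\supp^+_{\fl{H}}(\alpha)$, and Lemma~\ref{lem:U_cap_lev} applied to the normal subgroup $\ker\roo$ (or to a common kernel) lets me split off a Levi part carrying the roots \emph{not} in $\supp(\alpha)$.

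Concretely, I would proceed as follows. By Lemma~\ref{lem:factor_U} (and its $\Up{}{}$-analogue), $U = \Um{U}{\alpha}\Uz{U}{\alpha}\Up{U}{\alpha}$ in a prescribed order, where $\Uz{U}{\alpha}=\bigcap_{n\in\mathbb Z}\alpha^n(U)$; note $\Uz{U}{\alpha}$ is tidy for $\fl{H}$ (intersection of tidy subgroups by Theorem~\ref{thm:tidy_intersection} and Proposition~\ref{prop:invariance_of_tidiness}) and $\fl{H}$ restricts flatly to $\lev(\alpha)$ with $\Uz{U}{\alpha}$ tidy there. Now $\Um{U}{\alpha} = U\cap\con(\alpha/\nub(\fl{H}))$ by Proposition~\ref{prop:rep_description} (intersected with $U$ and using Proposition~\ref{prop:tidy_criteria}), and Corollary~\ref{cor:intersect_U} gives $\Um{U}{\alpha} = (U\cap H_1)\cdots(U\cap H_n)$; similarly for $\Up{U}{\alpha}$. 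Each $U\cap H_i$ equals $U\cap\con(\fl{A}_{\roo_i}/\nub(\fl{H}))$ up to finite index — here I would invoke Proposition~\ref{prop:roots}\ref{prop:roots2} and Proposition~\ref{prop:con_Aroo}\ref{prop:con_Aroo1}, which identify $U\cap H = U_{\roo}$ when $H$ is the \emph{maximal} scaling subgroup and give $\widehat U_\roo = \con(\fl{A}_\roo/U_{\fl{H}0})$; combined with Proposition~\ref{prop:con_Aroo}\ref{prop:con_Aroo2} this shows $U\cap\con(\fl{A}_\roo/\nub(\fl{H})) = U_\roo$. Then the rank of $\fl{H}|_{\lev(\alpha)}$ is strictly smaller than $\rk(\fl{H})$ (since $\alpha$ acts trivially on nothing nontrivial there... more carefully, $\red(\fl{H}|_{\lev(\alpha)})$ omits $\supp(\alpha)\ne\emptyset$, so by Proposition~\ref{prop:red&support} and Corollary~\ref{cor:free_abelian} the rank drops), so the induction hypothesis factors $\Uz{U}{\alpha}$ over the remaining roots, and splicing the three ordered pieces together gives the full product over $\red(\fl{H})$.

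The main obstacle I anticipate is \textbf{showing the order is consistent and independent of $U$}, i.e. assembling the local orders (the order of the $H_i$ inside each $\con(\alpha/\nub(\fl{H}))$ from Proposition~\ref{prop:factor_con}, the order of the three blocks $\Um{U}{\alpha}$, $\Uz{U}{\alpha}$, $\Up{U}{\alpha}$, and the inductively-obtained order on $\lev(\alpha)$) into a single total order on $\red(\fl{H})$ that works simultaneously for every choice of $\alpha$ and every tidy $U$. The right framework is presumably to define a relation $\roo\prec\roo'$ whenever some $\gamma\in\fl{H}$ forces $U_\roo$ to appear before $U_{\roo'}$ in an application of Proposition~\ref{prop:factor_U}, then show this relation is acyclic — acyclicity should follow from the scale-factoring in Proposition~\ref{prop:factor_U-}, since a cycle would produce a strictly decreasing chain of integer scale factors. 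Extending the acyclic relation to a total order (possible since $\red(\fl{H})$ is finite) and checking that this single order realizes every factorization is the delicate bookkeeping step; the algebraic content — that $U$ is \emph{some} product of the $U_\roo$ and $\Uz{U}{\fl{H}}$ — follows more or less directly from the cited propositions once this order is in hand. I would also need a short argument that $\Uz{U}{\fl{H}} = \bigcap_{\alpha\in\fl{H}}\alpha(U)$ absorbs all the "leftover" pieces from the nested inductions consistently, which follows from Lemma~\ref{lem:lev_closed} and the fact that $\fl{H}_u$-elements fix every tidy subgroup (Proposition~\ref{prop:uniscalar}).
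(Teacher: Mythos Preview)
Your approach is essentially the paper's: induction on rank, splitting $U$ via a chosen non-uniscalar $\alpha$ into a Levi piece $\Uz{U}{\langle\alpha,\fl{H}_u\rangle}$ and two contraction pieces, applying Lemma~\ref{lem:U_cap_lev} to descend to $\lev(\langle\alpha,\fl{H}_u\rangle)$ with strictly smaller rank, and invoking Proposition~\ref{prop:factor_con} together with Corollary~\ref{cor:intersect_U} on the contraction pieces.

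Two small points. First, the claim $\Um{U}{\alpha}=U\cap\con(\alpha/\nub(\fl{H}))$ is not correct in general: $\Um{U}{\alpha}$ contains $\Uz{U}{\alpha}$, which may strictly exceed $\nub(\fl{H})$. The paper sidesteps this by writing $U=\Uz{U}{\fl{K}}D_+D_-$ directly, with $\fl{K}=\langle\alpha,\fl{H}_u\rangle$ and $D_\pm=U\cap\con(\alpha^{\mp1}/\nub(\fl{H}))$, absorbing the excess into the $\Uz{U}{\fl{K}}$ factor. Second, your ``main obstacle'' about assembling a consistent order is over-engineered. The statement asserts only the \emph{existence} of an order, and the paper simply defines it recursively: first the roots outside $\supp_{\fl{H}}(\alpha)$ in the inductively obtained order on the Levi part, then those in $\supp^+_{\fl{H}}(\alpha)$ and $\supp^+_{\fl{H}}(\alpha^{-1})$ in the order they emerge from the factorizations of $\con(\alpha^{\mp1}/\nub(\fl{H}))$ in Proposition~\ref{prop:factor_con}. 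No acyclicity relation is needed; independence from $U$ is automatic because the construction depends only on the chosen generators of $\fl{H}/\fl{H}_u$, not on $U$.
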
 
%%%%%%%
\begin{proof} 
Since $U_{\roo_i} = \Uz{U}{\fl{H}}\left(U\cap  \con(\fl{A}_{\roo_i}/\nub(\fl{H}))\right)$ for each $\roo_i\in \red(\fl{H})$, by Proposition~\ref{prop:con_Aroo}, and since $\Uz{U}{\fl{H}}$ normalizes $\con(\fl{A}_{\roo_i}/\nub(\fl{H}))$, it suffices to show that $U$ is equal to the second product. 

The proof is by induction on $\rk(\fl{H})$, that is, on the number of generators of $\fl{H}/\fl{H}_u$. The base case when $\rk(\fl{H})=0$ holds because we then have $U = \Uz{U}{\fl{H}}$ and $\red(\fl{H})=\emptyset$. 

Assume that the claim has been shown for all flat $\fl{K}$ with $\rk(\fl{K})\leq k$. Let $\fl{H}$ be flat with rank $k+1$ and suppose that $\gamma_1\fl{H}_u$, \dots, $\gamma_{k+1}\fl{H}_u$ generate $\fl{H}/\fl{H}_u$. Then  Corollary~\ref{cor:Flat=>abelian_mod_uniscalar} shows that $
\fl{K} =\langle\gamma_{k+1},\fl{H}_u\rangle$ is a normal subgroup of $\fl{H}$ and Propositions \ref{prop:U--^con} and \ref{prop:nub_smallest} show that
$$
U = \Uz{U}{\fl{K}}\Up{D}{\gamma_{k+1}}\Um{D}{\gamma_{k+1}}
$$
with $\Upm{D}{\gamma_{k+1}} = U\cap \con(\gamma_{k+1}^{\mp1}/\nub(\fl{H}))$. (We have also used that $\Uz{U}{\fl{K}}$ normalises $\con(\gamma_{k+1}/\nub(\fl{H}))$ and $\con(\gamma_{k+1}^{-1}/\nub(\fl{H}))$.) Lemma~\ref{lem:U_cap_lev} shows that $\Uz{U}{\fl{K}}$ is a compact open subgroup of $\lev(\fl{K})$ and is tidy for $\fl{H}|_{\lev(\fl{K})}$. Since $\gamma_{k+1}$ is uniscalar on $\lev(\fl{K})$, it follows that $\rk(\fl{H}|_{\lev(\fl{K})}) \leq k$ and we may apply the induction hypothesis. Hence the roots in $\red(\fl{H}|_{\lev(\fl{K})})$ may be ordered as $\roo_1$, \dots, $\roo_r$ such that, abbreviating $U\cap \con(\fl{A}_\roo/\nub(\fl{H}))$ by ${C}_\roo$ and denoting $\bigcap\left\{ \alpha|_{\lev(\fl{K})}(\Uz{U}{\fl{K}})\mid \alpha\in\fl{H}\right\}$ by $\widetilde{U}$, 
$$
\Uz{U}{\fl{K}} = \widetilde{U} C_{\roo_1}\dots {C}_{\roo_r}.
$$ 
Then the fact that $\fl{H}|_{\lev(\fl{K})}$ is a quotient of $\fl{H}$ and $\roo_1$, \dots, $\roo_r$ pull back to roots of $\fl{H}$, combined with the identities
\begin{align*}
\Uz{U}{\fl{K}} &= \bigcap\left\{\gamma_{k+1}^n(U)\mid n\in\mathbb{Z}\right\}\\
\mbox{ and }\fl{H} &= \langle\gamma_1,\dots, \gamma_k\rangle\fl{K},
\end{align*}
imply that $\widetilde{U} = \Uz{U}{\fl{H}}$. Hence, noting that 
$$
\Uz{U}{\fl{K}}\cap\con(\fl{A}_\roo/\nub(\fl{H})) = U\cap\con(\fl{A}_\roo/\nub(\fl{H}))
$$ 
if $\roo\in\red(\fl{H}|_{\lev(\fl{K})})$ because $\langle\gamma_{k+1}\rangle\subset \fl{A}_\roo$ in this case, we have that
$$
U = \Uz{U}{\fl{H}}C_{\roo_1}\dots C_{\roo_r}\Up{D}{\gamma_{k+1}}\Um{D}{\gamma_{k+1}}.
$$
Next, Theorem~\ref{thm:factors_invariant} shows that $\con(\gamma_{k+1}^{-1}/\nub(\fl{H}))$ and $\con(\gamma_{k+1}/\nub(\fl{H}))$ are stable under $\fl{H}$ and Propositions~\ref{prop:factor_con} and~\ref{prop:uniqueness} show that these subgroups are products of groups $\con(\fl{A}_\roo/\nub(\fl{H}))$ with $\roo\in\red(\fl{H})$ in the support of $\gamma_{k+1}$. Writing these roots in order $\roo_{r+1}$, \dots, $\roo_q$ according to the order they appear as factors in $\con(\gamma_{k+1}^{-1}/\nub(\fl{H}))$ and $\con(\gamma_{k+1}/\nub(\fl{H}))$ respectively and abbreviating $U\cap \con(\fl{A}_\roo/\nub(\fl{H}))$ by ${C}_\roo$, yields 
$$
\Up{D}{\gamma_{k+1}}\Um{D}{\gamma_{k+1}} = C_{\roo_{r+1}}\dots C_{\roo_q}
$$
by Corollary~\ref{cor:intersect_U}. Hence 
$$
U = \Uz{U}{\fl{H}} C_{\roo_1}\dots C_{\roo_q}
$$
with $\roo_1$, \dots, $\roo_q$ the roots of $\fl{H}$. (Note that every root in $\red(\fl{H})$ appears: those in $\supp_{\fl{H}}(\gamma_{k+1})$ are in the list $\roo_{r+1}$, \dots, $\roo_q$, and those in the complement of $\supp_{\fl{H}}(\gamma_{k+1})$ are in the list $\roo_1$, \dots, $\roo_r$.) Therefore the claim of the induction holds with $\rk(\fl{H})=k+1$.
\end{proof}

\begin{remark}
\label{rem:factor_con}
A weaker version of Proposition~\ref{prop:factor_con} was shown in~\cite{SimulTriang} by using the same bisection argument as used above to prove Proposition~\ref{prop:factor_con}. The present proof highlights this argument more than is done in~\cite{SimulTriang}, produces the statement about relative contraction groups, and shows more explicitly how the order of the factors $C_{\roo_1}$, \dots, $C_{\roo_q}$ emerges from order of the generators $\gamma_1$, \dots, $\gamma_{k+1}$. This approach may also be used to see how the tidy subgroup $U$ factors when $\fl{H}/\fl{H}_u$ is not finitely generated but it must first be said what an infinite product of non-commuting subgroups means, and that is beyond the scope of these notes.
\end{remark}

\subsection{The lower nub for a  flat group}
\label{sec:Levi_flat}

The nub subgroup for an automorphism $\alpha\in\Aut(G)$ is simultaneously the largest subgroup of $G$ contained in all subgroups tidy for $\alpha$, by Theorem \ref{thm:nub}, and the smallest $\alpha$-stable subgroup, $C$, such that $\con(\alpha/C)$ is closed, by Proposition \ref{prop:nub_smallest}. The nub subgroup, $\nub(\fl{H})$ for flat $\fl{H}$ is the largest group contained in all subgroups tidy for $\fl{H}$. However, although $\con(\alpha/\nub(\fl{H}))$ is closed for every $\alpha\in\fl{H}$, $\nub(\fl{H})$ might not be the smallest $\fl{H}$-invariant group with this property. 

The nub of a flat group of automorphisms was first studied by Colin Reid in~\cite{Reid_DynamicsNYJ_2016}. He also introduced what he called the \emph{lower nub}, denoted by $\lnub(\fl{H})$, which is the closed subgroup generated by all groups $\nub(\alpha)$ with $\alpha\in\fl{H}$. Since $\beta(\nub(\alpha)) = \nub(\beta\alpha\beta^{-1})$ and $\fl{H}$ normalises itself, $\lnub(\fl{H})$ is $\fl{H}$-stable. Since a subgroup tidy for $\fl{H}$ is tidy for every $\alpha\in\fl{H}$, Theorem~\ref{thm:nub} implies the following.
%%%%%%%%
\begin{proposition}
\label{prop:nubinnub}
Suppose that $\fl{H}\leq\Aut(G)$ is flat. Then 
$$
\langle \nub(\alpha) \mid\alpha\in\fl{H}\rangle \leq \nub(\fl{H}).
$$
Hence $\lnub(\fl{H})\leq \nub(\fl{H})$ and $\lnub(\fl{H})$ is compact. 
\end{proposition}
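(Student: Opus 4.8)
The plan is to derive the proposition directly from the characterisation of the nub of a single automorphism given in Theorem~\ref{thm:nub}, namely that $\nub(\alpha)$ is the intersection of all compact open subgroups tidy for~$\alpha$. First I would fix an arbitrary $\alpha\in\fl{H}$ and let $U$ be \emph{any} compact open subgroup tidy for~$\fl{H}$. By Definition~\ref{defn:flat}, such a $U$ is tidy for every element of~$\fl{H}$, in particular for~$\alpha$. Hence $U$ is one of the subgroups whose intersection defines $\nub(\alpha)$, so $\nub(\alpha)\leq U$. Since this holds for every $U$ tidy for~$\fl{H}$, we get $\nub(\alpha)\leq\bigcap\{U\mid U\text{ tidy for }\fl{H}\}=\nub(\fl{H})$ by Definition~\ref{defn:nub}.

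Next I would observe that $\nub(\fl{H})$ is a subgroup of~$G$ (indeed a compact one, as noted immediately after Definition~\ref{defn:nub}), and so it contains the subgroup generated by $\{\nub(\alpha)\mid\alpha\in\fl{H}\}$, since each generator lies inside it. This gives
$$
\langle\nub(\alpha)\mid\alpha\in\fl{H}\rangle\leq\nub(\fl{H}).
$$
Passing to closures and using that $\nub(\fl{H})$ is already closed (being compact) yields $\lnub(\fl{H})=\overline{\langle\nub(\alpha)\mid\alpha\in\fl{H}\rangle}\leq\nub(\fl{H})$. Compactness of $\lnub(\fl{H})$ is then immediate: it is a closed subgroup of the compact group $\nub(\fl{H})$.

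I do not anticipate a genuine obstacle here — the statement is essentially a bookkeeping consequence of the two definitions and Theorem~\ref{thm:nub}. The only point requiring a word of care is the distinction between the abstract subgroup generated by the $\nub(\alpha)$ and its closure $\lnub(\fl{H})$; since $\nub(\fl{H})$ is closed, both are contained in it, so the closure operation causes no trouble. The proof is therefore short, and I would write it roughly as follows.

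\begin{proof}
Let $\alpha\in\fl{H}$ and suppose that $U\in\COS(G)$ is tidy for $\fl{H}$. Then $U$ is tidy for $\alpha$, by Definition~\ref{defn:flat}, and hence $\nub(\alpha)\leq U$ by Theorem~\ref{thm:nub}. Since this holds for every $U$ tidy for $\fl{H}$, Definition~\ref{defn:nub} gives $\nub(\alpha)\leq\nub(\fl{H})$. As $\nub(\fl{H})$ is a subgroup of $G$ and contains each $\nub(\alpha)$, it contains $\langle\nub(\alpha)\mid\alpha\in\fl{H}\rangle$, which proves the displayed inclusion. Because $\nub(\fl{H})$ is compact, it is closed, so it also contains the closure $\lnub(\fl{H})$ of this subgroup. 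Finally, $\lnub(\fl{H})$ is a closed subgroup of the compact group $\nub(\fl{H})$ and is therefore compact.
\end{proof}
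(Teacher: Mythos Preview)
Your proof is correct and follows essentially the same approach as the paper, which simply observes that a subgroup tidy for $\fl{H}$ is tidy for every $\alpha\in\fl{H}$ and then invokes Theorem~\ref{thm:nub}. Your version is slightly more explicit in handling the closure step for $\lnub(\fl{H})$, but the argument is the same.
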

%%%%%%%%
Since $\lnub(\fl{H})$ is compact and $\fl{H}$-invariant, the contraction group for $\alpha$ modulo $\lnub(\fl{H})$, see Definition~\ref{defn:relative_contraction}, is well-defined. 
%%%%%%%%
\begin{corollary}
\label{cor:nubinnub} 
The contraction group $\con(\alpha/\lnub(\fl{H}))$ is closed for every $\alpha\in\fl{H}$ and $\lnub(\fl{H})$ is the smallest closed group with this property. Furthermore, $\con(\alpha/\nub(\fl{H}))$ is closed for every $\alpha\in\fl{H}$.
\endproof
\end{corollary}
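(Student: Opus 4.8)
The corollary is a formal consequence of Proposition~\ref{prop:nub_smallest} and Proposition~\ref{prop:nubinnub}, so the plan is short. The plan is to read ``smallest closed group with this property'' as minimality within the class of compact, $\fl{H}$-stable subgroups of $G$ (the natural class on which $\con(\alpha/C)$ is defined), and to reduce every assertion to the single-automorphism statement of Proposition~\ref{prop:nub_smallest}. Throughout I would use that $\lnub(\fl{H})$ is compact by Proposition~\ref{prop:nubinnub} and $\fl{H}$-stable by the discussion preceding Proposition~\ref{prop:nubinnub}, hence $\alpha$-stable for every $\alpha\in\fl{H}$.

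For the first assertion, fix $\alpha\in\fl{H}$. Since $\lnub(\fl{H})$ is by definition the closed subgroup generated by $\{\nub(\beta)\mid\beta\in\fl{H}\}$, it contains $\nub(\alpha)$. As $\lnub(\fl{H})$ is also compact and $\alpha$-stable, Proposition~\ref{prop:nub_smallest} applied to $\alpha$ alone gives that $\con(\alpha/\lnub(\fl{H}))$ is closed.

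For minimality, I would take any compact, $\fl{H}$-stable $C\leq G$ with $\con(\alpha/C)$ closed for every $\alpha\in\fl{H}$; for each such $\alpha$ the subgroup $C$ is compact and $\alpha$-stable, so Proposition~\ref{prop:nub_smallest} forces $\nub(\alpha)\leq C$. Since this holds for all $\alpha\in\fl{H}$ and $C$ is closed, $\lnub(\fl{H})\leq C$; combined with the first assertion (which shows $\lnub(\fl{H})$ itself lies in the class), this identifies $\lnub(\fl{H})$ as the smallest such group. Finally, the ``furthermore'' follows because $\lnub(\fl{H})\leq\nub(\fl{H})$ by Proposition~\ref{prop:nubinnub}, with $\nub(\fl{H})$ compact and $\fl{H}$-stable, so $\nub(\alpha)\leq\nub(\fl{H})$ for each $\alpha\in\fl{H}$ and one more application of Proposition~\ref{prop:nub_smallest} gives closedness of $\con(\alpha/\nub(\fl{H}))$ — alternatively this is immediate from the minimality just shown. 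There is no genuine obstacle here; the only point requiring care is stating precisely the class of subgroups over which minimality is claimed.
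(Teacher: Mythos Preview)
Your proposal is correct and matches the paper's intended argument: the corollary is stated with \verb|\endproof| and no written proof, so it is meant to follow immediately from Proposition~\ref{prop:nub_smallest} together with Proposition~\ref{prop:nubinnub}, exactly as you spell out. Your clarification that ``smallest'' should be read within the class of compact, $\fl{H}$-stable subgroups (the class on which $\con(\alpha/C)$ is defined in Definition~\ref{defn:relative_contraction}) is a useful point the paper leaves implicit.
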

 %%%%%%%%

Example~4.1 in~\cite{Reid_DynamicsNYJ_2016} shows that the inclusion $\lnub(\fl{H})\leq \nub(\fl{H})$ in Proposition~\ref{prop:nubinnub} may be strict. The flat group in that example is abelian and not finitely generated. Here are a couple of additional examples. The first takes a different approach and gives a finitely generated flat group, while the second is based on the idea in~\cite{Reid_DynamicsNYJ_2016} but incorporates additional features which illustrate arguments in earlier sections. 
%%%%%%%%
\begin{example}
\label{examp:lnub_not_nub}
Let $\Gamma$ be a finitely generated (discrete) group and $F$ be a finite group. Then $G:=F^{\Gamma}$ is a compact group and the translation action embeds $\Gamma$ as a flat subgroup of $\Aut(G)$. Since the action of $\Gamma$ on itself by translation is transitive, $G$ itself is the smallest subgroup tidy for $\Gamma$ and so $\nub(\Gamma)= G$. However, if every element of $\Gamma$ has finite order, as constructed  in~\cite{Adian,Golod,Grigorchuk_Burnside} for instance, then $\nub(\gamma) = \triv$ for every $\gamma\in\Gamma$ and hence $\lnub(\Gamma) = \triv$. 
\end{example}
%%%%%%%%
\begin{example}
\label{examp:lnub_not_nub2}
Let $\mathbb{F}_p$ be the additive group of the field of order~$p$ and let $B = \mathbb{F}_p(\!(t)\!) \times \mathbb{F}_p^{\mathbb{Z}}$ be the abelian group that is the direct product of the additive group of the field of formal Laurent series over $\mathbb{F}_p$ and the product over $\mathbb{Z}$ of copies of $\mathbb{F}_p$. Denote elements of $\mathbb{F}_p(\!(t)\!)$ by $f = \sum_{m\in\mathbb{Z}} f(m)t^m$. For each $n\in\mathbb{Z}$, let $u_n\in \mathbb{F}_p^{\mathbb{Z}}$ be given by $u_n(i) = \begin{cases} 1, & \mbox{ if }i=n\\ 0, & \mbox{ otherwise}\end{cases}$. Define, for each $m,n\in\mathbb{Z}$, an automorphism $\beta_{m,n}$ of $B$ by 
$$
\beta_{m,n}(f,g) = (f,g+f(m)u_n), \quad f\in \mathbb{F}_p(\!(t)\!),\ g\in\mathbb{F}_p^{\mathbb{Z}}.
$$
Then $\fl{B} := \langle\beta_{m,n} \mid m,n\in\mathbb{Z}\rangle$ is a flat subgroup of $\Aut(B)$ with every compact open subgroup of $B$ containing $\{0\}\times \mathbb{F}_p^{\mathbb{Z}}$ being invariant under $\fl{B}$ and therefore tidy. Every subgroup tidy for $\fl{B}$ must contain $\{0\}\times \mathbb{F}_p^{\mathbb{Z}}$. Hence $\nub(\fl{B}) = \{0\}\times \mathbb{F}_p^{\mathbb{Z}}$. On the other hand, since every element of $\fl{B}$ has order $p$, $\nub(\beta) = \{0\}\times\{0\}$ for every $\beta\in\fl{B}$ and so $\lnub(\fl{B}) = \{0\}\times\{0\}$.

Next, define $\tau\in\Aut(B)$ by $\tau(f,g) = (tf,g)$, so that $\tau$ is the shift on the first coordinate and the identity on the second. Then $\fl{H}:= \langle \tau, \fl{B}\rangle$ is a flat subgroup of $\Aut(B)$ with $t^m\mathbb{F}_p[\![t]\!]\times \mathbb{F}_p^{\mathbb{Z}}$ tidy for $\fl{H}$ every $m\in\mathbb{Z}$. Hence $\nub(\fl{H}) = \{0\}\times \mathbb{F}_p^\mathbb{Z}$. Once again we have that $\nub(\beta) =  \{0\}\times\{0\}$ for every $\beta\in\fl{H}$. 

Further define $\sigma\in\Aut(B)$ by $\sigma(f,g) = (f,g^{(1)})$, where $g^{(1)}(n) = g(n-1)$ is the shift of $g$. Then $\fl{K}:= \langle \sigma, \tau, \fl{B}\rangle$ is flat but now we have that $\nub(\sigma) = \{0\}\times \mathbb{F}_p^{\mathbb{Z}}$ and $\nub(\fl{K}) = \lnub(\fl{K})$. 
\end{example}

\begin{example}
	\label{examp:nub_smaller} 
	Let 
	$$
	B = \mathbb{F}_p(\!(t)\!) \times \mathbb{F}_p^{\mathbb{Z}},\ \fl{B} = \langle\beta_{m,n} \mid m,n\in\mathbb{Z}\rangle\mbox{ and }\fl{H} = \langle \tau, \fl{B}\rangle\leq\Aut(B)
	$$ 
	be as in Example~\ref{examp:lnub_not_nub2}. Put $G = B_1\times B_2$ with $B_i = B$ for each $i$ and $\fl{H}^{(1)} = \fl{H}\times \{\id\}$ and $\fl{H}^{(2)} = \{\id\}\times\fl{H}$, so that $\fl{H}^{(1)} \times \fl{H}^{(2)} \leq \Aut(G)$. Then $\fl{H}^{(1)} \times \fl{H}^{(2)}$ is flat because $\fl{H}$ is and 
	$$
	\nub(\fl{H}^{(1)} \times \fl{H}^{(2)}) = \nub(\fl{H}^{(1)}) \times \nub(\fl{H}^{(2)}) = (\{0\} \times \mathbb{F}_p^{\mathbb{Z}})\times(\{0\} \times \mathbb{F}_p^{\mathbb{Z}}).
	$$ 
	Furthermore, $\con((\tau,\tau)/\nub(\fl{H}^{(1)} \times \fl{H}^{(2)})) = B_1\times B_2 = G$. Choosing $\beta = (\tau,\tau^{-1})$, yields that
	$$
	\con(\beta/\nub(\fl{H}^{(1)} \times \fl{H}^{(2)})) = \left(\mathbb{F}_p(\!(t)\!) \times \mathbb{F}_p^{\mathbb{Z}}\right)\times\left(\{0\} \times \mathbb{F}_p^{\mathbb{Z}}\right).
	$$ 
	Then the restriction of $\fl{H}^{(1)} \times \fl{H}^{(2)}$ to $\con(\beta/\nub(\fl{H}^{(1)} \times \fl{H}^{(2)}))$ has nub $\left(\{0\} \times \mathbb{F}_p^{\mathbb{Z}}\right)\times\left(\{0\} \times \{0\}\right)$, which is strictly smaller than $\nub(\fl{H}^{(1)} \times \fl{H}^{(2)})$.
\end{example}
%%%%%%%% 

%%Example here showing that $\overline{\con(\alpha)}\cap U$ need not be normalised by $N$? 

%%

%
%
\newpage
\section{Exercises}

\subsection{Minimising subgroups and the scale for single endomorphisms}
\label{sec:single_automorphism}

\begin{enumerate}

	\item \label{exer:not_tidy}
	Let $G = \bigoplus_{n\in\mathbb{Z}} F$ with $F$ a finite group and equip $G$ with the discrete topology. Let $\alpha\in\Aut(G)$ be the shift, {\it i.e.\/}
	$$
	\alpha f(n) = f(n+1), \quad (f\in G,\ n\in\mathbb{Z}).
	$$
	Define $U = \left\{ f\in G \mid f(n)=\id_F \mbox{ if } n<0\mbox{ or } n>5\right\}$. 
	\begin{enumerate}
		\item Explain why $U\in\COS(G)$.
		%%$U$ is compact because it is finite and open because the topology on $G$ is discrete.
		\item Show that $\alpha^m(U)\cap\alpha^n(U)  = \bigcap_{k=m}^n \alpha^k(U)$ for all $m\leq n\in\mathbb{Z}$.
%		Denote, for $a\leq b\in\mathbb{Z}$, $U_{[a,b]} =  \left\{ f\in G \mid f(n)=\id_F \mbox{ unless } n<a\mbox{ or } n>b\right\}$. Then $U=U_{[0,5]}$ and $\alpha^k(U_{[0,5]}) = U_{[a-k,b-k]}$ for all $k$. Hence 
%		$$
%		\alpha^m(U_{[0,5]}) \cap \alpha^n(U_{[0,5]}) =
%		\begin{cases} \triv, & \mbox{ if }m-n\geq6\\
%		U_{[-m,5-n]}, & \mbox{ otherwise}
%		\end{cases}
%		$$
%		and the claim follows.
\item Show that $U$ is not tidy above for $\alpha$. 
	\end{enumerate}

	\item \label{exer:not_tidy2}
 Let $G = F^{\mathbb{Z}}$ with $F$ a finite group and equip $G$ with the product topology. 

Let $\sigma\in \Aut(G)$ be the shift, {\it i.e.\/} 
$$
(\sigma f)(n) = f(n+1), \quad f\in F^{\mathbb{Z}},\ n\in\mathbb{Z}.
$$
Let $U_1 = \left\{ f\in F^{\mathbb{Z}} \mid f(0)=\id_F\right\}$ and $U_2 = \left\{ f\in F^{\mathbb{Z}} \mid f(0)=\id_F = f(2)\right\}$.
\begin{enumerate}
	\item Compute the indices $[\sigma(U_i) : \sigma(U_i)\cap U_i]$ for $i=1,2$.
	\item Are $U_1$ and $U_2$ tidy above? tidy below for $\sigma$? 
	\item What is the value of $s(\sigma)$? Find a subgroup tidy for $\sigma$.
\end{enumerate}

\item \label{ex:comparable_tidy}
Let $\alpha$ be an automorphism of $G$ and suppose that there is $U\in\COS(G)$ such that $\alpha(U)\geq U$. Show that $U$ is tidy for $\alpha$. Suppose, on the other hand, that there is $U\in\COS(G)$ with $\alpha(U)\leq U$. Show that $U$ is tidy in this case.

\item \label{ex:comparable_not_tidy}
Let $G = F^{\mathbb{N}}$ with $F$ a finite group and let 
$$
(\alpha f)(n) = f(n+1),\quad f\in F^{\mathbb{N}},\ n\in\mathbb{N},
$$ 
be the shift endomorphism.
\begin{enumerate}
	\item What is $\ker\alpha$?
	\item What is $s(\alpha)$?
	\item Let $U = \left\{f\in G \mid f(0)=f(1)=\id_F\right\}$. Show that $\alpha(U)\geq U$ and find $[\alpha(U):\alpha(U)\cap U]$. Observe that $U$ is not minimising for $\alpha$.
	\item Show that $U$ is tidy above for $\alpha$.
	\item Determine $\Umm{U}{\alpha}$ and deduce that $U$ is not tidy below for $\alpha$. 
\end{enumerate}

\item 
\label{exer:endo_tidy_below}
Let $\alpha\in\End(G)$. Let $n\in\mathbb{N}$ and suppose that $U$ is tidy for $\alpha$. 
\begin{enumerate}
	\item Show that $\Um{U}{\alpha^n} = \Um{U}{\alpha}$. (It may help to use Proposition~\ref{prop:tidy_criteria}.)
	\item Show that $\Umm{U}{\alpha^n} = \Umm{U}{\alpha}$ and deduce that $U$ is tidy below for $\alpha^n$.
\end{enumerate}

\item 
\label{exer:scale_prop1}
Let $\alpha\in\Aut(G)$ and $U\in \COS(G)$. 
\begin{enumerate}
	\item Show that $[U : \alpha(U)\cap U] = [\alpha^{-1}(U) : \alpha(U)^{-1}\cap U]$.
	%%%The identity holds because $\alpha^{-1}$ is an automorphism.
\end{enumerate}
Let $m$ be a left-invariant Haar measure on $G$. Use the identity 
$$
\frac{m(\alpha(U))}{m(U)} = \frac{m(\alpha(U))}{m(\alpha(U)\cap U))}\times \frac{m(\alpha(U)\cap U))}{m(U)}.
$$
to deduce the following.
\begin{enumerate}
	\setcounter{enumii}{1}
	\item \label{exer:scale_prop12}
	$[\alpha(U) : \alpha(U)\cap U]$ is minimised at $U$ if and only if \\
	$[\alpha^{-1}(U) : \alpha(U)^{-1}\cap U]$ is minimised at $U$, and hence that $U$ is tidy for $\alpha$ if and only if it is tidy for $\alpha^{-1}$.
%	Since $\frac{m(\alpha(U))}{m(U)} = \Delta(\alpha)$ is independent of $U$, if $\frac{m(\alpha(U))}{m(\alpha(U)\cap U))}$ is minimised, then $\frac{m(\alpha(U)\cap U))}{m(U)}$ is maximised.
	\item If $\Delta : \Aut(G) \to \mathbb{R}^+$ is the modular function on $\Aut(G)$, then $\Delta(\alpha) = s(\alpha)/s(\alpha^{-1})$.
%	 Let $m$ be a Haar measure on $G$. Then $\Delta(\alpha) = m(\alpha(O))/m(O)$ for any compact subset of $G$ with non-empty interior and hence in particular with $O=U$. Note next that $\frac{m(\alpha(U))}{m(\alpha(U)\cap U))} = [\alpha(U):\alpha(U)\cap U] = s(\alpha)$ if $U$ is tidy for $\alpha$.
\end{enumerate}

\item 
\label{exer:scale_prop2}
Let $\alpha\in\Aut(G)$.
\begin{enumerate}
	\item Show that, if $U$ is tidy for $\alpha$, then $U$ is tidy for $\alpha^n$ with $n\in\mathbb{Z}$.
	\item Show that, for $n\in\mathbb{N}$, $s(\alpha^n) = s(\alpha)^n$.
\end{enumerate}

\item 
\label{exer:power_tidy_not}
Let $F$ be a finite group and put
$$
G = \left\{f\in F^\mathbb{Z}\mid \exists N\in\mathbb{Z} \mbox{ such that }f(n)=\id_F\mbox{ if }n\leq N\right\}.
$$
Equip $G$ with the group topology such that $U_N :=\left\{f\in G\mid f(n)=\id_F\mbox{ if }n\leq N\right\}$ is a compact open subgroup of $G$. (Thus $U_N \cong F^{[N,\infty)}$ with the product topology.) Define
$$
(\alpha f)(n) = f(n+1),\quad f\in G, n\in\mathbb{Z}.
$$
\begin{enumerate}
	\item Show that $\alpha\in\Aut(G)$.
	\item Let $U = \left\{f\in G\mid f(n)=\id_F\mbox{ if }n<0\mbox{ or }n=2\right\}$. Show that $U$ is tidy for $\alpha^3$ but not for $\alpha$.
\end{enumerate}

\item \label{exer:tree}
Let $G = \Isom(\tree_{q+1})$ be the isometry group of the regular tree in which every vertex has $q+1$ neighbours and equip $G$ with the permutation topology, or equivalently, the topology of convergence on finite sets of vertices. 

Let $x$ be a translation of $\tree_{q+1}$ by distance $d$ along an axis $\ell\subset V(\tree_{q+1})$. Denote the vertices on $\ell$ by $v_n$, $n\in\mathbb{Z}$, with $v_{n+1}$ and $v_{n-1}$ the neighbours of $v_n$ on the path. Then $x.v_n = v_{n-d}$ for all $v_n\in\ell$. Let $\alpha_x$ be the inner automorphism 
$$
\alpha_x : y\mapsto xyx^{-1}, \quad y\in \Isom(\tree_{q+1}).
$$ 
Suppose that $w_0,w_1\in V(\tree_{q+1})$ be neighbours of $v_0$ and $v_1$ respectively and not on $\ell$.  

Let $U_1 = \stab_G(v_0)$, $U_2 = \stab_G(v_0)\cap \stab_G(v_1)$, $U_3 = \stab_G(w_0)$ and $U_4 = \stab_G(w_0)\cap \stab_G(w_1)$ be compact open subgroups of $G$.
\begin{enumerate}
\item Compute the indices $[\alpha_x(U_i) : \alpha_x(U_i)\cap U_i]$ for $i=1,2,3,4$.
\item Which of $U_1$, $U_2$, $U_3$ and $U_4$ are tidy above? tidy below for $\alpha_x$? 
\item What is the value of $s(\alpha_x)$? Find a subgroup tidy for $\alpha_x$.
\end{enumerate}

\item \label{exer:SL2}
Let $G = SL(2,\mathbb{Q}_p)$ and equip $G$ with the subspace topology as a subset of $M(2,\mathbb{Q}_p)\cong \mathbb{Q}_p^4$. ($\mathbb{Q}_p$ denotes the field of $q$-adic numbers and $\mathbb{Z}_p$ the subring of $p$-adic integers.)

Let $\alpha$ be the automorphism of conjugation by the matrix $\left(\begin{array}{cc} p & 0 \\ 0 & 1\end{array}\right)$. 

Let $U_1 = \left\{ \left(\begin{array}{cc} a & b \\ c & d\end{array}\right)\in SL(2,\mathbb{Q}_p) \mid a,b,c,d\in \mathbb{Z}_p \right\}$ and \\
$U_2 = \left\{ \left(\begin{array}{cc} a & pb \\ c & d\end{array}\right)\in SL(2,\mathbb{Q}_p) \mid a,b,c,d\in \mathbb{Z}_p \right\}$ be compact open subgroups of $G$. 
\begin{enumerate}
\item Compute the indices $[\alpha(U_i) : \alpha(U_i)\cap U_i]$ for $i=1,2$.
\item Which of $U_1$ and $U_2$ are tidy above? tidy below for $\alpha$? 
\item What is the value of $s(\alpha)$? Find a subgroup tidy for $\alpha$.
\item Let $\beta$ be the automorphism of conjugation by the matrix $\left(\begin{array}{cc} 0 & p \\ 1 & 0\end{array}\right)$. Answer the same questions for $\beta$.
\end{enumerate}

\item  \label{exer:orbit}
Suppose that $U$ is tidy for the endomorphism $\alpha$ and let $x\in \alpha(U_+)\setminus U_+$. Show that $\alpha^n(x)\in \alpha^{n+1}(U_+)\setminus \alpha^n(U_+)$ and deduce that $\{\alpha^n(x)\}_{n\in\mathbb{N}}$ has no accumulation point. 

\item Using Exercise~\ref{exer:orbit} or otherwise, prove Proposition 1.1.5.

\item Using Exercise~\ref{exer:orbit} or otherwise, prove Proposition 1.1.6.

\item Prove Lemma 1.1.8.

\item Let $U$ be a compact open subgroup of $G$ and let $\alpha\in\Aut(G)$. 
\begin{enumerate}
\item Show that $\Delta(\alpha) = \frac{[\alpha(U):\alpha(U)\cap U]}{[U:\alpha(U)\cap U]} = \frac{[\alpha(U):\alpha(U)\cap U]}{[\alpha^{-1}(U):\alpha^{-1}(U)\cap U]}$.
\item Deduce that $U$ minimises $[\alpha(U):\alpha(U)\cap U]$ if and only if it minimises $[\alpha^{-1}(U):\alpha^{-1}(U)\cap U]$.
\item Show that $\Delta(\alpha) = s(\alpha)/s(\alpha^{-1})$.
\item Show that $s(\alpha)=1 = s(\alpha^{-1})$ if and only if there is $U\in \COS(G)$ such that $\alpha(U) = U$.
\item Show that $s(\alpha^n) = s(\alpha)^n$ for every $n\in\mathbb{N}$. 
\end{enumerate}

\item Let $\mbox{Aff}(P)$ be the group of affine motions of the plane, {\it i.e.\/} the group of translations, rotations and reflections of the plane and equip $\mbox{Aff}(P)$ with the compact-open topology. Equivalently, $\mbox{Aff}(P) = \mathbb{R}^2\rtimes O(2)$. Show that every translation is a limit of rotations.

\item \label{ex:tidymeet}
Let $G = \mathbb{F}_p(\!(t)\!) \times \mathbb{F}_p(\!(t^{-1})\!)$ and define $\alpha\in\Aut(G)$ by 
$$
\alpha(f(t),g(t^{-1})) = (tf(t),tg(t^{-1})).
$$ 
\begin{enumerate}
\item Show that $U^{(m,n)} := t^m\mathbb{F}_p[\![t]\!] \times t^n\mathbb{F}_p[\![t^{-1}]\!]$ is tidy for $\alpha$ for every $m,n\in\mathbb{Z}$. What are $\Up{U^{(m,n)}}{\alpha}$ and $\Um{U^{(m,n)}}{\alpha}$?
\item Let $H = \left\{ (f(t),g(t^{-1})) \in G \mid f_n=g_n\mbox{ for every }n\in\mathbb{Z}\right\}$. Show that $H$ is a closed $\alpha$-invariant subgroup of $G$. 
\item Show that $H$ is discrete and deduce that $s(\alpha|_H)=1$.
\item Show that $U^{(-3,3)}\cap H$ is tidy below for $\alpha|_H$ but not tidy above. 
\item Show that $G/H\cong \mathbb{F}_p^{\mathbb{Z}}$ and that $\alpha$ induces the shift on $\mathbb{F}_p^{\mathbb{Z}}$.
\item Show that $U^{(-3,3)}H/H$ is tidy above for the shift on $\mathbb{F}_p^{\mathbb{Z}}$ but not tidy below. 
\end{enumerate}

\item \label{ex:P(G)_not_closed}
Let $G = \mathbb{R}^2\rtimes SO(2)$, where $SO(2)$ is the special orthogonal group with its natural action on $\mathbb{R}^2$. Denote elements of $\mathbb{R}^2$ by $\xi$ and the rotation through $\theta$ in $SO(2)$ by $R_\theta$.
\begin{enumerate}
	\item Show that, if $g = (\xi,R_\theta)\in G$ with $\theta\ne0$, then $\{g^n\}_{n\in\mathbb{Z}}$ has compact closure in $G$.
	\item Deduce that $P(G) = \left\{(\xi,R_\theta)\in G \mid \theta\ne0\right\}$ and hence that $P(G)$ is open and dense in $G$. Conclude that $P(G)$ is not closed.
	\item For $\xi\in\mathbb{R}^2$ and $R_\theta\in SO(2)$, let 
	$$
	(\xi,R_0)(0,R_\theta)(\xi,R_0)^{-1} = (R_\theta(\xi),R_\theta).
	$$ 
	Show that, putting $\xi_\theta = (1/\theta,0)$, we have that $(R_\theta(\xi),R_\theta)$ is a rotation and
	$$
	(R_\theta(\xi),R_\theta)\to ((0,-1),R_0)\mbox{ as }\theta\to0.
	$$ 
Conclude directly that $P(G)$ is not closed.
\end{enumerate} 

\subsection{Subgroups associated with automorphisms}

\item\label{ex:con_not_closed}
Let $G = F^{\mathbb{Z}}$ with $F$ a finite group and let 
$$
(\alpha f)(n) = f(n+1),\quad f\in F^{\mathbb{Z}},\ n\in\mathbb{Z},
$$ 
be the shift automorphism. Determine $\con(\alpha)$ and observe that it is not a closed subgroup of $G$. 

\item \label{ex:con_tree}
Let $\alpha_x$ be the automorphism of $\Isom(\tree_{q+1})$ defined in Exercise~\ref{exer:tree}. Determine $\con(\alpha_x)$ and $\nub(\alpha_x)$. Deduce that $\con(\alpha_x)$ is not closed.

\item  \label{ex:con_matrix}
Let $\alpha$ be the automorphism of $SL(2,\mathbb{Q}_p)$ defined in Exercise~\ref{exer:SL2}. 
\begin{enumerate}
\item For each $n\geq1$, let $U_n = \left\{ (a_{ij})\in SL(2,\mathbb{Z}_p) \mid (a_{ij})\equiv (\delta_{ij}) \mod p^n\right\}$. Show that $U_n$ is tidy for $\alpha$.
\item Deduce that $\nub(\alpha) = \{\id\}$ and hence that $\con(\alpha)$ is closed. 
\end{enumerate}

\item Show that, if $G$ is a totally disconnected, locally compact group and there is $\alpha\in\Aut(G)$ for which there is no proper, non-empty open $\alpha$-invariant $\mathscr{O}\subset G$, then $G$ is compact. 

Give an example of a totally disconnected compact group $G$ and automorphism $\alpha$ that is topologically ergodic, {\it i.e.\/}, if $\mathscr{O}\subset G$ is open and $\alpha$-invariant, then $\mathscr{O} = G$ or $\mathscr{O} = \emptyset$.

\item Let $G$ be a totally disconnected compact group and $\alpha\in\Aut(G)$. Show that the action of $\alpha$ on $G$ is ergodic if and only if $G$ itself is the only subgroup tidy for $\alpha$.

\subsection{The Tree Representation Theorem}

\item Let $G$ be a totally disconnected locally compact group and $\alpha\in\Aut(G)$. Suppose that there is $U\in\COS(G)$ such that 
$$
\alpha(U) > U\mbox{ and }\bigcup_{n\in\mathbb{Z}} \alpha^n(U) = G.
$$ 
\begin{enumerate}
\item Verify that $U$ is tidy above and below for $\alpha$ and hence show that $s(\alpha) = [\alpha(U) : U]$. 
\item Define a graph, $\Gamma$,  with vertex set $V(\Gamma) = (G\rtimes \langle\alpha\rangle)/U$ and edge set $E(\Gamma) = \left\{ \{x\alpha^nU,x\alpha^{n+1}U\}\mid x\in G,\ n\in\mathbb{Z}\right\}$. Show that $\Gamma$ is a regular tree in which every vertex has degree $s(\alpha)+1$.
\item Let $G\rtimes \langle\alpha\rangle$ act on $V(\Gamma)$ by left translation of cosets. Show that this action represents $G\rtimes \langle\alpha\rangle$ as a group of tree automorphisms. 
\item Verify that the representation of $G\rtimes \langle\alpha\rangle$ in $\Isom(\Gamma)$ in the previous part has closed range and has kernel equal to the largest compact normal subgroup of $G\rtimes \langle\alpha\rangle$.
\item Verify that the action of $G\rtimes \langle\alpha\rangle$ in $\Gamma$ fixes the end containing the ray $\{\alpha^nU\}_{n\leq0}\subset V(\Gamma)$ and that the stabiliser of the vertex $U\in V(\Gamma)$ is the subgroup $U\leq G\rtimes \langle\alpha\rangle$.
\end{enumerate}
\end{enumerate}

%%%{\color{fireenginered} Proof of theorem? Tidying procedure? First Lemma?}

%%%{\color{fireenginered} Show that $\langle \alpha\rangle\rtimes \Uz{U}{\alpha}$ is flat?} 

 \newpage
 
\subsection{Flat Groups of Automorphisms}
\label{sec:flatness}

\begin{enumerate}

\item \label{exer:singly_generated_flat}
Show that $\langle\alpha\rangle$ is a flat group for any $\alpha\in\Aut(G)$.

\item\label{exer:irreducible}
Let $G = \mathbb{Q}_p\times F$ with $F$ a non-trivial finite group and let $\alpha = (p,\id)$ be the automorphism that multiplies $\mathbb{Q}_p$ by $p$ and is the identity on $F$. Let $U = \mathbb{Z}_p\times F$. 
\begin{enumerate}
	\item Show that $\alpha(U)<U$ and that $G = \bigcup_{n\in\mathbb{Z}} \alpha^n(U)$ and deduce that $G$ is irreducible over $\langle\alpha\rangle$.
	\item Let $V = \mathbb{Z}_p\times\triv$. Show that $V$ is tidy for $\langle\alpha\rangle$ but that $\bigcup_{n\in\mathbb{Z}} \alpha^n(V)$ is not equal to $G$. 
\end{enumerate}

\item \label{exer:matrix_flat}
Let $G = SL(3,\mathbb{Q}_p)$ and let $\fl{H}$ be the group of automorphisms given by conjugating by diagonal matrices (not necessarily having determinant~$1$). 
\begin{enumerate}
\item Show that the subgroup 
$$
U = \left\{ \left(a_{ij}\right)_{i,j\in\{1,2,3\}} \mid a_{ij}\in\mathbb{Z}_p\mbox{ if }i\geq j\mbox{ and }a_{ij}\in p\mathbb{Z}_p\mbox{ if }i<j\right\}
$$
is tidy for $\fl{H}$.
\item Deduce that $\fl{H}$ is flat.
\item Determine the uniscalar subgroup $\fl{H}_u$.
\end{enumerate}

\item Let $G=\Isom(\tree_{q+1})$ and $\{v_n\}_{n\in\mathbb{Z}}$ and $\{w_n\}_{n\in\mathbb{Z}}$ be two bi-infinite paths in $\tree_{q+1}$ such that $w_0=v_0$ and $w_1=v_1$ and $w_n\ne v_n$ otherwise. Thus the paths have exactly one edge in common. Let $\alpha$ and $\beta$ be translations of $\tree_{q+1}$ such that $\alpha(v_n) = v_{n+1}$ and $\beta(w_n) = w_{n+1}$. 
\begin{enumerate}
\item Show that $\stab_G(v_0)\cap\stab_G(v_1)$ is tidy for both $\alpha$ and $\beta$.
\item Show that $\stab_G(v_2)\cap\stab_G(v_3)$ is not tidy for $\beta$.
\item By appealing to Proposition~2.1.2, deduce that $\langle\alpha,\beta\rangle$ is not flat.
\end{enumerate}

\item What is the set of homomorphisms $\red(\fl{H})$ and what are the subgroups $\widetilde{U}_\rho$, $\rho\in\red(\fl{H})$, that are described in Theorem~2.1.4 for the flat group in Exercise~\ref{exer:singly_generated_flat}.?

\item 
Determine $\nub(\fl{H})$ and $\lev(\fl{H})$ for the flat group $\fl{H}$ in Exercise~\ref{exer:matrix_flat}.

\item \label{exer:Qpn}
Let $G = \mathbb{Q}_{\fp}^n$ and for each $\mathbf{a} = (a_i)\in (\mathbb{Q}_{\fp}\setminus\{0\})^n$ define 
$$
\alpha_{\mathbf{a}}: \mathbb{Q}_{\fp}^n\to \mathbb{Q}_{\fp}^n; (\xi_i) \mapsto (a_i\xi_i).
$$
Let $\fl{H} = \left\{ \alpha_{\mathbf{a}}\mid \mathbf{a} \in (\mathbb{Q}_{\fp}\setminus\{0\})^n\right\}$. 
\begin{enumerate}
\item Show that $\mathbb{Z}_{\fp}^n$ is tidy for $\fl{H}$ and deduce that $\fl{H}$ is flat.
\item Deduce that $\nub(\fl{H})$ is the trivial subgroup.
\end{enumerate}

\item \label{exer:vector_examp}
Let $\alpha,\beta\in\Aut(\mathbb{Q}_{\fp}^n)$ be given by
$$
\alpha(\xi_i) = ({\fp}\xi_i)\mbox{ and } \beta(\xi_i) = ({\fp}^{i-2}\xi_i),\quad (\xi_i)\in \mathbb{Q}_{\fp}^n.
$$
Then $\langle\alpha,\beta\rangle$ is a subgroup of the flat group from Exercise~\ref{exer:Qpn}. Hence $\langle\alpha,\beta\rangle$ is flat and $U := \mathbb{Q}_{\fp}^n$ is tidy for it.
\begin{enumerate}
\item Determine the subgroups $\Upm{U}{\alpha}$ and $\Upm{U}{\beta}$. Are $\Uz{U}{\alpha}$ and $\Uz{U}{\beta}$ trivial?
\item Determine the subgroups $\con(\beta)$ and $\con(\beta^{-1})$. (Note that $\langle\alpha,\beta\rangle$ has trivial nub and so these contraction subgroups are also the contraction subgroups modulo $\nub(\langle\alpha,\beta\rangle)$.)
\item Verify that $\Um{U}{\alpha} = (\Um{U}{\alpha}\cap \Up{U}{\beta})(\Um{U}{\alpha}\cap\Um{U}{\beta})$.
\item Find $\gamma\in\langle\alpha,\beta\rangle$ such that 
%%%
\begin{align*}
U &= \con(\gamma/\nub(\langle\alpha,\beta\rangle))\con(\gamma^{-1}/\nub(\langle\alpha,\beta\rangle))\\
\mbox{ and }&\con(\gamma/\nub(\langle\alpha,\beta\rangle))=\con(\beta/\nub(\langle\alpha,\beta\rangle)).
\end{align*} 
\end{enumerate}

\item Let $\langle\alpha,\beta\rangle$ be the flat group in Exercise~\ref{exer:vector_examp}.~and let 
$$
V = \left\{ (\xi_i)\in\mathbb{Z}_{\fp}^n \mid \xi_i \equiv \xi_3 \mod {\fp} \mbox{ for all }i\geq3\right\}.
$$
\begin{enumerate}
\item Show that $V$ is tidy for $\alpha$ and $\beta$.
\item Is $V$ tidy for $\langle\alpha,\beta\rangle$ when $n>3$?
\end{enumerate} 

\item Let $\langle\alpha,\beta\rangle$ be the flat group in Exercise~\ref{exer:vector_examp}. Show that the subgroup $\Um{U}{\beta}$ is not $\fl{H}$-invariant.

\item 
\label{exer:rank_weights}
Let $\langle\alpha,\beta\rangle$ be the flat group in Exercise~\ref{exer:vector_examp}. What is the rank of $\langle\alpha,\beta\rangle$ and what is the cardinality of $\red(\langle\alpha,\beta\rangle)$?

\item 
\label{exer:torsion_quotient}
Let $\alpha,\beta\in\Aut(\mathbb{Q}_\fp^2)$ be given by
$$
\alpha(\xi_1,\xi_2) = (\fp\xi_1,\fp\xi_2)\mbox{ and }\beta(\xi_1,\xi_2) = (\fp\xi_1,\fp^{-1}\xi_2),\quad (\xi_1\xi_2)\in\mathbb{Q}_\fp^2.
$$
Then $\langle\alpha,\beta\rangle$ is a subgroup of the flat group from Exercise~\ref{exer:Qpn}. Hence $\langle\alpha,\beta\rangle$ is flat and $U := \mathbb{Q}_{\fp}^2$ is tidy for it.
\begin{enumerate}
\item Determine the subgroups $\Upm{U}{\alpha}$ and $\Upm{U}{\beta}$. 
\item Determine the subgroups $\con(\beta)$ and $\con(\beta^{-1})$.
\item Describe the homomorphism $L : \langle\alpha,\beta\rangle\to \mathbb{Z}^2$ defined in Proposition~\ref{prop:free_abelian}. 
\item Is $L$ surjective? What is $\mathbb{Z}^2/L(\langle\alpha,\beta\rangle)$?
\end{enumerate}

\item \label{exer:zero_necessary}
Let $G = (F_p(\!(t)\!),+)^2$ and define $\alpha,\beta\in\Aut(G)$ by
$$
\alpha(f_1,f_2) = (tf_1,f_2) \text{ and }\beta(f_1,f_2) = (f_1,tf_2), \quad (f_1,f_2)\in F_p(\!(t)\!)^2.
$$
\begin{enumerate}
	\item Show that $F_p[\![t]\!]^2$ is tidy for $\langle\alpha,\beta\rangle$ and hence that $\langle\alpha,\beta\rangle$ is flat.
	\item Let $U = F_p[\![t]\!]^2$ and determine $\Umm{U}{\alpha}$. Verify that $\beta(\Umm{U}{\alpha})\not\leq \Umm{U}{\alpha}$ but that $\beta(\Umm{U}{\alpha})\leq \Umm{U}{\alpha}\beta(\Uz{U}{\alpha})$.
\end{enumerate}

\item\label{exer:scaling}
Let $G = F\times \mathbb{Q}_p$ with $F$ a non-trivial finite group, and let $U = \triv\times\mathbb{Z}_p$ be a compact open subgroup of $G$. Define an automorphism, $\alpha$, of $G$ by
$$
\alpha(f,\xi) = (f,{p}\xi),\qquad f\in F,\ \xi\in\mathbb{Q}_p.
$$
\begin{enumerate}
	\item Show that $\alpha^n(U)\leq U$ or $\alpha^n(U)\geq U$ for every $\alpha^n\in\langle\alpha\rangle$, but that $\bigcup\left\{\alpha^n(U)\right\} \ne G$.
	\item Show that $G$ is nevertheless scaling for $\langle\alpha\rangle$.
\end{enumerate}

\item Let $\fl{H}$ be the flat group of automorphisms of $SL(2,\mathbb{Q}_{\fp})$ described in Question~\ref{exer:matrix_flat} and let $\alpha = \mbox{diag}[1,{\fp},{\fp}^2]\in \fl{H}$. Let $U$ be the subgroup tidy for $\fl{H}$ seen in Question~\ref{exer:matrix_flat}.
\begin{enumerate}
\item Find the groups $\Upm{U}{\alpha}$, $\Utpm{U}{\alpha}$ and $\con(\alpha/\nub(\fl{H}))$. (Recall that $\nub(\fl{H})$ was found in Question~\ref{exer:Qpn}.)
\item Observe that $\con(\alpha/\nub(\fl{H}))$ is invariant under $\fl{H}$ and denote the restriction of $\fl{H}$ to $\con(\alpha/\nub(\fl{H}))$ by $\widetilde{\fl{H}}$. \\
Verify that $\widetilde{U} := U\cap \con(\alpha/\nub(\fl{H}))$ is tidy for $\widetilde{\fl{H}}$.
\item Let $\beta = \mbox{diag}[1,1,p]$ and denote the restriction of $\beta$ to $\con(\alpha/\nub(\fl{H}))$ by $\widetilde{\beta}$. Determine the groups $\Upm{\widetilde{U}}{\beta}$, $\Utpm{\widetilde{U}}{\beta}$ and $\con(\widetilde{\beta}/\nub(\fl{H}))$.
\item Find $\gamma\in\langle\alpha,\beta\rangle$ such that $\widetilde{U} = \con(\widetilde{\gamma}^{-1}/\nub(\fl{H}))\con(\widetilde{\gamma}/\nub(\fl{H}))
$ and $\con(\widetilde{\beta}/\nub(\fl{H})) = \con(\widetilde{\gamma}/\nub(\fl{H}))$.
\end{enumerate}

\item \label{exer:choose_beta}
Let $G = \mathbb{Q}_p^3$ and $\alpha,\beta\in \Aut(G)$ be given by
$$
\alpha(a,b,c) = (pa,pb,pc) \hbox{ and }\beta(a,b,c) = (pa,b,p^{-1}c),\quad (a,b,c)\in\mathbb{Q}_p^3.
$$
\begin{enumerate}
	\item Show that $\fl{H} := \langle\alpha,\beta\rangle$ is flat.
	\item Choose $U$ and $N$ to satisfy the hypotheses of Proposition~\ref{prop:choose_beta} and determine $\Upp{U}{\beta}$ for this choice.
	\item Find an automorphism $\gamma$ as in the proof of Proposition~\ref{prop:choose_beta}.
	\item Show that $\Upp{U}{\beta} \ne \con(\gamma^{-1}/N)$.
\end{enumerate}

%\item {\color{fireenginered}For the flat groups in each of questions~\ref{exer:singly_generated_flat}. and~\ref{exer:matrix_flat}., what is the set of homomorphisms $\red(\fl{H})$ and what are the subgroups $\widetilde{U}_\sigma$, $\sigma\in\red(\fl{H})$, that are described in Theorem~2.1.4?}

\end{enumerate}

\newpage

\bibliography{TDG}
\bibliographystyle{plain}
\end{document}